\newcommand{\done}{\ding{182}}
\newcommand{\dtwo}{\ding{183}}
\newcommand{\dthree}{\ding{184}}
\newcommand{\dfour}{\ding{185}}
\newcommand{\dfive}{\ding{186}}
\newtheorem{assumption}{Assumption}
\DeclareMathOperator*{\argmax}{arg\,max}
\DeclareMathOperator*{\stat}{stat}
\DeclareMathOperator*{\argstat}{arg\,stat}
\newcommand{\ba}			{\begin{array}}
\newcommand{\ea}			{\end{array}}
\newcommand{\nn}			{\nonumber}
\newcommand{\dom}		{{\textsf{dom\,}}}
\newcommand{\ran}			{{\textsf{ran\,}}}
\newcommand{\kernel}		{{\textsf{ker\,}}}
\newcommand{\bo}			{{\mathcal{L}}}
\newcommand{\ddtone}[2]	{{\frac{d {#1}}{d {#2}}}}
\newcommand{\ddttwo}[2]	{{\frac{d^2 {#1}}{d {#2}^2}}}
\newcommand{\er}[1]		{{(\ref{#1})}}
\newcommand{\funspace}[1]	{{\mathscr{#1}}}
\newcommand{\ol}[1]		{{\overline{#1}}}
\newcommand{\ul}[1]		{{\underline{#1}}}
\newcommand{\op}[1]		{{\mathcal{#1}}}
\newcommand{\opbreve}[1]	{{\breve{\op{#1}}}}
\newcommand{\optilde}[1]	{{\tilde{\op{#1}}}}
\newcommand{\pdtone}[2]	{{\frac{\partial {#1}}{\partial {#2}}}}
\newcommand{\pdttwo}[2]		{{\frac{\partial^2 {#1}}{\partial {#2}^2}}}
\newcommand{\ts}[1]		{{\textstyle{#1}}}
\newcommand{\ooer}[1]			{\normalfont\tag{\ref{#1}}}
\newcommand{\cB}			{{\funspace{B}}}
\newcommand{\cL}			{{\mathcal{L}}}
\newcommand{\cX}			{{\funspace{X}}}
\newcommand{\half}			{{\frac{1}{2}}}
\newcommand{\minushalf}	{{-\frac{1}{2}}}
\newcommand{\cV}			{{\funspace{V}}}
\newcommand{\cW}			{{\funspace{W}}}
\newcommand{\cY}			{{\funspace{Y}}}
\newcommand{\cZ}			{{\funspace{Z}}}
\newcommand{\demi}		{{\ts{\frac{1}{2}}}}
\newcommand{\eps}			{{\epsilon}}
\newcommand{\ggrad}		{{\nabla}}
\newcommand{\Lone}		{{\funspace{L}_1}}
\newcommand{\Ltwo}		{{\funspace{L}_2}}
\newcommand{\ltwo}		{{$\Ltwo$}}
\newcommand{\mapsinto}	{{\rightarrow}}
\newcommand{\R}			{{\mathbb{R}}}
\newcommand{\N}			{{\mathbb{N}}}
\newcommand{\TPBVP}		{{\textsf{TPBVP}}}
\newcommand{\Frechet}			{{Fr\'{e}chet}}
\newcommand{\opAsqrt}		{{\op{A}^\frac{1}{2}}}
\newcommand{\Wbreve}		{{\breve W}}
\newcommand{\What}		{{\widehat W}}
\newcommand{\opdot}[1]		{{\dot{\op{#1}}}}
\newcommand{\ophat}[1]		{{\widehat{\op{#1}}}}
\begin{document}


\title{
Solving two-point boundary value problems for a wave equation via the principle of stationary action and optimal control
%
\thanks{This research was partially supported by AFOSR, NSF, and the Australian Research Council. Preliminary results contributing to this paper appear in \cite{DM2:13}.}}

\author{Peter M. Dower\thanks{Department of Electrical \& Electronic Engineering, University of Melbourne, Parkville, Victoria 3010, Australia. Email: {\tt pdower@unimelb.edu.au}}
\and
William M. McEneaney\thanks{Department of Mechanical \& Aerospace Engineering, University of California at San Diego, 9500 Gilman Dr., La Jolla, CA 92093-04111, USA. Email: {\tt wmceneaney@eng.ucsd.edu.au}}}
\date{}

\maketitle


\begin{abstract}  \small\baselineskip=9pt 
A new approach to solving two-point boundary value problems for a wave equation is developed. This new approach exploits the principle of stationary action to reformulate and solve such problems in the framework of optimal control. In particular, an infinite dimensional optimal control problem is posed so that the wave equation dynamics and temporal boundary data of interest are captured via the characteristics of the associated Hamiltonian and choice of terminal payoff respectively. In order to solve this optimal control problem for any such terminal payoff, and hence solve any two-point boundary value problem corresponding to the boundary data encapsulated by that terminal payoff, a fundamental solution to the optimal control problem is constructed. Specifically, the optimal control problem corresponding to any given terminal payoff can be solved via a max-plus convolution of this fundamental solution with the specified terminal payoff. Crucially, the fundamental solution is shown to be a quadratic functional that is defined with respect to the unique solution of a set of operator differential equations, and computable using spectral methods. An example is presented in which this fundamental solution is computed and applied to solve a two-point boundary value problem for the wave equation of interest.
\end{abstract}

\begin{keywords}
Two-point boundary value problems, wave equation, stationary action, optimal control, fundamental solution.
\end{keywords}

\begin{AMS}
49LXX, 93C20, 35L53, 47F05, 47D06.
\end{AMS}


\section{Introduction}
The principle of stationary action, or {\em action principle}, states that any trajectory generated by a conservative system must render the corresponding {\em action functional} stationary in the calculus of variations sense, see for example \cite{F:48,F:64,GT:07}. As this action functional is defined as the time integral of the associated Lagrangian, it may be regarded as the payoff due to a unique trajectory generated by some generalized system dynamics, and corresponding to a specified initial system state. By regarding the velocity of these generalized dynamics as an input, the action principle may be expressed as an optimal control problem. Recent work by the authors has exploited this correspondence with optimal control to develop a fundamental solution to the classical gravitational $N$-body problem, see \cite{MD1:13,MD2:13}. This fundamental solution is a special case of a more general notion of a fundamental solution semigroup developed for optimal control problems, see for example \cite{M:08,DM1:14,ZD1:15,DMZ1:15}. In the specific case of the gravitational $N$-body problem, by constructing a fundamental solution to the optimal control problem corresponding to stationary action, a fundamental solution to a class of $N$-body two-point boundary value problems (TPBVPs) may also be constructed. 
In this paper, the corresponding fundamental solution construction for a class of TPBVPs is extended via infinite dimensional systems theory to a wave equation \cite{P:83,CZ:95,DM:11,DM:12,DM2:13,DM1:14}. The specific wave equation considered is expressed via the partial differential equation (PDE) and boundary data
\begin{equation}
	\begin{aligned}
		& \pdttwo{u}{s}
		= \left( \frac{\kappa}{m} \right) \pdttwo{u}{\lambda}\,.
		\qquad u(\cdot,0) = 0 = u(\cdot,L)\,, \ L\in\R_{>0}\,.
	\end{aligned}
	\label{eq:wave}
\end{equation}
In a mechanical setting (for example), $u(s,\lambda)$ may be interpreted as the displacement of a vibrating string at time $s\in[0,\bar t]$, $\bar t\in\R_{>0}$, and location $\lambda\in\overline{\Lambda}$, $\Lambda\doteq(0,L)$. Here, constants $\kappa, m\in\R_{>0}$ model the distributed elastic spring constant and mass respectively (with SI units of $\mathsf{N}$ and $\mathsf{kg\, m^{-1}}$). An example pair of initial and terminal conditions defining a TPBVP of interest is
\begin{equation}
	u(0,\cdot) = x(\cdot)\,, \
	u(t,\cdot) = z(\cdot)\,,
	\label{eq:IC-FC}
\end{equation}
in which $x$ and $z$ denote the initial and terminal displacements. The problem to solve is then
\begin{align}
	\TPBVP(t, x, z)
	\doteq \left\{ \ba{c}
	\text{Find the initial velocity $\pdtone{u}{s}(0,\cdot)$}
	\\
	\text{(if it exists) such that \er{eq:wave} and \er{eq:IC-FC}}
	\\
	\text{hold with functions $x$ and $z$ given.}
	\ea \right.
	\label{eq:TPBVP}
\end{align}
(Another example of a TPBVP of interest is to determine the initial velocity such that a desired terminal velocity is attained.) In order to formulate the action principle for system \er{eq:wave}, note that the potential and kinetic energies associated with a solution $u(s,\cdot)$ of \er{eq:wave} at time $s\in[0,t]$ are respectively denoted by $V(u(s,\cdot))$ and $T(\ts{\pdtone{u}{s}}(s,\cdot))$, where $V$ and $T$ are the functionals defined by
\begin{align}
	V(u(s,\cdot))
	& = \ts{\frac{\kappa}{2}} \, \int_\Lambda \left| \pdtone{u}{\lambda}(s,\lambda) \right|^2\, d\lambda\,,
	\qquad
	T\left(\pdtone{u}{s}(s,\cdot) \right)
	=  \ts{\frac{m}{2}} \, \int_\Lambda \left| \pdtone{u}{s}(s,\lambda) \right|^2\, d\lambda\,.
	\label{eq:V-and-T-wave}
\end{align}
The action principle states that any solution $u$ of \er{eq:wave} must render the action functional 
\begin{align}
	\int_0^t V(u(s,\cdot)) - T\left(\pdtone{u}{s}(s,\cdot) \right) \, ds
	\label{eq:action}
\end{align}
stationary in the sense of the calculus of variations \cite{GT:07}, where $V$ and $T$ denote the energy functionals as per \er{eq:V-and-T-wave}, and the integrand is the Lagrangian or its additive inverse (as selected here). This includes any solution of the TPBVP \er{eq:TPBVP}. Hence, by formulating an appropriate optimal control problem encapsulating this variational problem, solutions of the TPBVP \er{eq:TPBVP} may be investigated. In particular, a fundamental solution to the TPBVP \er{eq:TPBVP} can be constructed within the framework of infinite dimensional optimal control, using a more general notion of fundamental solution semigroup for optimal control \cite{M:08,DM1:14,ZD1:15,DMZ1:15}.  The attendant optimal control problem and subsequent TPBVP fundamental solution is formulated and developed in Sections \ref{sec:optimal} and \ref{sec:fundamental}. Useful auxiliary optimal control problems, and their interrelationships, are employed in this development. The application of this fundamental solution is then considered in the context of an illustrative example in Section \ref{sec:example}. Selected technical details of relevance to the development are included in the appendices.

In terms of the notation, $\R$, $\R_{\ge 0}$, and $\R_{>0}$ denotes the sets of reals, non-negative reals, and positive reals respectively. Given an open subset $\mathbb{D}$ of a Euclidean space and a Banach space $\cZ$, the respective spaces of continuous, continuously differentiable, and Lebesgue square integrable functions mapping $\mathbb{D}$ to $\cZ$ are denoted by $C(\mathbb{D};\, \cZ)$, $C^1(\mathbb{D};\cZ)$, and $\Ltwo(\mathbb{D}; \cZ)$. Symbols $\partial$ and $\partial^2$ denote first and second order differentation for functions defined on $\Lambda$. An operator $\mathcal{O}:\cX\mapsinto\cY$ between Banach spaces $\cX$ and $\cY$ is {\Frechet} differentiable at $x\in\cX$ if there exists a bounded linear operator $d\mathcal{O}(x)\in\cL(\cX;\cY)$ such that the limit 
$
	\lim_{\|h\|_\cX\rightarrow 0} \|\mathcal{O}(x+h) - \mathcal{O}(x) - d\mathcal{O}(x)\, h \|_\cY / \|h\|_\cX
$
exists and is zero. 


\section{Approximating stationary action via optimal control}
\label{sec:optimal}

Where the action functional is concave or convex, the action principle can be formulated as an optimal control problem, see for example \cite{MD1:13,MD2:13}. However, this convexity or concavity, corresponding to that of the payoff or cost functional, is limited to a finite time horizon that is determined by parameters associated with the kinetic and potential energies. In the finite dimensional case, this limited time horizon is strictly positive, so that the conservative dynamics defined by the action principle can be propagated via solution of the optimal control problem up to that time horizon. However, in the infinite dimensional case considered here, this limited time horizon tends to zero, see Theorem \ref{thm:second-difference} and \cite{DM2:13}, thereby complicating the direct application of the approach of \cite{MD1:13,MD2:13}.  In order to overcome this complication, a perturbed optimal control problem is formulated that approximates the stationary action principle on a strictly positive time horizon, thereby allowing the solution of the TPBVP \er{eq:TPBVP} to be approximated on that time horizon. By concatenating such horizons via the dynamic programming principle, solutions on longer horizons can also be approximated. Such approximations are shown (using well-known semigroup approximation results) to be exact in the limit of vanishing perturbations, see Section \ref{sec:approx-solution}.


\subsection{Preliminaries}
Define an {\ltwo} and Sobolev space by
\begin{align}
	\cX
	& \doteq \Ltwo(\Lambda;\R)\,,
	\qquad
	\cX_0
	\doteq 
	\left\{ x\in\cX \, \left| \, \ba{c} 
		x,\, \partial x \text{ absolutely continuous,} \\
		x(0) = 0 = x(L)\,,
		\\
		\partial^2 x\in\cX
	\ea \right. \right\}\,,
	\label{eq:cX-0}
\end{align}
and let $\langle\, , \rangle$ and $\|\cdot\|$ denote the standard {\ltwo} inner product and norm on $\cX$.
A specific unbounded operator $\op{A}$ of interest in considering the wave equation \er{eq:wave} is densely defined on $\cX$ by
\begin{align}
	\op{A}\, x
	= (\op{A}\, x)(\cdot)
	& \doteq - \partial^2 x(\cdot)\,,
	\quad
	\dom(\op{A})
	\doteq \cX_0\subset\cX, \ \ol{\cX_0} = \cX\,.
	\label{eq:op-A}
\end{align}
Operator $\op{A}$ is closed, positive, self-adjoint, and boundedly invertible, and has a unique, positive, self-adjoint, boundedly invertible square root, denoted by $\opAsqrt$. The inverse of this square root is denoted by $\op{J}\doteq(\opAsqrt)^{-1}\in\mathcal{L}(\cX)$. 
See Appendix \ref{app:op-A-properties} (Lemma \ref{lem:op-A-properties}), \cite{CZ:95} (Example 2.2.5, Lemma A.3.73, and Examples A.4.3 and A.4.26), and also \cite{B:68}. These properties admit the definition of Hilbert spaces
\begin{align}
	\cX_\half
	& \doteq \dom(\opAsqrt)\,,
	&& \langle x, \, \xi \rangle_\half \doteq \langle \opAsqrt\, x,\, \opAsqrt\, \xi \rangle\,,
	&& \forall\ x,\xi\in\cX_\half\,,
	\label{eq:cX-half}
	\\
	\cY_\half
	& \doteq \cX_\half\oplus \cX\,,
	&& \langle (x,p) , (\xi,\pi) \rangle_\oplus \doteq m\, \langle x,\, \xi\rangle_\half + \ts{\frac{1}{\kappa}}\, \langle p,\, \pi\rangle\,,
	&& \forall\ x,\xi\in\cX_\half,\, p,\pi\in\cX\,.
	\label{eq:cY-oplus}
\end{align}
The corresponding norms are denoted by $\|\cdot\|_\half$ and $\|\cdot\|_\oplus$. 
Similarly, it is also convenient to define the set
\begin{align}
	\cY_0
	& \doteq \cX_0\oplus \cX_\half \subset\cY_\half\,.
	\label{eq:cY-0}
\end{align}
Operators $\op{A}$ and $\opAsqrt$ are Riesz-spectral operators, see Appendix \ref{app:Riesz} and \cite{CZ:95}. Define orthonormal Riesz bases 
\begin{align}
	\begin{aligned}
	\op{B}
	& \doteq \{ \varphi_n \}_{n=1}^\infty\,,
	&
	\varphi_n(\cdot) & \doteq \ts{\sqrt{\frac{2}{L}}} \, \sin(\ts{\frac{n\, \pi}{L}}\, \cdot)\,,
	\\
	\optilde{B} & \doteq \{ \tilde\varphi_n \}_{n=1}^\infty\subset\cX_\half\,,
	&
	\tilde\varphi_n(\cdot) & \doteq \ts{\frac{\sqrt{2\, L}}{n\, \pi}} \, \sin(\ts{\frac{n\, \pi}{L}}\, \cdot)\,,
	\end{aligned}
	\label{eq:basis-Riesz}
\end{align}
for $\cX$ and $\cX_\half$ respectively (see Lemma \ref{lem:basis-Riesz}). The input space for the optimal control problem of interest is 
\begin{align}
	\cW[r,t]
	& \doteq \Ltwo([r,t];\cX_\half)
	\label{eq:cW}
\end{align}
for all $t\in\R_{\ge 0}$, $r\in[0,t]$. The corresponding norm is defined by $\|w\|_{\cW[r,t]}^2 \doteq \int_r^t \|w(s)\|_\half^2 \, ds$.


\subsection{Approximating optimal control problem}
\label{ssec:approx}
In order to formulate the action principle for the conservative infinite dimensional dynamics of \er{eq:wave}, define the abstract Cauchy problem \cite{P:83,CZ:95} by
\begin{align}
	\dot \xi(s)
	& = w(s)\,,
	\quad \xi(0) = x\in\cX_\half\,,
	\label{eq:dynamics}
\end{align}
in which $\xi(s)$ denotes the infinite dimensional state at time $s\in[0,t]$ that has evolved from initial state $x\in\cX_\half$ in the presence of input $w\in\cW[0,s]$. The derivative in \er{eq:dynamics} is of {\Frechet} type, defined with respect to the norm $\|\cdot\|_\half$. The mild solution \cite{P:83,CZ:95} of \er{eq:dynamics} is defined as
\begin{align}
	\xi(s)
	& = x + \int_0^s w(\sigma) \, d\sigma
	\label{eq:mild}
\end{align}
for all $x\in\cX_\half$, $w\in\cW[0,s]$, $s\in[0,t]$. In view of these dynamics, define the payoff (action) functional $J_{m,\psi}^\mu:[0,\bar t)\times\cX_\half\times\cW[0,\bar t)\mapsinto\R$ for some $\bar t\in\R_{>0}$ by
\begin{align}
	J_{m,\psi}^\mu(t,x,w)
	& \doteq \hspace{-1mm}
	\int_0^t \hspace{-1mm} 
					\ts{\frac{\kappa}{2}} \, \|\xi(s)\|_\half^2 - \ts{\frac{m}{2}}\, \|\op{J}^\mu\, w(s)\|_\half^2 
					ds
	+ \psi(\xi(t))\,,
	\label{eq:payoff}
\end{align}
in which $\kappa,\, m\in\R_{>0}$ are physical constants as per \er{eq:wave}, $\mu\in\R_{>0}$ is a real-valued perturbation parameter, $\op{J}^\mu:\cX_\half\mapsinto\cX_0\times\cX_\half$ is a bounded linear operator given by
\begin{align}
	\op{J}^\mu\, w
	& \doteq \left[ \ba{c} \op{J} \\ \mu\, \op{I} \ea \right] \, w\,,
	\label{eq:op-J-mu}
\end{align}
$\op{I}$ is the identity operator on $\cX_\half$, and $\psi:\cX_\half\mapsinto\R$ is any concave terminal payoff. In the integrand in \er{eq:payoff}, note that $\|\xi(s)\|_\half \equiv \|\partial\xi(s)\|$ is well-defined as $\xi(s)\in\cX_\half$ for each $s\in[0,t]$. Note also that $\|\op{J}\, w(s)\|_\half\equiv \|w(s)\|$ is well-defined as $\ran(\op{J}) = \cX_\half$. Consequently, \er{eq:payoff} approximates the action functional \er{eq:action} as
\begin{align}
	&
	\int_0^t V(u(s,\cdot)) - T^\mu\left(\pdtone{u}{s}(s,\cdot) \right) \, ds
	\label{eq:action-mu}
\end{align}
where $T^\mu:\cX_\half\mapsinto\R$ is the approximate kinetic energy functional defined analogously to \er{eq:V-and-T-wave} by 
\begin{align}
	T^\mu\left(\pdtone{u}{s}(s,\cdot) \right)
	& \doteq
	\ts{\frac{m}{2}} \left( \left\| \pdtone{u}{s}(s,\cdot) \right\|^2
	+ \mu^2 \left\| \pdtone{u}{s}(s,\cdot) \right\|_\half^2
	\right)
	= \ts{\frac{m}{2}} \, \|\op{J}^\mu\, w(s) \|_\half^2\,.
	\label{eq:T-mu}
\end{align}
Note in particular that the $\|\cdot\|_\half^2$ term introduces a penalty on spatial ripples in $\ts{\pdtone{u}{s}}(s,\cdot)$ for $\mu\ne 0$. This term vanishes for $\mu=0$, so that $T = T^0$ by inspection of \er{eq:V-and-T-wave} and \er{eq:T-mu}.

In order to ensure that the optimal control problem defined via the payoff functional $J_{m,\psi}^\mu$ of \er{eq:payoff} has a finite value, it is critical to establish the existence of a $\bar t^\mu\in\R_{>0}$ in \er{eq:payoff} such that $J_{m,\psi}^\mu(t,x,\cdot)$ is either convex or concave for all $t\in[0,\bar t^\mu)$ and $x\in\cX_\half$. To this end, it may be shown (see Theorem \ref{thm:second-difference} at the end of this section) that the second difference $\Delta J_{m,\psi}^\mu(t,x,w^*\!\!,\delta\, \tilde w) \doteq J_{m,\psi}^\mu(t,x,w^*\!\!+\delta\tilde w) - 2\, J_{m,\psi}^\mu(t,x,w^*) + J_{m,\psi}^\mu(t,x,w^*\!\!-\delta\tilde w)$ of $J_{m,\psi}^\mu(t,x,\cdot)$ for an input $w^*\in\cW[0,t]$ in direction $\tilde w\in\cW[0,t]$, $\delta\in\R$ (with $\delta\, \|\tilde w\|_{\cW[0,t]} \ne 0$) satisfies
\begin{align}
	\Delta J_{m,\psi}^\mu(t,x,w^*\!\!,\delta\, \tilde w)
	& \le -\delta^2 \left [m\, \mu^2 - \kappa\, ( \ts{\frac{t^2}{2}} ) \right] \|\tilde w\|_{\cW[0,t]}^2 < 0
	\label{eq:second-difference-bound}
\end{align}
for all $t\in[0, \bar t^\mu)$, provided the terminal payoff $\psi$ is concave, where 
\begin{align}
	\bar t^\mu
	& \doteq \mu\, \ts{({\ts{\frac{2\, m}{\kappa}}})^\half}\,.
	\label{eq:t-bar-mu}
\end{align}
That is, the payoff functional $J_{m,\psi}^\mu(t,x,\cdot)$ of \er{eq:payoff} is strictly concave under these conditions. Consequently, the approximate action principle (modified to include a concave terminal payoff $\psi$, and perturbed by $\mu\in\R_{>0}$) may be expressed via the value function $W^\mu:\R_{\ge 0}\times\cX_\half\mapsinto\R$,
\begin{align}
	W^\mu(t,x)
	& \doteq \sup_{w\in\cW[0,t]} J_{m,\psi}^\mu(t,x,w)\,.
	\label{eq:W}
\end{align}
By interpreting \er{eq:W} as an optimal control problem, it is shown that the state feedback characterization of the optimal (velocity) input for the approximate action principle is defined via $w^*(s) = k(s,\xi^*(s))$, where $k(s,x) \doteq \ts{\frac{1}{m}}\, \opAsqrt\, \op{I}_\mu\, \opAsqrt\, \ggrad_x W^\mu(t-s,x)$. Here, $\xi^*(\cdot)$ denotes the trajectory \er{eq:dynamics} corresponding to input $w^*$, and $\op{I}_\mu$ is a self-adjoint bounded linear operator that approximates the identity for small $\mu\in\R_{>0}$ (to be defined later). Consequently, by selecting a terminal payoff that forces the terminal displacement $\xi(t)$ to $z$ (fixed apriori as per \er{eq:TPBVP}), the corresponding initial velocity required to achieve this terminal displacement is shown to be
\begin{align}
	w(0) & = \ts{\frac{1}{m}}\, \opAsqrt\, \op{I}_\mu\, \opAsqrt \, \ggrad_x W^\mu(t,\xi^*(0)) 
	= \ts{\frac{1}{m}}\, \opAsqrt\, \op{I}_\mu\, \opAsqrt \, \ggrad_x W^\mu(t,x)\,.
	\nn
\end{align}
The characteristic equations corresponding to the Hamiltonian associated with \er{eq:W} imply that this initial velocity determines the corresponding initial momentum costate. Here, it is convenient to define a scaled costate $\pi(s) \doteq m\, \op{I}_\mu^{-\half}\, w(s)$, so that the initialization $\pi(0) = p\doteq m\, \op{I}_\mu^{-\half}\, w(0)$ ultimately yields the terminal displacement $z$ after evolution of the state and costate dynamics to time $t\in(0,\bar t^\mu)$. This evolution is governed by the abstract Cauchy problem 
\begin{align}
	\left( \ba{c} \dot\xi(s) \\ \dot \pi(s) \ea \right)
	& = \op{A}_\mu^\oplus
	\left( \ba{c} \xi(s) \\ \pi(s) \ea \right)\,,
	\quad 
	\op{A}_\mu^\oplus \doteq
	\left( \ba{cc}
		0 & \ts{\frac{1}{m}} \, \op{I}_\mu^\half
		\\
		-\kappa\, \opAsqrt\, \op{I}_\mu^\half\, \opAsqrt & 0 
	\ea \right)\,,
	\quad
	\dom(\op{A}_\mu^\oplus) \doteq \cY_\half\,,
	\label{eq:Cauchy-mu}
\end{align}
in which $\xi(s)$ and $\pi(s)$ denote the state and costate at time $s\in[0,t]$, evolved from $\xi(0) = x$ and $\pi(0) = p$. The uniformly continuous semigroup of bounded linear operators $\op{T}_\mu^\oplus(s)\in\bo(\cY_\half)$ generated by $\op{A}_\mu^\oplus\in\bo(\cY_\half)$ yields solutions of \er{eq:Cauchy-mu} of the form
\begin{align}
	\left( \ba{c}
		\xi(s) \\ \pi(s)
	\ea \right)
	& = \op{T}_\mu^\oplus(s) \left( \ba{c}
			\xi \\ \pi 
		\ea \right),
	\label{eq:wave-solution-mu}
\end{align}
for all $s\in[0,t]$, with $\xi$ solving an approximation of the wave equation \er{eq:wave} given by
\begin{align}
	& \ddot\xi(s) = -(\ts{\frac{\kappa}{m}}) \, \opAsqrt\, \op{I}_\mu\, \opAsqrt\, \xi(s)
	\label{eq:wave-mu}
\end{align}
for $s\in\R_{\ge 0}$. Furthermore, $\op{A}_\mu^\oplus$ is shown to converge strongly (as $\mu\rightarrow 0$) to an unbounded, closed, and densely defined operator $\op{A}^\oplus$ on $\cY_0\doteq\dom(\op{A}^\oplus) \doteq \cX_0\times\cX_\half$ . This operator defines the related abstract Cauchy problem
\begin{align}	
	\left( \ba{c} \dot x(s) \\ \dot p(s) \ea \right)
	& = \op{A}^\oplus
	\left( \ba{c} x(s) \\ p(s) \ea \right)\,,
	\quad 
	\op{A}^\oplus 
	\doteq
	\left( \ba{cc}
		0 & \ts{\frac{1}{m}} \, \op{I}
		\\
		-\kappa\, \op{A} & 0 
	\ea \right),
	\quad
	\dom(\op{A}^\oplus)\doteq \cY_0 \doteq \cX_0\oplus\cX_\half\,,
	\label{eq:Cauchy}
\end{align}
and is the generator of the $C_0$-semigroup of bounded linear operators $\op{T}^\oplus(t)\in\bo(\cY_\half)$, $t\in\R_{\ge 0}$, yielding all solutions of \er{eq:Cauchy} of the form 
\begin{align}
	\left( \ba{c} x(s) \\ p(s) \ea \right)
	& = \op{T}^\oplus(s) \left( \ba{c} x \\ p \ea \right),
	\label{eq:wave-solution}
\end{align}
in which $x(s)$ and $p(s)$ denote the state and costate analogously to \er{eq:wave-solution-mu}. Crucially, the state $x$ is the solution is the wave equation \er{eq:wave} itself. As the first Trotter-Kato theorem (e.g. \cite{EN:00}) implies that the semigroup $\op{T}_\mu^\oplus(t)$ converges strongly to $\op{T}^\oplus(t)$ for $t\in\R_{\ge 0}$ on bounded intervals, solutions \er{eq:wave-solution-mu} of \er{eq:wave-mu} converge to solutions \er{eq:wave-solution} of \er{eq:wave} as $\mu\rightarrow 0$. In this sense, solutions of the TPBVP \er{eq:TPBVP} defined with respect to the wave equation \er{eq:wave} may be approximated via the optimal control problem defined by \er{eq:W}. 

Where the terminal velocity is specified (rather than the terminal position as in \er{eq:TPBVP}), this same approach may be applied by employing the terminal payoff
\begin{align}
	\psi(x)
	= \psi_{v}(x)
	& \doteq m\, \langle \op{J}\, \op{J}\, v, \, x \rangle_\half\,.
	\label{eq:stationary-payoff}
\end{align}
In that case, the terminal momentum costate is given by 
$$
	\pi(t) = \opAsqrt\, \op{I}_\mu^\half\, \opAsqrt\,\ggrad_x W^\mu(0,\xi^*(t)) 
	= \opAsqrt\, \op{I}_\mu^\half\, \opAsqrt\, \ggrad_x \psi_v(\xi^*(t)) 
	= m\, \op{I}_\mu^\half\, v\,.
$$
Hence, by solving the optimal control problem \er{eq:W} defined with respect to terminal payoff $\psi_{v}$ of \er{eq:stationary-payoff}, the infinite dimensional dynamics of \er{eq:wave-mu} can be propagated forward from a known initial position $x\in\cX_0\subset\cX_\half$ to a known terminal velocity $\dot\xi(t) =  \op{I}_\mu\, v\in\cX_0$. As in the terminal position case, this approximation converges to the actual wave equation dynamics  satisfying $\dot\xi(t) = v\in\cX_\half$ as $\mu\rightarrow 0$. The rigorous development yielding this conclusion commences with a theorem concerning the concavity of the payoff functional $J_{m,\psi}^\mu$ of \er{eq:payoff}.

\begin{theorem}
\label{thm:second-difference}
Given $t\in\R_{>0}$, $x\in\cX_\half$, and concave terminal payoff $\psi:\cX_\half\mapsinto\R$,
the payoff functional $J_{m,\psi}^\mu(t,x,\cdot)$ of \er{eq:payoff} is strictly concave. In particular, the second difference 
$\Delta J_{m,\psi}^\mu(t,x,w^*\!\!,\delta\, \tilde w) \doteq J_{m,\psi}^\mu(t,x,w^*\!\!+\delta\tilde w) - 2\, J_{m,\psi}^\mu(t,x,w^*) + J_{m,\psi}^\mu(t,x,w^*\!\!-\delta\tilde w)$ of the payoff functional $J_{m,\psi}^\mu(t,x,\cdot)$ at any $w^*\in\cW[0,t]$ is strictly negative as per \er{eq:second-difference-bound} for any direction $\delta\, \tilde w\in\cW[0,t]$ in the input space defined by $\delta\in\R$, $\delta\, \|\tilde w\|_{\cW[0,t]}\ne 0$.
\end{theorem}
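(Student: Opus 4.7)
The plan is to exploit the linear dependence of the mild solution $\xi$ on the input $w$ so that the second difference of $J_{m,\psi}^\mu(t,x,\cdot)$ factors into a pure quadratic part (from the running cost) plus a non-positive terminal contribution (from concavity of $\psi$). Given $w^\ast$ and a direction $\tilde w$, write $\xi^\ast(s) = x + \int_0^s w^\ast(\sigma)\,d\sigma$ and $\tilde\xi(s) = \int_0^s \tilde w(\sigma)\,d\sigma$, so that the mild solution \eqref{eq:mild} associated with $w^\ast \pm \delta\tilde w$ is $\xi^\ast(s) \pm \delta\tilde\xi(s)$. The parallelogram identity for the quadratic forms $\|\cdot\|_\half^2$ and $\|\op{J}^\mu\cdot\|_\half^2$ then yields
\begin{equation*}
\Delta J_{m,\psi}^\mu(t,x,w^\ast,\delta\tilde w)
= \delta^2 \int_0^t \Bigl[\kappa\,\|\tilde\xi(s)\|_\half^2 - m\,\|\op{J}^\mu\tilde w(s)\|_\half^2\Bigr]\,ds + \Delta\psi,
\end{equation*}
where $\Delta\psi = \psi(\xi^\ast(t)+\delta\tilde\xi(t)) - 2\psi(\xi^\ast(t)) + \psi(\xi^\ast(t)-\delta\tilde\xi(t)) \le 0$ by concavity of $\psi$. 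Note that the $w^\ast$-dependent cross terms drop out because each quadratic expression is symmetric under $\tilde w \mapsto -\tilde w$, so the proof reduces to estimating the two integrals above.

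For the potential (state) term, the plan is to use Cauchy--Schwarz in $\cX_\half$ to bound the perturbation trajectory by the input: $\|\tilde\xi(s)\|_\half^2 \le s \int_0^s \|\tilde w(\sigma)\|_\half^2\,d\sigma$. A Fubini exchange then gives
\begin{equation*}
\int_0^t \|\tilde\xi(s)\|_\half^2\,ds
\le \int_0^t \|\tilde w(\sigma)\|_\half^2 \int_\sigma^t s\,ds\,d\sigma
= \int_0^t \tfrac{t^2-\sigma^2}{2}\,\|\tilde w(\sigma)\|_\half^2\,d\sigma
\le \tfrac{t^2}{2}\,\|\tilde w\|_{\cW[0,t]}^2.
\end{equation*}
For the kinetic term, the block structure of $\op{J}^\mu$ in \eqref{eq:op-J-mu} and the identity $\|\op{J}\,\tilde w(s)\|_\half = \|\opAsqrt\op{J}\,\tilde w(s)\| = \|\tilde w(s)\|$ give $\|\op{J}^\mu\tilde w(s)\|_\half^2 = \|\tilde w(s)\|^2 + \mu^2\|\tilde w(s)\|_\half^2 \ge \mu^2\|\tilde w(s)\|_\half^2$, whence $m\int_0^t \|\op{J}^\mu\tilde w(s)\|_\half^2\,ds \ge m\mu^2\|\tilde w\|_{\cW[0,t]}^2$.

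Combining these bounds with $\Delta\psi \le 0$ yields precisely \eqref{eq:second-difference-bound}. Strict negativity for $t\in[0,\bar t^\mu)$ follows from the definition \eqref{eq:t-bar-mu} of $\bar t^\mu$: the bracketed quantity $m\mu^2 - \kappa t^2/2$ is strictly positive on this interval, and $\delta\|\tilde w\|_{\cW[0,t]} \ne 0$ by hypothesis. Since this holds for every nonzero direction, strict concavity of $J_{m,\psi}^\mu(t,x,\cdot)$ on $\cW[0,t]$ is immediate. The only non-routine step is the Fubini/Cauchy--Schwarz estimate delivering the sharp constant $t^2/2$ (rather than $t^2$); everything else is algebraic manipulation together with the defining identities of $\op{J}$ and $\opAsqrt$ recalled in Appendix \ref{app:op-A-properties}.
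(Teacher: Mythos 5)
Your proposal is correct and follows essentially the same strategy as the paper: decompose the second difference into a pure-quadratic running-cost contribution (via the linear dependence of $\xi$ on $w$) plus a non-positive terminal contribution from concavity of $\psi$, then estimate the state and kinetic pieces to obtain the coefficient $m\mu^2 - \kappa t^2/2$. The only cosmetic difference is in obtaining the intermediate bound on $\int_0^t\|\tilde\xi(s)\|_\half^2\,ds$: you invoke the Bochner triangle inequality, Cauchy--Schwarz, and a Fubini interchange directly, whereas the paper reaches the identical constant $t^2/2$ by introducing the functional $\Pi(\cdot)=\langle\cdot,\tilde\xi(r)\rangle_\half$ and commuting it with the integral via \cite[Theorem~A.5.23]{CZ:95}; both routes are rigorous and lead to the same estimate.
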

\begin{proof}
Fix $t\in\R_{>0}$, $x\in\cX_\half$, $w^*\in\cW[0,t]$, $\delta\, \tilde w\in\cW[0,t]$ and $\delta\in\R$, with $\delta\, \|\tilde w\|_{\cW[0,t]}\ne 0$, as per the theorem statement. Define the trajectories corresponding to inputs $w^*$ and $\hat w \doteq w^*+\delta\, \tilde w$ via \er{eq:dynamics} as
\begin{align}
	\xi^*(r)
	& \doteq x + \int_0^r w^*(s)\, ds\,,
	\quad
	\hat\xi(r)
	\doteq x + \int_0^r w^*(s) + \delta\, \tilde w(s)\, ds = \xi^*(r) + \delta\, \tilde\xi(r)\,, \quad
	\tilde\xi(r) \doteq \int_0^r \tilde w(s)\, ds\,,
	\label{eq:star-traj}
\end{align}
where $r\in[0,t]$. The integrated action functional in the payoff \er{eq:payoff} is of the form $\int_0^t V(\xi(s)) - T^\mu(w(s)) \, ds$, where $V$ and $T^\mu$ are quadratic functionals given by
\begin{align}
	V(\xi(s))
	& \doteq \ts{\frac{\kappa}{2}}\, \|\xi(s)\|_\half^2\,,
	\quad
	T^\mu(w(s)) \doteq \ts{\frac{m}{2}}\, \|\op{J}^\mu\, w(s)\|_\half^2\,,
	\label{eq:action-quads}
\end{align}
with operator $\op{J}^\mu$ as per \er{eq:op-J-mu}. Applying \er{eq:star-traj} in \er{eq:action-quads}, 
\begin{align}
	V(\xi^*(r) + \delta\, \tilde\xi(r))
	& = V(\xi^*(r)) + \delta\, \kappa\, \left\langle \xi^*(r),\, \int_0^r \tilde w(s)\, ds \right\rangle_\half + 
	\delta^2 \, (\ts{\frac{\kappa}{2}}) \left\| \int_0^r \tilde w(s)\, ds \right\|_\half^2\,,
	\label{eq:series-V}
	\\
	T^\mu(w^*(r) + \delta\, \tilde w(r))
	& = T(w^*(r)) + \delta\, m \left[ \langle \op{J}\, w^*(r),\, \op{J}\, \tilde w(r) \rangle_\half 
			+ \mu^2 \, \langle w^*(r),\, \tilde w(r) \rangle_\half \right]
	\nn\\
	& \hspace{18.5mm}
			+ \delta^2 \, (\ts{\frac{m}{2}}) \left[  \|\op{J}\, \tilde w(r)\|_\half^2 + \mu^2\, \|\tilde w(r)\|_\half^2 \right]\,.
	\label{eq:series-T} 
\end{align}
Hence, combining \er{eq:payoff}, \er{eq:series-V}, and \er{eq:series-T}, 
\begin{align}
	& J_{m,\psi}^\mu (t,x,w^* + \delta\, \tilde w) - J_{m,\psi}^\mu(t,x,w^*)
	\nn\\
	& = \int_0^t V(\xi^*(r) + \delta\, \tilde\xi(r)) - V(\xi^*(r)) - \left[ T^\mu(w^*(r) + \delta\, \tilde w(r)) - T^\mu(w^*(r)) \right] \, dr
	+ \psi(\xi^*(t) + \delta\, \tilde\xi(t)) - \psi(\xi^*(t))
	\nn\\
	& = \int_0^t \delta\, \kappa\, \left\langle \xi(r),\, \int_0^r \tilde w(s)\, ds \right\rangle_\half 
	+ \delta^2 \, (\ts{\frac{\kappa}{2}}) \left\| \int_0^r \tilde w(s)\, ds \right\|_\half^2
	\nn\\
	& \qquad\qquad
	- \delta\, m \left[ \langle \op{J}\, w^*(r),\, \op{J}\, \tilde w(r) \rangle_\half 
			+ \mu^2 \, \langle w^*(r),\, \tilde w(r) \rangle_\half \right]
	- \delta^2 \, (\ts{\frac{m}{2}}) \left[  \|\op{J}\, \tilde w(r)\|_\half^2 + \mu^2\, \|\tilde w(r)\|_\half^2 \right]
	\, dr
	\nn\\
	& \qquad
	+ \psi(\xi^*(t) + \delta\, \tilde\xi(t)) - \psi(\xi^*(t))
	\label{eq:payoff-first-diff}
\end{align}
A corresponding expression for $J_{m,\psi}^\mu (t,x,w^* - \delta\, \tilde w) - J_{m,\psi}^\mu(t,x,w^*)$ follows by replacing $\delta$ with $-\delta$ in \er{eq:payoff-first-diff}. Adding this expression to \er{eq:payoff-first-diff} yields the second difference $\Delta J_{m,\psi}^\mu(t,x,w^*,\delta\, \tilde w)$ of $J_{m,\psi}^\mu(t,x,w^*)$ at $w^*$ in direction $\delta\, \tilde w$, with
\begin{align}
	\Delta J_{m,\psi}^\mu(t,x,w^*\!\!,\delta\, \tilde w) 
	& = \int_0^t \delta^2 \, \kappa \left\| \int_0^r \tilde w(s)\, ds \right\|_\half^2
	- \delta^2 \, m \left[  \|\op{J}\, \tilde w(r)\|_\half^2 + \mu^2\, \|\tilde w(r)\|_\half^2 \right] \, dr
	+ \Delta \psi(\xi^*(t), \delta\, \tilde\xi(t))\,,
	\label{eq:diff-bound-1}
\end{align}
where $\Delta \psi(\xi^*(t), \delta\, \tilde\xi(t)) \doteq \psi(\xi^*(t) + \delta\, \tilde\xi(t)) - 2\, \psi(\xi^*(t)) + \psi(\xi^*(t) - \delta\, \tilde\xi(t))$ is the second difference of $\psi$ at $\xi^*(t)$ in direction $\delta\, \tilde\xi(t)$.
With a view to dealing with first term on the right-hand side of \er{eq:diff-bound-1}, define $q_{\tilde w}\doteq \tilde\xi(r) = \int_0^r \tilde w(\sigma)\, d\sigma\in\cX_\half$ and $\Pi:\cX_\half\mapsinto\R$ by $\Pi\, \omega \doteq \langle \omega,\, q_{\tilde w} \rangle_\half$. Note that $\Pi$ is a closed linear operator (a functional) in $\cL(\cX_\half;\R)$.  Also note that for $\tilde w\in\cW[0,r]$, $r\in[0,t]$, H\"{o}lder's inequality and Cauchy-Schwartz implies that
\begin{align}
	\int_0^r \|\tilde w(s)\|_\half\, ds
	& \le \sqrt{r} \, \|\tilde w\|_{\cW[0,r]}\,,
	\quad
	\int_0^r |\Pi \, \tilde w(s)|\, ds
	= \int_0^r | \langle q_{\tilde w}, \, \tilde w(s)\rangle_\half |\, ds
	\le \sqrt{r} \, \|q_{\tilde w}\|_\half \, \|\tilde w\|_{\cW[0,r]} < \infty\,.
	\label{eq:diff-bound-1a}
\end{align}
That is, $\tilde w\in\Lone([0,r];\cX_\half)$ and $\Pi\, \tilde w\in\Lone([0,r];\R)$. Hence, as $\cX_\half$ and $\R$ are separable Hilbert spaces (separability of the former follows by existence of a countable basis, see Lemma \ref{lem:basis-Riesz}), it follows by \cite[Theorem A.5.23, p.628]{CZ:95} (for example) that 
\begin{align}
	& \Pi \int_0^r \tilde w(s)\, ds = \int_0^r \Pi\, \tilde w(s)\, ds\,.
	\label{eq:Pi-0}
\end{align}
Recalling the definition of $\Pi$,
\begin{align}
	\Pi\, \int_0^r \tilde w(s) \, ds
	& =
	\left\langle \int_0^r \tilde w(s)\, ds,\, q_{\tilde w} \right\rangle_\half
	=
	\left\langle \int_0^r \tilde w(s)\, ds,\, \int_0^r \tilde w(\sigma) \, d\sigma \right\rangle_\half
	= \left\| \int_0^r \tilde w(s)\, ds \right\|_\half^2\,,
	\label{eq:Pi-1}
\end{align}
while
\begin{align}
	\int_0^r \Pi\, \tilde w(s)\, ds
	& = \int_0^r \langle \tilde w(s),\, q_{\tilde w} \rangle_\half \, ds
	=
	\int_0^r \left\langle
		\tilde w(s),\, \int_0^r \tilde w(\sigma)\, d\sigma
	\right\rangle_\half \, ds
	= \int_0^r \int_0^r \langle \tilde w(s),\, \tilde w(\sigma) \rangle_\half \, d\sigma\, ds
	\nn\\
	& \le \int_0^r \int_0^r \| \tilde w(s) \|_\half \, \| \tilde w(\sigma) \|_\half \, d\sigma \, ds
	= \left( \int_0^r \| \tilde w(s) \|_\half \, ds \right)^2
	\le r \, \|\tilde w\|_{\cW[0,r]}^2 \le r \, \|\tilde w\|_{\cW[0,t]}^2\,, 
	\label{eq:Pi-2}
\end{align}
in which \cite[Theorem A.5.23, p.628]{CZ:95} is applied a second time to obtain the third equality, and the left-hand inequality in \er{eq:diff-bound-1a} is applied to obtained the upper bound. Hence, combining \er{eq:Pi-1} and \er{eq:Pi-2} in \er{eq:Pi-0} yields
\begin{align}
	\left\| \int_0^r \tilde w(s)\, ds \right\|_\half^2
	& \le  r \, \|\tilde w\|_{\cW[0,t]}^2\,,
	\nn
\end{align}
which in turn implies that the first term on the right-hand side of \er{eq:diff-bound-1} is
\begin{align}
	\int_0^t \delta^2\, \kappa \left\| \int_0^r \tilde w(s)\, ds \right\|_\half^2 \, dr
	& \le \delta^2\, \kappa \int_0^t r  \, dr \, \|\tilde w\|_{\cW[0,t]}^2 = \delta^2\, \kappa\, (\ts{\frac{t^2}{2}}) \, \|\tilde w\|_{\cW[0,t]}^2\,.
	\label{eq:diff-bound-3}
\end{align}
Substituting \er{eq:diff-bound-3} in \er{eq:diff-bound-1} thus yields the second difference bound
\begin{align}
	\Delta J_{m,\psi}^\mu(t,x,w^*\!\!,\delta\, \tilde w) 
	& \le -\delta^2 \left[ m\, \mu^2 - \kappa\, (\ts{\frac{t^2}{2}}) \right] \| \tilde w\|_{\cW[0,t]}^2 - \delta^2\, m \, \|\op{J}\, \tilde w\|_{\cW[0,t]}^2
	+ \Delta \psi(\xi^*(t), \delta\, \tilde\xi(t))\,.
	\label{eq:diff-bound-4}
\end{align}
As the second difference $\Delta \psi(\xi^*(t), \delta\, \tilde\xi(t))$ is non-positive by concavity of $\psi$,
\er{eq:diff-bound-4} is strictly negative if $m\, \mu^2 - \kappa \, (\ts{\frac{t^2}{2}}) > 0$. That is, if $t\in[0,\bar t^\mu)$, where $\bar t^\mu\in\R_{>0}$ is as per \er{eq:t-bar-mu}. Under these conditions, it follows immediately that the payoff functional $J_{m,\psi}^\mu(t,x,\cdot)$ of \er{eq:payoff} is strictly concave.
\end{proof}


\section{Fundamental solution to the approximating optimal control problem}
\label{sec:fundamental}

A fundamental solution in this optimal control context is an object from which the value function $W^\mu$ of \er{eq:W} can be computed given any concave terminal payoff $\psi$. This fundamental solution is constructed via four auxiliary control problems.


\subsection{Auxiliary control problems}
\label{sec:aux-control}
The auxiliary control problems of interest employ the same running payoff as used in \er{eq:payoff} to define the approximating optimal control problem \er{eq:W}. A specific terminal payoff is used in each auxiliary problem. Two of these terminal payoffs depend on an additional function $z\in\cX_\half$ describing the terminal displacement. These terminal payoffs are denoted by $\psi^{\mu,c}:\cX_\half\times\cX_\half\mapsinto\R$, $\psi^{\mu,\infty}:\cX_\half\times\cX_\half\mapsinto\R\cup\{-\infty\}$, and $\psi^0:\cX_\half\mapsinto\R$, where $\mu,\, c\in\R_{\ge 0}$ denote real-valued parameters. Specifically,
\begin{gather}
	\psi^0(x)
	\doteq 0\,,
	\qquad
	\psi^{\mu,c}(x,z)
	\doteq -\ts{\frac{c}{2}} \, \|\op{K}_\mu\, (x-z) \|_\half^2\,,
	\qquad
	\psi^{\mu,\infty}(x,z)
	\doteq \left\{ \ba{rl}
		0\,,	& \|\op{K}_\mu\, (x - z)\|_\half = 0\,,
		\\
		-\infty\,, & \|\op{K}_\mu\, (x - z)\|_\half > 0\,,
	\ea \right.
	\label{eq:psi}
\end{gather}
where $\op{K}_\mu\in\bo(\cX_\half)$ is a boundedly invertible operator to be defined later. Using these terminal payoffs and a fixed real-valued parameter $\ul{m}\in(0,m)$, the four auxiliary control problems of interest are defined via their respective (auxiliary) value functions 
$$
	\ol{W}^\mu:\R_{\ge 0}\times\cX_\half\mapsinto\R\,, \quad 
	\ol{W}^{\mu,c},\, W^{\mu,c}:\R_{\ge 0}\times\cX_\half\times\cX_\half\mapsinto\R\,, \quad
	W^{\mu,\infty}:\R_{\ge 0}\times\cX_\half\times\cX_\half\mapsinto\R\cup\{-\infty\},
$$
where
\begin{align}
	\ol{W}^\mu(t,x)
	& \doteq \sup_{w\in\cW[0,t]} J_{\ul{m},\psi^0}^\mu(t,x,w)\,,
	& 
	\ol{W}^{\mu,c}(t,x,z)
	& \doteq \sup_{w\in\cW[0,t]} J_{\ul{m},\psi^{\mu,c}(\cdot,z)}^\mu(t,x,w)\,,
	\label{eq:Wbar}
	\\
	W^{\mu,c}(t,x,z)
	& \doteq \sup_{w\in\cW[0,t]} J_{m,\psi^{\mu,c}(\cdot, z)}^\mu(t,x,w)\,,
	\label{eq:W-c}
	\\
	W^{\mu,\infty}(t,x,z)
	& \doteq \sup_{w\in\cW[0,t]} J_{m,\psi^{\mu,\infty}(\cdot, z)}^\mu(t,x,w)\,.
	\label{eq:W-infty}
\end{align}
The majority of the subsequent analysis will concern the value function $W^{\mu,c}$ of \er{eq:W-c} and its convergence to $W^{\mu,\infty}$ of \er{eq:W-infty} as $c\rightarrow\infty$. Ultimately, $W^{\mu,\infty}$ plays the role of the fundamental solution for the optimal control problem \er{eq:W}, in the sense that
\begin{align}
	W^\mu(t,x)
	& = \sup_{z\in\cX_\half} \left\{ W^{\mu,\infty}(t,x,z) + \psi(z) \right\}
	\label{eq:W-from-fund-conjecture}
\end{align}
for all $t\in\R_{\ge 0}$, $x\in\cX_\half$. The remaining value functions $\ol{W}^\mu$ and $\ol{W}^{\mu,c}$ are useful in ensuring that these auxiliary problems are well-defined. In particular, note by inspection of the value functions \er{eq:Wbar}--\er{eq:W-infty} that
\begin{align}
	& \ol{W}^\mu(t,x) \ge \ol{W}^{\mu,c}(t,x,z) \ge W^{\mu,c}(t,x,z) \ge W^{\mu.\infty}(t,x,z) > -\infty
	\label{eq:value-order}
\end{align}
for all $t\in\R_{>0}$, $x,z\in\cX_\half$. Here, the last inequality follows by noting that the specific constant input $\hat w\in\cW[0,t]$ defined by $\hat w(s) \doteq \ts{\frac{1}{t}} (z-x)$ for all $s\in[0,t]$ is suboptimal in the definition \er{eq:W-infty} of $W^{\mu,\infty}(t,x,z)$. In particular, $W^{\mu,\infty}(t,x,z) \ge J_{m,\psi^{\mu,\infty}(\cdot,z)}^\mu (t,x,\hat w) \ge -\ts{\frac{m}{2\, t}} \, \|z-x\|_\half^2>-\infty$. The following is assumed throughout.
\begin{assumption}
\label{ass:finite}
$\ol{W}^\mu(t,x) < \infty$ for all $\mu\in(0,1], \, t\in(0,\bar t^\mu),\, x\in\cX_\half$.
\end{assumption}




\subsection{Fundamental solution}
In order to construct a fundamental solution to the optimal control problem \er{eq:W}, it is useful to define the value functional $\What^\mu:\R_{\ge 0}\times\cX_\half\mapsinto\R$ by
\begin{align}
	\What^\mu(t,x)
	& \doteq
	\sup_{z\in\cX_\half} \left\{ W^{\mu,\infty}(t,x,z) + \psi(z) \right\}.
	\label{eq:What}
\end{align}
\begin{theorem}
\label{thm:reach}
The value functionals $W^\mu$ and $\What^\mu$ of \er{eq:W} and \er{eq:What} are equivalent, with $W^\mu(t,x) = \What^\mu(t,x)$ for all $t\in[0,\bar t^\mu)$ and $x\in\cX_\half$.
\end{theorem}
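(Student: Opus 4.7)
The plan is to exploit the fact that the terminal payoff $\psi^{\mu,\infty}(\cdot,z)$ of \er{eq:psi} acts as an indicator enforcing the terminal constraint $\xi(t)=z$, so that the outer supremum over $z\in\cX_\half$ in \er{eq:What} partitions the admissible inputs of \er{eq:W} by their terminal state, collapsing the double supremum to the single supremum defining $W^\mu$. The only mildly delicate point is recording that bounded invertibility of $\op{K}_\mu\in\bo(\cX_\half)$ converts the soft indicator of \er{eq:psi} into a hard terminal constraint; once this is done, the supremum swap is essentially bookkeeping.

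First, I would rewrite $W^{\mu,\infty}$ as a reachability-constrained supremum. Since $\op{K}_\mu$ is boundedly invertible, $\|\op{K}_\mu(\xi(t)-z)\|_\half=0$ if and only if $\xi(t)=z$, so by \er{eq:psi} and \er{eq:W-infty},
\begin{align}
W^{\mu,\infty}(t,x,z)
= \sup\left\{\int_0^t \ts{\frac{\kappa}{2}}\|\xi(s)\|_\half^2 - \ts{\frac{m}{2}}\|\op{J}^\mu w(s)\|_\half^2\, ds \,:\, w\in\cW[0,t],\ \xi(t)=z\right\},\nonumber
\end{align}
where $\xi$ is the mild solution \er{eq:mild} driven from $x$ by $w$. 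The constraint set is nonempty for every $z\in\cX_\half$, since the constant input $\hat w\doteq (z-x)/t$ of the remark following \er{eq:value-order} is always feasible, so $W^{\mu,\infty}(t,x,z)>-\infty$ uniformly and the $-\infty$ branch of $\psi^{\mu,\infty}$ creates no arithmetic issue downstream.

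Second, I would substitute into \er{eq:What} and interchange the two suprema. Since $z=\xi(t)$ on the feasibility set, one may replace $\psi(z)$ by $\psi(\xi(t))$ and recombine the running and terminal contributions via \er{eq:payoff} to obtain
\begin{align}
\What^\mu(t,x) = \sup_{z\in\cX_\half}\sup_{\substack{w\in\cW[0,t]\\ \xi(t)=z}} J^\mu_{m,\psi}(t,x,w).\nonumber
\end{align}
For the inequality $\What^\mu(t,x)\le W^\mu(t,x)$, each feasible pair $(z,w)$ gives an admissible $w\in\cW[0,t]$ scoring $J^\mu_{m,\psi}(t,x,w)\le W^\mu(t,x)$, and the bound passes through both suprema. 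For the reverse inequality, any $w\in\cW[0,t]$ becomes feasible upon setting $z\doteq\xi(t)\in\cX_\half$, so $J^\mu_{m,\psi}(t,x,w)\le\What^\mu(t,x)$; taking the supremum over $w$ yields $W^\mu(t,x)\le\What^\mu(t,x)$. No compactness, measurability, or topological argument beyond what is already used to pose \er{eq:W-infty} is required.
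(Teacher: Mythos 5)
Your proof is correct and takes essentially the same approach as the paper: both exploit the indicator nature of $\psi^{\mu,\infty}$ together with the bounded invertibility of $\op{K}_\mu$ to collapse the double supremum in \er{eq:What} at $z=\xi(t)$. The paper performs the collapse by swapping the order of the two suprema and then evaluating the inner supremum over $z$ directly, while you achieve the same effect via a constrained reformulation of $W^{\mu,\infty}$ plus a two-sided inequality — a presentational variant of the same argument.
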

\begin{proof} 
Fix $t\in[0,\bar t^\mu)$, $x\in\cX_\half$. Substituting \er{eq:W-infty} in \er{eq:What}, and recalling \er{eq:psi},
\begin{align}
	\What^\mu(t,x)
	& = \sup_{z\in\cX_\half} \sup_{w\in\cW[0,t]} \left\{ J_{m,\psi^0}^\mu(t,x,w) + \psi^{\mu,\infty}(\xi(t),z) + \psi(z) \left| \ba{c}
		\text{\er{eq:dynamics} holds with} \\
		\xi(0) = x
	\ea \right. \right\}
	\nn\\
	& = \sup_{w\in\cW[0,t]} \sup_{z\in\cX_\half} \left\{ J_{m,\psi^0}^\mu(t,x,w) + \psi^{\mu,\infty}(\xi(t),z) + \psi(z) \left| \ba{c}
		\text{\er{eq:dynamics} holds with} \\
		\xi(0) = x
	\ea \right. \right\}
	\nn\\
	& = \sup_{w\in\cW[0,t]} \left\{ J_{m,\psi^0}^\mu(t,x,w) + \sup_{z\in\cX_\half} \left\{ \psi^{\mu,\infty}(\xi(t),z) + \psi(z) \right\} \left| \ba{c}
		\text{\er{eq:dynamics} holds with} \\
		\xi(0) = x
	\ea \right. \right\}.
	\label{eq:new-fund-proof-1}
\end{align}
By inspection of \er{eq:psi}, the inner supremum must be achieved at $z = z^* \doteq \xi(t)$, with 
$
	\sup_{z\in\cX_\half} \{ \psi^{\mu,\infty}(\xi(t),z) + \psi(z) \} 
	= \psi^{\mu,\infty}(\xi(t),z^*) + \psi(z^*) = 0 + \psi(\xi(t)) = \psi(\xi(t))
$.
Substituting in \er{eq:new-fund-proof-1} and recalling \er{eq:W} yields that
\begin{align}
	\What^\mu(t,x)
	& = \sup_{w\in\cW[0,t]} \left\{ J_{m,\psi^0}^\mu(t,x,w) + \psi(\xi(t)) \left| \ba{c}
		\text{\er{eq:dynamics} holds with} \\
		\xi(0) = x
	\ea \right. \right\}
	= \sup_{w\in\cW[0,t]} J_{m,\psi}^\mu(t,x,w)
	= W^\mu(t,x).
	\nn
\end{align}
\end{proof}

Theorem \ref{thm:reach} provides an explicit decomposition of the approximating optimal control problem associated with the principle of stationary action. In particular, it provides a means of evaluating of the value functional $W^\mu$ of \er{eq:W} for {\em any} concave terminal payoff $\psi$, including that of \er{eq:stationary-payoff}. In this regard, inspection of \er{eq:What} via Theorem \ref{thm:reach} reveals that $W^{\mu,\infty}$ of \er{eq:W-infty} can be regarded as an approximation (for $\mu\ne 0$) of the fundamental solution to the TPBVP \er{eq:TPBVP} via the principle of stationary action. Consequently, characterization of an explicit representation of $W^{\mu,\infty}$ is important for its application in the computation of $W^\mu$.


\subsection{Explicit representation of the fundamental solution}
In order to characterize the fundamental solution of the approximating optimal control problem \er{eq:W} via Theorem \ref{thm:reach}, an explicit form for the value function $W^{\mu,\infty}$ of \er{eq:W-infty} may be constructed via three steps:
\begin{center}
\parbox[t]{17cm}{
\begin{itemize}
\item[\done]
Show that the value function $W^{\mu,\infty}$ of \er{eq:W-infty} may be obtained as the limit of $W^{\mu,c}$ of \er{eq:W-c} as $c\rightarrow\infty$;
\item[\dtwo]
Develop a verification theorem that provides a means for proposing and validated an explicit representation for the value function $W^{\mu,c}$ of \er{eq:W-c};
\item[\dthree]
Find an explicit representation satisfying the conditions of the verification theorem of {\dtwo}, and apply the limit argument of {\done} to obtain the corresponding representation for $W^{\mu,\infty}$ of \er{eq:W-infty}. 
\end{itemize}
}
\end{center}


\subsubsection{Limit argument -- {\done}} 

This first step is formalized via the following theorem.


\begin{theorem}
\label{thm:limit}
The auxiliary value functions $W^{\mu,c}$, $W^{\mu,\infty}$ of \er{eq:W-c}, \er{eq:W-infty} satisfy the limit relationship
\begin{align}
	\lim_{c\rightarrow\infty} W^{\mu,c}(t,x,z)
	& = W^{\mu,\infty}(t,x,z)
	\label{eq:aux-limit}
\end{align}
for all $t\in[0,\bar t^\mu)$, $x,z\in\cX_\half$.
\end{theorem}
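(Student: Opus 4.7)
The plan is to sandwich the limit $L \doteq \lim_{c\to\infty} W^{\mu,c}(t,x,z)$ between $W^{\mu,\infty}(t,x,z)$ and itself. Since $\psi^{\mu,c}(\cdot,z)$ is pointwise non-increasing in $c$, the map $c \mapsto W^{\mu,c}(t,x,z)$ is non-increasing, and by \er{eq:value-order} it is bounded below by $W^{\mu,\infty}(t,x,z) > -\infty$. Hence $L$ exists and $L \ge W^{\mu,\infty}(t,x,z)$.

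For the reverse inequality, I would select an approximately optimal sequence $w_c \in \cW[0,t]$ with $J_{m,\psi^{\mu,c}(\cdot,z)}^\mu(t,x,w_c) \ge W^{\mu,c}(t,x,z) - 1/c$, and denote its trajectory by $\xi_c$. Two uniform (in $c$) bounds drive the argument. First, using $\|\op{J}^\mu w\|_\half^2 \ge \mu^2 \|w\|_\half^2$ yields $J_{\ul m,\psi^{\mu,c}}^\mu(t,x,w_c) \ge J_{m,\psi^{\mu,c}}^\mu(t,x,w_c) + \ts{\frac{(m-\ul m)\mu^2}{2}} \|w_c\|_{\cW[0,t]}^2$; combined with $J_{\ul m,\psi^{\mu,c}}^\mu \le \ol W^{\mu,c} \le \ol W^\mu(t,x) < \infty$ from Assumption \ref{ass:finite} and the lower bound $J_{m,\psi^{\mu,c}}^\mu(t,x,w_c) \ge L - 1$, this produces a uniform bound on $\|w_c\|_{\cW[0,t]}$ and hence on $\sup_{s\in[0,t]} \|\xi_c(s)\|_\half$ via the mild solution formula. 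Second, substituting $J_{m,\psi^{\mu,c}}^\mu(t,x,w_c) = J_{m,\psi^0}^\mu(t,x,w_c) - \ts{\frac{c}{2}} \|\op{K}_\mu(\xi_c(t) - z)\|_\half^2$ and using $J_{m,\psi^0}^\mu(t,x,w_c) \le \ol W^\mu(t,x)$ shows that $\ts{\frac{c}{2}} \|\op{K}_\mu(\xi_c(t) - z)\|_\half^2$ is uniformly bounded in $c$; bounded invertibility of $\op{K}_\mu$ then forces $\|\xi_c(t) - z\|_\half \to 0$.

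The final step is a constructive projection of each $w_c$ onto the set of inputs feasible for $W^{\mu,\infty}$. Define $\tilde w_c(s) \doteq w_c(s) + \delta_c$, where $\delta_c \doteq \ts{\frac{1}{t}}(z - \xi_c(t)) \in \cX_\half$ is a time-independent spatial offset; then the associated trajectory satisfies $\tilde\xi_c(t) = z$ exactly, so $\tilde w_c$ is admissible for $W^{\mu,\infty}(t,x,z)$ and $\psi^{\mu,\infty}(\tilde\xi_c(t),z) = 0$. Expanding the quadratics produces cross-terms proportional to $\int_0^t s\, \langle \xi_c(s), \delta_c \rangle_\half\, ds$ and $\int_0^t \langle \op{J}^\mu w_c(s), \op{J}^\mu \delta_c \rangle_\half\, ds$, plus pure-square terms of order $\|\delta_c\|_\half^2$, all of which vanish as $c \to \infty$ by the uniform bounds on $\|w_c\|_{\cW[0,t]}$ and $\sup_s \|\xi_c(s)\|_\half$ combined with $\|\delta_c\|_\half \to 0$. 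Thus $W^{\mu,\infty}(t,x,z) \ge J_{m,\psi^0}^\mu(t,x,\tilde w_c) = J_{m,\psi^0}^\mu(t,x,w_c) + o(1) \ge J_{m,\psi^{\mu,c}}^\mu(t,x,w_c) + o(1) \ge W^{\mu,c}(t,x,z) - 1/c + o(1)$, and sending $c \to \infty$ yields $W^{\mu,\infty}(t,x,z) \ge L$.

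The main obstacle is securing the first uniform bound on $\|w_c\|_{\cW[0,t]}$: without it, the cross-terms generated by the projection $w_c \mapsto \tilde w_c$ cannot be controlled and the approximation argument collapses. This is precisely the reason for introducing the auxiliary value functions $\ol W^\mu$ and $\ol W^{\mu,c}$ built with $\ul m < m$ and for imposing Assumption \ref{ass:finite} at the outset; the $\ts{\frac{(m-\ul m)\mu^2}{2}}\|w_c\|^2$ margin they create is exactly what pins down the optimizing sequence.
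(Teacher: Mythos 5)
Your proof is correct and follows essentially the same route as the paper: a monotone-limit argument for $\lim_c W^{\mu,c}\ge W^{\mu,\infty}$, a near-optimal input $w_c$ whose $\cW[0,t]$-norm is uniformly bounded via the gap between the $m$ and $\underline m$ value functions (Assumption~\ref{ass:finite}), a terminal-defect estimate $\|\xi_c(t)-z\|_\half\to 0$ from boundedness of $\frac{c}{2}\|\op{K}_\mu(\xi_c(t)-z)\|_\half^2$, and the same linear projection $\tilde w_c(s)=w_c(s)+\frac{1}{t}(z-\xi_c(t))$ to a feasible input for $W^{\mu,\infty}$. The only notable difference is that you establish the terminal-defect decay by a direct quantitative inequality, where the paper isolates this step into the contradiction-style Lemma~\ref{lem:eps-ball}; your version is a small streamlining rather than a different method.
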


In order to demonstrate the limit property summarized by Theorem \ref{thm:limit}, it is useful to first note that a ball of any fixed radius centered on $z\in\cX_\half$ can be reached by a sufficiently near-optimal trajectory defined with respect to \er{eq:W-c}.
\begin{lemma}
\label{lem:eps-ball}
Fix $t\in[0,\bar t^\mu)$ and $x,z\in\cX_\half$. For each $\eps\in\R_{>0}$, there exists a $\bar c \doteq \bar c_{t,x,z}^\eps\in\R_{>0}$, $\bar\delta\in(0,1]$, such that $\left\| \xi^{c,\delta}(t) - z \right\|_\half \le \eps$ for all $c\in(\bar c,\infty)$ and $\delta\in(0,\bar\delta)$, where $\xi^{c,\delta}(\cdot)$ denotes the trajectory of system \er{eq:dynamics} corresponding to any $\delta$-optimal input $w^{c,\delta}\in\cW[0,t]$ in the definition \er{eq:W-c} of $W^{\mu,c}(t,x,z)$.
\end{lemma}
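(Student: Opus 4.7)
The plan is to bound $\|\xi^{c,\delta}(t)-z\|_\half$ in terms of $1/c$ and $\delta$ by exploiting the two-sided comparison between $W^{\mu,c}$, $\ol W^\mu$ and $W^{\mu,\infty}$ provided in \er{eq:value-order}, together with Assumption~\ref{ass:finite} and the bounded invertibility of $\op{K}_\mu$. The essential observation is that the terminal penalty $-\tfrac{c}{2}\|\op{K}_\mu(\xi(t)-z)\|_\half^2$ forces near-optimal trajectories to land near $z$ because the remaining (running) part of the payoff is uniformly bounded above in $c$.

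Concretely, I would fix $t\in[0,\bar t^\mu)$, $x,z\in\cX_\half$ and a $\delta$-optimal input $w^{c,\delta}\in\cW[0,t]$ in \er{eq:W-c}, with corresponding mild trajectory $\xi^{c,\delta}(\cdot)$ given by \er{eq:mild}. Writing the payoff as the sum of a running part
$
R(w^{c,\delta})\doteq\int_0^t\tfrac{\kappa}{2}\|\xi^{c,\delta}(s)\|_\half^2-\tfrac{m}{2}\|\op{J}^\mu w^{c,\delta}(s)\|_\half^2\,ds
$
and the terminal penalty $\psi^{\mu,c}(\xi^{c,\delta}(t),z)=-\tfrac{c}{2}\|\op{K}_\mu(\xi^{c,\delta}(t)-z)\|_\half^2$, the $\delta$-optimality of $w^{c,\delta}$ gives
\begin{align*}
R(w^{c,\delta})-\tfrac{c}{2}\,\|\op{K}_\mu(\xi^{c,\delta}(t)-z)\|_\half^2\ \ge\ W^{\mu,c}(t,x,z)-\delta\ \ge\ W^{\mu,\infty}(t,x,z)-\delta,
\end{align*}
where the second inequality uses \er{eq:value-order}. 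Since $\ul m\in(0,m)$, the running part is dominated termwise by the corresponding integrand of $J_{\ul m,\psi^0}^\mu(t,x,w^{c,\delta})$, so that $R(w^{c,\delta})\le\ol W^\mu(t,x)$ by definition \er{eq:Wbar}. Rearranging yields
\begin{align*}
\tfrac{c}{2}\,\|\op{K}_\mu(\xi^{c,\delta}(t)-z)\|_\half^2\ \le\ \ol W^\mu(t,x)-W^{\mu,\infty}(t,x,z)+\delta.
\end{align*}

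The right-hand side is finite and independent of $c$: Assumption~\ref{ass:finite} ensures $\ol W^\mu(t,x)<\infty$, while $W^{\mu,\infty}(t,x,z)>-\infty$ by the constant-input argument noted just after \er{eq:value-order}. Denoting this constant by $M_{t,x,z}+\delta$ and using the boundedness of $\op{K}_\mu^{-1}\in\bo(\cX_\half)$ gives
\begin{align*}
\|\xi^{c,\delta}(t)-z\|_\half\ \le\ \|\op{K}_\mu^{-1}\|_{\bo(\cX_\half)}\,\sqrt{\tfrac{2\,(M_{t,x,z}+\delta)}{c}}.
\end{align*}
Choosing any $\bar\delta\in(0,1]$ and then $\bar c=\bar c_{t,x,z}^\eps\in\R_{>0}$ so large that $\|\op{K}_\mu^{-1}\|\sqrt{2(M_{t,x,z}+\bar\delta)/\bar c}\le\eps$ yields the claim for all $c>\bar c$, $\delta\in(0,\bar\delta)$. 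I do not anticipate a significant obstacle; the one point requiring care is the bound $R(w^{c,\delta})\le\ol W^\mu(t,x)$, which hinges on the strict inequality $\ul m<m$ embedded in the definition of $\ol W^\mu$ and is precisely the reason that auxiliary value function was introduced.
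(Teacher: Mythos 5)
Your proof is correct and uses precisely the same ingredients as the paper's: the domination $J_{m,\psi^0}^\mu(t,x,w^{c,\delta}) \le \ol{W}^\mu(t,x)$ coming from $\ul m<m$, the lower bound $W^{\mu,c}\ge W^{\mu,\infty}$ from \er{eq:value-order}, Assumption~\ref{ass:finite}, and the bounded invertibility of $\op{K}_\mu$. The only difference is presentational: the paper argues by contradiction with the same inequality, whereas you argue directly and thereby obtain the explicit rate $\|\xi^{c,\delta}(t)-z\|_\half = O(c^{-1/2})$ as a byproduct.
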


\begin{proof} 
Fix $t\in[0,\bar t^\mu)$ and $x,z\in\cX_\half$. Recalling the assumed bounded invertibility of $\op{K}_\mu$ on $\cX_\half$, see \er{eq:psi}, set $\kappa_\mu\doteq \left\|\op{K}_\mu^{-1}\right\|_\half^2\in\R_{>0}$. Suppose the statement of the lemma is false. That is, there exists an $\eps\in\R_{>0}$ such that for all $\bar c\in\R_{>0}$ and $\bar\delta\in(0,1]$, there exists a $c\in(\bar c,\infty)$ and $\delta\in(0,\bar\delta)$ such that $\left\| \xi^{c,\delta}(t) - z \right\|_\half>\eps$. So, given this $\eps\in\R_{>0}$, choose a specific $\bar c\in\R_{>0}$ and $\bar\delta\in(0,1]$ such that
\begin{align}
	(\ts{\frac{\bar c}{2\, \kappa_\mu}}) \, \eps^2 - \bar\delta
	& \ge 
	\ol{W}^\mu(t,x) - W^{\mu,\infty}(t,x,z)\,.
	\label{eq:eps-ball-1}
\end{align}
(Note that this is always possible by Assumption \ref{ass:finite}.) Let $c\in(\bar  c,\infty)$ and $\delta\in(0,\bar\delta)$ be such that $\left\|\xi^{c,\delta}(t) - z\right\|_\half > \eps$ as per the hypothesis above. Note by bounded invertibility of $\op{K}_\mu$ on $\cX_\half$,
\begin{align}
	\eps^2 
	& < \left\| \op{K}_\mu^{-1}\, \op{K}_\mu\, (\xi^{c,\delta}(t) - z) \right\|_\half^2
	\le 
	\kappa_\mu\, \left\| \op{K}_\mu\, (\xi^{c,\delta}(t) - z) \right\|_\half^2\,.
	\label{eq:eps-ball-1b}
\end{align}
Hence, by definition of any $\delta$-optimal input $w^{c,\delta}$ in $W^{\mu,c}(t,x,z)$ of \er{eq:W-c},
\begin{align}
	W^{\mu,c}(t,x,z) - \delta
	& < J_{m,\psi^c(\cdot,z)}^\mu\left(t,x,w^{c,\delta}\right)
	= J_{m,\psi^0}^\mu(t,x,w^{c,\delta}) + \psi^{\mu,c}\left(\xi^{c,\delta}(t),\, z \right)
	\nn\\
	& \le \ol{W}^\mu(t,x) - \textstyle{\frac{c}{2}}  \left\|\op{K}_\mu\, (\xi^{c,\delta}(t)-z) \right\|_\half^2
	\le \ol{W}^\mu(t,x) - (\ts{\frac{c}{2\, \kappa_\mu}}) \, \eps^2\,,
	\nn
\end{align}
where the equality follows by \er{eq:psi}, while the inequalities follow by suboptimality of $w^{c,\delta}$ in the definition \er{eq:W-c} of $W^{\mu,c}$, \er{eq:value-order}, and \er{eq:eps-ball-1b}. Consequently,
$
	\ol{W}^\mu(t,x) - W^{\mu,\infty}(t,x,z)
	\ge 
	\ol{W}^\mu(t,x) - W^{\mu,c}(t,x,z)
	\ge 
	(\ts{\frac{c}{2\, \kappa_\mu}}) \, \eps^2 - \delta
	> (\ts{\frac{\bar c}{2\, \kappa_\mu}}) \, \eps^2 - \bar\delta
$, 
which contradicts \er{eq:eps-ball-1}. Hence, the assertion in the lemma statement is true.
\end{proof}

An upper norm bound on near-optimal inputs is also useful.
\begin{lemma}
\label{lem:near-opt-input-bound}
With $c\in\R_{>0}$, $\delta\in(0,1]$, $t\in[0,\bar t^\mu)$, and $x,z\in\cX_\half$ fixed, any input $w^{c,\delta}\in\cW[0,t]$ that is $\delta$-optimal in the definition \er{eq:W-c} of $W^{\mu,c}(t,x,z)$ satisfies the bound
\begin{align}
	\| \op{J}^\mu\, w^{c,\delta} \|_{\cW[0,t]}
	& \le M^{\mu,c,\delta}(t,x,z) 
	\le \overline{M}^\mu(t,x,z)
	\label{eq:near-opt-input-bound}
\end{align}
where
\begin{align}
	M^{\mu,c,\delta}(t,x,z)
	& \doteq \left( \frac{\ol{W}^{\mu,c}(t,x,z) - W^{\mu,c}(t,x,z) + \delta}{\demi (m - \ul{m})} \right)^{\frac{1}{2}}\,,
	\quad
	\overline{M}^\mu(t,x,z)
	\doteq \left( \frac{\ol{W}^\mu(t,x) - W^{\mu,\infty}(t,x,z)+ 1}{\demi (m - \ul{m})} \right)^{\frac{1}{2}}\,.
	\nn
\end{align}
\end{lemma}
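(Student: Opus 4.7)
The plan is to prove the two inequalities in \er{eq:near-opt-input-bound} sequentially, with the key algebraic observation being that the payoff functionals $J_{\ul{m},\psi}^\mu$ and $J_{m,\psi}^\mu$ in the definitions of $\ol{W}^{\mu,c}$ and $W^{\mu,c}$ share the identical running spatial term, terminal payoff, and dynamics, and differ only through the coefficient on the kinetic-like term $\frac{1}{2}\|\op{J}^\mu w(s)\|_\half^2$ appearing in \er{eq:payoff}. Consequently, for any $w\in\cW[0,t]$,
\begin{align}
	J_{\ul{m},\psi^{\mu,c}(\cdot,z)}^\mu(t,x,w) - J_{m,\psi^{\mu,c}(\cdot,z)}^\mu(t,x,w)
	& = \ts{\frac{m-\ul{m}}{2}} \int_0^t \|\op{J}^\mu\, w(s)\|_\half^2 \, ds\,.
	\label{eq:mass-diff-identity}
\end{align}
This identity, together with the $\delta$-optimality of $w^{c,\delta}$ in \er{eq:W-c} and the defining supremum property of $\ol{W}^{\mu,c}$ in \er{eq:Wbar}, will yield the first inequality essentially for free.

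For the first inequality, I would instantiate \er{eq:mass-diff-identity} at $w = w^{c,\delta}$, then bound the right side above using $J_{\ul{m},\psi^{\mu,c}(\cdot,z)}^\mu(t,x,w^{c,\delta}) \le \ol{W}^{\mu,c}(t,x,z)$ (by definition of the supremum) and $J_{m,\psi^{\mu,c}(\cdot,z)}^\mu(t,x,w^{c,\delta}) \ge W^{\mu,c}(t,x,z) - \delta$ (by $\delta$-optimality). This gives
\begin{align*}
	\ts{\frac{m-\ul{m}}{2}} \int_0^t \|\op{J}^\mu\, w^{c,\delta}(s)\|_\half^2 \, ds
	& \le \ol{W}^{\mu,c}(t,x,z) - W^{\mu,c}(t,x,z) + \delta\,,
\end{align*}
and dividing through by $\demi(m-\ul{m})$ and taking the square root yields $\|\op{J}^\mu\, w^{c,\delta}\|_{\cW[0,t]} \le M^{\mu,c,\delta}(t,x,z)$.

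For the second inequality, I would combine the ordering chain \er{eq:value-order} — namely $\ol{W}^{\mu,c}(t,x,z) \le \ol{W}^\mu(t,x)$ and $W^{\mu,c}(t,x,z) \ge W^{\mu,\infty}(t,x,z)$ — to conclude $\ol{W}^{\mu,c}(t,x,z) - W^{\mu,c}(t,x,z) \le \ol{W}^\mu(t,x) - W^{\mu,\infty}(t,x,z)$. Adding $\delta \le 1$ to both sides and comparing the square-rooted quotients gives $M^{\mu,c,\delta}(t,x,z) \le \overline{M}^\mu(t,x,z)$. Finiteness of $\overline{M}^\mu(t,x,z)$ (hence well-definedness of the bound) is guaranteed by Assumption \ref{ass:finite} together with the lower bound $W^{\mu,\infty}(t,x,z) > -\infty$ already noted below \er{eq:value-order}.

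I do not foresee a genuine obstacle in this argument: the only subtlety is the purely notational matter of interpreting $\|\op{J}^\mu\, w\|_{\cW[0,t]}^2$ as $\int_0^t \|\op{J}^\mu\, w(s)\|_\half^2 ds$ in the codomain $\cX_0\oplus\cX_\half$, consistently with \er{eq:T-mu}. Once this is understood, the proof reduces to a three-line bookkeeping argument based on \er{eq:mass-diff-identity} and \er{eq:value-order}.
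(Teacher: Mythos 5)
Your proposal is correct and takes essentially the same route as the paper: both proofs compare the payoffs $J_{\ul{m},\psi^{\mu,c}(\cdot,z)}^\mu(t,x,w^{c,\delta})$ and $J_{m,\psi^{\mu,c}(\cdot,z)}^\mu(t,x,w^{c,\delta})$ (whose difference is exactly the kinetic-energy term with coefficient $\tfrac{m-\ul{m}}{2}$), then invoke suboptimality in $\ol{W}^{\mu,c}$ and $\delta$-optimality in $W^{\mu,c}$, and finally apply the ordering chain \er{eq:value-order} together with $\delta\le 1$ for the second bound. Writing the difference as the explicit identity \er{eq:mass-diff-identity} before bounding is a cosmetic reorganization of the paper's subtraction of the two inequalities, not a different argument.
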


\begin{proof} 
Input $w^{c,\delta}\in\cW[0,t]$ is respectively sub-optimal and $\delta$-optimal in the definitions \er{eq:Wbar} and \er{eq:W-c} of $\ol{W}^{\mu,c}(t,x,z)$ and $W^{\mu,c}(t,x,z)$, so that
\begin{align}
	\ol{W}^{\mu,c}(t,x,z)
	& \ge J_{\ul{m},\psi^c(\cdot,z)}^\mu(t,x,w^{c,\delta})
	= \int_0^t \ts{\frac{\kappa}{2}}\, \|\xi^{c,\delta}(s) \|_\half^2 - \ts{\frac{\ul{m}}{2}}\, \|\op{J}^\mu\, w^{c,\delta}(s)\|_\half^2 \, ds 
			+ \psi^{\mu,c}\left(\xi^{c,\delta}(t),\, z\right)\,,
	\nn\\
	W^{\mu,c}(t,x,z) - \delta
	& < J_{m,\psi^c(\cdot,z)}^\mu(t,x,w^{c,\delta})
	= \int_0^t \ts{\frac{\kappa}{2}}\, \| \xi^{c,\delta}(s) \|_\half^2 - \ts{\frac{m}{2}}\, \|\op{J}^\mu \, w^{c,\delta}(s) \|_\half^2 \, ds
			+ \psi^{\mu,c}\left(\xi^{c,\delta}(t),\, z\right)\,,
	\nn
\end{align}
in which $\xi^{c,\delta}(\cdot)$ denotes the trajectory corresponding to $w^{c,\delta}(\cdot)$. The left-hand inequality of \er{eq:near-opt-input-bound} follows by subtracting the second inequality above from the first, while the right-hand inequality of \er{eq:near-opt-input-bound} follows by application of \er{eq:value-order} in the definition of $M^{\mu,c,\delta}(t,x,z)$ to yield the upper bound $\ol{M}^\mu(t,x,z)$.
\end{proof}

Lemmas \ref{lem:eps-ball} and \ref{lem:near-opt-input-bound} facilitate the proof of the required limit property of Theorem \ref{thm:limit}. 

\begin{proof}[Theorem \ref{thm:limit}]
Fix $t\in[0,\bar t^\mu)$ and $x,z\in\cX_\half$. Observe that for all $c\in\R_{\ge 0}$, $c_1\in\R_{\ge c}$,
\begin{align}
	\psi^{\mu,c}(x,z) & \ge \psi^{\mu,c_1}(x,z) \ge \psi^{\mu,\infty}(x,z)\,,
	\qquad
	W^{\mu,c}(t,x,z) \ge W^{\mu,c_1}(t,x,z) \ge W^{\mu,\infty}(t,x,z)\,,
	\nn
\end{align}
where the first set of inequalities follows immediately by inspection of \er{eq:psi}, which in turn implies the second set of inequalities by inspection of \er{eq:W-c} and \er{eq:W-infty}. That is, $W^{\mu,c}$ is non-increasing in $c$ and satisfies
\begin{align}
	\lim_{c\rightarrow\infty} W^{\mu,c}(t,x,z)
	& \ge W^{\mu,\infty}(t,x,z)\,.
	\label{eq:aux-limit-1}
\end{align}
In order to prove the opposite inequality required to demonstrate \er{eq:aux-limit}, a sub-optimal input for $W^{\mu,\infty}$ is constructed from a near-optimal input for $W^{\mu,c}$. To this end, fix an arbitrary $\eps\in\R_{>0}$. With $\xi^{c,\delta}(\cdot)$ and $w^{c,\delta}(\cdot)$ as per the statement of Lemma \ref{lem:eps-ball}, there exists a $\bar c\in\R_{>0}$ and $\bar\delta\in(0,1]$ such that 
\begin{align}
	& \left\| \xi^{c,\delta}(t) - z \right\|_\half \le \eps
	\label{eq:aux-limit-1b}
\end{align}
for all $c\in(\bar c,\infty)$ and $\delta\in(0,\bar\delta)$. Define a new input $\hat w^{c,\delta}\in\cW[0,t]$ by
\begin{align}
	\hat w^{c,\delta}(s)
	& \doteq w^{c,\delta}(s) + \textstyle{\frac{1}{t}} \left( z - \xi^{c,\delta}(t) \right)
	\label{eq:aux-limit-2}
\end{align}
for all $s\in[0,t]$. By inspection of \er{eq:dynamics} and \er{eq:aux-limit-2}, the corresponding state trajectory $\hat \xi^{c,\delta}(\cdot)$ satisfies
\begin{align}
	\hat\xi^{c,\delta}(s) 
	& = x + \int_0^s \hat w^{c,\delta}(\sigma)\, d\sigma 
	= x + \int_0^s w^{c,\delta}(\sigma)\, d\sigma + \ts{\frac{s}{t}}\, \left( z - \xi^{c,\delta}(t) \right)
	= \xi^{c,\delta}(s) + \ts{\frac{s}{t}}\, \left( z - \xi^{c,\delta}(t) \right)\,,
	\label{eq:aux-limit-2b}
\end{align}
so that $\hat \xi^{c,\delta}(0) = x$ and $\hat \xi^{c,\delta}(t) = z$. However, as $\xi^{c,\delta}(t)$ need not equal $z$, \er{eq:psi} implies that 
$
	\psi^{\mu,c}\left(\xi^{c,\delta}(t),\, z\right) 
	= -\ts{\frac{c}{2}} \, \left\| \op{K}_\mu\, ( \xi^{c,\delta}(t) - z ) \right\|_\half^2
	\le 0 
	= \psi^{\mu,\infty} ( \hat \xi^{c,\delta}(t),\, z)
$.
So, for all $c\in(\bar c,\infty)$, $\delta\in(0,\bar\delta)$,
\begin{align}
	& W^{\mu,c}(t,x,z) - \delta 
	< J_{m,\psi^c(\cdot,z)}^\mu(t,x,w^{c,\delta})
	= \int_0^t \ts{\frac{\kappa}{2}}\, \|\xi^{c,\delta}(s) \|_\half^2 - \ts{\frac{m}{2}}\, \|\op{J}^\mu\, w^{c,\delta}(s) \|_\half^2 \, ds
			+ \psi^{\mu,c}(\xi^{c,\delta}(t),\, z)
	\nn\\
	& \le \int_0^t \!\! \ts{\frac{\kappa}{2}}\, \| \hat\xi^{c,\delta}(s) \|_\half^2 \! - \ts{\frac{m}{2}}\, \|\op{J}^\mu\, \hat w^{c,\delta}(s) \|_\half^2 \, ds
			+ \psi^{\mu,\infty}(\hat\xi^{c,\delta}(t),\, z) + \Delta^{\mu,\eps}(t,x,z)
	= J_{m,\psi^\infty(\cdot,z)}^\mu \left(t,x,\hat w^{c,\delta}\right) + \Delta^{\mu,\eps}(t,x,z)
	\nn\\
	& \le W^{\mu,\infty}(t,x,z) + \Delta^{\mu,\eps}(t,x,z)\,,
	\label{eq:aux-limit-4}
\end{align}
where sub-optimality of $\hat w^{c,\delta}$ in the definition \er{eq:W-infty} of $W^{\mu,\infty}(t,x,z)$ has been applied, and
\begin{align}
	\Delta^{\mu,\eps}(t,x,z)
	& \doteq
	\int_0^t \ts{\frac{\kappa}{2}} \left( 
			\| \xi^{c,\delta}(s) \|_\half^2 - \| \hat\xi^{c,\delta}(s) \|_\half^2 
		\right)
	 - \ts{\frac{m}{2}} \left( 
			\|\op{J}^\mu\, w^{c,\delta}(s) \|_\half^2 - \|\op{J}^\mu\, \hat w^{c,\delta}(s) \|_\half^2
		\right) \, ds
	\nn\\
	& \le \int_0^t \ts{\frac{\kappa}{2}} \left( \| \hat\xi^{c,\delta}(s) \|_\half + \| \xi^{c,\delta}(s) \|_\half \right)
		 \| \hat\xi^{c,\delta}(s) - \xi^{c,\delta}(s) \|_\half
	+ \ts{\frac{m}{2}} \left(
			\| \op{J}^\mu\, \hat w^{c,\delta}(s) \|_\half + \| \op{J}^\mu\, w^{c,\delta}(s) \|_\half
		\right) \times
	\nn\\
	& \hspace{25mm}
		\| \op{J}^\mu\, (\hat w^{c,\delta}(s) - w^{c,\delta}(s) ) \|_\half
		\, ds\,.
	\label{eq:aux-limit-5}
\end{align}
(Here, the upper bound follows by the triangle inequality.) Note that $\Delta^{\mu,\eps}(t,x,z)$ is parameterized by $\eps\in\R_{>0}$ via $\bar c$ and $\bar\delta$ (see Lemma \ref{lem:eps-ball}). In order to bound the right-hand side of \er{eq:aux-limit-5}, H\"{o}lder's inequality implies that for any Hilbert space $\cZ$ (with norm denoted by $\|\cdot\|_\cZ$) and any $z, \hat z\in\cZ[0,t] \doteq \Ltwo([0,t];\cZ)$,
\begin{align}
	& \int_0^t \left( \|\hat z(s)\|_\cZ + \|z(s)\|_\cZ \right) \left\| \hat z(s) - z(s) \right\|_\cZ \, ds
	\le \left( \int_0^t \left( \|\hat z(s)\|_\cZ + \|z(s)\|_\cZ \right)^2 \, ds \right)^\half 
	\left( \int_0^t \left\| \hat z(s) - z(s) \right\|_\cZ^2\, ds \right)^\half
	\nn\\
	& \qquad \le \sqrt{2} \left( \|\hat z\|_{\cZ[0,t]}^2 + \|z\|_{\cZ[0,t]}^2 \right)^\half \left\| \hat z - z \right\|_{\cZ[0,t]}
	\le \sqrt{2} \left( \|\hat z\|_{\cZ[0,t]} + \|z\|_{\cZ[0,t]} \right) \left\| \hat z - z \right\|_{\cZ[0,t]}\,,
	\label{eq:aux-limit-6}
\end{align}
in which $\|z\|_{\cZ[0,t]}^2 \doteq \int_0^t \|z(s)\|_{\cZ}^2 \, ds$. Meanwhile, the triangle inequality states that
\begin{align}
	& \|\hat z\|_{\cZ[0,t]}
	\le \|\hat z - z \|_{\cZ[0,t]} + \|z\|_{\cZ[0,t]}\,.
	\label{eq:aux-limit-6b}
\end{align}
With a view to applying \er{eq:aux-limit-6} and \er{eq:aux-limit-6b} to the right-hand side of \er{eq:aux-limit-5}, note that by \er{eq:op-J-mu} and \er{eq:aux-limit-2},
\begin{align}
	& \| \op{J}^\mu\, (\hat w^{c,\delta}(s) - w^{c,\delta}(s)) \|_\half^2
	= \| \op{J}\, (\hat w^{c,\delta}(s) - w^{c,\delta}(s)) \|_\half^2 
	+  \mu^2 \, \| \hat w^{c,\delta}(s) - w^{c,\delta}(s) \|_\half^2
	\nn\\
	& = \| \op{J}\, \ts{\frac{1}{t}} (z - \xi^{c,\delta}(t)) \|_\half^2 +  \mu^2 \, \| \ts{\frac{1}{t}} (z - \xi^{c,\delta}(t)) \|_\half^2
	= \ts{\frac{1}{t^2}} \, \| \op{J}\, \opAsqrt (z - \xi^{c,\delta}(t)) \|^2 +  \ts{\frac{\mu^2}{t^2}} \, \| z - \xi^{c,\delta}(t) \|_\half^2
	\nn\\
	& \le \ts{\frac{\|\op{J}\|^2 + \mu^2}{t^2}} \,  \| z - \xi^{c,\delta}(t) \|_\half^2 \le \left( \ts{\frac{\|\op{J}\|^2 + \mu^2}{t^2}} \right) \, \eps^2\,,
	\nn
\end{align}
where commutation of $\op{J}\in\bo(\cX)$ and $\opAsqrt$ follows by Lemma \ref{lem:op-A-properties}.
Hence, integration yields 
\begin{align}
	\left\| \op{J}^\mu\, (\hat w^{c,\delta} - w^{c,\delta}) \right\|_{\cW[0,t]}
	& 
	\le \left( \int_0^t \left( \ts{\frac{\|\op{J}\|^2 + \mu^2}{t^2}} \right) \, \eps^2 \, ds \right)^\half
	= \left(\ts{\frac{\|\op{J}\|^2 + \mu^2}{t}} \right)^\half \, \eps\,.
	\label{eq:aux-limit-7a}
\end{align}
Consequently, Lemma \ref{lem:near-opt-input-bound}, \er{eq:aux-limit-6b}, and \er{eq:aux-limit-7a} together imply that
\begin{align}
	\left\| \op{J}^\mu\, w^{c,\delta} \right\|_{\cW[0,t]}
	& \le \ol{M}^\mu(t,x,z)\,,
	\qquad
	\left\| \op{J}^\mu\, \hat w^{c,\delta}  \right\|_{\cW[0,t]}
	\le \ol{M}^\mu(t,x,z) + \left(\ts{\frac{\|\op{J}\|^2 + \mu^2}{t}}\right)^\half \, \eps\,.
	\label{eq:aux-limit-7b}
\end{align}
Similarly, \er{eq:aux-limit-1b} and \er{eq:aux-limit-2b} imply that
\begin{align}
	\left\| \hat\xi^{c,\delta} - \xi^{c,\delta} \right\|_{\cW[0,t]}
	& 
	= \ts{\frac{1}{t}} \left\| z - \xi^{c,\delta}(t) \right\|_\half \left( \int_0^t s^2 \, ds \right)^\half
	\le
	\ts{\frac{1}{t}} \, \eps \, \left( \ts{\frac{t^3}{3}} \right)^\half
	= \left(\ts{\frac{t}{3}}\right)^{\frac{1}{2}}\, \eps\,.
	\label{eq:aux-limit-8a}
\end{align}
As $\hat w^{c,\delta}$ is sub-optimal in the definition \er{eq:W-infty} of $W^{\mu,\infty}(t,x,z)$, while $\psi^{\mu,\infty}(\hat\xi^{c,\delta}(t),z) = 0$ by \er{eq:psi} and \er{eq:aux-limit-2b},
\begin{align}
	W^{\mu,\infty}(t,x,z)
	& \ge 
	\int_0^t \ts{\frac{\kappa}{2}} \, \| \hat \xi^{c.\delta}(s)\|_\half^2 - \ts{\frac{m}{2}}\, \| \op{J}^\mu\, \hat w^{c,\delta}(s)\|_\half^2 \, ds 
			+ \psi^{\mu,\infty}(\hat\xi^{c,\delta}(t),z)
	= \ts{\frac{\kappa}{2}} \, \|\hat\xi^{c,\delta}\|_{\cW[0,t]}^2 - \ts{\frac{m}{2}} \, \|\op{J}^\mu\, \hat w^{c,\delta}\|_{\cW[0,t]}^2\,,
	\nn
\end{align}
or, equivalently,
$
	\|\hat \xi^{c,\delta}\|_{\cW[0,t]}^2
	\le \ts{\frac{2}{\kappa}} \, W^{\mu,\infty}(t,x,z) + \ts{\frac{m}{\kappa}} \, \| \op{J}^\mu\, \hat w^{c,\delta} \|_{\cW[0,t]}^2
$.
Applying the second inequality of \er{eq:aux-limit-7b}, the triangle inequality, and \er{eq:aux-limit-8a} yields the respective inequalities
\begin{align}
	\left\| \hat\xi^{c,\delta} \right\|_{\cW[0,t]}
	& \le \widetilde{M}^{\mu,\eps}(t,x,z)\,,
	\qquad
	\left\| \xi^{c,\delta} \right\|_{\cW[0,t]}
	\le \widetilde{M}^{\mu,\eps}(t,x,z) + \left(\ts{\frac{t}{3}}\right)^{\frac{1}{2}}\, \eps\,,
	\label{eq:aux-limit-8b}
\end{align}
in which
$
	\widetilde{M}^{\mu,\eps}(t,x,z)
	\doteq
	[ \ts{\frac{2}{\kappa}}\, W^{\mu,\infty}(t,x) + \ts{\frac{m}{\kappa}} \left(\ts{\frac{\|\op{J}\|^2 + \mu^2}{t}}\right)^\half \, \eps 
				+ \ts{\frac{m}{\kappa}} \, \ol{M}^\mu(t,x,z) ]^\half
$.
So, combining \er{eq:aux-limit-7a}, \er{eq:aux-limit-7b}, \er{eq:aux-limit-8a}, \er{eq:aux-limit-8b} in \er{eq:aux-limit-5} yields that
\begin{align}
	& \Delta^{\mu,\eps} (t,x,z)
	\le \ts{\frac{\kappa}{\sqrt{2}}} (
		\| \hat\xi^{c,\delta} \|_{\cW[0,t]} + \| \xi^{c,\delta} \|_{\cW[0,t]}
	) \| \hat\xi^{c,\delta} - \xi^{c,\delta} \|_{\cW[0,t]}
	\nn\\
	& \hspace{30mm}
	+ \ts{\frac{m}{\sqrt{2}}} (
		\| \op{J}^\mu\,  \hat w^{c,\delta} \|_{\cW[0,t]} + \| \op{J}^\mu\,  w^{c,\delta} \|_{\cW[0,t]}
	) \| \op{J}^\mu\, ( \hat w^{c,\delta} - w^{c,\delta} ) \|_{\cW[0,t]}
	\nn\\
	& \le \ts{\frac{\kappa}{\sqrt{2}}} \left(
		2\, \widetilde M^{\mu,\eps}(t,x,z) + \left(\ts{\frac{t}{3}}\right)^{\frac{1}{2}}\, \eps
	\right) \left(\ts{\frac{t}{3}}\right)^{\frac{1}{2}}\, \eps
	+ \ts{\frac{m}{\sqrt{2}}} \left( 2\, \ol{M}^\mu(t,x,z) + \left(\ts{\frac{\|\op{J}\|^2+\mu^2}{t}}\right)^\half \, \eps \right)
	\left(\ts{\frac{\|\op{J}\|^2 + \mu^2}{t}}\right)^\half \, \eps
	= O(\eps)
	\label{eq:aux-limit-9}
\end{align}
This bound is independent of $c$ and $\delta$. Fix any $\bar\eps\in\R_{>0}$. With $t$, $x$, and $z$ given, there exists an $\eps\in\R_{>0}$ such that 
$\Delta^{\mu,\eps}(t,x,z) < \bar\eps$. Inequality \er{eq:aux-limit-4} then implies that
$
	W^{\mu,c}(t,x,z) - \delta 
	< W^{\mu,\infty}(t,x,z) + \bar\eps
$
for all $c\in(\bar c,\infty)$ and $\delta\in(0,\bar\delta)$. So, sending $\delta\rightarrow 0^+$ and $c\rightarrow\infty$ yields
$
	\lim_{c\rightarrow\infty} W^{\mu,c}(t,x,z)
	\le W^{\mu,\infty}(t,x,z) + \bar\eps
$. 
As $\bar\eps\in\R_{>0}$ is arbitrary, it follows that
$
	\lim_{c\rightarrow\infty} W^{\mu,c}(t,x,z) \le W^{\mu,\infty}(t,x,z)
$
for any $t\in[0,\bar t^\mu)$, $x,z\in\cX_\half$. Combining this inequality with \er{eq:aux-limit-1} completes the proof.
\end{proof}


\subsubsection{Verification theorem -- {\dtwo}}
The second step in explicitly characterizing the fundamental solution to the optimal control problem \er{eq:W} utilizes a verification theorem. In stating this theorem, it is convenient to define operator $\op{I}_\mu\in\bo(\cX)$ by 
\begin{align}
	\op{I}_\mu \, y
	& \doteq (\op{I} + \mu^2\, \op{A})^{-1}\, y\,,\quad \dom(\op{I}_\mu) \doteq \cX,\, \ran(\op{I}_\mu) = \cX_0\,,
	\label{eq:op-I-mu}
\end{align}
where boundedness, the stated range, and a number of other useful properties follow by Lemma \ref{lem:op-I-mu-properties}. 
\begin{theorem}[Verification]
\label{thm:verify}
Given $\mu\in\R_{>0}$, $\bar t^\mu\in\R_{\ge 0}$ as per \er{eq:t-bar-mu}, $c\in\R_{\ge 0}$, and $z\in\cX_\half$, suppose that a functional $W\in
C([0,\bar t^\mu]\times\cX_\half\times\cX_\half;\R)\cap C^1((0,\bar t^\mu)\times\cX_\half\times\cX_\half; \R)$
satisfies
\begin{align}
	& 0 = 
	-\pdtone{W}{t}(t,x,z) + H(x, \ggrad_x W(t,x,z))\,,
	\label{eq:verify-DPE}
	\\
	& W(0,x,z) = 
	\psi^{\mu,c}(x,z)
	\label{eq:verify-IC}
\end{align}
for all $t\in[0,\bar t^\mu)$ and $x\in\cX_\half$, where $\ggrad_x W(t,x,z)\in\cX_\half$ denotes the {\Frechet} derivative of $W(t,\cdot,z)$ at $x\in\cX_\half$, defined with respect to inner product $\langle\, , \rangle_\half$ on $\cX_\half$, and $H:\cX_\half\times\cX_\half\mapsinto\R$ is the Hamiltonian
\begin{align}
	H(x,p)
	& \doteq \ts{\frac{\kappa}{2}} \, \|x\|_\half^2 + \ts{\frac{1}{2\, m}} \, \| \op{I}_\mu^\half\, \opAsqrt\, p\|_\half^2\,,
	\label{eq:verify-H}
\end{align}
in which $\op{I}_\mu^\half$ is the unique square root of $\op{I}_\mu$ of \er{eq:op-I-mu}, see Lemma \ref{lem:op-I-mu-properties}. Then, $W(t,x,z) \ge J_{m,\psi^{\mu,c}(\cdot, z)}^\mu(t,x,w)$ for all $x\in\cX_\half$, $w\in\cW[0,t]$, $t\in[0,\bar t^\mu)$. Furthermore, if there exists a mild solution $\xi^*$ as per \er{eq:mild} corresponding to a distributed input $w^*$ defined via the feedback characterization
\begin{align}
	w^*(s)
	& \doteq k(s,\, \xi^*(s))\,,
	\qquad
	k(s,x) \doteq \ts{\frac{1}{m}}\, \opAsqrt\, \op{I}_\mu\, \opAsqrt\, \ggrad_x W(t-s,x,z)\,,
	\label{eq:w-star}
\end{align}
such that $\xi^*(s)\in\cX_\half$ for all $s\in[0,t]$, then $W(t,x,z) = J_{m,\psi^{\mu,c}(\cdot, z)}^\mu(t,x,w^*)$, and $W(t,x,z) = W^{\mu,c}(t,x,z)$.
\end{theorem}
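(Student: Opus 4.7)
The plan is to execute the classical verification argument for an infinite-dimensional optimal control value function via the associated Hamilton-Jacobi equation. Fix $t\in(0,\bar t^\mu)$, $x\in\cX_\half$, and an arbitrary admissible $w\in\cW[0,t]$ with corresponding mild trajectory $\xi(\cdot)$ given by \er{eq:mild} (so $\dot\xi(s)=w(s)$ a.e. in $\cX_\half$). Set $\Phi(s)\doteq W(t-s,\xi(s),z)$ and use the {\Frechet} differentiability of $W$ on the interior together with the chain rule to write
\begin{align*}
	\ts{\frac{d}{ds}} \Phi(s)
	& = -\pdtone{W}{t}(t-s,\xi(s),z) + \langle \ggrad_x W(t-s,\xi(s),z),\, w(s) \rangle_\half\,.
\end{align*}

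Next, I would identify the Hamiltonian as the supremum in the running payoff by checking the variational identity
$
H(x,p) = \sup_{v\in\cX_\half}\bigl\{ \ts{\frac{\kappa}{2}}\|x\|_\half^2 - \ts{\frac{m}{2}}\|\op{J}^\mu v\|_\half^2 + \langle p,v\rangle_\half \bigr\}\,,
$
obtained by completing the square using $\|\op{J}^\mu v\|_\half^2 = \langle \op{A}^{-1}(\op{I}+\mu^2\op{A})v,v\rangle_\half$ and commutativity of $\opAsqrt$ with $\op{I}_\mu$ as functions of $\op{A}$. The unique maximizer is $v^\star=\ts{\frac{1}{m}}\opAsqrt\op{I}_\mu\opAsqrt p$, which coincides with the feedback rule \er{eq:w-star}. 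Substituting the DPE \er{eq:verify-DPE} for $\pdtone{W}{t}$ and invoking this variational identity yields the pointwise bound
\begin{align*}
	\ts{\frac{d}{ds}} \Phi(s)
	& \le -\bigl[\ts{\frac{\kappa}{2}}\|\xi(s)\|_\half^2 - \ts{\frac{m}{2}}\|\op{J}^\mu w(s)\|_\half^2\bigr]\,,
\end{align*}
with equality precisely when $w(s)=k(s,\xi(s))$ as in \er{eq:w-star}.

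Integrating over $[0,t]$ and using $\Phi(0)=W(t,x,z)$ together with the initial condition \er{eq:verify-IC} at $\Phi(t)=\psi^{\mu,c}(\xi(t),z)$, rearrangement produces $W(t,x,z)\ge J_{m,\psi^{\mu,c}(\cdot,z)}^\mu(t,x,w)$ for any admissible $w$. For the feedback trajectory $\xi^*$ with $w^*$ given by \er{eq:w-star} (existence assumed in the statement), the pointwise inequality is an equality, so integration yields $W(t,x,z)=J_{m,\psi^{\mu,c}(\cdot,z)}^\mu(t,x,w^*)$; this combined with the upper bound and the definition \er{eq:W-c} forces $W(t,x,z)=W^{\mu,c}(t,x,z)$. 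The hard part will be a rigorous justification of the chain rule for $\Phi$ when $W$ is only $C^1$ on the interior and $w$ is merely $\Ltwo$ in time: the natural route is to express $\Phi(s_2)-\Phi(s_1)$ as the integral of a one-sided difference, then exploit uniform continuity of $\ggrad_x W$ on the compact arc $\xi([0,t])\subset\cX_\half$ (continuity of $\xi$ following from \er{eq:mild} and $w\in\Ltwo$) together with Fubini to pass to the limit. A secondary detail is verifying that the feedback input $w^*(\cdot)=k(\cdot,\xi^*(\cdot))$ lies in $\cW[0,t]$, which follows from continuity of $\ggrad_x W$ in $x$ and boundedness of $\opAsqrt\op{I}_\mu\opAsqrt$ on $\cX_\half$.
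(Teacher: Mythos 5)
Your proposal follows essentially the same route as the paper's proof: the completion-of-squares variational identity you describe for the Hamiltonian is exactly the content of the paper's Lemma~\ref{lem:half-L2-squares}, the chain rule/Fundamental Theorem of Calculus step you flag as the ``hard part'' is the paper's Lemma~\ref{lem:fund-calc} (cited as standard, e.g. \cite{BDDM:07}), and the final integration and rearrangement to obtain the inequality, then equality for the feedback input $w^*$, matches the paper's argument step for step. Your algebraic identity $\|\op{J}^\mu v\|_\half^2 = \langle(\op{A}^{-1}+\mu^2\op{I})v,v\rangle_\half$ and the resulting maximizer $v^\star = \ts{\frac{1}{m}}\opAsqrt\op{I}_\mu\opAsqrt p$ are both correct and consistent with Lemma~\ref{lem:half-L2-squares}.
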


The verification Theorem \ref{thm:verify} may be proved via completion of squares and a chain rule for {\Frechet} differentiation, summarized via the following preliminary lemmas.

\begin{lemma}
\label{lem:half-L2-squares}
Given any $p\in\cX_\half$, the quadratic functional $\pi_p^\mu:\cX_\half\mapsto\R$,
$
	\pi_p^\mu(w) 
	\doteq \langle p,\, w \rangle_\half - \ts{\frac{m}{2}} \, \|\op{J}^\mu\, w\|_\half^2
$,
satisfies
$
	\sup_{w\in\cX_\half}
	\pi_p^\mu(w) = \pi_p^\mu(w^*) = \ts{\frac{1}{2\, m}} \, \| \op{I}_\mu^\half \, \opAsqrt\, p \|_\half^2
$
with $w^* \doteq \ts{\frac{1}{m}}\, \opAsqrt\, \op{I}_\mu \, \opAsqrt\, p\in\cX_\half$ and $\op{I}_\mu$ as per \er{eq:op-I-mu}.
\end{lemma}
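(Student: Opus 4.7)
The plan is to recognize $\pi_p^\mu$ as a strictly concave quadratic functional on the Hilbert space $(\cX_\half, \langle\cdot,\cdot\rangle_\half)$, extract its unique maximizer via a Fr\'echet first-order condition, and then substitute back to evaluate $\pi_p^\mu(w^*)$. Strict concavity follows by expanding $\|\op{J}^\mu w\|_\half^2 = \|\op{J}w\|_\half^2 + \mu^2\|w\|_\half^2$ and using $\opAsqrt\op{J} = \op{I}$ (Lemma \ref{lem:op-A-properties}) to rewrite this as $\|w\|^2 + \mu^2\|w\|_\half^2$, which is strictly positive for every nonzero $w \in \cX_\half$; hence the quadratic part $-\ts{\frac{m}{2}}\|\op{J}^\mu w\|_\half^2$ is strictly negative definite and the supremum, if attained, is attained uniquely.

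To locate the maximizer I compute the Fr\'echet derivative at $w$ in direction $h \in \cX_\half$:
\[
	d\pi_p^\mu(w)\,h = \langle p, h\rangle_\half - m\langle w, h\rangle - m\mu^2\langle w, h\rangle_\half.
\]
The middle term lives in the $\cX$ inner product and must be converted into $\langle\cdot,\cdot\rangle_\half$ before setting the gradient to zero. The required identity is $\langle w, h\rangle = \langle \op{J}^2 w, h\rangle_\half$, which is immediate from the definition of $\langle\cdot,\cdot\rangle_\half$ together with $\opAsqrt\op{J} = \op{I}$: namely $\langle \op{J}^2 w, h\rangle_\half = \langle \opAsqrt \op{J}^2 w, \opAsqrt h\rangle = \langle \op{J} w, \opAsqrt h\rangle = \langle w, h\rangle$. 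Setting the derivative to zero for every $h\in\cX_\half$ then yields $p = m(\op{J}^2 + \mu^2\op{I})w^*$. Using the factorization $\op{J}^2 + \mu^2\op{I} = \op{J}(\op{I} + \mu^2\op{A})\op{J} = \op{J}\, \op{I}_\mu^{-1}\op{J}$, together with the commutativity and bounded invertibility of $\op{I}_\mu$ and $\opAsqrt$ supplied by Lemma \ref{lem:op-I-mu-properties}, inversion gives $w^* = \ts{\frac{1}{m}}\opAsqrt\op{I}_\mu\opAsqrt p$, and $w^* \in \cX_\half$ since $\ran(\op{I}_\mu)\subset \cX_0 \subset \cX_\half$.

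For the value at the maximum I pair the first-order condition against $w^*$ itself, obtaining the standard quadratic-form identity $\pi_p^\mu(w^*) = \ts{\frac{1}{2}}\langle p, w^*\rangle_\half$. Substituting the expression for $w^*$ and using that $\op{I}_\mu^\half$ is self-adjoint with respect to $\langle\cdot,\cdot\rangle_\half$ (a consequence of its self-adjointness on $\cX$ and commutation with $\opAsqrt$) produces
\[
	\pi_p^\mu(w^*) = \ts{\frac{1}{2m}}\langle p, \opAsqrt\op{I}_\mu\opAsqrt p\rangle_\half = \ts{\frac{1}{2m}}\|\op{I}_\mu^\half\opAsqrt p\|_\half^2,
\]
as asserted, with uniqueness inherited from the strict concavity of the first step. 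The principal nuisance is the bookkeeping between the $\cX$ and $\cX_\half$ inner products that collide in the definition of $\op{J}^\mu$; once the Riesz identity $\langle w, h\rangle = \langle \op{J}^2 w, h\rangle_\half$ and the commutativity/self-adjointness facts of Lemma \ref{lem:op-I-mu-properties} are available, the remainder is textbook completion of squares.
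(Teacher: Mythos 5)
Your proof is correct, and it reaches the same operator identity $\op{J}^2+\mu^2\op{I}=\op{J}\,\op{I}_\mu^{-1}\op{J}$ that lies at the heart of the paper's argument, but the route is genuinely different in presentation. The paper works directly in ``completed square'' form: starting from \er{eq:squares-1}, it factors $\op{I}_\mu^{-1}$ through its square root, rewrites $\pi_p^\mu(w)$ as a constant minus a nonnegative squared term $\ts{\frac{m}{2}}\|\op{I}_\mu^{-\half}\op{J}w-\ts{\frac{1}{m}}\op{I}_\mu^\half\opAsqrt p\|_\half^2$, and reads off both the maximizer and the maximum value in one step, with attainment built in. You instead split the argument into (i) strict concavity via $\|\op{J}^\mu w\|_\half^2=\|w\|^2+\mu^2\|w\|_\half^2>0$, (ii) the Fr\'echet first-order condition, (iii) solving the resulting operator equation by factorization and inversion, and (iv) the ``stationary quadratic'' shortcut $\pi_p^\mu(w^*)=\demi\langle p,w^*\rangle_\half$. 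The paper's version is tighter: it never needs concavity as a separate ingredient and does not pass through an operator equation on the way back to the value. Your version is arguably easier to motivate and generalizes more readily to non-quadratic settings, but it relies on the (correct) observation that a stationary point of a concave functional is a global maximizer. The Riesz identity $\langle w,h\rangle=\langle\op{J}^2 w,h\rangle_\half$ you introduce to convert the $\cX$ pairing is exactly the conversion the paper performs implicitly when it writes $\langle w,\op{J}\op{I}_\mu^{-1}\op{J}w\rangle_\half=\|\op{I}_\mu^{-\half}\op{J}w\|_\half^2$, so the bookkeeping between inner products is the same. One small imprecision: to conclude $w^*\in\cX_\half$ you cite $\ran(\op{I}_\mu)\subset\cX_0\subset\cX_\half$, but $w^*=\ts{\frac{1}{m}}\opAsqrt\op{I}_\mu\opAsqrt p$ has an outer $\opAsqrt$, so the operative chain is $\opAsqrt p\in\cX$, $\op{I}_\mu\opAsqrt p\in\cX_0=\dom(\op{A})$, hence $\opAsqrt\op{I}_\mu\opAsqrt p\in\cX_\half$ by \er{eq:op-A-ass-2a}; the conclusion is right but needs that last application of $\opAsqrt:\cX_0\to\cX_\half$ spelled out.
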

\begin{proof}
Fix $p\in\cX_\half$ and $w\in\cX_\half$. Note that $\opAsqrt\, p\in\cX$ and $\op{J}\, w\in\cX_0 = \dom(\op{A})$, by Lemma \ref{lem:op-A-properties}. Note also that $(\op{J}^2 + \mu^2\, \op{I})\, w = \op{J}\, (\op{I} + \mu^2\, \op{A} )\, \op{J}\, w  = \op{J}\, \op{I}_\mu^{-1}\, \op{J}\, w$. Hence, by definition of $\pi_p^\mu(w)$ and \er{eq:op-J-mu},
\begin{align}
	& \pi_p^\mu(w)
	= -\ts{\frac{m}{2}} \left[ \langle w,\, \op{J}\, \op{I}_\mu^{-1}\, \op{J}\, w \rangle_\half - \ts{\frac{2}{m}}\, \langle p,\, w \rangle_\half 
	\right]\,,
	\label{eq:squares-1}
\end{align}
As $\op{I}_\mu^{-1}$ has a unique, positive, self-adjoint and boundedly invertible square root (Lemma \ref{lem:op-I-mu-properties}), it follows that
$
	\langle p,\, w\rangle_\half = \langle \op{I}_\mu^\half \, \opAsqrt\, p,\, \op{I}_\mu^\minushalf\, \op{J}\, w \rangle_\half
$ and
$
	\langle w,\, \op{J}\, \op{I}_\mu^{-1}\, \op{J}\, w \rangle_\half = \| \op{I}_\mu^\minushalf\, \op{J}\, w\|_\half^2
$,
where $\op{I}_\mu^\half:\cX\mapsinto\cX_\half$ and $\op{I}_\mu^\half\in\bo(\cX)$. Substituting in \er{eq:squares-1},
\begin{align}
	\pi_p^\mu(w) 
	= -\ts{\frac{m}{2}} \left[  \| \op{I}_\mu^\minushalf\, \op{J}\, w\|_\half^2 - 
		\ts{\frac{2}{m}} \, \langle \op{I}_\mu^\half \, \opAsqrt\, p,\, \op{I}_\mu^\minushalf\, \op{J}\, w \rangle_\half \right] 
	& = \ts{\frac{1}{2\, m}} \, \| \op{I}_\mu^\half \, \opAsqrt\, p \|_\half^2 - \ts{\frac{m}{2}} \,
	\| \op{I}_\mu^\minushalf\, \op{J}\, w - \ts{\frac{1}{m}}\, \op{I}_\mu^\half \, \opAsqrt\, p \|_\half^2
	\nn\\
	& =  \ts{\frac{1}{2\, m}} \, \| \op{I}_\mu^\half \, \opAsqrt\, p \|_\half^2 - \ts{\frac{m}{2}}
	\| \op{I}_\mu^\minushalf\, \op{J}\, ( w - w^* ) \|_\half
	\nn
\end{align}
for all $w\in\cX_\half$, where $w^*\in\cX_\half$ is as per the lemma statement. Taking the supremum of $\pi_p^\mu(w)$ over $w\in\cX_\half$, \begin{align}
	\sup_{w\in\cX_\half} \pi_p^\mu(w)
	& = \ts{\frac{1}{2\, m}} \, \| \op{I}_\mu^\half \, \opAsqrt\, p \|_\half^2  
			- \ts{\frac{m}{2}} \! \inf_{w\in\cX_\half} \! \| \op{I}_\mu^\minushalf\, \op{J}\, ( w - w^* ) \|_\half
	=  \ts{\frac{1}{2\, m}} \, \| \op{I}_\mu^\half \, \opAsqrt\, p \|_\half^2 = \pi_p^\mu(w^*),
	\nn
\end{align}
as per the lemma statement, with the supremum attained at $w^*\in\cX_\half$.
\end{proof}

The following lemma is standard and its proof is omitted (see for example \cite{BDDM:07}).

\begin{lemma}[Fundamental Theorem of Calculus]
\label{lem:fund-calc}
Given any $\mu\in\R_{>0}$, $t\in(0,\bar t^\mu)$ with $\bar t^\mu\in\R_{>0}$ as per \er{eq:t-bar-mu}, $x\in\cX_\half$,  $w\in\cW[0,t]$, let $\xi:[0,t]\mapsinto\cX_\half$ denote the mild solution \er{eq:mild}, and let
$
	W\in C([0,\bar t^\mu]\times\cX_\half\times\cX_\half;\R)\cap C^1((0,\bar t^\mu)\times\cX_\half\times\cX_\half; \R)
$.
Then, for any $\tau\in[0,t]$, 
\begin{align}
	W(t-\tau,\, \xi(\tau), z) - W(t,x,z)
	& = \int_0^\tau -\ddtone{W}{t}(t-s,\, \xi(s),\, z) + \langle \ggrad_x W(t-s,\xi(s),z),\, w(s) \rangle_\half \, ds\,,
	\nn
\end{align}
where $\ggrad_x W(t,x,z)\in\cX_\half$ denotes the Riesz representation of the {\Frechet} derivative of $W(t,\cdot,z)$ at $x$, defined with respect to inner product $\langle\, , \rangle_\half$ on $\cX_\half$. 
\end{lemma}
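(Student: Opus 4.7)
The plan is to apply the scalar fundamental theorem of calculus to the real-valued map $\phi:[0,\tau]\mapsinto\R$ defined by $\phi(s) \doteq W(t-s,\xi(s),z)$, after computing its derivative via a chain rule for {\Frechet}-differentiable maps. The claimed identity is then just $\phi(\tau)-\phi(0) = \int_0^\tau \dot\phi(s)\, ds$, with $\dot\phi$ matching the integrand on the right-hand side.

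First, I would record the regularity of the mild solution. Since $w\in\cW[0,t] = \Ltwo([0,t];\cX_\half)$ and $\cX_\half$ is a separable Hilbert space, the Bochner integral $s\mapsto\int_0^s w(\sigma)\, d\sigma$ is absolutely continuous in $\|\cdot\|_\half$, with strong derivative equal to $w(s)$ at almost every $s\in[0,t]$. Hence $\xi:[0,t]\mapsinto\cX_\half$ is absolutely continuous, continuous, and $\dot\xi(s)=w(s)$ for a.e.\ $s$; in particular the curve $s\mapsto(t-s,\xi(s),z)$ is continuous into $[0,\bar t^\mu]\times\cX_\half\times\cX_\half$ and lies in $(0,\bar t^\mu)\times\cX_\half\times\cX_\half$ on $(0,\tau)$ (or on $[0,\tau]$ after trivially adjusting for the endpoints via continuity of $W$).

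Second, I would invoke the chain rule for compositions of a $C^1$ functional on a Hilbert space with an absolutely continuous curve, which is precisely the situation covered in \cite{BDDM:07}. Using continuity of $\pdtone{W}{t}$ and $\ggrad_x W$ on the compact image of the curve $s\mapsto(t-s,\xi(s),z)$, the chain rule yields that $\phi$ is absolutely continuous on $[0,\tau]$ with
\begin{align*}
	\dot\phi(s)
	& = -\ddtone{W}{t}(t-s,\xi(s),z) + \langle \ggrad_x W(t-s,\xi(s),z),\, w(s) \rangle_\half
\end{align*}
for almost every $s\in[0,\tau]$. Measurability and integrability of both summands follow from continuity of the partial derivatives along the curve (so boundedness on $[0,\tau]$) combined with Cauchy--Schwarz applied to the second term against $w\in\Ltwo([0,t];\cX_\half)$.

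Third, integrating $\dot\phi$ over $[0,\tau]$ and substituting $\phi(\tau) = W(t-\tau,\xi(\tau),z)$ and $\phi(0) = W(t,x,z)$ completes the argument. The only real obstacle is the chain-rule step: one must apply it under merely absolute continuity of $\xi$ (rather than $C^1$) together with $C^1$-regularity of $W$ on a Hilbert space, and one must handle the mild degeneracy that $\ggrad_x W$ is the Riesz representation with respect to $\langle\,,\rangle_\half$ rather than the underlying $\Ltwo$ inner product. Both issues are standard in the reference cited, which is why I would simply invoke \cite{BDDM:07} rather than reproduce the argument.
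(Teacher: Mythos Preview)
Your proposal is correct and mirrors the paper's treatment exactly: the paper declares the lemma standard and omits the proof, citing \cite{BDDM:07} for the chain rule in the Hilbert-space setting, which is precisely the reference you invoke for the only nontrivial step. Your outline (define $\phi(s)=W(t-s,\xi(s),z)$, use absolute continuity of $\xi$ and $C^1$ regularity of $W$ to differentiate via the chain rule, then integrate) is the argument the paper has in mind.
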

\if{false}

\begin{proof}
Fix any $\mu\in\R_{>0}$, $t\in(0,\bar t^\mu)$, $x\in\cX_\half$, $w\in\cW[0,t]$. By definition \er{eq:mild}, the mild solution $\xi:[0,t]\mapsinto\cX_\half$ is absolutely continuous, and hence continuous and bounded, on $[0,t]$. That is, there exists an $R(t,x,w)<\infty$ such that $\|\xi(s)\| \le R\doteq R(t,x,w) < \infty$ for all $s\in[0,t]$. By the assumed smoothness of $W$, note that $W(t,\cdot,z)$ is Lipschitz on $B_R(0)$. So, combining the absolute continuity of $\xi(s)$ as a function of $s\in[0,t]$ with the Lipschitz behaviour of $W(t,\cdot,z)$ on the range of $\xi(\cdot)$ yields that absolute continuity of $W(t - \cdot,\xi(\cdot),z):[0,t]\mapsinto \R$. That is, 
\begin{align}
	W(t-\tau,\xi(\tau),z)-W(t,x,z)
	& = \int_{0}^{\tau} \dot\omega(s) \, ds\,,
	\qquad 
	\omega(s) \doteq W(t-s,\xi(s),z)\,,
	\label{eq:chain-Frechet-1}
\end{align}
for all $\tau\in[0,t]$. Let $\cV \doteq \R \otimes \cX_\half$ and define an inner product on $\cV$ by
\begin{align}
	\langle \zeta_1,\, \zeta_2 \rangle_{\cV}
	& \doteq
	s_1\, s_2 + \langle \xi_1,\, \xi_2 \rangle_\half\,,
	\qquad \zeta_1 = (s_1,\xi_1)\,, \ \zeta_2 = (s_2,\xi_2)\,,
	\label{eq:chain-Frechet-2}
\end{align}
for all $\zeta_1,\, \zeta_2\in\cV$. Define $\zeta:[0,t]\mapsinto\cV$ and $\Theta:\cV\mapsinto\R$ by
\begin{align}
	\zeta(s) & \doteq (\sigma(s), \xi(s))\,,
	\qquad
	\Theta(\zeta) \doteq W(t - s_\zeta,\, \xi_\zeta,\, z)\,,
	\ \zeta = (s_\zeta,\xi_\zeta)\,,
	\label{eq:chain-Frechet-3}
\end{align}
where $\sigma:(0,t)\mapsinto (0,t)$ is defined by $\sigma(s)\doteq s$, and $\xi(\cdot)$ is the mild solution \er{eq:mild} as per the lemma statement. By definition \er{eq:chain-Frechet-1} of $\omega$, note that
\begin{align}
	\omega(s)
	& = \Theta(\zeta(s))\,.
	\label{eq:chain-Frechet-4}
\end{align}
Recall that $\xi(\cdot)$ is {\Frechet} differentiable on $(0,\bar t^\mu)$ by \er{eq:dynamics}, \er{eq:mild}. By definition, $\sigma(\cdot)$ is also {\Frechet} differentiable. Hence, $\zeta(\cdot)$ must also be {\Frechet} differentiable by inspection of \er{eq:chain-Frechet-3}. In particular, applying definition \er{eq:Frechet} of {\Frechet} differentiability, for each $s\in(0, t)$ there exists an operator $d\zeta(s)\in\cL((-s, t-s);\cV)$ such that
\begin{align}
	0 
	& = \lim_{\eps\rightarrow 0} \frac{\| \zeta(s + \eps) - \zeta(s) - d\zeta(s)\, \eps\|_\cV}{|\eps|}\,,
	\label{eq:chain-Frechet-5}
\end{align}
where the {\Frechet} differential is given by $d\zeta(s)\, \eps = (\eps\, \dot\sigma(s),\eps\, \dot\xi(s)) = (\eps,\, \eps\, \dot\xi(s)) = \eps\, (1,\, \dot\xi(s))$. By inspection, \er{eq:chain-Frechet-5} implies that for each $s\in(0,t)$ there exists an operator $d^2 \zeta(s):(-s, t-s)\mapsinto\cV$ such that
\begin{align}
	\zeta(s+\eps) 
	& = \zeta(s) + d\zeta(s)\, \eps + \eps\, d^2 \zeta(s)\, \eps
	\label{eq:chain-Frechet-6}
\end{align}
where $\|d^2 \zeta(s) \, \eps\|_\cV\rightarrow 0$ as $\eps\rightarrow 0$, with $d^2 \zeta(s) \, \eps \doteq \frac{\zeta(s + \eps) - \zeta(s) - d\zeta(s)\, \eps}{\eps}$ for all $\eps\in(-s, t-s)$.

Similarly, $W(\cdot,\cdot,z)$ is {\Frechet} differentiable on $(0,t)\times\cX_\half$ by the assumed smoothness of $W$. That is, $\Theta$ of \er{eq:chain-Frechet-3} is {\Frechet} differentiable at $\zeta=(s_\zeta,\xi_\zeta)\in\cV$, $s_\zeta\in(0,t)$. So, applying the definition \er{eq:Frechet} of {\Frechet} differentiability, there exists an operator $d\Theta(\zeta)\in\cL(\cV;\R)$ such that
\begin{align}
	0 & = \lim_{\|h\|_\cV\rightarrow 0} \frac{|\Theta(\zeta+h) - \Theta_z(\zeta) - d\Theta_z(\zeta)\, h|}{\|h\|_\cV}\,,
	\label{eq:chain-Frechet-6a}
\end{align}
where the {\Frechet} differential is (by application of the Riesz representation theorem)
\begin{align}
	d\Theta(\zeta)\, h & = \langle \ggrad_\zeta \Theta(\zeta),\, h \rangle_\cV
	= \left\langle \left(-\pdtone{W}{t}(t-s_\zeta,\xi_\zeta,z),\, \ggrad_x W(t - s_\zeta,\xi_\zeta, z) \right),\, h \right\rangle_\cV\,, 
	\qquad \zeta = (s_\zeta,\xi_\zeta)\,.
	\label{eq:chain-Frechet-6b}
\end{align}
By inspection, \er{eq:chain-Frechet-6a} implies that for each $\zeta = (s_\zeta,\xi_\zeta)\in\cV$, $s_\zeta\in(0,t)$, there exists an operator $d^2\Theta(\zeta):\cV\mapsinto\R$ (not necessarily linear) such that
\begin{align}
	\Theta(\zeta+h)
	& = \Theta(\zeta) + d\Theta(\zeta)\, h + \|h\|_\cV\, d^2\Theta(\zeta)\, h
	\label{eq:chain-Frechet-7a}
\end{align}
where $|d^2\Theta(\zeta)\, h| \rightarrow 0$ as $\|h\|_\cV\rightarrow 0$, with $d^2\Theta(\zeta)\, h \doteq \frac{\Theta(\zeta+h) - \Theta(\zeta) - d\Theta(\zeta)\, h}{\|h\|_\cV}$ for all $h = (s_h,\xi_h)\in\cV$, $s_h\in(-s_\zeta,t-s_\zeta)$. 
Define the operator $\widetilde{d^2\Theta}(\zeta):\cV\mapsinto\cV$ by
\begin{align}
	\widetilde{d^2\Theta}(\zeta) \, h
	& \doteq \left\{ \ba{ccl}
			\left(\frac{d^2\Theta(\zeta) \, h}{\|h\|_\cV} \right)\, h \,,
			&&	\|h\|_\cV\ne 0\,,
			\\
			0\,,
			&& 	\|h\|_\cV = 0\,,
	\ea \right.
	\nn
\end{align}
and note that $\widetilde{d^2\Theta}(\zeta)\, h$ is continuous at $h=0\in\cV$ as $\|\widetilde{d^2\Theta}(\zeta) \, h\|_\cV = \left( \frac{|d^2\Theta(\zeta) \, h|}{\|h\|_\cV} \right) \|h\|_\cV = |d^2\Theta(\zeta)\, h| \rightarrow 0$ as $\|h\|_\cV\rightarrow 0$. Further note that $\langle \widetilde{d^2\Theta}(\zeta) \, h,\, h \rangle_\cV = \|h\|_\cV\, d^2\Theta(\zeta)\, h$. Hence, recalling \er{eq:chain-Frechet-6b}, \er{eq:chain-Frechet-7a} may be equivalently written as
\begin{align}
	\Theta(\zeta+h)
	& = \Theta(\zeta) + \left\langle \ggrad_\zeta \Theta(\zeta) + \widetilde{d^2\Theta}(\zeta)\, h,\, h \right\rangle_\cV\,.
	\label{eq:chain-Frechet-7b} 
\end{align}
Hence, combining \er{eq:chain-Frechet-4}, \er{eq:chain-Frechet-6} and \er{eq:chain-Frechet-7b}, it follows that for a.e. $s\in(0,t]$,
\begin{align}
	\frac{\omega(s+\eps) - \omega(s)}{\eps}
	& = \frac{\Theta(\zeta(s+\eps)) - \Theta(\zeta(s))}{\eps}
	\nn\\
	& = \frac{\Theta(\zeta(s) + d\zeta(s)\, \eps + \eps\, d^2\zeta(s)\, \eps) - \Theta(\zeta(s))}{\eps}
	\nn\\
	& = \frac{\left\langle \ggrad_\zeta \Theta(\zeta(s)) + 
		\widetilde{d^2\Theta}(\zeta(s)) \, \left[ d\zeta(s)\, \eps + \eps\, d^2\zeta(s)\, \eps \right],\, 
		d\zeta(s)\, \eps + \eps\, d^2\zeta(s)\, \eps 
	\right\rangle_\cV}{\eps}
	\nn\\
	& = \left\langle \ggrad_\zeta \Theta(\zeta(s)),\, \frac{1}{\eps} \, d\zeta(s)\, \eps \right\rangle_\cV + \theta(\eps)
	\nn\\
	& = \left\langle
		\left(-\pdtone{W}{t}(t-s,\xi(s),z),\, \ggrad_x W(t - s,\xi(s), z) \right),\, (1,\dot\xi(s))
	\right\rangle_\cV + \theta(\eps)
	\nn\\
	& = -\pdtone{W}{t}(t-s,\xi(s),z) + \left\langle \ggrad_x W(t - s,\xi(s), z),\, \dot\xi(s) \right\rangle_\half + \theta(\eps)
	\label{eq:chain-Frechet-8a}
\end{align}
where 
\begin{align}
	\theta(\eps)
	& \doteq
	\left\langle \ggrad_\zeta \Theta(\zeta(s)), \, d^2\zeta(s)\, \eps  \right\rangle_\cV + 
	\left\langle \widetilde{d^2\Theta}(\zeta(s)) \, \left[ d\zeta(s)\, \eps + \eps\, d^2\zeta(s)\, \eps \right],\,
	\frac{1}{\eps}\, d\zeta(s)\, \eps + d^2\zeta(s)\, \eps \right\rangle_\cV.
	\label{eq:chain-Frechet-8b}
\end{align}
Taking the limit as $\eps\rightarrow 0$ as per \er{eq:Frechet} and recalling the limit properties of $\widetilde{d^2\Theta}(\zeta(s))$ and $d\zeta(s)$,
\begin{align}
	& \lim_{\eps\rightarrow 0}
	\frac{\left| \omega(s+\eps) - \omega(s) - 
		\left( -\pdtone{W}{t}(t-s,\xi(s),z) + \left\langle \ggrad_x W(t - s,\xi(s), z),\, \dot\xi(s) \right\rangle_\half \right) \eps
	\right|}{|\eps|}
	= \lim_{\eps\rightarrow 0} |\theta(\eps)|
	\nn\\
	&
	\le 
	\lim_{\eps\rightarrow 0} \left| \left\langle \ggrad_\zeta \Theta(\zeta(s)), \, d^2\zeta(s)\, \eps  \right\rangle_\cV \right| 
	+
	\lim_{\eps\rightarrow 0} \left| \left\langle \widetilde{d^2\Theta}(\zeta(s)) \, \left[ d\zeta(s)\, \eps + \eps\, d^2\zeta(s)\, \eps \right],\,
	\frac{1}{\eps}\, d\zeta(s)\, \eps + d^2\zeta(s)\, \eps \right\rangle_\cV \right|
	\nn\\
	& \le 
	\lim_{\eps\rightarrow 0} \left\|  \ggrad_\zeta \Theta(\zeta(s)) \right\|_\cV \left\|  d^2\zeta(s) \, \eps \right\|_\cV +
	\lim_{\eps\rightarrow 0} \left\|\widetilde{d^2\Theta}(\zeta(s)) \, \left[ d\zeta(s)\, \eps + \eps\, d^2\zeta(s)\, \eps \right] \right\|_\cV\,
		\lim_{\eps\rightarrow 0} \left\| \frac{1}{\eps} d\zeta(s)\, \eps + d^2\zeta(s)\, \eps \right\|_\cV
	= 0\,,
	\nn
\end{align}
so that the {\Frechet} derivative of $\omega(\cdot)$ at $s\in(0,t)$ is $\dot\omega(s) = -\pdtone{W}{t}(t-s,\xi(s),z) + \left\langle \ggrad_x W(t - s,\xi(s), z),\, \dot\xi(s) \right\rangle_\half$. Recalling \er{eq:chain-Frechet-1} completes the proof.
\end{proof}

\fi

Lemmas \ref{lem:half-L2-squares} and \ref{lem:fund-calc} facilitate the proof of the verification Theorem \ref{thm:verify}.

\begin{proof}[Theorem \ref{thm:verify}]
Given $\bar t^\mu\in\R_{>0}$ and $z\in\cX_\half$, let $W\in C([0,\bar t^\mu]\times\cX_\half\times\cX_\half;\R)\cap C^1((0,\bar t^\mu)\times\cX_\half\times\cX_\half; \R)$ denote a solution of \er{eq:verify-DPE} -- \er{eq:verify-IC} as per the theorem statement. Fix $\ol{w}\in\cW[0,t]$, $t\in[0,\bar t^\mu)$, and let $\ol{\xi}(\cdot)$ denote the mild solution \er{eq:mild} of \er{eq:dynamics} with $\xi(0) = x\in\cX_\half$ and $w(s) = \ol{w}(s)$, $s\in[0,t]$. Recall that $\ol{\xi}(s)\in\cX_\half$ for all $s\in[0,t]$. Set $p(s) \doteq \ggrad_x W(t-s, \ol{\xi}(s),z)$, and note that $p(s)\in\cX_\half$ for all $s\in[0,t]$. Hence, both terms in $H(\ol{\xi}(s), p(s))$ as per \er{eq:verify-H} are well-defined for all $s\in[0,t]$. 
In particular, Lemma \ref{lem:half-L2-squares} implies that
\begin{align}
	\ts{\frac{1}{2\, m}} \, \left\| \op{I}_\mu^\half\, \opAsqrt\, \ggrad_x W(t-s, \, \ol{\xi}(s), \, z) \right\|_\half^2
	& \ge \langle \ggrad_x W(t-s,\, \ol{\xi}(s),z),\, \ol{w}(s) \rangle_\half - \ts{\frac{m}{2}}\, \|\op{J}^\mu\, \ol{w}(s)\|_\half^2
	\label{eq:verify-1}
\end{align}
for all $s\in[0,t]$. Hence, substituting \er{eq:verify-1} in \er{eq:verify-DPE} yields that
$	0 
	\ge 
	- \pdtone{W}{t}(t-s, \, \ol{\xi}(s), \, z) 
	+ \langle \ggrad_x W(t-s,\, \ol{\xi}(s),z),\,\ol{w}(s) \rangle_\half 
	+ \ts{\frac{\kappa}{2}} \, \left\| \ol{\xi}(s) \right\|_\half^2 - \ts{\frac{m}{2}} \left\|\op{J}^\mu\, \ol{w}(s) \right\|_\half^2
$
for all $s\in[0,t]$. Integrating with respect to $s\in[0,t]$, the Fundamental Theorem of Calculus (Lemma \ref{lem:fund-calc}) implies that
\begin{align}
	0 
	& \ge \int_0^t - \pdtone{W}{t}(t-s, \, \ol{\xi}(s), \, z) 
	+  \langle \ggrad_x W(t-s,\, \ol{\xi}(s),z),\, \ol{w}(s) \rangle_\half \, ds
	+ \int_0^t  \ts{\frac{\kappa}{2}} \, \left\| \ol{\xi}(s) \right\|_\half^2 - \ts{\frac{m}{2}} \left\| \op{J}^\mu\, \ol{w}(s) \right\|_\half^2 \, ds
	\nn\\
	& = W(0,\, \ol{\xi}(t),\, z) - W(t,x,z)
	+ \int_0^t  \ts{\frac{\kappa}{2}} \, \left\| \ol{\xi}(s) \right\|_\half^2 - 
	\ts{\frac{m}{2}} \left\| \op{J}^\mu\, \ol{w}(s) \right\|_\half^2 \, ds\,.
	\nn
\end{align}
Applying \er{eq:verify-IC} and \er{eq:payoff} to this yields $W(t,x,z) \ge J_{m,\psi^{\mu,c}(\cdot,z)}^\mu(t,x,\ol{w})$ as per the first assertion. In order to prove the second assertion, define $w^*$ as per \er{eq:w-star}. By assumption, $\xi^*(s)\in\cX_\half$ and $\ggrad_x W(t,\xi^*(s),z)\in\cX_\half$ for all $s\in[0,t]$. Hence, the argument from \er{eq:verify-1} onwards may be repeated, this time with equality, yielding that $W(t,x,z) = J_{m,\psi^{\mu,c}(\cdot,z)}^\mu(t,x,w^*) = W^{\mu,c}(t,x,z)$ as required.
\end{proof}


\subsubsection{An explicit representation of the fundamental solution $W^{\mu,\infty}$ of \er{eq:W-infty}
-- {\dthree}}

The third step in explicitly characterizing the fundamental solution to approximating optimal control problem \er{eq:W} involves the construction of a functional that satisfies the conditions of Theorem \ref{thm:verify}, followed by an application of the limit argument {\done}. To this end, define the bi-quadratic functional $\Wbreve^{\mu,c}:[0,\bar t^\mu)\times\cX_\half\times\cX_\half\mapsinto\R$ by
\begin{align}
	\Wbreve^{\mu,c}(t,x,z)
	& \doteq
	\demi \langle x,\, \op{P}^{\mu,c}(t)\, x \rangle_\half + 
	\langle x,\, \op{Q}^{\mu,c}(t)\, z \rangle_\half 
	+ \demi\, \langle z,\, \op{R}^{\mu,c}(t)\, z \rangle_\half\,,
	\label{eq:W-explicit}
\end{align}
where $\op{P}^{\mu,c}$, $\op{Q}^{\mu,c}$, $\op{R}^{\mu,c}:[0,\bar t^\mu)\mapsinto\bo(\cX_\half)$ denote operator-valued functions of time that satisfy the operator differential equations
\begin{align}
	\dot{\op{P}}^{\mu,c}(t)
	& = \kappa\, \op{I} + \ts{\frac{1}{m}}\, \op{P}^{\mu,c}(t)\, \opAsqrt\, \op{I}_\mu\, \opAsqrt\,\op{P}^{\mu,c}(t),
	& \hspace{-2mm}
	\op{P}^{\mu,c}(0) & = - c\, \op{M}_{\mu},
	\label{eq:op-P}
	\\
	 \dot{\op{Q}}^{\mu,c}(t)
	& = \ts{\frac{1}{m}}\, \op{P}^{\mu,c}(t)\, \opAsqrt\, \op{I}_\mu\, \opAsqrt\, \op{Q}^{\mu,c}(t),
	& \hspace{-2mm}
	\op{Q}^{\mu,c}(0) & = +c\, \op{M}_\mu,
	\label{eq:op-Q}
	\\
	 \dot{\op{R}}^{\mu,c}(t)
	& = \ts{\frac{1}{m}}\, (\op{Q}^{\mu,c})(t)'\, \opAsqrt\, \op{I}_\mu\, \opAsqrt\, \op{Q}^{\mu,c}(t),
	& \hspace{-2mm}
	\op{R}^{\mu,c}(0) & = -c\,\op{M}_\mu,
	\label{eq:op-R}
\end{align}
in which $\op{I}$ denotes the identity operator on $\cX_\half$, and 
\begin{align}
	\op{M}_\mu
	& \doteq (\op{K}_\mu)'\, \op{K}_\mu\in\bo(\cX_\half)
	\label{eq:op-M-and-K}
\end{align}
is self-adjoint, positive, and boundedly invertible by definition of $\op{K}_\mu$. It may be shown that the functional $\Wbreve^{\mu,c}$ of \er{eq:W-explicit} satisfies the conditions of the verification Theorem \ref{thm:verify}, thereby providing an explicit representation for the value function $W^{\mu,c}$ of \er{eq:W-c} in terms of the operator-valued functions $\op{P}^{\mu,c}$, $\op{Q}^{\mu,c}$, $\op{R}^{\mu,c}$.


\begin{theorem}
\label{thm:explicit}
The functionals $W^{\mu,c}$ of \er{eq:W-c} and $\Wbreve^{\mu,c}$ of \er{eq:W-explicit} are equivalent. That is,
\begin{align}
	W^{\mu,c}(t,x,z) & = \Wbreve^{\mu,c}(t,x,z) 
	\label{eq:explicit-2}
\end{align}
for all $t\in[0,\bar t^\mu)$, $x,z\in\cX_\half$.
\end{theorem}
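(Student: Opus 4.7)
The plan is to apply the verification Theorem \ref{thm:verify} to the bi-quadratic ansatz $\Wbreve^{\mu,c}$ of \er{eq:W-explicit}. If $\Wbreve^{\mu,c}$ satisfies the Hamilton--Jacobi equation \er{eq:verify-DPE} and the initial condition \er{eq:verify-IC}, and if the feedback \er{eq:w-star} admits a mild solution in $\cX_\half$, then the two conclusions of Theorem \ref{thm:verify} give $\Wbreve^{\mu,c}(t,x,z) = W^{\mu,c}(t,x,z)$, which is precisely \er{eq:explicit-2}. Before checking these conditions, I would first establish well-posedness of the operator differential equations \er{eq:op-P}--\er{eq:op-R} on $[0,\bar t^\mu)$. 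The operator $\opAsqrt\,\op{I}_\mu\,\opAsqrt$ is bounded and self-adjoint on $\cX_\half$ (via Lemma \ref{lem:op-I-mu-properties}), so \er{eq:op-P} is a standard operator Riccati equation in $\bo(\cX_\half)$ with self-adjoint initial datum $-c\,\op{M}_\mu$; local existence of a self-adjoint solution is classical, and self-adjointness is preserved by the structure of the equation. Assumption \ref{ass:finite} together with the concavity bound from Theorem \ref{thm:second-difference} bars blow-up on $[0,\bar t^\mu)$ through the usual comparison between the Riccati flow and the value function. Given $\op{P}^{\mu,c}$, the equations \er{eq:op-Q}, \er{eq:op-R} are linear non-autonomous ODEs in $\bo(\cX_\half)$ driven by continuous coefficients, hence globally well-posed on $[0,\bar t^\mu)$.

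Next I would verify the hypotheses of Theorem \ref{thm:verify} directly. Evaluating \er{eq:W-explicit} at $t=0$ and using the initial data together with \er{eq:op-M-and-K} yields
\begin{equation}
\Wbreve^{\mu,c}(0,x,z) = -\tfrac{c}{2}\langle x-z,\, \op{M}_\mu(x-z)\rangle_\half = -\tfrac{c}{2}\|\op{K}_\mu(x-z)\|_\half^2 = \psi^{\mu,c}(x,z), \nn
\end{equation}
confirming \er{eq:verify-IC}. Smoothness in $C([0,\bar t^\mu]\times\cX_\half\times\cX_\half;\R)\cap C^1((0,\bar t^\mu)\times\cX_\half\times\cX_\half;\R)$ follows from continuous differentiability of $\op{P}^{\mu,c},\op{Q}^{\mu,c},\op{R}^{\mu,c}$ combined with the bi-quadratic form of $\Wbreve^{\mu,c}$. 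Using self-adjointness of $\op{P}^{\mu,c}(t)$ with respect to $\langle\cdot,\cdot\rangle_\half$, one computes $\ggrad_x \Wbreve^{\mu,c}(t,x,z) = \op{P}^{\mu,c}(t)\,x + \op{Q}^{\mu,c}(t)\, z$, and then
\begin{align}
H(x,\ggrad_x \Wbreve^{\mu,c})
& = \tfrac{\kappa}{2}\|x\|_\half^2 + \tfrac{1}{2m}\langle \op{P}^{\mu,c}x+\op{Q}^{\mu,c}z,\, \opAsqrt\,\op{I}_\mu\,\opAsqrt(\op{P}^{\mu,c}x+\op{Q}^{\mu,c}z)\rangle_\half. \nn
\end{align}
Expanding and matching coefficients against $\pdtone{\Wbreve^{\mu,c}}{t} = \tfrac12\langle x,\dot{\op{P}}^{\mu,c}x\rangle_\half + \langle x,\dot{\op{Q}}^{\mu,c}z\rangle_\half + \tfrac12\langle z,\dot{\op{R}}^{\mu,c}z\rangle_\half$ in the three bilinear forms of $(x,x)$, $(x,z)$, and $(z,z)$ reproduces exactly the operator equations \er{eq:op-P}, \er{eq:op-Q}, \er{eq:op-R} respectively. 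Hence \er{eq:verify-DPE} holds identically.

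To invoke the second (and sharper) assertion of Theorem \ref{thm:verify}, I would then construct the optimal trajectory. With $\ggrad_x\Wbreve^{\mu,c}$ as above, the feedback law \er{eq:w-star} reads $w^*(s) = \tfrac{1}{m}\opAsqrt\,\op{I}_\mu\,\opAsqrt\bigl(\op{P}^{\mu,c}(t-s)\,\xi^*(s) + \op{Q}^{\mu,c}(t-s)\, z\bigr)$, which, when substituted into \er{eq:dynamics}, produces a linear non-autonomous abstract Cauchy problem on $\cX_\half$ with bounded and continuous generator. Standard evolution-family theory gives a unique mild solution $\xi^*\in C([0,t];\cX_\half)$ starting from $\xi^*(0)=x$, and the induced $w^*\in\cW[0,t]$ is admissible. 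Theorem \ref{thm:verify} then yields both $\Wbreve^{\mu,c}(t,x,z)\ge J_{m,\psi^{\mu,c}(\cdot,z)}^\mu(t,x,w)$ for all $w\in\cW[0,t]$ and $\Wbreve^{\mu,c}(t,x,z) = J_{m,\psi^{\mu,c}(\cdot,z)}^\mu(t,x,w^*)$, so $\Wbreve^{\mu,c}(t,x,z) = W^{\mu,c}(t,x,z)$ on $[0,\bar t^\mu)\times\cX_\half\times\cX_\half$. The main obstacle is the Riccati step: keeping $\op{P}^{\mu,c}(t)$ self-adjoint and globally defined on $[0,\bar t^\mu)$. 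The boundedness of $\opAsqrt\,\op{I}_\mu\,\opAsqrt$ on $\cX_\half$ (which relies crucially on $\mu>0$) is what reduces the analysis to the bounded-operator Riccati theory and, via Theorem \ref{thm:second-difference}, prevents a finite-time blow-up before $\bar t^\mu$.
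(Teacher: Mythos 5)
Your proof follows the same route as the paper's: treat $\Wbreve^{\mu,c}$ as an ansatz, verify the HJB equation \er{eq:verify-DPE} and initial condition \er{eq:verify-IC} by computing $\pdtone{\Wbreve^{\mu,c}}{t}$ and $\ggrad_x\Wbreve^{\mu,c}$ and matching coefficients against the operator ODEs \er{eq:op-P}--\er{eq:op-R}, then invoke the verification Theorem \ref{thm:verify}. You additionally fill in two steps the paper's proof glosses over—well-posedness of the operator Riccati system on $[0,\bar t^\mu)$ (which the paper instead addresses later, in Lemma \ref{lem:explicit}, via explicit Riesz-spectral solutions) and existence of the mild solution $\xi^*$ needed for the second, sharper conclusion of Theorem \ref{thm:verify}—so your argument is, if anything, more complete than the published one.
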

\begin{proof} 
As indicated above, it is sufficient to demonstrate that $\breve W^{\mu,c}$ of \er{eq:W-explicit} satisfies the conditions of Theorem 3.6. To this end, fix $t\in[0,\bar t^\mu)$, $x,\, z\in\cX_\half$. Firstly, in order to show that $\Wbreve^{\mu,c}$ satisfies \er{eq:verify-DPE}, note that $\Wbreve^{\mu,c}(\cdot,x,z)$ and $\Wbreve^{\mu,c}(t,\cdot,z)$ are {\Frechet} differentiable. In particular, 
\begin{align}
	\pdtone{\Wbreve^{\mu,c}}{t}(t,x,z)
	& = 
	\demi\, \langle x,\, \dot{\op{P}}^{\mu,c}(t)\, x \rangle_\half + \langle x,\, \dot{\op{Q}}^{\mu,c}(t)\, z \rangle_\half
	+ \demi\, \langle z,\, \dot{\op{R}}^{\mu,c}(t)\, z \rangle_\half\,,
	\label{eq:explicit-1a}
	\\
	\ggrad_x \Wbreve^c(t,x,z)
	& = \op{P}^{\mu,c}(t)\, x + \op{Q}^{\mu,c}(t)\, z\,.
	\label{eq:explicit-1b}
\end{align}
With a view to verifying that \er{eq:verify-DPE} holds, further note that
\begin{align}
	\ts{\frac{\kappa}{2}}\, \|x\|_\half^2
	& = \demi\, \langle x,\, \kappa\, \op{I}\, x \rangle_\half\,,
	\label{eq:explicit-1c}
	\\
	\ts{\frac{1}{2\, m}} \, \| \op{I}_\mu^\half\, \opAsqrt\, \ggrad_x \Wbreve^{\mu,c}(t,x,z) \|_\half^2
	& = \demi\, \langle x,\, \ts{\frac{1}{m}}\, \op{P}^{\mu,c}(t)\, \opAsqrt\, \op{I}_\mu\, \opAsqrt\,\op{P}^{\mu,c}(t)\, x \rangle_\half
	+ \langle x,\,  \ts{\frac{1}{m}}\, \op{P}^{\mu,c}(t)\, \opAsqrt\, \op{I}_\mu\, \opAsqrt\, \op{Q}^{\mu,c}(t)\, z \rangle_\half
	\nn\\
	& \qquad
	+ \demi\, \langle z,\, \ts{\frac{1}{m}}\, (\op{Q}^{\mu,c}(t))'\, \opAsqrt\, \op{I}_\mu\, \opAsqrt\, \op{Q}^{\mu,c}(t)\, z \rangle_\half\,,
	\label{eq:explicit-1d}
\end{align}
where the second equality also exploits the fact that $\opbreve{P}^{\mu,c}(t)$ is self-adjoint. Hence, substitution of \er{eq:explicit-1a} \er{eq:explicit-1b}, \er{eq:explicit-1c}, \er{eq:explicit-1d} in the right-hand side of \er{eq:verify-DPE} yields the bi-quadratic functional (in $x$ and $z$)
\begin{align}
	\demi\, \langle x,\, \op{X}(t)\, x \rangle_\half + 
	\langle x,\, \op{Y}(t)\, z \rangle_\half +
	\demi\, \langle z,\, \op{Z}(t)\, z \rangle_\half
	\label{eq:explicit-DPE}
\end{align}
in which $\op{X}(t) \doteq -\dot{\op{P}}^{\mu,c}(t) + \kappa\, \op{I} + \ts{\frac{1}{m}}\, \op{P}^{\mu,c}(t)\, \opAsqrt\, \op{I}_\mu\, \opAsqrt\,\op{P}^{\mu,c}(t)$, $\op{Y}(t)\doteq - \dot{\op{Q}}^{\mu,c}(t) + \ts{\frac{1}{m}}\, \op{P}^{\mu,c}(t)\, \opAsqrt\, \op{I}_\mu\, \opAsqrt\, \op{Q}^{\mu,c}(t)$, and $\op{Z}(t)\doteq -\dot{\op{R}}^{\mu,c}(t) + \ts{\frac{1}{m}}\, (\op{Q}^{\mu,c}(t))'\, \opAsqrt\, \op{I}_\mu\, \opAsqrt\, \op{Q}^{\mu,c}(t)$. However, \er{eq:op-P}, \er{eq:op-Q}, \er{eq:op-R} imply that these three operator-valued functions are identically zero, so that \er{eq:explicit-DPE} must be zero. Hence, the explicit functional $\Wbreve^c$ of \er{eq:W-explicit} satisfies \er{eq:verify-DPE}.

Secondly, \er{eq:explicit-1b}, in which $\op{P}^{\mu,c}(t),\, \op{Q}^{\mu,c}(t):\cX_\half\mapsinto\cX_\half$, implies that $\ggrad_x \Wbreve^{\mu,c}(t,x,z)\in\cX_\half$.

Finally, in order to show that $\Wbreve^{\mu,c}$ satisfies the initial condition \er{eq:verify-IC}, note by inspection of \er{eq:W-explicit}, the initial conditions of \er{eq:op-P}, \er{eq:op-Q}, \er{eq:op-R}, and the identities $\op{J}\, \opAsqrt = \op{I}$ and $\op{M}_\mu = (\op{K}_\mu)'\, \op{K}_\mu$, that
\begin{align}
	\Wbreve^{\mu,c}(0,x,z)
	& = \demi\, \langle x,\, \op{P}^{\mu,c}(0)\,  x \rangle_\half
		+ \langle  x,\, \op{Q}^{\mu,c}(0)\, z \rangle_\half
		+ \demi\, \langle z,\, \op{R}^{\mu,c}(0)\, z \rangle_\half
	\nn\\
	& = \demi\, \langle x,\, (-c\, \op{M}_\mu) \, x \rangle_\half
		+ \langle x,\, (+c\, \op{M}_\mu) \, z \rangle_\half
		+ \demi\, \langle z,\, (-c\, \op{M}_\mu)\, z \rangle_\half
	= - \ts{\frac{c}{2}} \, \| \op{K}_\mu\, (x - z) \|_\half^2
	= \psi^{\mu,c}(x,z)\,,
	\nn
\end{align}
as required by \er{eq:verify-IC}. That is, the explicit functional $\Wbreve^{\mu,c}$ of \er{eq:W-explicit} satisfies the conditions \er{eq:verify-DPE}, \er{eq:verify-IC} of Theorem \ref{thm:verify}. Consequently, $\Wbreve^{\mu,c}(t,x,z) = W^{\mu,c}(t,x,z)$. 
\end{proof}

Theorem \ref{thm:explicit} provides a representation for $W^{\mu,c}$ of \er{eq:W-c}, via $\breve{W}^{\mu,c}$ of \er{eq:W-explicit}, in terms of  operator-valued functions $\op{P}^{\mu,c}$, $\op{Q}^{\mu,c}$, $\op{R}^{\mu,c}$ satisfying \er{eq:op-P}, \er{eq:op-Q}, \er{eq:op-R}. Candidate definitions for these functions are Riesz-spectral operator-valued functions of the form \er{eq:op-F}, see Appendix \ref{app:Riesz-functions}. In particular, define an operator-valued function $\opbreve{P}^{\mu,c}$ of the form \er{eq:op-F} by
\begin{align}
	\opbreve{P}^{\mu,c}(t)\, x
	& \doteq \sum_{n=1}^\infty p_n^{\mu,c}(t)\, \langle x,\, \tilde\varphi_n \rangle_\half\, \tilde\varphi_n\,,
	\label{eq:op-P-def}
\end{align}
where $\{p_n^{\mu,c}(t)\}_{n\in\N}$ denotes the set of eigenvalues of $\opbreve{P}^{\mu,c}(t)$ corresponding to its eigenvectors $\{\tilde\varphi_n\}_{n\in\N}$ defined by the Riesz basis \er{eq:basis-Riesz} for $\cX_\half$. Motivated by the initial condition $\opbreve{P}^{\mu,c}(0) = -c\, \op{M}_\mu$ specified in \er{eq:op-P}, restrict $\op{M}_\mu$ to be a Riesz-spectral operator of the form \er{eq:op-Riesz}, with simple (i.e. non-repeated) eigenvalues $\{m_n^\mu\}_{n\in\N}$. Note in particular that $p_n^{\mu,c}(0) = -c\, m_n^\mu$. Analogously define operator-valued functions $\opbreve{Q}^{\mu,c}$ and $\opbreve{R}^{\mu,c}$. Select the respective eigenvalues of the operators in the range of these three operator-valued functions to be
\begin{gather}
	p_n^{\mu,c}(t)
	\doteq - \ts{\frac{1}{\alpha_n^\mu}}\, \frac{1}{\tan\left( \omega_n^\mu\, t + \theta_n^{\mu,c} \right)}\,,
	\qquad
	q_n^{\mu,c}(t)
	\doteq + \ts{\frac{1}{\alpha_n^\mu}}\, \left( \frac{1}{1 + ( \frac{1}{\alpha_n^\mu\, m_n^\mu\, c})^2} \right)^\half\, 
	\frac{1}{\sin\left( \omega_n^\mu\, t + \theta_n^{\mu,c} \right)}\,,
	\label{eq:ode-p-sol}
	\\
	r_n^{\mu,c}(t)
	\doteq - \ts{\frac{1}{\alpha_n^\mu}} \left( \frac{1}{1 + (\ts{\frac{1}{\alpha_n^\mu\, m_n^\mu\, c}})^2} \right) 
	\left[ \ts{\frac{1}{\alpha_n^\mu\, m_n^\mu\, c}} + \frac{1}{\tan \left(\omega_n^\mu\, t + \theta_n^{\mu,c} \right)} \right],	
	\label{eq:ode-r-sol}
\end{gather}
for all $\mu\in(0,1]$, $t\in[0,\bar t^{\mu})$, $c\in\R_{>0}$,  where 
\begin{equation}
	\begin{aligned}
	\alpha_n^\mu & \doteq [ ( \ts{\frac{1}{m\, \kappa}})\, \lambda_n^\mu]^\half\,,
	&
	\lambda_n^\mu &\doteq \frac{\lambda_n}{1 + \mu^2\, \lambda_n}\,,
	&
	\bar t^\mu & \doteq \mu\, (\ts{\frac{2\, m}{\kappa}})^\half\,,
	\\
	\omega_n^\mu & \doteq [ (\ts{\frac{\kappa}{m}})\, \lambda_n^\mu]^\half\,,
	&
	\lambda_n & \doteq (\ts{\frac{n\, \pi}{L}})^2\,,
	& 
	\theta_n^{\mu,c} & \doteq \tan^{-1} (\ts{\frac{1}{\alpha_n^\mu\, m_n^\mu\, c}})\,.
	\end{aligned}
	\label{eq:alpha-omega-lambda}
\end{equation}
Note by inspection that $\{\lambda_n\}$, $\{\lambda_n^\mu\}$, $\{\alpha_n^\mu\}$, $\{\omega_n^\mu\}$ define strictly increasing sequences in $n\in\N$. In particular,
\begin{align}
	\lambda_1^1
	& \le \lambda_1^\mu \le \lambda_n^\mu \le \lambda_\infty^\mu\doteq \ts{\frac{1}{\mu^2}}\,,
	\label{eq:alpha-omega-lambda-order}
\end{align}
with corresponding inequalities holding for $\alpha_n^\mu$, $\omega_n^\mu$. 

In order to establish that the Riesz-spectral operator-valued functions $\opbreve{P}^{\mu,c}$, $\opbreve{Q}^{\mu,c}$, $\opbreve{R}^{\mu,c}$ defined by \er{eq:op-P-def}--\er{eq:ode-r-sol} satisfy the respective operator-valued initial value problems \er{eq:op-P}--\er{eq:op-R}, it is important to first establish differentiability of these operator-valued functions, given a specific choice of initial condition operator $\op{M}_\mu$. This can be achieved by application of Lemma \ref{lem:op-F}. In particular, motivated by condition {\em (i)} of Lemma \ref{lem:op-F} (concerning strict monotonicity of sequences $\{p_n^{\mu,c}(t)\}$, $\{q_n^{\mu,c}(t)\}$, $\{r_n^{\mu,c}(t)\}$, $n\in\N$), it is convenient by inspection of \er{eq:ode-p-sol}--\er{eq:alpha-omega-lambda} to select $\op{M}_\mu$ to be a Riesz-spectral operator of the form \er{eq:op-Riesz} with eigenvalues $\{m_n^\mu\}_{n\in\N}$ satisfying $\frac{1}{\alpha_n^\mu\, m_n^\mu} = \sqrt{m\, \kappa} = \frac{\sqrt{\lambda_n^\mu}}{\alpha_n^\mu}$. That is,
\begin{align}
	\op{M}_\mu\, x
	& = \sum_{n=1}^\infty m_n^\mu \, \langle x,\, \tilde\varphi_n \rangle_\half\, \tilde\varphi_n\,,
	\quad
	m_n^\mu
	\doteq \frac{1}{\sqrt{\lambda_n^\mu}} = \left( \frac{1 + \mu^2\, \lambda_n}{\lambda_n} \right)^\half
	&& x\in\dom(\op{M}_\mu) = \cX_\half\,.
	\label{eq:eig-m}
\end{align}
(Note that $\op{M}_\mu\in\bo(\cX_\half)$ as $\{ \mu_n^\mu \}_{n\in\N}$ is bounded.)
The eigenvalues \er{eq:ode-p-sol}--\er{eq:ode-r-sol} subsequently simplify to
\begin{gather}
	p_n^{\mu,c}(t)
	= -\ts{\frac{1}{\alpha_n^\mu}} \, \frac{1}{\tan\left( \omega_n^\mu\, t + \tan^{-1} \left( \ts{\frac{\sqrt{m\, \kappa}}{c}} \right) \right)}\,,
	\quad
	q_n^{\mu,c}(t)
	= + \ts{\frac{1}{\alpha_n^\mu}}\, \left( \frac{1}{1 + ( \frac{\sqrt{m\, \kappa}}{c})^2} \right)^\half\, 
	\frac{1}{\sin\left( \omega_n^\mu\, t + \tan^{-1} \left( \ts{\frac{\sqrt{m\, \kappa}}{c}} \right) \right)}\,,
	\label{eq:eig-p}
	\\
	r_n^{\mu,c}(t)
	= - \ts{\frac{1}{\alpha_n^\mu}} \left( \frac{1}{1 + (\ts{\frac{\sqrt{m\, \kappa}}{c}})^2} \right) 
	\left[ \ts{\frac{\sqrt{m\, \kappa}}{c}} + \frac{1}{\tan \left(\omega_n^\mu\, t + \tan^{-1} \left( \ts{\frac{\sqrt{m\, \kappa}}{c}} \right) \right)} \right].	
	\label{eq:eig-r}
\end{gather}
For convenience, define
\begin{align}
	\bar c
	& \doteq \sqrt{m\, \kappa}\, \tan \sqrt{2}\,.
	\label{eq:c-bar}
\end{align}
\begin{lemma}
\label{lem:explicit}
Given $\mu\in(0,1]$, $c\in(\bar c,\infty)$, $\bar c\in\R_{>0}$ as per \er{eq:c-bar}, and $\op{M}_\mu\in\bo(\cX_\half)$ as per \er{eq:eig-m}, the Riesz-spectral operator-valued functions $\opbreve{P}^{\mu,c},\, \opbreve{Q}^{\mu,c},\, \opbreve{R}^{\mu,c}$ of the form \er{eq:op-P-def} and defined by the respective eigenvalues \er{eq:eig-p}--\er{eq:eig-r}
%
%
satisfy $\opbreve{P}^{\mu,c}(t),\, \opbreve{Q}^{\mu,c}(t),\, \opbreve{R}^{\mu,c}(t)\in\bo(\cX_\half)$ for every $t\in[0,\bar t^\mu)$, while $\opbreve{P}^{\mu,c},\, \opbreve{Q}^{\mu,c},\, \opbreve{R}^{\mu,c}:[0,\bar t^\mu)\mapsinto \bo(\cX_\half)$ are {\Frechet} differentiable and satisfy the initial value problems \er{eq:op-P}--\er{eq:op-R}.
\end{lemma}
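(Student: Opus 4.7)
The plan is to apply Lemma \ref{lem:op-F} of Appendix \ref{app:Riesz-functions} to each of the three Riesz-spectral operator-valued functions, and then to verify the operator ODEs \er{eq:op-P}--\er{eq:op-R} by reducing them to scalar ODEs on the basis $\{\tilde\varphi_n\}$ of \er{eq:basis-Riesz}.

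The main technical step, and the principal obstacle, is a uniform boundedness estimate for the scalar eigenvalue sequences on $[0,\bar t^\mu)$. The governing observation is
$$
    \omega_n^\mu\, \bar t^\mu
    = \mu\, \sqrt{2\, \lambda_n^\mu}
    < \mu\, \sqrt{2\, \lambda_\infty^\mu}
    = \sqrt{2}
    \qquad \forall\, n\in\N,
$$
which follows directly from \er{eq:alpha-omega-lambda}--\er{eq:alpha-omega-lambda-order}. Combined with the standing hypothesis $c > \bar c = \sqrt{m\, \kappa}\, \tan\sqrt{2}$ (equivalent to $\tan^{-1}(\sqrt{m\, \kappa}/c) < \pi/2 - \sqrt{2}$), this places the argument $\omega_n^\mu\, t + \theta_n^{\mu,c}$ of the $\tan$ and $\sin$ factors in \er{eq:eig-p}--\er{eq:eig-r} strictly inside $(0,\pi/2)$, uniformly in both $n\in\N$ and $t\in[0,\bar t^\mu)$. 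Together with $\alpha_n^\mu \ge \alpha_1^1 > 0$, this yields uniform boundedness of $\{p_n^{\mu,c}(t)\}$, $\{q_n^{\mu,c}(t)\}$, $\{r_n^{\mu,c}(t)\}$, and hence $\opbreve{P}^{\mu,c}(t), \opbreve{Q}^{\mu,c}(t), \opbreve{R}^{\mu,c}(t) \in \bo(\cX_\half)$ for all $t\in[0,\bar t^\mu)$. Without the quantitative constraint $c > \bar c$, the argument of $\tan$ could approach $\pi/2$ as $n\to\infty$, breaking uniform boundedness and blocking the application of Lemma \ref{lem:op-F}.

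Next I would verify the remaining hypotheses of Lemma \ref{lem:op-F}, in particular the strict monotonicity in $n$ of each eigenvalue sequence (condition \emph{(i)} referenced in the paragraph preceding the lemma statement). This follows from strict monotonicity of $\{\alpha_n^\mu\}$ and $\{\omega_n^\mu\}$, together with the monotonicity of $\tan$ and $\sin$ on the interval $(0,\pi/2)$ obtained above, applied to the closed forms \er{eq:eig-p}--\er{eq:eig-r}. Invoking Lemma \ref{lem:op-F} then delivers Fréchet differentiability of $\opbreve{P}^{\mu,c}, \opbreve{Q}^{\mu,c}, \opbreve{R}^{\mu,c} : [0,\bar t^\mu)\mapsinto\bo(\cX_\half)$, with the derivatives obtained by termwise differentiation of the spectral expansions.

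It remains to verify the ODEs themselves. A short computation using $\op{A}\, \varphi_n = \lambda_n\, \varphi_n$ and $\opAsqrt\, \tilde\varphi_n = \varphi_n$ gives $\opAsqrt\, \op{I}_\mu\, \opAsqrt\, \tilde\varphi_n = \lambda_n^\mu\, \tilde\varphi_n$, so that \er{eq:op-P}--\er{eq:op-R} decouple on $\{\tilde\varphi_n\}$ into the scalar system
$$
\begin{aligned}
    \dot p_n^{\mu,c} & = \kappa + \tfrac{\lambda_n^\mu}{m}\, (p_n^{\mu,c})^2, &\qquad p_n^{\mu,c}(0) &= -c\, m_n^\mu, \\
    \dot q_n^{\mu,c} & = \tfrac{\lambda_n^\mu}{m}\, p_n^{\mu,c}\, q_n^{\mu,c}, &\qquad q_n^{\mu,c}(0) &= +c\, m_n^\mu, \\
    \dot r_n^{\mu,c} & = \tfrac{\lambda_n^\mu}{m}\, (q_n^{\mu,c})^2, &\qquad r_n^{\mu,c}(0) &= -c\, m_n^\mu,
\end{aligned}
$$
for each $n\in\N$. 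That \er{eq:eig-p}--\er{eq:eig-r} solve this system is then a routine trigonometric exercise using $1 + \cot^2 = \csc^2$, the identities $\omega_n^\mu/\alpha_n^\mu = \kappa$ and $\lambda_n^\mu/(m\, (\alpha_n^\mu)^2) = \kappa$ built into \er{eq:alpha-omega-lambda}, together with $m_n^\mu = 1/\sqrt{\lambda_n^\mu}$ from \er{eq:eig-m} for the initial conditions.
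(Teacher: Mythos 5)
Your proposal follows essentially the same route as the paper's proof: both verify conditions (i)--(iii) of Lemma \ref{lem:op-F} (with the angle bound $\omega_n^\mu\, t + \theta_n^{\mu,c} \in (0,\pi/2)$, enforced by the hypothesis $c>\bar c$, as the crux of uniform boundedness and monotonicity), then invoke Lemma \ref{lem:op-F} for {\Frechet} differentiability, and finally verify \er{eq:op-P}--\er{eq:op-R} by decoupling onto the eigenbasis $\{\tilde\varphi_n\}$ via $\opAsqrt\, \op{I}_\mu\, \opAsqrt\, \tilde\varphi_n = \lambda_n^\mu\, \tilde\varphi_n$. The proposal is correct and matches the paper's structure, differing only in presentation.
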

\begin{proof}
The proof proceeds by demonstrating that the conditions of Lemma \ref{lem:op-F} hold for each of the Riesz-spectral operator-valued functions $\opbreve{P}^{\mu,c}$, $\opbreve{Q}^{\mu,c}$, $\opbreve{R}^{\mu,c}$, thereby demonstrating their {\Frechet} differentiability. Satisfaction of the initial value problems \er{eq:op-P}--\er{eq:op-R} then follows by inspection.

In order verify that condition {\em (i)} of Lemma \ref{lem:op-F} holds for each of the Riesz-spectral operator-valued functions $\opbreve{P}^{\mu,c}$, $\opbreve{Q}^{\mu,c}$, $\opbreve{R}^{\mu,c}$, strict monotonicity of their respective eigenvalues must be demonstrated. To this end, by inspection of the eigenvalues \er{eq:eig-m} of $\op{M}_\mu$, it is straightforward to show via \er{eq:alpha-omega-lambda} that
\begin{align}
	\omega_n^\mu\, t\in[0,\sqrt{2})\,, \quad
	\omega_n^\mu\, t + \theta_n^{\mu,c} \in \left( \tan^{-1} \left( \ts{\frac{\sqrt{m\, \kappa}}{c}} \right),\,
			\sqrt{2} + \tan^{-1} \left( \ts{\frac{\sqrt{m\, \kappa}}{\bar c}} \right) \right)
		\subset (0,\ts{\frac{\pi}{2}} )
	\label{eq:angle-argument-ok}
\end{align}
for all $\mu\in(0,1]$, $t\in[0,\bar t^\mu)$, $c\in(\bar c,\infty)$, where $\bar c\in\R_{>0}$ is as per \er{eq:c-bar}. Hence, by inspection of \er{eq:eig-p}--\er{eq:eig-r}, the eigenvalues of operators $\opbreve{P}^{\mu,c}(t)$, $\opbreve{Q}^{\mu,c}(t)$, $\opbreve{Q}^{\mu,c}(t)$ are well-defined for all $\mu\in(0,1]$, $t\in[0,\bar t^\mu)$, $c\in(\bar c,\infty)$, and $n\in\N$. Furthermore, strict monotonicity of the sequences $\{\alpha_n^\mu\}$ and $\{\omega_n^\mu\}$ in $n\in\N$, and strict monotonicity of the trigonometric functions $\tan$ and $\sin$ on $[0,\ts{\frac{\pi}{2}})$, implies that the sequences $\{p_n^{\mu,c}(t)\}$, $\{q_n^{\mu,c}(t)\}$, $\{r_n^{\mu,c}(t)\}$ are strictly monotone in $n\in\N$ for all $\mu\in(0,1]$, $t\in[0,\bar t^\mu)$, $c\in(\bar c,\infty)$. It is straightforward to see that the respective closures of these sets of eigenvalues are totally disconnected. Hence, condition {\em (i)} of Lemma \ref{lem:op-F} holds for each of the Riesz-spectral operator-valued functions $\opbreve{P}^{\mu,c}$, $\opbreve{Q}^{\mu,c}$, $\opbreve{R}^{\mu,c}$.

In order verify that condition {\em (ii)} of Lemma \ref{lem:op-F} holds for each of the Riesz-spectral operator-valued functions $\opbreve{P}^{\mu,c}$, $\opbreve{Q}^{\mu,c}$, $\opbreve{R}^{\mu,c}$, first recall that by \er{eq:angle-argument-ok} that the functions $p_n^{\mu,c}$, $q_n^{\mu,c}$, $r_n^{\mu,c}$ of \er{eq:eig-p}--\er{eq:eig-r} are continuous on $[0,\bar t^\mu)$, for every $\mu\in(0,1]$, $c\in(\bar c,\infty)$, and $n\in\N$. These functions are (twice) differentiable, with
\begin{align}
	\dot p_n^{\mu,c}(t)
	& = \kappa + \ts{\frac{1}{m}}\, \lambda_n^\mu\, (p_n^{\mu,c}(t))^2\,,
	& \ddot p_n^{\mu,c}(t) & = \ts{\frac{2}{m}} \, \lambda_n^\mu\, p_n^{\mu,c}(t)\, \dot p_n^{\mu,c}(t)\,,
	& p_n^{\mu,c}(0) & = -c\, m_n^\mu\,,
	\label{eq:dot-p}
	\\
	\dot q_n^{\mu,c}(t)
	& = \ts{\frac{1}{m}} \, \lambda_n^\mu\, p_n^{\mu,c}(t) \, q_n^{\mu,c}(t)\,,
	& \ddot q_n^{\mu,c}(t) 
			& = \ts{\frac{1}{m}}\, \lambda_n^\mu\, \left( p_n^{\mu,c}(t) \, \dot q_n^{\mu,c}(t) + \dot p_n^{\mu,c}(t) \, q_n^{\mu,c}(t) \right)\,,
	& q_n^{\mu,c}(0) & = + c\, m_n^\mu\,,
	\label{eq:dot-q}
	\\
	\dot r_n^{\mu,c}(t)
	& = \ts{\frac{1}{m}}\, \lambda_n^\mu\, (q_n^{\mu,c}(t))^2\,,
	& \ddot r_n^{\mu,c}(t) & = \ts{\frac{2}{m}}\, \lambda_n^\mu\, q_n^{\mu,c}(t) \, \dot q_n^{\mu,c}(t)\,,
	& r_n^{\mu,c}(0) & = -c\, m_n^\mu\,,
	\label{eq:dot-r}
\end{align}
for all $\mu\in(0,1]$, $t\in[0,\bar t^\mu)$, $c\in(\bar c,\infty)$, and $n\in\N$. Hence, the first and second derivatives \er{eq:dot-p}--\er{eq:dot-r} must also be continuous by inspection. That is, condition {\em (ii)} of Lemma \ref{lem:op-F} holds for each of the Riesz-spectral operator-valued functions $\opbreve{P}^{\mu,c}$, $\opbreve{Q}^{\mu,c}$, $\opbreve{R}^{\mu,c}$.

In order verify that condition {\em (iii)} of Lemma \ref{lem:op-F} holds for each of the Riesz-spectral operator-valued functions $\opbreve{P}^{\mu,c}$, $\opbreve{Q}^{\mu,c}$, $\opbreve{R}^{\mu,c}$, note by inspection of \er{eq:alpha-omega-lambda-order} and \er{eq:eig-p}--\er{eq:dot-r} that
\begin{gather}
	|p_n^{\mu,c}(t)|
	\le \ts{\frac{1}{\alpha_1^\mu}} \, \frac{c}{\sqrt{m\, \kappa}} \doteq M_p^{\mu,c}<\infty\,,
	\quad
	|\dot p_n^{\mu,c}(t)|
	\le \kappa + \ts{\frac{1}{m}} \, \lambda_\infty^\mu\, (M_p^{\mu,c})^2 \doteq M_{\dot p}^{\mu,c}<\infty\,,
	\nn\\
	|\ddot p_n^{\mu,c}(t)|
	\le \ts{\frac{2}{m}}\, \lambda_\infty^\mu\, M_p^{\mu,c}\, M_{\dot p}^{\mu,c} \doteq M_{\ddot p}^{\mu,c}<\infty\,,
	\nn
\end{gather}
for all $\mu\in(0,1]$, $t\in[0,\bar t^\mu)$, $c\in(\bar c,\infty)$, and uniformly in $n\in\N$, with analogous bounds holding for $q_n^{\mu,c}(t)$ and $r_n^{\mu,c}(t)$, and their first and second derivatives. That is, condition {\em (iii)} of Lemma \ref{lem:op-F} holds for each of the Riesz-spectral operator-valued functions $\opbreve{P}^{\mu,c}$, $\opbreve{Q}^{\mu,c}$, $\opbreve{R}^{\mu,c}$.

In summary, Lemma \ref{lem:op-F} thus implies that the Riesz-spectral operator-valued functions $\opbreve{P}^{\mu,c},\, \opbreve{Q}^{\mu,c},\,\opbreve{R}^{\mu,c}:[0,\bar t^\mu)\mapsinto\bo(\cX_\half)$ of the form \er{eq:op-P-def} and defined by the eigenvalues \er{eq:eig-p}--\er{eq:eig-r} are {\Frechet} differentiable. Furthermore, their {\Frechet} derivatives are also Riesz-spectral operators, and take the form \er{eq:op-F-dot}. Hence, combining \er{eq:op-F-dot} and \er{eq:dot-p}, and recalling Lemma \ref{lem:compose-Riesz},
\begin{align}
	\dot{\opbreve{P}}^{\mu,c}(t)\, x
	& = \sum_{n=1}^\infty \dot p_n^{\mu,c}(t) \, \langle x,\, \tilde\varphi_n\rangle_\half\, \tilde\varphi_n
	= \sum_{n=1}^\infty \left( \kappa + \ts{\frac{1}{m}}\, \left( \frac{\lambda_n}{1 + \mu^2\, \lambda_n} \right)  (p_n^{\mu,c}(t))^2 \right)
											\langle x,\, \tilde\varphi_n\rangle_\half\, \tilde\varphi_n
	\nn\\
	& = \kappa\, \op{I}\, x + \ts{\frac{1}{m}}\, \sum _{n=1}^\infty \left( 
						p_n^{\mu,c}(t)\, \sqrt{\lambda_n} \, (1 + \mu^2\, \lambda_n)^{-1}\, \sqrt{\lambda_n} \, p_n^{\mu,c}(t)
					\right) \langle x,\, \tilde\varphi_n\rangle_\half\, \tilde\varphi_n
	\nn\\
	& = \left( \kappa\, \op{I} + \ts{\frac{1}{m}}\, \opbreve{P}^{\mu,c}(t)\, \opAsqrt\, \op{I}_\mu\, \opAsqrt\, \opbreve{P}^{\mu,c}(t) \right) x\,,
	\label{eq:opdot-P-computation}
\end{align}
for all $\mu\in(0,1]$, $t\in(0,\bar t^\mu)$, $c\in(\bar c,\infty)$, $x\in\cX_\half$. Recalling the definition \er{eq:eig-m} of the eigenvalues of $\op{M}^\mu$,
\begin{align}
	\opbreve{P}^{\mu,c}(0)\, x
	& = \sum_{n=1}^\infty p_n^{\mu,c}(0) \, \langle x,\, \tilde\varphi_n\rangle_\half\, \tilde\varphi_n
	= \sum_{n=1}^\infty -c\, m_n^\mu \, \langle x,\, \tilde\varphi_n\rangle_\half\, \tilde\varphi_n
	= -c \, \op{M}_\mu\, x\,,
	\label{eq:op-P-zero}
\end{align}
for all $\mu\in(0,1]$, $c\in(\bar c,\infty)$, $x\in\cX_\half$. That is, \er{eq:opdot-P-computation} and \er{eq:op-P-zero} imply that $\opbreve{P}^{\mu,c}$ satisfies the initial value problem \er{eq:op-P}. Analogous calculations similarly imply that $\opbreve{Q}^{\mu,c}$ and $\opbreve{R}^{\mu,c}$ satisfy \er{eq:op-Q} and \er{eq:op-R} respectively. 
\end{proof}

Given the role of the eigenvalues \er{eq:eig-m} of the operator $\op{M}_\mu$ in the definition of operators $\opbreve{P}^{\mu,c}$, $\opbreve{Q}^{\mu,c}$, $\opbreve{R}^{\mu,c}$, it is convenient to construct a closed-form for $\op{M}_\mu$, and subsequently $\op{K}_\mu$ of \er{eq:op-M-and-K}. 

\begin{lemma}
\label{lem:explicit-op-M-and-K}
$\op{M}_\mu$, $\op{K}_\mu$ of \er{eq:op-M-and-K}, \er{eq:eig-m} are bounded, self-adjoint, positive, and boundedly invertible, with
\begin{align}
	\op{M}_\mu\, x
	& = (\op{A}^{-1} + \mu^2\, \op{I})^\half\, x\,,
	&& x\in\dom(\op{M}_\mu) = \cX_\half\,,
	\label{eq:explicit-op-M}
	\\
	\op{K}_\mu\, x
	& = \op{M}_\mu^\half\, x\,,
	&& x\in\dom(\op{K}_\mu) = \cX_\half\,.
	\label{eq:explicit-op-K}
\end{align}
\end{lemma}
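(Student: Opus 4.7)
The plan is to read off all the claimed properties from the Riesz-spectral representation already at hand and then match eigenvalues to obtain the explicit closed forms. Throughout, I would work in the $\cX_\half$-orthonormal Riesz basis $\{\tilde\varphi_n\}_{n\in\N}$ from \er{eq:basis-Riesz}, on which the relevant operators diagonalize simultaneously.

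First I would verify that $\op{M}_\mu$ as defined by \er{eq:eig-m} lies in $\bo(\cX_\half)$ and is self-adjoint, positive, and boundedly invertible. This is immediate from the eigenvalue formula $m_n^\mu = (\lambda_n^{-1}+\mu^2)^{1/2}$: the sequence is real and positive, and satisfies
\[
    0 < \mu \le m_n^\mu \le \left(\lambda_1^{-1} + \mu^2\right)^{1/2} < \infty
    \qquad\forall n\in\N,\, \mu\in(0,1],
\]
so by the standard Riesz-spectral criteria (Appendix \ref{app:Riesz} and Lemma \ref{lem:op-A-properties} invoked in the same style as for $\op{A}$ and $\opAsqrt$), $\op{M}_\mu$ is bounded, self-adjoint, positive, and has a bounded inverse with eigenvalues $\{(m_n^\mu)^{-1}\}$.

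Next I would establish \er{eq:explicit-op-M} by matching spectra. Since $\op{A}\tilde\varphi_n = \lambda_n\tilde\varphi_n$ (as $\tilde\varphi_n$ is a scalar multiple of $\varphi_n$), the operator $\op{A}^{-1}+\mu^2\op{I}$ acts on $\tilde\varphi_n$ by the scalar $\lambda_n^{-1}+\mu^2 > 0$, so by the Riesz-spectral functional calculus its unique positive self-adjoint square root $(\op{A}^{-1}+\mu^2\op{I})^{1/2}$ has eigenvalues exactly $(\lambda_n^{-1}+\mu^2)^{1/2} = m_n^\mu$ on the same basis. Two bounded operators on $\cX_\half$ that share a common Riesz-spectral expansion with identical eigenvalues on the same basis coincide, which gives \er{eq:explicit-op-M}.

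Finally, for \er{eq:explicit-op-K} I would take $\op{K}_\mu \doteq \op{M}_\mu^{1/2}$, the unique positive self-adjoint square root of $\op{M}_\mu$, which exists and is bounded with bounded inverse by the first step and the same functional calculus (its eigenvalues are $\{(m_n^\mu)^{1/2}\}$, bounded above and below away from zero). Then $\op{K}_\mu$ is self-adjoint, so $(\op{K}_\mu)' = \op{K}_\mu$ and $(\op{K}_\mu)'\op{K}_\mu = \op{M}_\mu$, confirming consistency with \er{eq:op-M-and-K} and the bounded invertibility requirement from \er{eq:psi}. The only mildly delicate point is the justification that the spectral identifications extend from the basis vectors to all of $\cX_\half$; this is resolved by citing the Riesz basis expansion together with the dominated convergence / continuity of the operators on partial sums, exactly as is done in Appendix \ref{app:Riesz} for $\op{A}$ itself, so no genuine obstacle arises.
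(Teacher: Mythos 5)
Your proposal is correct, and the conclusion it reaches agrees with the paper's, but the route is somewhat different. The paper establishes \er{eq:explicit-op-M} by building $\op{M}_\mu$ operator-theoretically as the composition $\op{I}_\mu^{-\half}\op{J}$ (via Lemma \ref{lem:compose-Riesz}), then computing $\op{M}_\mu^2 = \op{J}(\op{I}+\mu^2\op{A})\op{J} = \op{A}^{-1}+\mu^2\op{I}$ and invoking uniqueness of positive square roots; for bounded invertibility it identifies $\op{M}_\mu^{-2} = \op{A}\op{I}_\mu = \ts{\frac{1}{\mu^2}}(\op{I}-\op{I}_\mu)\in\bo(\cX_\half)$, citing Lemma \ref{lem:op-I-mu-properties}. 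You instead argue purely spectrally: you obtain boundedness and bounded invertibility from the two-sided uniform bounds $\mu < m_n^\mu \le (\lambda_1^{-1}+\mu^2)^{1/2}$ (so both $\{m_n^\mu\}$ and $\{(m_n^\mu)^{-1}\}$ are bounded sequences), and you identify $\op{M}_\mu$ with $(\op{A}^{-1}+\mu^2\op{I})^{1/2}$ by matching eigenvalues on the common Riesz basis and appealing to the fact that a bounded operator on $\cX_\half$ is determined by its action on that basis. Your approach is more compact and avoids the composition bookkeeping; the one thing you should still make explicit is the step that the unique positive square root of a bounded positive self-adjoint operator sends an eigenvector with eigenvalue $\lambda>0$ to an eigenvector with eigenvalue $\sqrt{\lambda}$ (which follows, e.g., from $(T^{1/2}-\sqrt{\lambda})(T^{1/2}+\sqrt{\lambda})v = (T-\lambda)v = 0$ and injectivity of $T^{1/2}+\sqrt{\lambda}$). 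The paper's more explicit computation has the side benefit of producing the identity $\op{M}_\mu^{-2}=\ts{\frac{1}{\mu^2}}(\op{I}-\op{I}_\mu)$, which ties $\op{M}_\mu$ neatly to $\op{I}_\mu$, but for the purposes of this lemma the two proofs are of equal weight.
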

\begin{proof}
Recall that $\op{A}$ of \er{eq:op-A} is a Riesz-spectral operator of the form \er{eq:op-Riesz}, with eigenvalues $\{\lambda_n\}_{n\in\N}$ (see Lemmas \ref{lem:eigenvalues} and \ref{lem:specific-op-Riesz}). Consequently, noting the form \er{eq:identity-Riesz} of the identity $\op{I}$, it follows that $\op{I}_\mu^{-1} = \op{I} + \mu^2\, \op{A}$ is also a Riesz-spectral operator of the same form \er{eq:op-Riesz}, defined on $\cX_0$ via \er{eq:op-I-mu}, with eigenvalues $\{1 + \mu^2\, \lambda_n \}_{n\in\N}$. Furthermore, as $\op{I}_\mu^{-1}$ is self-adjoint and positive (by Lemma \ref{lem:op-A-properties}), it also has a unique self-adjoint and positive square root $\op{I}_\mu^{-\half}:\cX_0\mapsinto\cX_\half$, which is also a Riesz-spectral operator of the form \er{eq:op-Riesz} by Corollary \ref{cor:unique-inverse-Riesz}. Similarly, $\opAsqrt$ and hence $\op{J}\doteq (\opAsqrt)^{-1}$ are Riesz-spectral operators of the same form (see Lemma \ref{lem:specific-op-Riesz}). In particular,
\begin{align}
	\op{I}_\mu^{-\half}\, x
	& = \sum_{n=1}^\infty (1 + \mu^2\, \lambda_n)^\half \, \langle x,\, \tilde\varphi_n\rangle_\half\, \tilde\varphi_n\,,
	&& x\in\dom(\op{I}_\mu^{-\half}) = \cX_0\,, \ \ran(\op{I}_\mu^{-\half}) = \cX_\half\,,
	\label{eq:op-inv-I-mu-Riesz}
	\\
	\op{J}\, x
	& = \sum_{n=1}^\infty \frac{1}{(\lambda_n)^\frac{1}{2}} \, \langle x,\, \tilde\varphi_n\rangle_\half\, \tilde\varphi_n\,,
	&& x\in\dom(\op{J}) = \cX_\half\,, \ \ran(\op{J}) = \cX_0\,.
	\label{eq:op-J-Riesz}
\end{align}
Applying Lemma \ref{lem:compose-Riesz}, the composition $\ophat{M}_\mu \doteq \op{I}_\mu^{-\half}\, \op{J}:\cX_\half\mapsinto\cX_\half$ is also a Riesz-spectral operator, with
\begin{align}
	\ophat{M}_\mu\, x
	& \doteq \op{I}_\mu^{-\half}\, \op{J} \, x
	= \sum_{n=1}^\infty \left( \frac{1 + \mu^2\, \lambda_n}{\lambda_n} \right)^\half \, \langle x,\, \tilde\varphi_n \rangle_\half\, \tilde\varphi_n
	= \sum_{n=1}^\infty m_n^\mu \, \langle x,\, \tilde\varphi_n \rangle_\half\, \tilde\varphi_n
	= \op{M}_\mu\, x\,,
	&& x\in\dom(\ophat{M}_\mu) = \cX_\half\,,
	\label{eq:op-M-hat-Riesz}
\end{align}
where the third and fourth equalities follow by definition \er{eq:eig-m} of the eigenvalues $\{m_n^\mu\}_{n\in\N}$ of $\op{M}_\mu$. Furthermore, again applying Lemma \ref{lem:compose-Riesz}, and the fact that $\op{J}$ and $\op{A}$ are Riesz-spectral operators,
\begin{align}
	\op{M}_\mu^2 \, x = \op{M}_\mu\, \op{M}_\mu \, x
	& = \sum_{n=1}^\infty \left( \frac{1+\mu^2\, \lambda_n}{\lambda_n}\right) \langle x,\, \tilde\varphi_n \rangle_\half\, \tilde\varphi_n
	= \sum_{n=1}^\infty \left( \frac{1}{\sqrt{\lambda_n}} \, ( 1 + \mu^2\, \lambda_n) \, \frac{1}{\sqrt{\lambda_n}} \right)
			\langle x,\, \tilde\varphi_n \rangle_\half\, \tilde\varphi_n
	\nn\\
	& = \op{J}\, (\op{I} + \mu^2 \, \op{A}) \, \op{J}\, x = (\op{A}^{-1} + \mu^2\, \op{I})\, x\,,
	\qquad\qquad x\in\dom(\op{M}_\mu^2) = \cX_\half\,.
	\label{eq:ophat-M-squared}
\end{align}
(Note that this equivalently follows from \er{eq:op-M-hat-Riesz} via commutation of $\op{I}_\mu^{-\half}$ and $\op{J}$ in $\op{M}_\mu^2 = \ophat{M}_\mu^2 = \op{I}_\mu^{-\half}\, \op{J}\, \op{I}_\mu^{-\half}\, \op{J}$.)
Applying Lemma \ref{lem:op-A-properties}, $\op{A}^{-1} + \mu^2\, \op{I}$ is bounded, self-adjoint, and positive, and so has a unique, bounded, self-adjoint, and positive square root defined on $\cX_\half$. That is, $\op{M}_\mu$ is equivalently defined by \er{eq:explicit-op-M}, and it is bounded, self-adjoint, and positive. Consequently, a unique $\op{K}_\mu \doteq \op{M}_\mu^\half\in\bo(\cX_\half)$ exists as per \er{eq:op-M-and-K} and \er{eq:explicit-op-K}, with the additional properties that it is also self-adjoint and positive. 

It remains to be shown that $\op{M}_\mu$ and $\op{K}_\mu$ are boundedly invertible. To this end, note that $\op{M}_\mu^{2} = \op{I}_\mu^{-1}\, \op{A}^{-1}$ by commuting the left-hand $\op{J}$ with $\op{I} + \mu^2\, \op{A}$ in the fourth equality of \er{eq:ophat-M-squared}. Hence, $\op{M}_\mu^2$ is boundedly invertible, as $\op{M}_\mu^{-2} = \op{A}\, \op{I}_\mu = \ts{\frac{1}{\mu^2}}\, (\op{I} - \op{I}_\mu) \in \bo(\cX_\half)$ by Lemma \ref{lem:op-I-mu-properties}. Also, as $\op{M}_\mu^2$ is positive and self-adjoint, so is $\op{M}_\mu^{-2}$. Consequently, $\op{M}_\mu^{-2}$ has a unique, bounded, positive, and self-adjoint square-root and fourth root, namely $\op{M}_\mu^{-1}$ and $\op{M}_\mu^{-\half} = \op{K}_\mu^{-1}$. That is, $\op{M}_\mu$ and $\op{K}_\mu$ are boundedly invertible as required.
\end{proof}

With Lemma \ref{lem:explicit-op-M-and-K} in place, $\op{K}_\mu$ of \er{eq:explicit-op-M} satisfies the properties required by definition \er{eq:psi} and the proof of Lemma \ref{lem:eps-ball}. Consequently, an explicit form for the fundamental solution \er{eq:W-infty} may be established.

\if{false}

These observations combined with \er{eq:eig-m} and Lemma \ref{lem:compose-Riesz} motivate the definition of an operator $\ophat{M}_\mu$ by 
\begin{align}
	\ophat{M}_\mu\, x
	& \doteq 
	\op{I}_\mu^{-\half}\, \op{J}\, \op{I}_\mu^{-\half}\, \op{J}\, x = (\op{I}_\mu^{-\half}\, \op{J})^2\, x\,,
	&& x\in\dom(\ophat{M}_\mu) \doteq \cX_\half\,.
	\label{eq:op-M-hat}
\end{align}
Note further that $\op{I}_\mu^{-\half} = (\op{I}+\mu^2\, \op{A})^\half$ and $\op{J} = \op{A}^{-\half}$ commute on $\cX_\half$, so that
\begin{align}
	\ophat{M}_\mu\, x
	& = \op{I}_\mu^{-\half} \, \op{I}_\mu^{-\half} \, \op{J}\, \op{J}\, x 
	= (\op{I} + \mu^2\, \op{A})\, \op{A}^{-1}\, x
	= (\op{A}^{-1} + \mu^2\, \op{I})\, x\,,
	&& x\in\cX_\half\,.
	\label{eq:op-M-hat-commute}
\end{align}
Hence, $\ophat{M}_\mu$ is bounded, self-adjoint, and positive by Lemma \ref{lem:op-A-properties}. 
Consequently, it has a unique, bounded, self-adjoint, and positive square root, defined on $\cX_\half$ as per \er{eq:op-M-hat} by $\ophat{M}_\mu^\half = \op{I}_\mu^{-\half}\, \op{J}$.
Combining \er{eq:op-inv-I-mu-Riesz}, \er{eq:op-J-Riesz}, \er{eq:op-M-hat} via Lemma \ref{lem:compose-Riesz}, $\ophat{M}_\mu$ and $\ophat{M}_\mu^\half$ are both Riesz-spectral operators of the form \er{eq:op-Riesz} on $\cX_\half$, with
\begin{align}
	\ophat{M}_\mu^\half\, x
	& = \sum_{n=1}^\infty \left( \frac{1 + \mu^2\, \lambda_n}{\lambda_n} \right)^\half \langle x,\, \tilde\varphi_n\rangle_\half\, \tilde\varphi_n\,,
	&& \dom(\ophat{M}_\mu^\half) = \cX_\half\,.
	\label{eq:op-M-hat-sqrt}
\end{align}
(The domains of both operators may be checked via Lemma \ref{lem:dom-Riesz}.) By inspection of \er{eq:op-M-hat-commute}, $\ophat{M}_\mu$ is invertible, with 
$\ophat{M}_\mu^{-1} \equiv \op{A}\, \op{I}_\mu = \frac{1}{\mu^2} \, (\op{I} - \op{I}_\mu) \in \bo(\cX_\half)$. Hence, $\ophat{M}_\mu^\half$ is also boundedly invertible. So, by comparison of \er{eq:eig-m} and \er{eq:op-M-hat-sqrt}, an explicit closed form for $\op{M}_\mu$ of \er{eq:op-P}, \er{eq:op-Q}, \er{eq:op-R} follows immediately as
\begin{align}
	\op{M}_\mu\, x
	& = \ophat{M}_\mu^\half\, x = (\op{A}^{-1} + \mu^2\, \op{I})^\half\, x\,,
	&& x\in\dom(\op{M}_\mu) \doteq \cX_\half\,,
	\label{eq:op-M-mu}
\end{align}
which is bounded, self-adjoint, positive, and boundedly invertible. Consequently, $\op{K}_\mu \doteq \op{M}_\mu^\half\in\bo(\cX_\half)$ exists and may be selected in payoff \er{eq:psi} and associated definitions. 

\fi


\begin{theorem}
\label{thm:Wbreve-ok}
With $\op{K}_\mu$ as per \er{eq:explicit-op-K} in \er{eq:psi}, $\mu\in(0,1]$, and $c\in(\bar c,\infty)$, $\bar c\in\R_{>0}$ as per \er{eq:c-bar}, the value functional $W^{\mu,c}$ of \er{eq:W-c} takes the explicit form of $\Wbreve^{\mu,c}$ of \er{eq:W-explicit} with the operator-valued functions $\op{P}^{\mu,c}$, $\op{Q}^{\mu,c}$, $\op{R}^{\mu,c}$ given by the Riesz-spectral operator-valued functions $\opbreve{P}^{\mu,c},\, \opbreve{Q}^{\mu,c},\, \opbreve{R}^{\mu,c}:[0,\bar t^\mu)\mapsinto\bo(\cX_\half)$ of the form \er{eq:op-P-def} with respective eigenvalues defined by \er{eq:ode-p-sol}--\er{eq:ode-r-sol}. Furthermore, the value functional $W^{\mu,\infty}$ of \er{eq:W-infty} defining the fundamental solution of the approximating optimal control problem \er{eq:W} via \er{eq:W-from-fund-conjecture} is given by 
\begin{align}
	W^{\mu,\infty}(t,x,z) 
	& \doteq \lim_{c\rightarrow\infty} \Wbreve^{\mu,c}(t,x,z)
	=  \demi\, \langle x,\, \opbreve{P}^{\mu,\infty}(t)\, x\rangle_\half + \langle x,\, \opbreve{Q}^{\mu,\infty}(t)\, z \rangle_\half
						+ \demi\, \langle z,\, \opbreve{R}^{\mu,\infty}(t)\, z \rangle_\half\,,
	\label{eq:explicit-W-infty}
\end{align}
for all $t\in(\delta,\, \bar t^\mu)$, $x,z\in\cX_\half$, given any $\delta\in(0,\bar t^\mu)$, with $\opbreve{P}^{\mu,\infty}, \, \opbreve{Q}^{\mu,\infty},\, \opbreve{R}^{\mu,\infty}:(\delta,\bar t^\mu)\mapsinto\bo(\cX_\half)$ defined by
\begin{align}
	\opbreve{P}^{\mu,\infty}(t)\, x
	& \doteq \sum_{n=1}^\infty p_n^{\mu,\infty}(t)\, \langle x,\, \tilde\varphi_n\rangle_\half\, \tilde\varphi_n\,,
	&& x\in\dom(\opbreve{P}^{\mu,\infty}(t)) = \cX_\half\,,
	\label{eq:op-P-infty}
	\\
	\opbreve{Q}^{\mu,\infty}(t)\, z
	& \doteq \sum_{n=1}^\infty q_n^{\mu,\infty}(t)\, \langle z,\, \tilde\varphi_n\rangle_\half\, \tilde\varphi_n\,,
	&& z\in\dom(\opbreve{Q}^{\mu,\infty}(t)) = \cX_\half\,,
	\label{eq:op-Q-infty}
	\\
	\opbreve{R}^{\mu,\infty}(t)\, z
	& \doteq \sum_{n=1}^\infty r_n^{\mu,\infty}(t)\, \langle z,\, \tilde\varphi_n\rangle_\half\, \tilde\varphi_n\,,
	&& z\in\dom(\opbreve{R}^{\mu,\infty}(t)) = \cX_\half\,,
	\label{eq:op-R-infty}
\end{align}
and
\begin{align}
	p_n^{\mu,\infty}(t)
	& \doteq -\ts{\frac{1}{\alpha_n^\mu}} \, \frac{1}{\tan(\omega_n^\mu\, t)}\,,
	\quad
	q_n^{\mu,\infty}(t)
	\doteq +\ts{\frac{1}{\alpha_n^\mu}} \, \frac{1}{\sin(\omega_n^\mu\, t)}\,,
	\quad
	r_n^{\mu,\infty}(t)
	\doteq -\ts{\frac{1}{\alpha_n^\mu}} \, \frac{1}{\tan(\omega_n^\mu\, t)}\,.
	\label{eq:eig-pqr-infty}
\end{align}
\end{theorem}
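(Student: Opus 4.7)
My plan is to combine the three principal results already available: Lemma \ref{lem:explicit}, the verification machinery of Theorem \ref{thm:explicit}, and the limit Theorem \ref{thm:limit}.

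First I would dispatch the finite-$c$ assertion. Lemma \ref{lem:explicit} guarantees that the Riesz-spectral operator-valued functions $\opbreve{P}^{\mu,c}, \opbreve{Q}^{\mu,c}, \opbreve{R}^{\mu,c}$ with eigenvalues \er{eq:eig-p}--\er{eq:eig-r} take values in $\bo(\cX_\half)$, are {\Frechet} differentiable on $[0,\bar t^\mu)$, and solve the operator initial value problems \er{eq:op-P}--\er{eq:op-R}. Substituting these into $\Wbreve^{\mu,c}$ of \er{eq:W-explicit} produces a bi-quadratic functional of the required regularity, so Theorem \ref{thm:explicit} delivers $W^{\mu,c}(t,x,z) = \Wbreve^{\mu,c}(t,x,z)$ on $[0,\bar t^\mu) \times \cX_\half \times \cX_\half$, proving the first claim.

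For the second assertion, I would pass to the limit $c \to \infty$ term-wise in the spectral expansion. Theorem \ref{thm:limit} combined with the first assertion gives $W^{\mu,\infty}(t,x,z) = \lim_{c \to \infty} \Wbreve^{\mu,c}(t,x,z)$. Using Parseval with the Riesz basis $\{\tilde\varphi_n\}$ of $\cX_\half$,
\begin{align*}
\Wbreve^{\mu,c}(t,x,z)
& = \sum_{n=1}^\infty \Big( \demi \, p_n^{\mu,c}(t) \, |\langle x, \tilde\varphi_n \rangle_\half|^2
  + q_n^{\mu,c}(t) \, \langle x, \tilde\varphi_n \rangle_\half \, \langle z, \tilde\varphi_n \rangle_\half
  + \demi \, r_n^{\mu,c}(t) \, |\langle z, \tilde\varphi_n \rangle_\half|^2 \Big).
\end{align*}
Inspection of \er{eq:eig-p}--\er{eq:eig-r} and \er{eq:alpha-omega-lambda} shows that $\theta_n^{\mu,c} \to 0$, $(1 + (\sqrt{m\kappa}/c)^2)^{-\half} \to 1$, and $\sqrt{m\kappa}/c \to 0$ all uniformly in $n$ as $c \to \infty$; consequently each eigenvalue converges pointwise to the expression in \er{eq:eig-pqr-infty}. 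Taking these limits inside the sum produces \er{eq:explicit-W-infty} with $\opbreve{P}^{\mu,\infty}, \opbreve{Q}^{\mu,\infty}, \opbreve{R}^{\mu,\infty}$ as in \er{eq:op-P-infty}--\er{eq:eig-pqr-infty}.

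The main obstacle is justifying the interchange of limit and infinite sum, which is precisely why the statement excludes $t = 0$. For any fixed $t \in (\delta, \bar t^\mu)$, the sequence $\omega_n^\mu t$ lies in a compact subinterval of $(0, \sqrt{2})$ uniformly in $n \in \N$, since $\{\omega_n^\mu\}$ is bounded above by $\omega_\infty^\mu = \mu^{-1}\sqrt{\kappa/m}$ (so that $\omega_n^\mu t < \omega_\infty^\mu \bar t^\mu = \sqrt{2}$) and bounded below by $\omega_1^\mu > 0$. Moreover, for all $c$ sufficiently large the perturbation $\theta_n^{\mu,c}$ is uniformly small, so $\omega_n^\mu t + \theta_n^{\mu,c}$ remains in a compact subinterval of $(0, \pi/2)$ uniformly in $n$ and $c$. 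Hence $|\sin(\omega_n^\mu t + \theta_n^{\mu,c})|$ and $|\tan(\omega_n^\mu t + \theta_n^{\mu,c})|$ are bounded away from zero, and together with the uniform bound $1/\alpha_n^\mu \le 1/\alpha_1^\mu$ this yields a finite constant $C_\delta(t)$ with $|p_n^{\mu,c}(t)|, |q_n^{\mu,c}(t)|, |r_n^{\mu,c}(t)| \le C_\delta(t)$ for all $n \in \N$ and all $c$ sufficiently large. Since $\sum_n |\langle x, \tilde\varphi_n \rangle_\half|^2 = \|x\|_\half^2$ and similarly for $z$, Cauchy--Schwarz provides a summable dominating sequence and the dominated convergence theorem on the counting measure justifies term-by-term passage to the limit, completing the proof.
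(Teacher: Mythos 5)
Your proof is correct and establishes the same theorem, but it takes a somewhat different route at the key step. Both you and the paper start identically: the finite-$c$ assertion is Lemma \ref{lem:explicit} fed into Theorem \ref{thm:explicit}, and the transition to $c=\infty$ rests on Theorem \ref{thm:limit} via $W^{\mu,\infty} = \lim_{c\to\infty}W^{\mu,c} = \lim_{c\to\infty}\Wbreve^{\mu,c}$. Where you diverge is in \emph{how} the limit of the bi-quadratic functional is computed. The paper expands each eigenvalue $p_n^{\mu,c}(t),q_n^{\mu,c}(t),r_n^{\mu,c}(t)$ in a Taylor series in $1/c$ and derives per-mode error bounds of the form $|p_n^{\mu,c}(t)-p_n^{\mu,\infty}(t)|\le c^{-1}\,\Delta_p^\mu(\delta)$, uniform in $n\in\N$ and $t\in(\delta,\bar t^\mu)$; this yields operator-norm convergence $\|\opbreve{P}^{\mu,c}(t)-\opbreve{P}^{\mu,\infty}(t)\|_\half\le c^{-1}\Delta_p^\mu(\delta)$, from which boundedness of $\opbreve{P}^{\mu,\infty}(t)$ and convergence of the quadratic forms follow by the triangle inequality. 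You instead expand $\Wbreve^{\mu,c}$ as a scalar spectral series, observe that each eigenvalue converges pointwise as $c\to\infty$ (using that $\theta_n^{\mu,c}=\tan^{-1}(\sqrt{m\kappa}/c)$ is actually $n$-independent under \er{eq:eig-m}, so the uniformity you invoke is automatic), bound all eigenvalues by a constant $C_\delta(t)$ since the arguments $\omega_n^\mu t + \theta_n^{\mu,c}$ stay in a compact subinterval of $(0,\pi/2)$, and invoke dominated convergence on the counting measure with dominator $C_\delta(t)(|\langle x,\tilde\varphi_n\rangle_\half|^2 + |\langle x,\tilde\varphi_n\rangle_\half||\langle z,\tilde\varphi_n\rangle_\half| + |\langle z,\tilde\varphi_n\rangle_\half|^2)$, which is summable by Parseval and Cauchy--Schwarz. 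Your approach is somewhat more elementary (no explicit Taylor remainder estimates) and delivers exactly what the theorem claims; the paper's approach is more quantitative and in particular gives a $O(1/c)$ rate in operator norm, uniformly in $t\in(\delta,\bar t^\mu)$, which is strictly more information than the pointwise-in-$t$ convergence your DCT argument produces. One small completeness point: to secure the last sentence of the theorem statement ($\opbreve{P}^{\mu,\infty},\opbreve{Q}^{\mu,\infty},\opbreve{R}^{\mu,\infty}:(\delta,\bar t^\mu)\to\bo(\cX_\half)$) you should note explicitly that the uniform eigenvalue bound persists in the limit, so that, e.g., $\|\opbreve{P}^{\mu,\infty}(t)x\|_\half^2 = \sum_n|p_n^{\mu,\infty}(t)|^2|\langle x,\tilde\varphi_n\rangle_\half|^2 \le C_\delta(t)^2\|x\|_\half^2$ via Lemma \ref{lem:dom-Riesz}; this is implicit in your argument but worth one line.
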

\begin{proof}
The first assertion concerning the explicit form of $W^{\mu,c}$ follows by Theorem \ref{thm:explicit}, Lemma \ref{lem:explicit}, and the specified functional $\Wbreve^{\mu,c}$. In order to prove the second assertion concerning the explicit form \er{eq:explicit-W-infty} of the limit value function $W^{\mu,\infty}$ of \er{eq:W-infty}, the operator-valued functions $\opbreve{P}^{\mu,c},\, \opbreve{Q}^{\mu,c},\, \opbreve{R}^{\mu,c}:[0,\bar t^\mu)\mapsinto\bo(\cX_\half)$ must be shown to converge (either strongly or uniformly) to their respective candidate limits defined by \er{eq:op-P-infty}--\er{eq:op-R-infty}, whereupon Theorem \ref{thm:limit} can be used to complete the proof. To this end, fix $\mu\in(0,1]$ and $\delta\in(0,\bar t^\mu)$, and note that the eigenvalues of $\opbreve{P}^{\mu,c}(t),\, \opbreve{Q}^{\mu,c}(t),\, \opbreve{R}^{\mu,c}(t)\in\bo(\cX_\half)$, given by  \er{eq:eig-p}--\er{eq:eig-r}, satisfy (after straightforward calculation of the respective Taylor series expansions with respect to $1/c$)
\begin{align}
	|p_n^{\mu,c}(t) - p_n^{\mu,\infty}(t)|
	& \le \ts{\frac{1}{c}}\, \Delta_p^{\mu}(\delta)\,,
	&& 
	\Delta_p^\mu(\delta) \doteq \frac{\sqrt{m\, \kappa}}{\alpha_1^\mu} \, \frac{1}{\sin^2(\omega_1^\mu\, \delta)}\,,
	\nn\\
	|q_n^{\mu,c}(t) - q_n^{\mu,\infty}(t)|
	& \le \ts{\frac{1}{c}}\, \Delta_q^{\mu}(\delta) \,,
	&& \Delta_q^{\mu}(\delta) \doteq
	\frac{\sqrt{m\, \kappa}}{\alpha_1^\mu} \, \frac{1}{\sin(\omega_1^\mu\, \delta)} \left[
		\frac{\sqrt{m\, \kappa}}{\bar c} +  \frac{1}{\tan(\omega_1^\mu\, \delta)}
	\right],
	\nn\\
	|r_n^{\mu,c}(t) - r_n^{\mu,\infty}(t)|
	& \le \ts{\frac{1}{c}}\, \Delta_r^{\mu}(\delta) \,,
	&& \Delta_r^\mu(\delta) \doteq
	\frac{\sqrt{m\, \kappa}}{\alpha_1^\mu} \left[ 1 + \frac{1}{\tan(\omega_1^\mu\, \delta)} + \frac{1}{\sin^2(\omega_1^\mu\, \delta)} \right]
	\nn
\end{align}
for all $n\in\N$, $t\in(\delta,\bar t^\mu)$, $c\in(\bar c,\infty)$. Hence, \er{eq:op-P-def}, \er{eq:eig-p}--\er{eq:eig-r}, \er{eq:op-P-infty}--\er{eq:op-R-infty}, \er{eq:eig-pqr-infty}, and \er{eq:identity-Riesz} imply that
\begin{align}
	\| \opbreve{P}^{\mu,c}(t) - \opbreve{P}^{\mu,\infty}(t) \|_\half
	& \le \ts{\frac{1}{c}}\, \Delta_p^{\mu}(\delta)\,,
	\quad
	\| \opbreve{Q}^{\mu,c}(t) - \opbreve{Q}^{\mu,\infty}(t) \|_\half
	\le \ts{\frac{1}{c}}\, \Delta_q^{\mu}(\delta)\,,
	\quad
	\| \opbreve{R}^{\mu,c}(t) - \opbreve{R}^{\mu,\infty}(t) \|_\half
	\le \ts{\frac{1}{c}}\, \Delta_r^{\mu}(\delta)\,,
	\nn
\end{align}
for all $\mu\in(0,1]$, $\delta\in(0,\bar t^\mu)$, $t\in(\delta,\bar t^\mu)$, and $c\in(\bar c,\infty)$, where $\|\cdot\|_\half$ denotes the induced operator norm in $\cX_\half$. Consequently, Lemma \ref{lem:explicit} and the triangle inequality imply that $\opbreve{P}^{\mu,\infty}, \, \opbreve{Q}^{\mu,\infty},\, \opbreve{R}^{\mu,\infty}:(\delta,\bar t^\mu)\mapsinto\bo(\cX_\half)$. Furthermore, the Riesz-spectral operator-valued functions $\opbreve{P}^{\mu,c},\, \opbreve{Q}^{\mu,c},\, \opbreve{R}^{\mu,c}:(\delta,\bar t^\mu)\mapsinto\bo(\cX_\half)$ converge uniformly to $\opbreve{P}^{\mu,\infty},\, \opbreve{Q}^{\mu,\infty},\, \opbreve{R}^{\mu,\infty}:(\delta,\bar t^\mu)\mapsinto\bo(\cX_\half)$ as $c\rightarrow\infty$.
\end{proof}

\begin{corollary}
\label{cor:optimal-input}
Under the conditions of Theorem \ref{thm:Wbreve-ok}, the state feedback characterization of the optimal input $w^*$ of \er{eq:w-star} corresponding to the fundamental solution $W^{\mu,\infty}(t,x,y)$ of \er{eq:W-infty} of the approximating optimal control problem \er{eq:W} is given by
\begin{align}
	w^*(s)
	& = k(s,\xi^*(s))\,,
	\qquad
	k(s,x)
	= \ts{\frac{1}{m}}\, \opAsqrt\, \op{I}_\mu\, \opAsqrt \left( \opbreve{P}^{\mu,\infty}(t-s) \, x + \opbreve{Q}^{\mu,\infty}(t-s)\, y \right)
	\label{eq:optimal}
\end{align}
for all $\mu\in(0,1]$, $\delta\in(0,\bar t^\mu)$, $t\in(\delta,\bar t^\mu)$, $s\in[0,t-\delta)$, and $x,y\in\cX_\half$, where $\xi^*$ is the corresponding optimal trajectory generated by the open-loop dynamics \er{eq:dynamics} in feedback with policy $k$ of \er{eq:optimal}.
\end{corollary}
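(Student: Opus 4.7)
The plan is to substitute the explicit quadratic representation of $W^{\mu,\infty}$ from Theorem \ref{thm:Wbreve-ok} into the feedback characterization \er{eq:w-star} supplied by the verification Theorem \ref{thm:verify}. The algebra is immediate; the real work lies in justifying the two analytic inputs, namely the Fréchet gradient of $W^{\mu,\infty}$ and the existence of the closed-loop mild trajectory $\xi^*$.

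First I would compute $\ggrad_x W^{\mu,\infty}(t-s,x,y)$ directly from \er{eq:explicit-W-infty}. Because $\opbreve{P}^{\mu,\infty}(t-s)$ is Riesz-spectral with real eigenvalues $\{p_n^{\mu,\infty}(t-s)\}$ relative to the orthonormal basis $\{\tilde\varphi_n\}$ of $\cX_\half$, it is self-adjoint on $\cX_\half$. Differentiating the bi-quadratic form \er{eq:explicit-W-infty} in $x$, exactly as in the derivation of \er{eq:explicit-1b}, gives
\begin{align*}
\ggrad_x W^{\mu,\infty}(t-s,x,y) = \opbreve{P}^{\mu,\infty}(t-s)\, x + \opbreve{Q}^{\mu,\infty}(t-s)\, y,
\end{align*}
an element of $\cX_\half$ by Theorem \ref{thm:Wbreve-ok}, valid for $t-s\in(\delta,\bar t^\mu)$. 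Substituting this gradient into the feedback policy $k$ of \er{eq:w-star} yields precisely the right-hand side of \er{eq:optimal}.

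To promote this expression into a genuinely optimal state feedback, I would apply Theorem \ref{thm:verify} at each finite $c\in(\bar c,\infty)$, which is licensed by Lemma \ref{lem:explicit} and Theorem \ref{thm:Wbreve-ok}: $W^{\mu,c}$ satisfies \er{eq:verify-DPE}--\er{eq:verify-IC} with terminal penalty $\psi^{\mu,c}$, so the resulting finite-$c$ optimal feedback is $w^{*,c}(s)=\tfrac{1}{m}\opAsqrt\op{I}_\mu\opAsqrt(\opbreve{P}^{\mu,c}(t-s)\,x+\opbreve{Q}^{\mu,c}(t-s)\,y)$. The uniform convergence $\opbreve{P}^{\mu,c}\to\opbreve{P}^{\mu,\infty}$ and $\opbreve{Q}^{\mu,c}\to\opbreve{Q}^{\mu,\infty}$ on $(\delta,\bar t^\mu)$ established in the proof of Theorem \ref{thm:Wbreve-ok}, combined with the value-function limit of Theorem \ref{thm:limit}, then justifies passing $c\to\infty$ in the feedback law and delivers \er{eq:optimal}.

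The main technical obstacle is discharging the mild-solution hypothesis in the second assertion of Theorem \ref{thm:verify}, namely the existence of $\xi^*\in C([0,t-\delta];\cX_\half)$ satisfying the closed-loop equation $\dot\xi^*(s)=k(s,\xi^*(s))$. Here $s\mapsto\tfrac{1}{m}\opAsqrt\op{I}_\mu\opAsqrt\opbreve{P}^{\mu,\infty}(t-s)$ is a norm-continuous $\bo(\cX_\half)$-valued function on $[0,t-\delta]$, and $s\mapsto\tfrac{1}{m}\opAsqrt\op{I}_\mu\opAsqrt\opbreve{Q}^{\mu,\infty}(t-s)\,y$ is a continuous $\cX_\half$-valued inhomogeneity, both uniformly bounded on $[0,t-\delta]$ by Theorem \ref{thm:Wbreve-ok} and Lemma \ref{lem:op-I-mu-properties}. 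The closed-loop equation is thus a linear, non-autonomous Cauchy problem on $\cX_\half$ with bounded, norm-continuous coefficients, and a standard Picard iteration on the integral form \er{eq:mild} produces a unique continuous $\cX_\half$-valued solution, completing the verification hypothesis and hence establishing \er{eq:optimal}.
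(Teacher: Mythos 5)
Your proposal is correct and follows essentially the same route the paper takes: the paper's one-line proof simply cites Theorems \ref{thm:limit}, \ref{thm:verify}, and \ref{thm:Wbreve-ok}, which are precisely the three ingredients you assemble (gradient of the explicit quadratic representation from Theorem \ref{thm:Wbreve-ok} into the feedback law of Theorem \ref{thm:verify}, then $c\to\infty$ via Theorem \ref{thm:limit}). What you add — and what the paper elides as ``immediate'' — is the explicit discharge of the mild-solution hypothesis of Theorem \ref{thm:verify}: your observation that $s\mapsto\tfrac{1}{m}\opAsqrt\op{I}_\mu\opAsqrt\opbreve{P}^{\mu,\infty}(t-s)$ is norm-continuous in $\bo(\cX_\half)$ on $[0,t-\delta]$ (using the uniform-in-$n$ bounds on $p_n^{\mu,\infty}$, $\dot p_n^{\mu,\infty}$ over $(\delta,\bar t^\mu)$, since $\omega_n^\mu(t-s)$ stays in $(\omega_1^\mu\delta,\sqrt{2})\subset(0,\pi/2)$) so that Picard iteration on \er{eq:mild} produces the required trajectory $\xi^*\in C([0,t-\delta);\cX_\half)$ — a genuine gap in the paper's telegraphic justification that you close correctly.
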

\begin{proof}
Immediate by Theorems \ref{thm:limit}, \ref{thm:verify}, and \ref{thm:Wbreve-ok}.
\end{proof}

\subsection{Application of the fundamental solution to solve optimal control problem \er{eq:W}}
The fundamental solution \er{eq:W-infty}, \er{eq:explicit-W-infty} can be applied via \er{eq:What} and Theorems \ref{thm:reach} and \ref{thm:Wbreve-ok} to solve the approximating optimal control problem \er{eq:W} for any concave terminal payoff $\psi:\cX_\half\mapsinto\R\cup\{-\infty\}$ for which the associated value function $W^\mu$ is finite. In particular, given $t\in(0,\bar t^\mu)$, $x\in\cX_\half$, the optimal control $w^*\in\cW[0,t]$ that maximizes the payoff $J_{m,\psi}(t,x,\cdot)$ in \er{eq:W} is given by \er{eq:optimal} with $y=z^*$, where
\begin{align}
	z^* & \in \argmax_{\zeta\in\cX_\half} \left\{ W^{\mu,\infty}(t,x,\zeta) + \psi(\zeta) \right\}\,.
	\label{eq:z-star}
\end{align}
For the specific terminal payoff $\psi \doteq \psi^{\mu,\infty}(\cdot,z)$, $z\in\cX_\half$, given by \er{eq:psi}, $z^* = z$ by inspection of \er{eq:z-star}. In that case, the value functional $W^\mu(t,\cdot)$ of \er{eq:W} and fundamental solution $W^{\mu,\infty}(t,\cdot,z)$ of \er{eq:W-infty} coincide, as do their corresponding optimal inputs, see \er{eq:optimal}. Furthermore, by substituting the series representations \er{eq:op-P-infty}, \er{eq:op-Q-infty} for $\opbreve{P}^{\mu,\infty}(t)$, $\opbreve{Q}^{\mu,\infty}(t)$ in \er{eq:optimal}, a state feedback characterization of the optimal control $w^*$ is given by
\begin{align}
	w^*(s)
	& = \ts{\frac{1}{m}} \sum_{n=1}^\infty \frac{\lambda_n}{1 + \mu^2\, \lambda_n} \left( 
			p_n^{\mu,\infty}(t-s)\, \langle \xi^*(s),\, \tilde\varphi_n \rangle_\half
			+ q_n^{\mu,\infty}(t-s)\, \langle z^*,\, \tilde\varphi_n \rangle_\half
	\right),
	\qquad
	\dot\xi^*(s) = w^*(s)\,,
	\label{eq:series-w-star}
\end{align}
for all $s\in[0,t-\delta)$, where $\delta\in(0,t)$ is as per Theorem \ref{thm:Wbreve-ok}, $\xi^*(0) = x\in\cX_\half$, and $z^* = z\in\cX_\half$.

Alternatively, with $\psi\doteq\psi_v$ as per \er{eq:stationary-payoff}, $v\in\cX_\half$, Theorem \ref{thm:Wbreve-ok} and \er{eq:z-star} imply that
\begin{align}
	z^*
	& = - \left( \opbreve{R}^{\mu,\infty}(t) \right)^{-1} \left[ \opbreve{Q}^{\mu,\infty}(t)' \, x + m \, \op{J}\, \op{J}\, v \right]
	= \sum_{n=1}^\infty \frac{1}{r_n^{\mu,\infty}(t)} \left[ q_n^{\mu,\infty}(t) \, \langle x,\, \tilde\varphi_n \rangle_\half 
				+ \frac{m}{\lambda_n} \, \langle v,\, \tilde\varphi_n \rangle_\half \right],
	\label{eq:z-star-terminal-velocity}
\end{align}
where the series representation follows by substitution of \er{eq:op-Q-infty}, \er{eq:op-R-infty} for the Riesz-spectral operators $\opbreve{Q}^{\mu,\infty}(t)$, $\opbreve{R}^{\mu,\infty}(t)$ respectively. (Note that existence of the inverse involved, and a representation for it, follows by Corollary \ref{cor:unique-inverse-Riesz}.) The optimal control $w^*$ is again given by \er{eq:series-w-star}, with $z^*\in\cX_\half$ given by \er{eq:z-star-terminal-velocity}.


%
%
Finally, it is important to note that the optimal input defined by \er{eq:optimal} and \er{eq:z-star} 
is not defined everywhere on the time interval $[0,t]$. In particular, by inspection of \er{eq:explicit-W-infty}, this input is not defined on a time interval $[t-\delta,t]$ containing the final time, where $\delta\in(0,t)$ is arbitrarily small. While this might appear to be a problematic limitation, it is the initial input $w^*(0)$ that is required for the approximate solution of TPBVPs such as \er{eq:TPBVP} via the approximating optimal control problem \er{eq:W}. 


\section{Approximate solution of two-point boundary value problems}
\label{sec:approx-solution}
For sufficiently short time horizons, Theorem \ref{thm:second-difference} guarantees that stationarity of the action functional \er{eq:action-mu} is achieved as a maximum. In particular, for horizons $t\in[0,\bar t^\mu)$, $\bar t^\mu\in\R_{>0}$ as per \er{eq:t-bar-mu}, the value function $W^\mu(t,\cdot)$ of \er{eq:W} is finite, and the corresponding optimal trajectory defined by \er{eq:dynamics}, \er{eq:optimal}, and \er{eq:z-star} renders the action functional \er{eq:action-mu} stationary in the calculus of variations sense. However, as the action principle only requires stationarity of the action functional with respect to trajectories, concavity of the action functional \er{eq:action-mu} may be lost for longer horizons. This implies a loss of concavity of the associated payoff $J_{m,\psi}^\mu$ of \er{eq:payoff}, and hence an infinite corresponding value function \er{eq:W}. 
In that case, the stationary action trajectory is no longer the optimal trajectory defined by \er{eq:dynamics}, \er{eq:optimal}, and \er{eq:z-star}, so that more analysis is required. Below, the short horizon case is discussed first, i.e. where the stationary and maximal action coincide. An indication of an extension to longer horizons is provided subsequently. 

\subsection{Short horizons}
\label{sec:short-horizon}
On shorter time horizons, i.e. those satisfying $t\in[0,\bar t^\mu)$, 
%
%
the optimal trajectory defined by \er{eq:dynamics}, \er{eq:optimal}, and \er{eq:z-star}
%
%
%
is described by the characteristic equations corresponding to the Hamiltonian $H$ of \er{eq:verify-H} for HJB \er{eq:verify-DPE}. These characteristic equations together define the abstract Cauchy problem 
\begin{align}
	\left( \ba{c} \dot\xi(s) \\ \dot \pi(s) \ea \right)
	& = \op{A}_\mu^\oplus
	\left( \ba{c} \xi(s) \\ \pi(s) \ea \right)\,,
	\quad 
	\op{A}_\mu^\oplus \doteq
	\left( \ba{cc}
		0 & \ts{\frac{1}{m}} \, \op{I}_\mu^\half
		\\
		-\kappa\, \opAsqrt\, \op{I}_\mu^\half\, \opAsqrt & 0 
	\ea \right)\,,
	\quad
	\dom(\op{A}_\mu^\oplus) \doteq \cY_\half\,,
	\ooer{eq:Cauchy-mu}
\end{align}
where $\cY_\half$ is the Hilbert space defined in \er{eq:cY-oplus}. Here, the augmented state is constructed from the (position) state $\xi(s)\in\cX_\half$ of the dynamics \er{eq:dynamics} driven by the optimal input $w^*(s)$ of \er{eq:optimal}, \er{eq:z-star}, together with a transformed (momentum) costate
$
	\pi(s)
	\doteq  \op{I}_\mu^{-\half}\, p(s)
	= m\, \op{I}_\mu^{-\half}\, w^*(s)
	= \opAsqrt\, \op{I}_\mu^\half\, \opAsqrt ( \opbreve{P}^{\mu,\infty}(t-s) \, \xi(s) + \opbreve{Q}^{\mu,\infty}(t-s)\, z^* ) \in \cX
$
for all $s\in[0,t-\delta)$, $z\in\cX_\half$, where $\delta\in(0,\bar t^\mu)$ is as per Theorem \ref{thm:Wbreve-ok}, and $z^*\in\cX_\half$ is as per \er{eq:z-star}. (Note that $\opAsqrt\, \op{I}_\mu^\half\, \opAsqrt\in\bo(\cX_\half;\cX)$ by Lemma \ref{lem:op-I-mu-properties}.) Meanwhile, the wave equation \er{eq:wave} defines an analogous abstract Cauchy problem, namely,
\begin{align}   
    \left( \ba{c} \dot x(s) \\ \dot p(s) \ea \right)
    & = \op{A}^\oplus
    \left( \ba{c} x(s) \\ p(s) \ea \right)\,,
    \quad 
    \op{A}^\oplus 
    \doteq
    \left( \ba{cc}
        0 & \ts{\frac{1}{m}} \, \op{I}
        \\
        -\kappa\, \op{A} & 0 
    \ea \right),
    \quad \dom(\op{A}^\oplus) = \cY_0\,,
    \ooer{eq:Cauchy}
\end{align}
where $\cY_0$ is the set defined in \er{eq:cY-0}. As noted in the following lemma, operators $\op{A}_\mu^\oplus$ and $\op{A}^\oplus$ generate respective semigroups of bounded linear operators defined on {\em all} time horizons. Crucially, these operators converge in an appropriate sense as $\mu\rightarrow 0$. Furthermore, the subsequent theorem shows that the generated semigroups also converge, implying that any solution of the abstract Cauchy problem \er{eq:Cauchy-mu} converges to an analogous solution of the abstract Cauchy problem \er{eq:Cauchy}. This naturally includes respective trajectories corresponding to the approximate and exact solution of TPBVPs such as \er{eq:TPBVP}.


\begin{lemma}
\label{lem:op-A-oplus-mu-bounded}
Given $\mu\in(0,1]$, the operators $\op{A}_\mu^\oplus$ and $\op{A}^\oplus$ of \er{eq:Cauchy-mu} and \er{eq:Cauchy} satisfy the following properties: 
\begin{enumerate}[(i)]
\item $\op{A}_\mu^\oplus\in\bo(\cY_\half)$; 
\item $\op{A}_\mu^\oplus$ generates a uniformly continuous semigroup of bounded linear operators $\op{T}_\mu^\oplus(t)\in\bo(\cY_\half)$, $t\in\R_{\ge 0}$;
\item $\op{A}^\oplus$ is unbounded, closed, and densely defined on $\cY_0$ (with $\ol{\cY_0} = \cY_\half$);
\item $\op{A}^\oplus$ generates a strongly continuous semigroup of bounded linear operators $\op{T}^\oplus(t)\in\bo(\cY_\half)$, $t\in\R_{\ge 0}$;
\item $\op{A}_\mu^\oplus$ converges strongly to $\op{A}^\oplus$ as $\mu\rightarrow 0$, i.e. $\lim_{\mu\rightarrow 0} \| \op{A}_\mu^\oplus\, y - \op{A}^\oplus\, y\|_\oplus = 0$ for all $y\in\cY_0$.
\end{enumerate}
\end{lemma}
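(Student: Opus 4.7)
For (i), the plan is to read off boundedness of the two block entries of $\op{A}_\mu^\oplus$ from the Riesz-spectral structure of $\op{I}_\mu$ (Lemma \ref{lem:op-I-mu-properties}). The multipliers $(1+\mu^2\lambda_n)^{-\half}\in(0,1]$ of $\op{I}_\mu^\half$ yield $\ts{\frac{1}{m}}\op{I}_\mu^\half\in\bo(\cX;\cX_\half)$, and by the Riesz-spectral composition rule (Lemma \ref{lem:compose-Riesz}) the operator $\opAsqrt\,\op{I}_\mu^\half\,\opAsqrt$ acts with multipliers $\sqrt{\lambda_n}\,(1+\mu^2\lambda_n)^{-\half}\le\mu^{-1}$, making it bounded from $\cX_\half$ into $\cX$. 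Combining block norm estimates on $\cY_\half=\cX_\half\oplus\cX$ then gives $\op{A}_\mu^\oplus\in\bo(\cY_\half)$. Part (ii) is immediate: any bounded operator on a Banach space generates a uniformly continuous semigroup via the operator-exponential power series.

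For (iii), I would exploit the block structure directly. The entry $-\kappa\,\op{A}$ is unbounded, closed, and densely defined on $\cX_0$ (Lemma \ref{lem:op-A-properties}), while $\ts{\frac{1}{m}}\op{I}\in\bo(\cX_\half)$; a routine graph-norm argument then transfers unboundedness and closedness to $\op{A}^\oplus$ on $\cY_0=\cX_0\oplus\cX_\half$. Density of $\cY_0$ in $\cY_\half$ reduces to showing $\cX_0$ dense in $\cX_\half$ and $\cX_\half$ dense in $\cX$, both of which hold because the Riesz bases $\optilde{B}\subset\cX_0$ and $\op{B}\subset\cX_\half$ from \er{eq:basis-Riesz} span the respective ambient spaces. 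For (iv), the natural route is Lumer-Phillips on $\cY_\half$ equipped with $\langle\cdot,\cdot\rangle_\oplus$ from \er{eq:cY-oplus}. A direct calculation yields $\langle\op{A}^\oplus\,(x,p),(x,p)\rangle_\oplus = \langle p,\,x\rangle_\half - \langle \op{A}\,x,\,p\rangle = 0$ on $\cY_0$, so both $\pm\op{A}^\oplus$ are dissipative (the wave equation is conservative). The range condition $\ran(\lambda\,\op{I}-\op{A}^\oplus) = \cY_\half$ for $\lambda>0$ reduces to uniquely solving $(\lambda^2 m\,\op{I} + \kappa\,\op{A})\,x = g + \lambda m f$ in $\cX_0$, which is straightforward by Riesz-spectral inversion since that operator has strictly positive spectrum; Lumer-Phillips then delivers the required $C_0$-semigroup (in fact a unitary group).

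For (v), I would work componentwise. Fix $y=(x,p)\in\cY_0$. The first component of $\op{A}_\mu^\oplus\, y - \op{A}^\oplus\, y$ equals $\ts{\frac{1}{m}}(\op{I}_\mu^\half - \op{I})\,p$, whose $\cX_\half$-norm coincides with $\ts{\frac{1}{m}}\|(\op{I}_\mu^\half - \op{I})\,\opAsqrt\,p\|$ by commutation with $\opAsqrt$. Since $\opAsqrt\,p\in\cX$ expands in the Riesz basis $\op{B}$ and the spectral multipliers $(1+\mu^2\lambda_n)^{-\half}-1\in[-1,0]$ converge pointwise to $0$, dominated convergence forces this norm to vanish as $\mu\rightarrow 0$. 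The second component is $-\kappa\,(\opAsqrt\,\op{I}_\mu^\half\,\opAsqrt - \op{A})\,x = -\kappa\,(\op{I}_\mu^\half - \op{I})\,\op{A}\,x$ by Riesz-spectral commutation, and $x\in\cX_0$ ensures $\op{A}\,x\in\cX$, so the same dominated-convergence argument applies to its $\cX$-norm. The main obstacle I anticipate is the careful bookkeeping of domains and commutations underlying the identity $\opAsqrt\,\op{I}_\mu^\half\,\opAsqrt = \op{I}_\mu^\half\,\op{A}$ on $\cX_0$, which must be justified via Lemmas \ref{lem:op-A-properties}, \ref{lem:op-I-mu-properties}, and \ref{lem:compose-Riesz} rather than taken for granted.
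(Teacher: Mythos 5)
Your proposal is correct and its conclusions match the lemma, but for parts (iii)--(v) you take a genuinely different and more self-contained route than the paper. For (i)--(ii) you and the paper do essentially the same thing: the paper bounds $\|\op{A}_\mu^\oplus y\|_\oplus$ directly via $\|\opAsqrt\op{I}_\mu^\half\|<\infty$ from Lemma~\ref{lem:op-I-mu-properties} and then cites Pazy for the uniformly continuous semigroup; your Riesz-spectral multiplier estimate $\sqrt{\lambda_n}(1+\mu^2\lambda_n)^{-\half}\le\mu^{-1}$ is the same bound read off blockwise, and your operator-exponential remark for (ii) is the content of the cited theorem. For (iii)--(iv) the paper simply refers the reader to \cite[Example 2.2.5]{CZ:95}, whereas you prove generation from scratch via Lumer--Phillips: your dissipativity computation $\langle\op{A}^\oplus(x,p),(x,p)\rangle_\oplus=\langle p,x\rangle_\half-\langle\op{A}x,p\rangle=0$ on $\cY_0$ is correct (using self-adjointness of $\opAsqrt$), and the range-condition reduction to inverting $\lambda^2 m\,\op{I}+\kappa\,\op{A}$ on $\cX_0$ is exactly right; this buys you a fully explicit and self-contained proof (and in fact shows $\op{T}^\oplus$ is a unitary group), at the cost of more verification than the citation. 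For (v) both you and the paper reduce the two components, via commutation, to strong convergence of $\op{I}_\mu^\half\rightarrow\op{I}$ on $\cX$ applied to $\opAsqrt p$ and $\op{A}x$. The paper then derives a quantitative Taylor bound $\|(\op{I}_\mu^\half-\op{I})x\|^2\le\ts{\frac{7}{4}}\mu^4\|\op{A}x\|^2$ on $\cX_0$ and extends by density; you instead invoke dominated convergence directly on the Riesz-spectral coefficients (multipliers $(1+\mu^2\lambda_n)^{-\half}-1\in[-1,0]$ converging pointwise to $0$, $\ell^2$ coefficients as dominant), which is cleaner and does not need the density extension step, but sacrifices the explicit $O(\mu^2)$ rate the paper obtains. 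Both routes are sound.
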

\begin{proof}
{\em (i)} Fix any $y\doteq \left(\ba{c}  \xi \\ \pi \ea\right)\in\cY_\half$. Applying definitions \er{eq:cY-oplus} and \er{eq:Cauchy-mu} of $\|\cdot\|_\oplus$ and $\op{A}_\mu^\oplus$, 
\begin{align}
	\| \op{A}_\mu^\oplus\, y\|_\oplus^2
	= \left\| \left( \ba{c} \ts{\frac{1}{m}}\, \op{I}_\mu^\half\, \pi \\ -\kappa\, \opAsqrt\, \op{I}_\mu^\half\, \opAsqrt\, \xi \ea \right) \right\|_\oplus^2
	& = \ts{\frac{1}{m}}\, \left\| \op{I}_\mu^\half\, \pi \right\|_\half^2 
			+ \kappa\, \left\| \opAsqrt\, \op{I}_\mu^\half\, \opAsqrt\, \xi \right\|^2
	= \ts{\frac{1}{m}}\, \left\| \opAsqrt\, \op{I}_\mu^\half\, \pi \right\|^2 
			+  \kappa\, \left\| \opAsqrt\, \op{I}_\mu^\half\, (\opAsqrt\, \xi) \right\|^2
	\nn\\
	& \le \ts{\frac{1}{m}}\, \left\| \opAsqrt\, \op{I}_\mu^\half\ \right\|^2 \, \|\pi\|^2 + \kappa\, \left\| \opAsqrt\, \op{I}_\mu^\half \right\|^2 \, \|\xi\|_\half^2
	=  M^2\, \| y \|_\oplus^2\,,
	\nn
\end{align}
where $M \doteq (\ts{\frac{\kappa}{m}})^\half \left\| \opAsqrt\, \op{I}_\mu^\half\ \right\| < \infty$ by assertion \er{eq:op-I-mu-ass-4a} of Lemma \ref{lem:op-I-mu-properties}, as required.

{\em (ii)} Immediate by {\em(i)} and \cite[Theorem 1.2, p.2]{P:83}.

{\em (iii)} and {\em (iv)} Follows by an analogous argument to \cite[Example 2.2.5, p.34]{CZ:95}.

{\em (v)} Fix any $y\doteq \left(\ba{c}  \xi \\ \pi \ea\right)\in\dom(\op{A}^\oplus) = \cY_0 = \cX_0\oplus\cX_\half$. Recalling \er{eq:cY-oplus}, \er{eq:Cauchy-mu} and \er{eq:Cauchy},
\begin{align}
	\left\| \op{A}_\mu^\oplus\, y - \op{A}^\oplus\, y \right\|_\oplus^2
	& = \left\| \left( \ba{cc}
		0 & \ts{\frac{1}{m}} \, \op{I}_\mu^\half
		\\
		-\kappa\, \opAsqrt\, \op{I}_\mu^\half\, \opAsqrt & 0 
	\ea \right) y - 
	\left( \ba{cc}
		0 & \ts{\frac{1}{m}} \, \op{I}
		\\
		-\kappa\, \op{A} & 0 
	\ea \right) y
	\right\|_\oplus^2
	= \left\| \left( \ba{c}
			\ts{\frac{1}{m}} \, (\op{I}_\mu^\half - \op{I}) \, \pi
			\\
			-\kappa \, \opAsqrt \, (  \op{I}_\mu^\half - \op{I} ) \, \opAsqrt \, \xi
		\ea \right)
	\right\|_\oplus^2
	\nn\\
	& = \ts{\frac{1}{m}}\, \| (\op{I}_\mu^\half - \op{I}) \, \pi \|_\half^2 + 
	\ts{\frac{1}{\kappa}}\, \| \opAsqrt \, (  \op{I}_\mu^\half - \op{I} ) \, \opAsqrt \, \xi \|^2
	= \ts{\frac{1}{m}}\, \| (\op{I}_\mu^\half - \op{I}) \, \opAsqrt\, \pi \|^2 + 
	\kappa\, \| (  \op{I}_\mu^\half - \op{I} ) \, \op{A} \, \xi \|^2\,,
	\label{eq:strong-half-mu-0}
\end{align}
where the last equality follows by definition of $\|\cdot\|_\half$ and assertion \er{eq:op-I-mu-ass-3b} of Lemma \ref{lem:op-I-mu-properties}. Note further that $\opAsqrt \, \pi,\, \op{A}\, \xi\in\cX$ by definition of $y\in\cY_0$. Consequently, it remains to be shown that $\op{I}_\mu^\half$ converges strongly to $\op{I}$ on $\cX$ as $\mu\rightarrow 0$. To this end, fix any $x\in\cX_0$, and note that $\|\op{A}\, x\| < \infty$. Note also that $\op{I}_\mu^\half - \op{I}$ is a Riesz-spectral operator on $\cX$, with $\dom(\op{I}_\mu^\half - \op{I}) = \cX$, so that
\begin{align}
	\left\| (\op{I}_\mu^\half - \op{I})\, x \right\|^2
	& = \sum_{n=1}^\infty \beta_{\lambda_n}(\mu^2) \, |\langle x,\, \varphi_n \rangle|^2\,,
	\label{eq:strong-half-mu-1}
\end{align}
where $\beta_\lambda:\R_{\ge 0}\mapsinto[0,1)$ is defined for each $\lambda\in\R_{>0}$ by $\beta_{\lambda}(\eps) \doteq [ 1 - \frac{1}{\sqrt{1 + \eps\, \lambda}} ]^2$. Taylor's theorem implies that for any $\eps\in\R_{\ge 0}$, there exists an $c_\eps\in(0,\eps)$ such that
$
	\beta_\lambda(\eps)
	= [ \ddttwo{\beta_\lambda}{\eps}(c_\eps) ] {\frac{\eps^2}{2}}
	\le \demi\, \lambda^2 \, \eps^2 \left[ \ts{\frac{3}{2}}\, (1 + c_\eps\, \lambda)^{-\frac{5}{2}} + 2\, (1 + c_\eps\, \lambda)^{-3} \right]
	\le \ts{\frac{7}{4}}\, \lambda^2\, \eps^2
$
for all $\lambda\in\R_{>0}$. Substitution in \er{eq:strong-half-mu-1} yields that
$
	\| (\op{I}_\mu^\half - \op{I})\, x \|^2
	\le 
	\ts{\frac{7}{4}}\, \mu^4  \,  \sum_{n=1}^\infty  |\lambda_n|^2\, |\langle x,\, \varphi_n \rangle|^2
	= \ts{\frac{7}{4}}\, \mu^4  \,  \|\op{A}\, x\|^2
$.
Recalling that $x\in\cX_0$, so that $\|\op{A}\, x\|<\infty$, it follows immediately that $\lim_{\mu\rightarrow 0}  \| (\op{I}_\mu^\half - \op{I})\, x \| = 0$ for any $x\in\cX_0$. As $\op{I}_\mu^\half\in\bo(\cX)$ by Lemma \ref{lem:op-I-mu-properties}, and $\cX_0$ is dense in $\cX$, it may also be concluded that $\lim_{\mu\rightarrow 0}  \| (\op{I}_\mu^\half - \op{I})\, x \| = 0$ for any $x\in\cX$. Applying this fact in \er{eq:strong-half-mu-0} completes the proof.
\end{proof}

\begin{theorem}
\label{thm:trotter-kato}
$\op{T}_\mu^\oplus(t)$ converges strongly to $\op{T}^\oplus(t)$ as $\mu\rightarrow 0$, uniformly for $t\in\R_{>0}$ in compact intervals. In particular, $\lim_{\mu\rightarrow 0} \left\| \op{T}_\mu^\oplus(t)\, y - \op{T}^\oplus(t)\, y \right\|_\oplus = 0$ for all $y\in\cY$, $t\in\Omega$, $\Omega\subset\R_{\ge 0}$ compact.
\end{theorem}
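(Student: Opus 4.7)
The plan is to invoke the first Trotter-Kato approximation theorem for $C_0$-semigroups (cf.\ \cite{EN:00}), whose hypotheses in the present setting reduce to three items: (a) $\op{A}^\oplus$ generates a $C_0$-semigroup on $\cY_\half$; (b) each $\op{A}_\mu^\oplus$ generates a $C_0$-semigroup on $\cY_\half$ with a stability bound $\|\op{T}_\mu^\oplus(t)\|_\oplus \le M\, e^{\omega\, t}$ holding uniformly in $\mu\in(0,1]$ and $t\in\R_{\ge 0}$; and (c) the strong convergence $\op{A}_\mu^\oplus\, y \to \op{A}^\oplus\, y$ as $\mu\to 0^+$ for each $y$ in a core of $\op{A}^\oplus$. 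Items (a), the generation portion of (b), and (c) are already in hand from Lemma~\ref{lem:op-A-oplus-mu-bounded} (parts (iv), (ii), and (v) respectively, the last with $\dom(\op{A}^\oplus) = \cY_0$ itself taken as the core). The only missing ingredient, and the crux of the proof, is the uniform stability estimate in (b).

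To establish that uniform estimate, I would exploit the conservative structure of the perturbed dynamics \er{eq:Cauchy-mu} and show that $\op{A}_\mu^\oplus$ is skew-symmetric in the inner product $\langle\cdot,\cdot\rangle_\oplus$ on $\cY_\half$. Since $\op{A}_\mu^\oplus\in\bo(\cY_\half)$ by Lemma~\ref{lem:op-A-oplus-mu-bounded}(i), skew-symmetry coincides with skew-adjointness, and so the uniformly continuous group of Lemma~\ref{lem:op-A-oplus-mu-bounded}(ii) that it generates consists of unitary operators. This gives $\|\op{T}_\mu^\oplus(t)\|_\oplus = 1$ for all $t\in\R_{\ge 0}$ and $\mu\in(0,1]$, so $M=1$, $\omega=0$ will serve as uniform stability constants.

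The skew-symmetry check is the main technical step. Given $(x,p),\,(\xi,\pi)\in\cY_\half$, I would expand both $\langle\op{A}_\mu^\oplus(x,p),(\xi,\pi)\rangle_\oplus$ and $\langle(x,p),\op{A}_\mu^\oplus(\xi,\pi)\rangle_\oplus$ using \er{eq:cY-oplus} and the block form in \er{eq:Cauchy-mu}, then transfer $\opAsqrt$ between factors of the $\cX$-inner product and commute $\op{I}_\mu^\half$ past $\opAsqrt$. Both manipulations are licensed by the simultaneous Riesz-spectral diagonalization of $\op{A}$, $\opAsqrt$, and $\op{I}_\mu^\half$ in the basis $\{\varphi_n\}$ (Lemmas~\ref{lem:op-A-properties}, \ref{lem:op-I-mu-properties} and Appendix~\ref{app:Riesz}), which yields pairwise commutativity together with individual self-adjointness on the relevant domains. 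The cross terms will cancel to give $\langle\op{A}_\mu^\oplus(x,p),(\xi,\pi)\rangle_\oplus + \langle(x,p),\op{A}_\mu^\oplus(\xi,\pi)\rangle_\oplus = 0$.

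I expect the only genuine obstacle to be keeping careful track of the shift between the $\cX$- and $\cX_\half$-inner products during the skew-symmetry computation, including the $m$ and $\ts{\frac{1}{\kappa}}$ weighting factors in $\langle\cdot,\cdot\rangle_\oplus$. Once that bookkeeping is done, the first Trotter-Kato theorem delivers the strong convergence $\op{T}_\mu^\oplus(t)\, y \to \op{T}^\oplus(t)\, y$ for every $y\in\cY_\half$, automatically uniform in $t$ over compact subsets of $\R_{\ge 0}$, which is precisely the assertion of the theorem.
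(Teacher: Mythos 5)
Your proposal takes the same route as the paper -- the first Trotter-Kato approximation theorem applied via Lemma~\ref{lem:op-A-oplus-mu-bounded} -- but it correctly identifies and fills a step that the paper's one-line proof passes over without comment: the $\mu$-uniform stability bound $\|\op{T}_\mu^\oplus(t)\|_\oplus \le M\, e^{\omega t}$ required as a standing hypothesis of the Trotter-Kato theorem. This is a genuine issue, because the operator norm bound from part~(i) of Lemma~\ref{lem:op-A-oplus-mu-bounded} scales like $\left\|\opAsqrt\,\op{I}_\mu^\half\right\|\sim 1/\mu$ (the eigenvalues $\sqrt{\lambda_n/(1+\mu^2\lambda_n)}$ are only bounded by $1/\mu$), so the naive bound $\|\op{T}_\mu^\oplus(t)\|_\oplus \le e^{\|\op{A}_\mu^\oplus\|_\oplus\, t}$ degenerates as $\mu\to 0$. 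Your skew-symmetry computation in the weighted inner product $\langle\cdot,\cdot\rangle_\oplus$ does go through -- the cross terms cancel because $\opAsqrt\,\op{I}_\mu^\half\in\bo(\cX)$ is self-adjoint and $\langle\op{I}_\mu^\half\,p,\xi\rangle_\half = \langle p,\,\opAsqrt\,\op{I}_\mu^\half\,\opAsqrt\,\xi\rangle$ for $p\in\cX$, $\xi\in\cX_\half$ -- giving $\|\op{T}_\mu^\oplus(t)\|_\oplus = 1$ for all $t$ and $\mu$, which is exactly the uniform constant the theorem needs. So your argument is correct and in fact more complete than what the paper records.
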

\begin{proof}
The proof follows by application of the First Trotter-Kato Approximation Theorem (see for example \cite[Theorem 4.8, p.209]{EN:00}), via Lemma \ref{lem:op-A-oplus-mu-bounded}.
\end{proof}

With the convergence property of all solutions of \er{eq:Cauchy-mu} and \er{eq:Cauchy} provided by Theorem \ref{thm:trotter-kato}, and formulae \er{eq:optimal}, \er{eq:z-star} for the optimal input that generates the corresponding optimal trajectory that renders the approximate action functional stationary, a recipe for approximating the solution of TPBVPs such as \er{eq:TPBVP} on short time horizons may be enumerated.
\begin{center}
\vspace{3mm}
{\bf Recipe for the approximate solution of TPBVPs for horizons $t\in(0,\bar t^\mu)$}

\parbox[t]{16cm}{%
\begin{itemize}
\item[\done] Select the approximation parameter $\mu\in(0,1]$, and a truncation order $N\in\N$ for the Riesz-spectral operator representations.
\item[\dtwo] Fix $t\in(0,\bar t^\mu)$, where $\bar t^\mu\in\R_{>0}$ is as per \er{eq:t-bar-mu}.
\item[\dthree] Select a terminal payoff $\psi:\cX_\half\mapsinto\R\cup\{-\infty\}$ that encapsulates the terminal condition of interest, e.g. \er{eq:stationary-payoff}, and apply \er{eq:z-star} via the fundamental solution \er{eq:explicit-W-infty} to determine the corresponding terminal state $z^*\in\cX_\half$, see for example \er{eq:z-star-terminal-velocity}.
\item[\dfour] Truncate the Riesz-spectral operator representation for the optimal input $w^*(0)$ of \er{eq:optimal}, with
\begin{align}
	w^*(0)
	& = k(0,x) = \ts{\frac{1}{m}} \, \opAsqrt \, \op{I}_\mu\, \opAsqrt\, \left( \opbreve{P}^{\mu,\infty}(t)\, x + \opbreve{Q}^{\mu,\infty}(t)\, z^* \right)
	\nn\\
	& \approx \tilde w^*(0) \doteq \ts{\frac{1}{m}} \sum_{n=1}^N \frac{\lambda_n}{1 + \mu^2\, \lambda_n} \left( 
			p_n^{\mu,\infty}(t)\, \langle x,\, \tilde\varphi_n \rangle_\half
			+ q_n^{\mu,\infty}(t)\, \langle z^*,\, \tilde\varphi_n \rangle_\half
	\right)\,,
	\nn
\end{align}
where $p_n^{\mu,\infty}(t)$ and $q_n^{\mu,\infty}(t)$ are as per \er{eq:eig-pqr-infty}.
\item[\dfive] Propagate the solution of the wave equation \er{eq:wave} using $\pdtone{u}{s}(0,\cdot) = \tilde w^*(0)$. 
\end{itemize}
}
\vspace{3mm}
\end{center}

With particular reference to step {\dfour} in the case where a fixed final velocity $v\in\cX_\half$ is specified via $\psi \doteq \psi_v$ as per \er{eq:stationary-payoff}, substitution of the left-hand equality of \er{eq:z-star-terminal-velocity} in \er{eq:optimal} yields the required initial velocity as 
\begin{align}
	w^*(0)
	& = \ts{\frac{1}{m}}\, \opAsqrt\, \op{I}_\mu\, \opAsqrt \left( 
			\left[ \opbreve{P}^{\mu,\infty}(t) - \opbreve{Q}^{\mu,\infty}(t) \left( \opbreve{R}^{\mu,\infty}(t) \right)^{-1} \opbreve{Q}^{\mu,\infty}(t)' 
			\right] x 
			- m\, \opbreve{Q}^{\mu,\infty}(t) \left( \opbreve{R}^{\mu,\infty}(t) \right)^{-1} \, \op{J}\, \op{J}\, v \right).
	\label{eq:w-star-terminal-velocity}
\end{align}
A series form for $w^*(0)$ follows by substitution of the Riesz-spectral operator representations for $\op{P}^{\mu,\infty}(t)$, $\opbreve{Q}^{\mu,\infty}(t)$, $\opbreve{R}^{\mu,\infty}(t)$, $\opAsqrt$, $\op{I}_\mu$, and $\op{J}$ into \er{eq:w-star-terminal-velocity}, with the details omitted for brevity.

\if{false}

?? 01Jan15 -- MOVE BACK TO OPTIMAL CONTROL SECTION ??

In the more general case where a terminal payoff $\psi$ is used to encapsulate alternative terminal data (for example, fixed terminal velocity), $W^{\mu,\infty}$ may be employed as the fundamental solution to the optimal control problem defined by $W^\mu$ of \er{eq:W} via Theorem \ref{thm:reach}. In particular, $z\in\cX_\half$ must be initialized via \er{eq:W-from-fund-conjecture}, with $z=z^*$ specified as per \er{eq:z-star}.
For example, 

??

Applying \er{eq:optimal}, 
\fi


\subsection{Longer horizons}
\label{sec:long-horizon}
As noted previously, the correspondence between stationary action and optimal control exploited for shorter horizons via \er{eq:W} may break down for longer time horizons due to loss of concavity of the associated payoff \er{eq:payoff}, see Theorem \ref{thm:second-difference}. Consequently, for longer time horizons, a modified approach is required. Two such approaches have been developed for finite dimensional problems, see \cite{MD-long-A:14,MD-stat:14}, based on replacing the supremum in the definition \er{eq:W} of the associated optimal control problem with a {\em stat} operation. This $\stat$ operation yields the stationary payoff (and hence the stationary action functional) without assuming that it is achieved at a maximum. In particular, in \cite{MD-long-A:14}, longer time horizons are accumulated via the concatenation of sufficiently many sufficiently short time horizons, with the $\stat$ operation used to characterize the intermediate states joining adjacent short time horizons. More generally, the supremum over inputs in \er{eq:W} may be completely replaced with the $\stat$ operation, see \cite{MD-stat:14}. Using either approach here requires a corresponding extension to infinite dimensions. For brevity, such an extension is postponed to later work. Instead, for the purpose of presenting an illustrative example in Section \ref{sec:example}, an outline of the development of the former (concatenation) approach is provided, in a formal setting only. This outline is as follows.

Given a fixed longer time horizon $t\in(\bar t^\mu,\infty)$ of interest, select a sufficiently large number $n_t\in\N$ of shorter horizons $\tau\doteq t/n_t$ such that $\tau\in(0,\bar t^\mu)$. By definition of $\tau$, Theorem \ref{thm:second-difference} implies that the payoff $J_{m,\psi}^\mu(\tau,x,\cdot)$ defined by \er{eq:payoff}, and hence the action functional of \er{eq:action-mu}, is concave for any $x\in\cX_\half$. That is, the action functional is concave on each of the subintervals $[(k-1)\, \tau, k\, \tau]$, $k\in [1,n_t]\cap\N$, with any loss of concavity occurring in the dependence on the intermediate states $\zeta_k\doteq\xi(k\,\tau)\in\cX_\half$. 

Motivated by this observation, a correspondence between stationary action and optimal control can be established for longer horizons for finite dimensional problems by relaxing the supremum in the associated optimal control problem, see \cite{MD-long-A:14}. In the infinite dimensional case considered here, it is conjectured that the fundamental solution $W^{\mu,\infty}(t,\cdot,\cdot)$ of the approximating optimal control problem defined by \er{eq:W}, as appearing in \er{eq:W-from-fund-conjecture}, is defined on longer time horizons by
\begin{align}
	W^{\mu,\infty}(t,x,z)
	& \doteq \stat_{\zeta\in(\cX_\half)^{n_t-1}} \left\{ \sum_{k=1}^{n_t} W^{\mu,\infty}(\tau,\zeta_{k-1},\zeta_k) \, 
		\biggl| \, \zeta_0 = x,\, \zeta_{n_t} = z
	\right\} 
	\label{eq:W-longer}
\end{align}
for all $x,z\in\cX_\half$, in which the $\stat$ operation is defined generally by
\begin{align}
	\stat_{x\in\cX_\half} F(x) 
	& \doteq \left\{ F(\bar x) \, \left| \, \bar x\in\argstat_{x\in\cX_\half} F(x) \right. \right\},
	\quad
	\argstat_{x\in\cX_\half} F(x) 
	\doteq \left\{ x\in\cX_\half \, \left| \, 0 = \lim_{y\rightarrow x}  \frac{|F(y) - F(x)|}{\|y - x\|_\half} \right. \right\},
	\nn
\end{align}
for functional $F:\cX_\half\mapsinto\R$. Figure \ref{fig:subintervals} provides an illustration of the role of the intermediate states $\zeta_k\in\cX_\half$, $k\in[1,n_t]\cap\N$. (Note that replacing $\stat$ with $\sup$ in \er{eq:W-longer} recovers the original short horizon fundamental solution \er{eq:W-infty} as per \er{eq:W-from-fund-conjecture}, albeit applied to the longer horizon.)

\begin{figure}[h]
\begin{center}
\vspace{3mm}
\epsfig{file=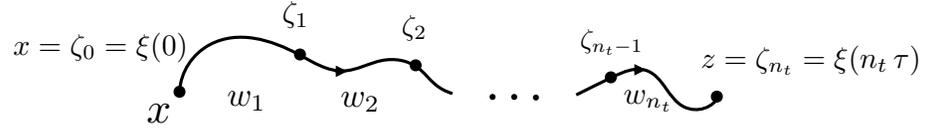,width=10cm}
\caption{Concatenations of trajectories to yield a longer time horizon.}
\label{fig:subintervals}
\end{center}
\end{figure}

In order to test the conjecture that \er{eq:W-longer} is a suitable generalization of the longer horizon fundamental solution, recall that $W^{\mu,\infty}(\tau,\cdot,\cdot)$ takes the form of the quadratic functional given by \er{eq:explicit-W-infty}, see Theorem \ref{thm:Wbreve-ok}. Combining \er{eq:explicit-W-infty} and \er{eq:W-longer},
\begin{align}
	W^{\mu,\infty}(t,x,z)
	& = \stat_{\zeta\in (\cX_\half)^{n_t-1}} \Theta^\mu(\tau,x,\zeta,z)\,,
	\qquad
	\Theta^\mu(\tau,x,\zeta,z)
	\doteq \demi \left\langle \left( \ba{c} x \\ \zeta \\ z \ea \right), \, 
	\Pi^\mu(\tau) \left( \ba{c} x \\ \zeta \\ z \ea \right)
	\right\rangle_{\star}
	\label{eq:W-longer-quad}
\end{align}
where $\langle \cdot, \, \cdot \rangle_{\star}$ denotes an inner product on $(\cX_\half)^{n_t+1}$, defined for all $\hat\zeta,\, \hat\xi\in(\cX_\half)^{n_t+1}$ by $\langle \hat\zeta, \hat\xi \rangle_{\star} \doteq \sum_{i=1}^{n_t+1} \langle \hat\zeta_i, \, \hat\xi_i \rangle_\half$, and $\Pi^\mu(\tau)\in(\bo(\cX_\half))^{(n_t+1)\times(n_t+1)}$ is a matrix of Riesz-spectral operators defined by
\begin{align}
	\Pi^\mu(\tau)
	& \doteq \left( \ba{c|cccc|c}
		\opbreve{P}^{\mu,\infty}(\tau) & \opbreve{Q}^{\mu,\infty}(\tau) & 0 
		& \cdots & 0 & 0
		\\\hline
		\opbreve{Q}^{\mu,\infty}(\tau)' & \opbreve{P}^{\mu,\infty}(\tau) + \opbreve{R}^{\mu,\infty}(\tau) & \opbreve{Q}^{\mu,\infty}(\tau) 
		& \cdots & 0 & 0
		\\
		0 & \opbreve{Q}^{\mu,\infty}(\tau)' & \opbreve{P}^{\mu,\infty}(\tau) + \opbreve{R}^{\mu,\infty}(\tau) 
			& \cdots & 0 & 0
		\\
		0 & 0 & \opbreve{Q}^{\mu,\infty}(\tau)' 
			& \cdots & 0 & 0
		\\
		\vdots & \vdots & \vdots 
			& \ddots & \vdots & \vdots
		\\
		0 & 0 & 0 
			& \cdots & \opbreve{Q}^{\mu,\infty}(\tau) & 0
		\\	
		0 & 0 & 0 
			& \cdots & \opbreve{P}^{\mu,\infty}(\tau) + \opbreve{R}^{\mu,\infty}(\tau) & \opbreve{Q}^{\mu,\infty}(\tau)
		\\\hline
		0 & 0 & 0 
			& \cdots & \opbreve{Q}^{\mu,\infty}(\tau)' & \opbreve{R}^{\mu,\infty}(\tau) 
	\ea \right)
	\nn\\
	& \equiv \left( \ba{c|c|c}
		\opbreve{P}^{\mu,\infty}(\tau) & \Pi_{1,2}^\mu(\tau) & 0
		\\\hline
		{\Pi}_{1,2}^\mu(\tau)' & \Pi_{2,2}^\mu(\tau) & \Pi_{2,3}^\mu(\tau)
		\\\hline
		0 & \Pi_{2,3}^\mu(\tau)' & \opbreve{R}^{\mu,\infty}(\tau)
	\ea \right)\,.
	\label{eq:matrix-of-Riesz}
\end{align}
The existence of a solution of a TPBVP such as \er{eq:TPBVP} on the long horizon $t\in\R_{>0}$ requires (by the action principle) that the $\stat$ in the definition \er{eq:W-longer-quad} of $W^{\mu,\infty}(t,x,z)$ exist. 
Define $\zeta^* \in (\cX_{\frac{1}{2}})^{n_t-1}$ by
\begin{align}
	\zeta^{\mu,*} \doteq \argstat_{\zeta\in(\cX_\half)^{n_t-1}} \Theta^\mu(t,x,\zeta,z)\,,
	\nn
\end{align}
and note that $0 = \nabla_\zeta \Theta^\mu(t,x,\zeta^{\mu,*},z)$, where $\nabla_\zeta \Theta^\mu(t,x,\zeta,z)\in(\cX_\half)^{n_t-1}$ is the {\Frechet} derivative of $\Theta^\mu(t,x,\cdot,z)$ at $\zeta\in(\cX_\half)^{n_t-1}$. Applying \er{eq:matrix-of-Riesz} yields
\begin{align}
	\Pi_{2,2}^\mu(\tau)\, \zeta^{\mu,*}
	& = - \Pi_{1,2}^\mu(\tau)'\, x - \Pi_{2,3}^\mu(\tau)\, z\,.
	\label{eq:zeta-star}
\end{align}
Hence, on the longer horizon $t\in[\bar t^\mu,2\, \bar t^{\mu})$ (for example), the fundamental solution \er{eq:W-infty} generalizes as per \er{eq:W-longer} to $W^{\mu,\infty}(t,x,\zeta^{\mu,*}) + W^{\mu,\infty}(t,\zeta^{\mu,*},z)$, where $\zeta^{\mu,*}$ solves \er{eq:zeta-star} for $n_t \doteq 2$, $\tau\doteq t/2$. This approach generalizes to any fixed longer horizon $t\in\R_{>0}$, and taking $\mu\rightarrow 0$ corresponds to sending $n_t\rightarrow\infty$ in \er{eq:W-longer}. Furthermore, in this limit, it may be shown that evaluating \er{eq:W-longer} via \er{eq:W-longer-quad} and \er{eq:zeta-star} yields the same explicit quadratic representation for the fundamental solution $W^{\mu,\infty}(t,x,z)$, but with $\mu=0$, as presented in Theorem \ref{thm:Wbreve-ok}.


\section{Example}
\label{sec:example}

For sufficiently short time horizons, as considered in Section \ref{sec:short-horizon}, the action principle corresponding to the wave equation \er{eq:wave} may be approximated by the optimal control problem \er{eq:W}. This approximation may be extended to longer horizons via the concatenation approach outlined in Section \ref{sec:long-horizon}, and becomes exact in the limit of the perturbation parameter $\mu\in\R$ tending to zero (see Theorem \ref{thm:trotter-kato}). 
Consequently, as the action principle describes all possible solutions to the wave equation \er{eq:wave}, including those constrained by any specific combination of boundary data, TPBVPs involving this wave equation may be solved via the optimal control problem \er{eq:W}. The initial velocity $w^*(0)$ that solves such a TPBVP may be found via the recipe enumerated in Section \ref{sec:short-horizon}, with $\mu=0$. In particular, step {\dfour} of that recipe yields the initial velocity that solves a TPBVP as 
\begin{align}
	v = w^*(0) & = \ts{\frac{1}{m}}\, \opAsqrt\, \op{I}_0\, \opAsqrt\, \ggrad_x W^{0,\infty}(t,x,z)
	= \ts{\frac{1}{m}}\, \op{A} \, \left[ \opbreve{P}^{0,\infty}(t)\, x + \opbreve{Q}^{0,\infty}(t)\, z ^*\right]
	\nn\\
	& = \ts{\frac{1}{m}} \sum_{n=1}^\infty \lambda_n^0 \left[ p_n^{0,\infty}(t)\, \langle x,\, \tilde\varphi_n \rangle_\half +
	q_n^{0,\infty}(t)\, \langle z^*,\, \tilde\varphi_n \rangle_\half \right] \tilde\varphi_n\,,
	\label{eq:initial-velocity}
\end{align}
where $p_n^{0,\infty}$, $q_n^{0,\infty}$ are as per \er{eq:eig-pqr-infty} and $z^*$ is as per \er{eq:z-star}, after sending $\mu\rightarrow 0$.

\begin{figure}[t!]
\vspace{-3mm}
\begin{center}
\subfigure[Terminal displacement $z(\lambda)$ vs $\lambda$.]{
	\psfrag{XX}{$\lambda$}
	\psfrag{YYYYY}{\hspace{-10mm} $z(\lambda) \  (\equiv u(t,\lambda))$}
	\epsfig{file=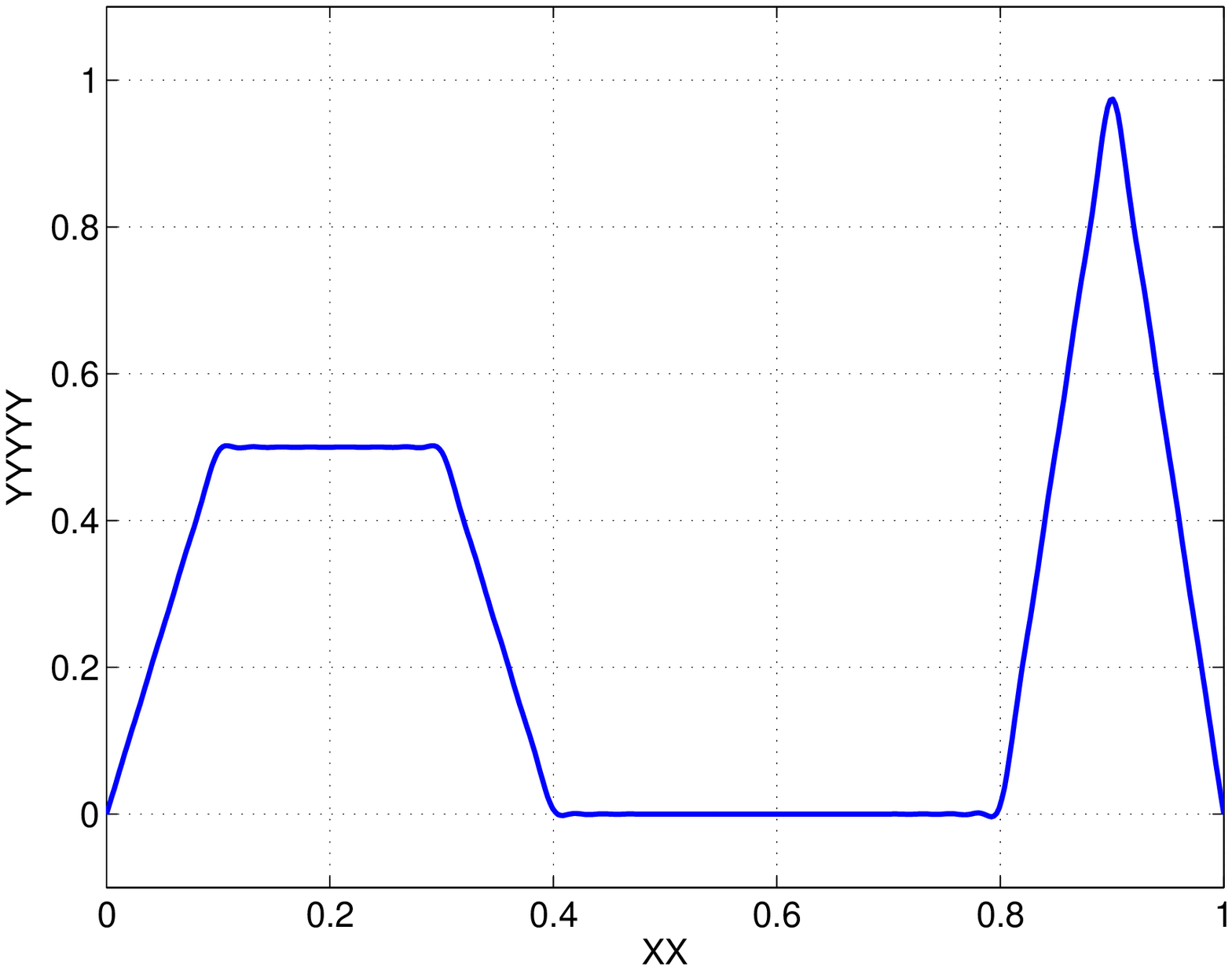,width=8cm}
}
\subfigure[Initial velocity $v(\lambda)$ vs $\lambda$.]{
	\psfrag{XX}{$\lambda$}
	\psfrag{YYYYY}{$\hspace{-10mm}v(\lambda) \  (\equiv \pdtone{u}{s}(0,\lambda))$}
	\epsfig{file=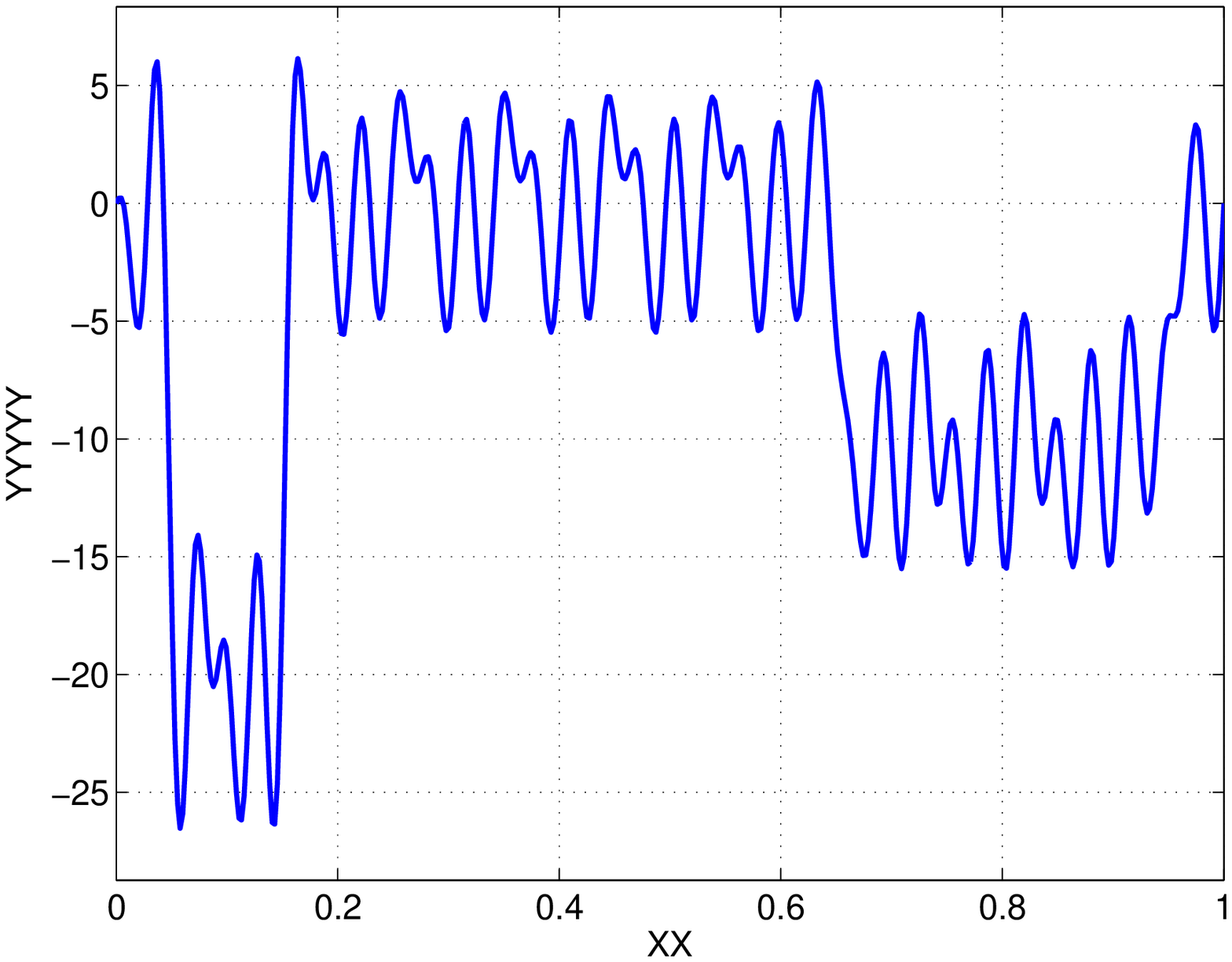,width=8cm}
}
\vspace{-4mm}
\caption{Initial velocity required to achieve terminal displacement.}
\label{fig:initial-terminal} 
\end{center}
%
\vspace{-3mm}
\begin{center}
\subfigure[Wave equation solution $u(s,\lambda)$, $s\in{[0,t]}$, $\lambda\in{[0,L]}$.]{
	\psfrag{XX}{$\lambda$}
	\psfrag{YY}{$s$}
	\psfrag{ZZZZ}{$u(s,\lambda)$}
	\epsfig{file=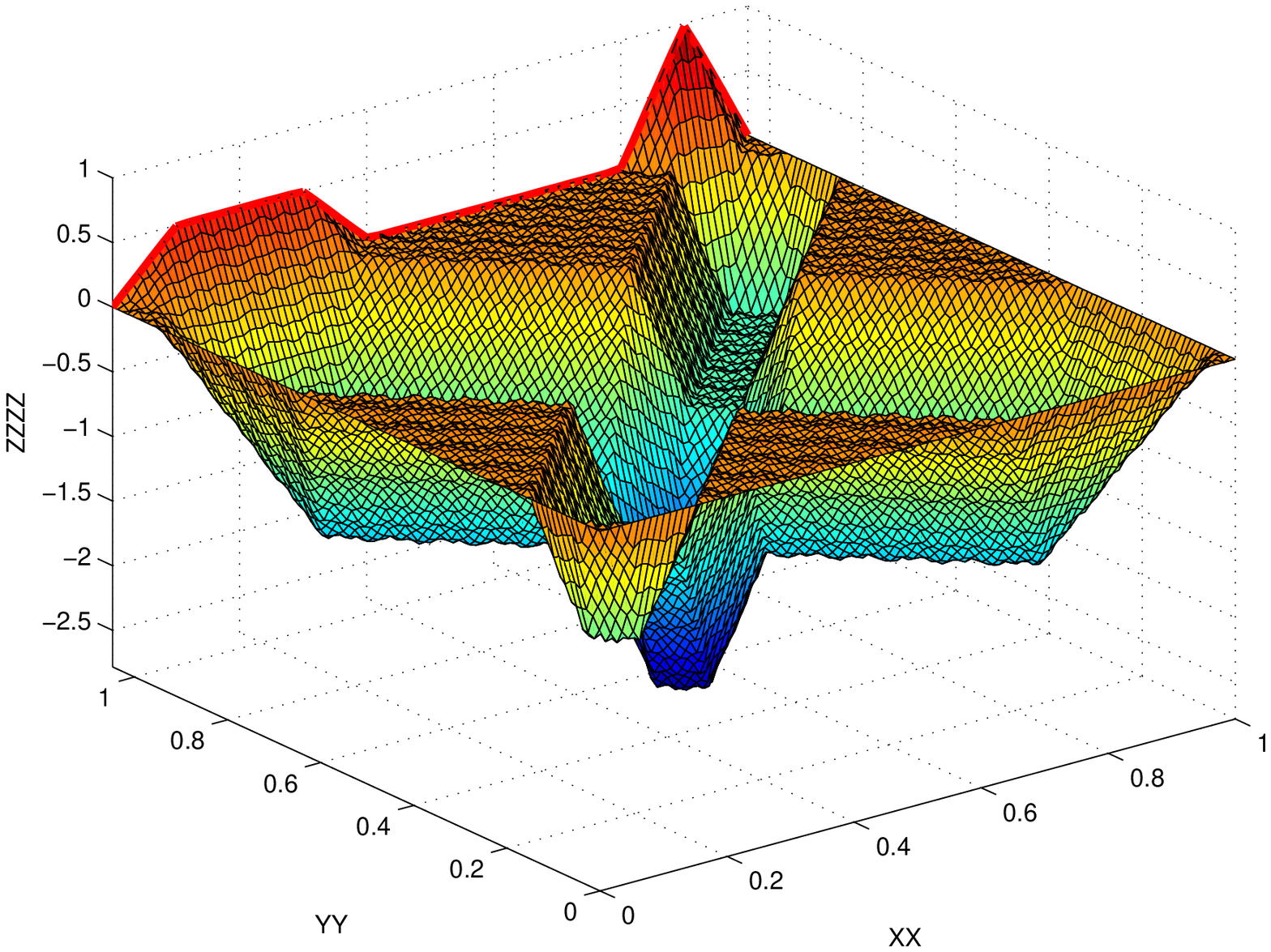,width=8cm}
}
\subfigure[Wave equation solution $u(s,\lambda)$, $s\in{[0,4 L (\ts{\frac{m}{\kappa}})^\half]}$, $\lambda\in{[0,L]}$.]{
	\psfrag{XX}{$\lambda$}
	\psfrag{YY}{$s$}
	\psfrag{ZZZZ}{$u(s,\lambda)$}
	\epsfig{file=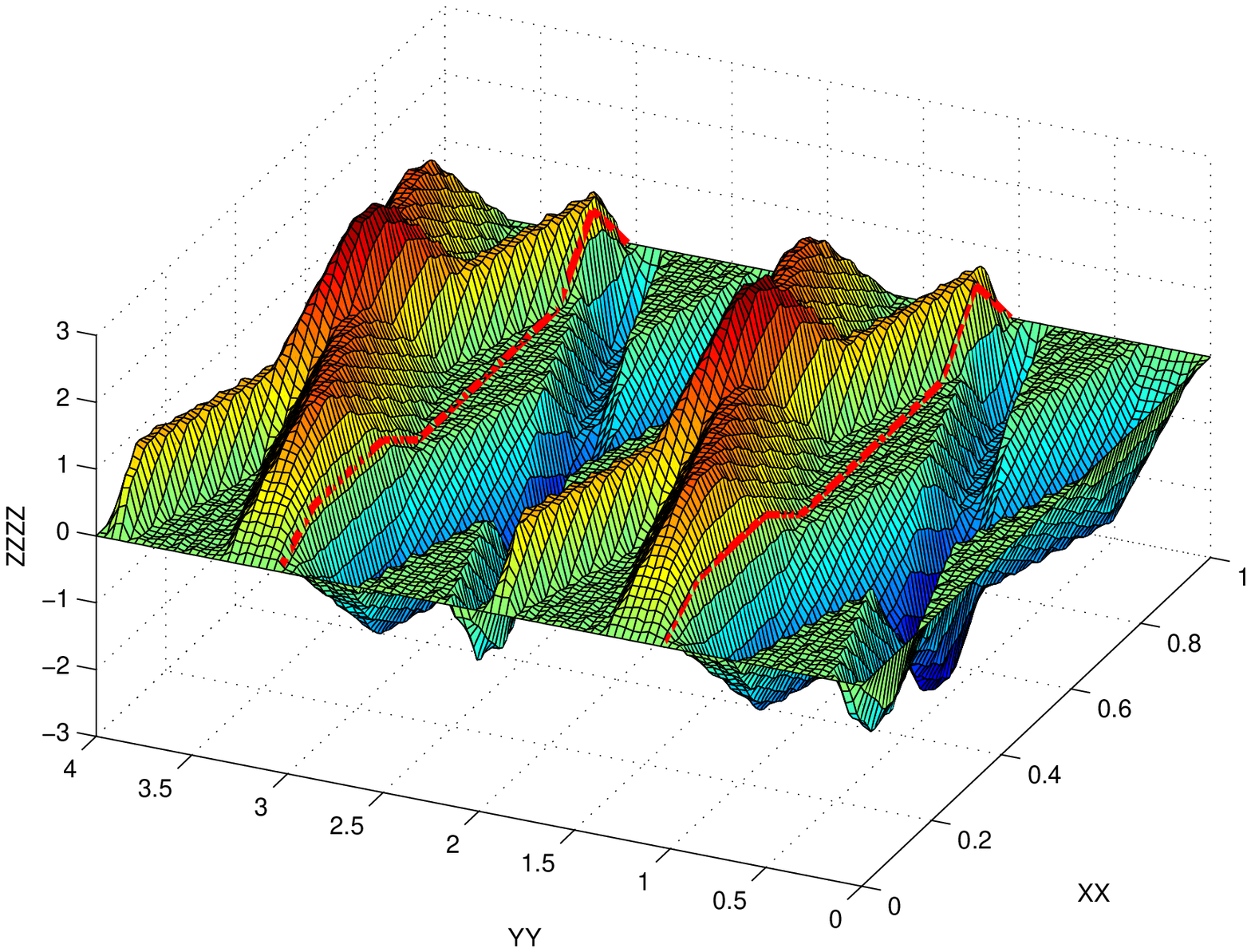,width=8cm}
}
\vspace{-4mm}
\caption{Solutions of wave equation \er{eq:wave}.}
\label{fig:wave-solutions}
\end{center}
\end{figure}

With $m=\kappa=L=1$ (with appropriate dimensions) in \er{eq:wave}, suppose that the specific problem $\TPBVP(t,x,z)$ is to be solved given the (arbitrary) initial displacement $x=0\in\cX_0$, terminal displacement $z\in\cX_0$ as per Figure \ref{fig:initial-terminal}(a), and horizon $t \doteq \ts{\frac{\pi\, L}{3}}\, ( \frac{m}{\kappa} )^\half$ ($\approx 1.05$). Recall in that case that the terminal payoff $\psi \doteq \psi^{\mu,\infty}(\cdot,z)$ encapsulates the required terminal displacement, and $z^* = z$ is required in \er{eq:initial-velocity}, as per discussion following \er{eq:z-star}. Applying \er{eq:initial-velocity} then yields the required initial velocity illustrated in Figure \ref{fig:initial-terminal}(b) that solves TPBVP \er{eq:TPBVP} via the fundamental solution \er{eq:W-infty}. This solution may be tested by propagating the initial displacement and velocity obtained forward to time $t$ by solving the wave equation \er{eq:wave} directly. (Here, the $C_0$-semigroup $\op{T}^\oplus(\cdot)$ of \er{eq:wave-solution} generated by $\op{A}^\oplus$ is applied to this end, see for example \cite{CZ:95}.) The resulting wave equation dynamics are illustrated in Figure \ref{fig:wave-solutions}(a), with the desired terminal displacement clearly achieved. Integration over a longer time period reveals (expected) periodic behaviour, see Figure \ref{fig:wave-solutions}(b).


\section{Conclusion}
A new fundamental solution based approach to solving a two point boundary value problem for a wave equation is considered. A value functional based characterization of this fundamental solution is formulated via the analysis of an optimal control problem that encapsulates the principle of stationary action. This value functional is shown to enjoy an explicit Riesz-spectral operator based representation via an associated infinite dimensional Hamilton Jacobi Bellman partial differential equation. Application of the fundamental solution obtained is illustrated via a simple example.


\bibliographystyle{siam}
\bibliography{spring}

\begin{thebibliography}{10}

\bibitem{BDDM:07}
{\sc A.~Bensoussan, G.~D. Prato, M.~Delfour, and S.~Mitter}, {\em
  Representation and control of infinite dimensional systems}, Birkha\"{u}ser,
  second~ed., 2007.

\bibitem{B:68}
{\sc S.~Bernau}, {\em The square root of a positive self-adjoint operator},
  Journal of the Australian Mathematical Society, 8 (1968), pp.~17--36.

\bibitem{CZ:95}
{\sc R.~Curtain and H.~Zwart}, {\em An introduction to infinite-dimensional
  linear systems theory}, vol.~21 of Texts in Applied Mathematics,
  Springer-Verlag, New York, 1995.

\bibitem{DM:11}
{\sc P.~Dower and W.~McEneaney}, {\em A max-plus based fundamental solution for
  a class of infinite dimensional {R}iccati equations}, in Proc. Joint
  $50^{th}$ IEEE Conference on Decision and Control and European Control
  Conference (Orlando FL, USA), 2011, pp.~615--620.

\bibitem{DM:12}
\leavevmode\vrule height 2pt depth -1.6pt width 23pt, {\em A max-plus method
  for the optimal control of a diffusion equation}, in proc. $51^{st}$ IEEE
  Conference on Decision \& Control (Maui HI, USA), 2012, pp.~618--623.

\bibitem{DM2:13}
\leavevmode\vrule height 2pt depth -1.6pt width 23pt, {\em A fundamental
  solution for an infinite dimensional two-point boundary value problem via the
  principle of stationary action}, in proc. Australian Control Conference
  (Perth, Australia), 2013, pp.~270--275.

\bibitem{DM1:14}
\leavevmode\vrule height 2pt depth -1.6pt width 23pt, {\em A max-plus dual
  space fundamental solution for a class of operator differential {R}iccati
  equations}, To appear, SIAM J. Control \& Optimization (preprint
  arXiv:1404.7209),  (2015).

\bibitem{DMZ1:15}
{\sc P.~Dower, W.~McEneaney, and H.~Zhang}, {\em Max-plus fundamental solution
  semigroups for optimal control problems}, in review, SIAM CT'15, 2015.

\bibitem{EN:00}
{\sc K.-J. Engel and R.~Nagel}, {\em One-parameter semigroups for linear
  evolution equations}, vol.~194 of Graduate texts in mathematics, Springer,
  2000.

\bibitem{F:48}
{\sc R.~Feynman}, {\em Space-time approach to non-relativistic quantum
  mechanics}, Rev. Mod. Phys., 20 (1948), p.~367.

\bibitem{F:64}
{\sc R.~Feynman, R.~Leighton, and M.~Sands}, {\em The {F}eynman lectures on
  physics}, vol.~2, Addison-Wesley, 2nd edition~ed., 1964.

\bibitem{GT:07}
{\sc C.~Gray and E.~Taylor}, {\em When action is not least}, Am. J. Phys., 75
  (2007).

\bibitem{K:78}
{\sc E.~Kreyszig}, {\em Introductory functional analysis with applications},
  Wiley, 1978.

\bibitem{M:08}
{\sc W.~McEneaney}, {\em A new fundamental solution for differential {R}iccati
  equations arising in control}, Automatica, 44 (2008), pp.~920--936.

\bibitem{MD1:13}
{\sc W.~McEneaney and P.~Dower}, {\em The principle of least action and
  solution of two-point boundary value problems on a limited time horizon}, in
  proc. SIAM CT'13 (San Diego CA, USA), 2013, pp.~199--206.

\bibitem{MD2:13}
\leavevmode\vrule height 2pt depth -1.6pt width 23pt, {\em The principle of
  least action and fundamental solutions of mass-spring and n-body two-point
  boundary value problems}, In review, SIAM J. Control \& Optimization,
  (2015).

\bibitem{MD-long-A:14}
\leavevmode\vrule height 2pt depth -1.6pt width 23pt, {\em The principle of
  stationary action and long-duration $n$-body two-point boundary value
  problems: {P}art {A}}.
\newblock In preparation, 2015.

\bibitem{MD-stat:14}
\leavevmode\vrule height 2pt depth -1.6pt width 23pt, {\em Staticization, its
  dynamic program, and solution propagation}, In review, Automatica.,  (2015).

\bibitem{P:83}
{\sc A.~Pazy}, {\em Semigroups of linear operators and applications to partial
  differential equations}, vol.~44 of Applied Mathematical Sciences,
  Springer-Verlag, 1983.

\bibitem{ZD1:15}
{\sc H.~Zhang and P.~Dower}, {\em Max-plus fundamental solution semigroups for
  a class of difference {R}iccati equations}, To appear, Automatica (preprint
  arXiv:1404:7593), 52 (2015), pp.~103--110.

\end{thebibliography}


\appendix


\section{Properties of operators $\op{A}$ and $\opAsqrt$}
\label{app:op-A-properties}
\label{app:half-basis}
Operators $\op{A}$ and $\opAsqrt$ are key to the application of the principle of stationary action to obtain the wave dynamics \er{eq:wave} via optimal control. The relevant properties of these operators are largely well-known \cite{CZ:95}, and are stated without proof unless otherwise indicated.

\begin{lemma}
\label{lem:op-A-properties}
The following properties hold on $\cX$:
\begin{enumerate}[(i)]
\item Operator $\op{A}$ is self-adjoint, positive, boundedly invertible, and closed, with 
\begin{align}
	& \op{A}' \, x = \op{A}\, x\,, 
	&&
	\forall \, x\in \dom(\op{A}') = \dom(\op{A}) = \cX_0\,,
	\nn\\
	&&&
	 \ran(\op{A}) = \ran(\op{A}') = \cX\,,
	\label{eq:op-A-ass-1a}
	\\
	& \langle x,\, \op{A}\, x \rangle = \| \partial x \|^2 > 0\,,
	&&
	\forall \ x\in\dom(\op{A}) = \cX_0\,, \, x\ne 0\,,
	\label{eq:op-A-ass-1b}
	\\
	& \op{A}^{-1}\, x = \int_\Lambda \hat A(\cdot,\zeta)\, x(\zeta)\, d\zeta\,,
	&&
	\hat A(\lambda,\zeta)
	\doteq  \ts{\frac{1}{L}} \left\{ \hspace{-1mm} \ba{cl}
		(L-\zeta)\, \lambda\,,	& 0\le \lambda\le \zeta \le L\,,
		\\
		(L-\lambda)\, \zeta\,,	& 0\le \zeta < \lambda \le L\,,
	\ea \right.
	\nn\\
	&&&
	 \forall \ x\in\dom(\op{A}^{-1}) = \cX\,,
	\nn\\
	&&&
	\ran(\op{A}^{-1}) = \dom(\op{A}) = \cX_0\,.
	\label{eq:op-A-ass-1c}
\end{align}
\item Operator $\op{A}$ has a unique, positive, self-adjoint, boundedly invertible, and closed square root $\opAsqrt$, with 
\begin{align}
	& \opAsqrt\, x\in\dom(\opAsqrt) = \cX_\half
	&&
	\forall \ x\in\dom(\op{A}) = \cX_0\,,
	\label{eq:op-A-ass-2a}
	\\
	& \opAsqrt\, \opAsqrt\, x = \op{A} \, x\,,	
	&&
	\forall \ x\in\dom(\op{A}) = \cX_0\,,
	\label{eq:op-A-ass-2b}
	\\
	& \|\opAsqrt\, x\| = \| \partial x\| = \|x\|_\half
	&&
	\forall \ x\in\dom(\op{A}) = \cX_0\,,
	\label{eq:op-A-ass-2c}	
	\\
	& \op{J}\, x \doteq \left(\opAsqrt\right)^{-1}\, x = \left(\op{A}^{-1}\right)^\half\, x\,,
	&&
	\forall\, x\in\dom(\op{J}) = \cX\,,
	\nn\\
	&&&
	\ran(\op{J}) = \dom(\opAsqrt) = \cX_\half\,,
	\label{eq:op-A-ass-2d}
	\\
	& \left\| \op{J}\, x \right\|_\half = \|x\|\,,
	&&
	\forall\, x\in\dom(\op{J}) = \cX\,.
	\label{eq:op-A-ass-2e}
\end{align}
%
\end{enumerate}
\end{lemma}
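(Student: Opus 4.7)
The plan is to treat part (i) as the classical Dirichlet--Laplacian analysis on $\Lambda=(0,L)$, and then to obtain part (ii) essentially by functional calculus, since $\op A$ has been shown to be positive and self-adjoint on $\cX$.

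For part (i), I would first verify self-adjointness by integration by parts. For $x,y\in\cX_0$, integration by parts twice against the Dirichlet boundary values $x(0)=x(L)=y(0)=y(L)=0$ yields $\langle\op A\, x,y\rangle = -\langle\partial^2 x,y\rangle = \langle\partial x,\partial y\rangle = \langle x,\op A\, y\rangle$, so $\op A\subset\op A'$. To promote this to $\op A=\op A'$, I would use the Green's function candidate $\hat A(\lambda,\zeta)$ in \er{eq:op-A-ass-1c}: for any $f\in\cX$, define $u(\lambda)\doteq\int_\Lambda\hat A(\lambda,\zeta)f(\zeta)\,d\zeta$ and verify by splitting the integral at $\lambda$ and differentiating that $u,\partial u$ are absolutely continuous, $-\partial^2 u=f$ a.e., and $u(0)=u(L)=0$, so $u\in\cX_0$ and $\op A u=f$. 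This simultaneously establishes surjectivity $\ran(\op A)=\cX$ and produces a bounded right inverse (boundedness follows from $\hat A\in\Ltwo(\Lambda\times\Lambda)$, making the integral operator Hilbert--Schmidt). Injectivity of $\op A$ on $\cX_0$ is immediate from \er{eq:op-A-ass-1b}, which itself follows from integration by parts: $\langle x,\op A x\rangle=\|\partial x\|^2>0$ for $x\in\cX_0\setminus\{0\}$, the strict positivity being guaranteed by the boundary conditions forcing $\partial x\not\equiv 0$ whenever $x\not\equiv 0$. Bounded invertibility and symmetry together with the range condition then upgrade $\op A\subset\op A'$ to equality by the standard argument (a densely defined symmetric operator with $\ran(\op A)=\cX$ is automatically self-adjoint); closedness is then automatic for any self-adjoint operator.

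For part (ii), since $\op A$ is a positive self-adjoint operator on $\cX$, the spectral theorem (or the classical result cited in \cite{CZ:95}) furnishes a unique positive self-adjoint square root $\opAsqrt$, with $\opAsqrt\opAsqrt\, x=\op A\, x$ on $\dom(\op A)=\cX_0$ and $\dom(\opAsqrt)\doteq\cX_\half$ by \er{eq:cX-half}. The containment $\opAsqrt\,x\in\cX_\half$ for $x\in\cX_0$ is the standard regularity statement $\op A\,x\in\cX\;\Rightarrow\;\opAsqrt x\in\dom(\opAsqrt)$ obtained from spectral calculus. The norm identity \er{eq:op-A-ass-2c} follows for $x\in\cX_0$ from $\|\opAsqrt x\|^2=\langle\opAsqrt x,\opAsqrt x\rangle=\langle x,\op A x\rangle=\|\partial x\|^2$, together with the definition $\|x\|_\half=\|\opAsqrt x\|$ from \er{eq:cX-half}. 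For $\op J$, I would apply the spectral functional calculus: since $\op A^{-1}\in\bo(\cX)$ is positive and self-adjoint by part (i), $(\op A^{-1})^\half\in\bo(\cX)$ exists and is unique; uniqueness of square roots applied to the identity $(\opAsqrt)^2(\op A^{-1})=\op A\,\op A^{-1}=\op I$ then gives $(\opAsqrt)^{-1}=(\op A^{-1})^\half$ on $\cX$, establishing \er{eq:op-A-ass-2d}. Finally, the identity \er{eq:op-A-ass-2e} follows by substituting $y=\op J x$ in $\|\opAsqrt y\|=\|y\|_\half$ and using $\opAsqrt\op J=\op I$.

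The main technical obstacle is the Green's function verification in part (i): one has to be careful with the kernel split at $\lambda=\zeta$ when differentiating under the integral, producing the correct jump in the first derivative that yields a $-\delta(\lambda-\zeta)$ contribution at the level of the second derivative. Everything else reduces either to integration by parts or to standard functional calculus for positive self-adjoint operators, and so I would cite \cite{CZ:95} for those pieces rather than reprove them.
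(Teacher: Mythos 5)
Your proposal matches the approach the paper takes (the paper itself defers almost entirely to \cite{CZ:95} and \cite{B:68}, with the remaining arguments kept in a commented-out proof in the source): integration by parts for symmetry and positivity, the explicit Green's function kernel $\hat A\in\Ltwo(\Lambda\times\Lambda)$ to get a bounded (Hilbert--Schmidt) inverse and hence closedness, and uniqueness of positive self-adjoint square roots plus short algebraic identities for part (ii). Your explicit use of the fact that a densely defined symmetric operator with full range is automatically self-adjoint is a slight tightening over the paper's commented argument, which asserts $\dom(\op{A}')=\dom(\op{A})$ without spelling out that step; otherwise the two are essentially identical.
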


\if{false} 
\begin{proof}
{\em (i)}
Fix $x,\, \xi\in\dom(\op{A}) = \cX_0$, and note that $\partial^2 x,\, \partial^2 \xi\in\cX$. By definition of the standard {\ltwo} inner-product on $\cX$,
\begin{align}
	\langle \xi,\, \op{A}\, x \rangle
	& = \int_\Lambda \xi(\zeta)\, (\op{A}\, x)(\zeta)\, d\zeta
	= - \int_\Lambda \xi(\zeta)\, \partial^2 x(\zeta)\, d\zeta
	\nn\\
	& = - \xi(\zeta)\, \partial x(\zeta) \biggl|_\Lambda + \int_\Lambda \partial \xi(\zeta) \, \partial x(\zeta)\, d\zeta
	\nn\\
	& = - \xi(\zeta)\, \partial x(\zeta) \biggl|_\Lambda + \partial \xi(\zeta)\, x(\zeta) \biggl|_\Lambda 
					- \int_\Lambda \partial^2 \xi(\zeta)\, x(\zeta)\, d\zeta
	\nn\\
	& = \partial \xi(L)\, x(L) - \partial\xi(0)\,  x(0) - \xi(L) \, \partial x(L) + \xi(0) \, \partial x(0)
					+ \int_\Lambda \left[ -\partial^2 \xi(\zeta) \right]\, x(\zeta)\, d\zeta
	\nn\\
	& = \langle \op{A}\, \xi,\, x \rangle\,,
	\label{eq:op-Lap-properties-1}
\end{align}
as $x(0) = 0 =  x(L)$ and $\xi(0) = 0 = \xi(L)$. That is, the adjoint of operator $\op{A}$ on $\cX$ is $\op{A}'\, x \doteq \op{A} \, x$ with $\quad\dom(\op{A}') \doteq \dom(\op{A})$, so that $\op{A}$ is self-adjoint on $\cX$ as per \er{eq:op-A-ass-1a}. In order to demonstrate positivity \er{eq:op-A-ass-1b} of $\op{A}$, recall that $\partial x\in\cX$ by inspection of \er{eq:cX-0}, so that $\|\partial x\|<\infty$. Suppose that $\|\partial x\| = 0$, i.e. $\partial x=0$ a.e. on $\Lambda$. By inspection of the boundary data in \er{eq:cX-0}, $x(\zeta) = x(0) + \int_0^\zeta \partial x(\rho)\, d\rho = 0$ for all $\zeta\in\Lambda$. That is, $\|x\| = 0$. So, in order to demonstrate positivity of $\op{A}$, it is sufficient to restrict attention to those $x\in\dom(\op{A})$ for which $\|\partial x\| > 0$. In that case, by inspection of the third equality in \er{eq:op-Lap-properties-1}, setting $\xi = x$ yields
\begin{align}
	\langle x,\, \op{A}\, x\rangle
	& = \int_\Lambda \partial x(\zeta)\, \partial x(\zeta) \, d\zeta
	= \| \partial x \|^2 > 0\,.
	\nn
\end{align}
That is, \er{eq:op-A-ass-1b} holds. Assertion \er{eq:op-A-ass-1c} follows by Example A.4.26 of \cite{CZ:95}. In particular, by inspection, $\op{A}^{-1}$ is an integral operator with kernel $\hat A\in\Ltwo(\Lambda\times\Lambda;\R)$, which implies that $\op{A}^{-1}\in\mathcal{L}(\cX)$. Consequently, $\op{A}$ is also closed. By definition, $\ran(\op{A}^{-1}) = \dom(\op{A}) = \cX_0$. Similarly, \er{eq:op-A-ass-1a} implies that $\dom(\op{A}^{-1}) = \ran(\op{A}) = \cX$. Hence, \er{eq:op-A-ass-1c} holds.

{\em (ii)} As $\op{A}$ defines a self-adjoint and positive operator on a real Hilbert space $\cX$ as per assertion (i), operator $\op{A}$ has a unique, self-adjoint, positive square root $\opAsqrt$, see (for example) Theorems 9-11 of \cite{B:68} or Lemma A.3.73 of \cite{CZ:95}. (Note that the self-adjoint property of $\opAsqrt$ follows immediately from the self-adjoint property of $\op{A}$ and uniqueness of $\opAsqrt$. In particular, $\op{A} = \op{A}' = (\opAsqrt\, \opAsqrt)' = (\opAsqrt)' \, (\opAsqrt)'$. Consequently, $(\opAsqrt)'$ is also a square-root of $\op{A}$, and so by uniqueness, $\opAsqrt = (\opAsqrt)'$ with the same domain.) The fact that $\opAsqrt$ is boundedly invertible follows by Example A.4.3 of \cite{CZ:95}. 
Assertions \er{eq:op-A-ass-2a} and \er{eq:op-A-ass-2b} follow by Lemma A.3.73 of \cite{CZ:95}. 
Assertion \er{eq:op-A-ass-2c} follows by applying the self-adjoint property of $\opAsqrt$ along with the definition \er{eq:cX-half} of $\|\cdot\|_\half$, and assertions \er{eq:op-A-ass-1b} and \er{eq:op-A-ass-2b}. In particular,
\begin{align}
	\|x\|_\half^2
	& 
	= \langle \opAsqrt\, x,\, \opAsqrt\, x \rangle
	= \langle x,\, \op{A}\, x\rangle
	= \|\partial x\|^2.
	\nn
\end{align}
Assertion \er{eq:op-A-ass-2d} follows by Example A.4.3 of \cite{CZ:95}. In particular, as $\op{A}^{-1}\in\mathcal{L}(\cX)$ is self-adjoint and positive, it has a well-defined square-root $(\op{A}^{-1})^\half = (\opAsqrt)^{-1} = \op{J}\in\mathcal{L}(\cX)$. This also implies that $\opAsqrt$ is closed. As $\op{J} \, \opAsqrt = \op{I}_{\half}$ and $\opAsqrt\, \op{J} = \op{I}$ (where $\op{I}_\half$ and $\op{I}$ denotes  the identity operator on $\cX_\half$ and $\cX$ respectively), $\cX = \dom(\op{J}) = \ran(\opAsqrt)$ and $\cX_\half = \ran(\op{J}) = \dom(\opAsqrt)$. (See also \cite{B:68}.)
Assertion \er{eq:op-A-ass-2e} follows by definition \er{eq:cX-half} of $\|\cdot\|_\half$, with
$\|x\|^2
	= \langle x,\, x\rangle
	= \langle \opAsqrt \op{J}\, x,\, \opAsqrt\op{J}\, x\rangle
	= \|\op{J}\, x\|_\half
$.
\end{proof}

\fi


\begin{lemma}
\label{lem:eigenvalues}
Operator $\op{A}$ of \er{eq:op-A} has countably infinite simple eigenvalues given by $\{\lambda_n\}_{n=1}^\infty$, where eigenvalue $\lambda_n$ corresponds to eigenvector $\varphi_n\in\op{B}$ of \er{eq:basis-Riesz} (or equivalently $\tilde\varphi_n\in\optilde{B}$) and
\begin{align}
	& \lambda_n \doteq (\ts{\frac{n\, \pi}{L}})^2\,.
	\label{eq:eigenvalues}
\end{align}
Similarly, the square root $\opAsqrt$ of operator $\op{A}$ has countably infinite simple eigenvalues given by $\{\sqrt{\lambda_n}\}_{n=1}^\infty$, where eigenvalue $\sqrt{\lambda_n}$ corresponds to eigenvector $\varphi_n\in\op{B}$ (or equivalently in $\tilde\varphi_n\in\optilde{B}$) and $\lambda_n$ is as per \er{eq:eigenvalues}.
\end{lemma}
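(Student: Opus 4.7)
The plan is to directly solve the eigenvalue problem for $\op{A}$ as a regular Sturm--Liouville problem on $\Lambda$, identify the eigenpairs with those prescribed by the Riesz bases \er{eq:basis-Riesz}, and then deduce the spectral structure of $\opAsqrt$ by uniqueness of the positive self-adjoint square root.

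First, I would fix $\lambda\in\R$ and consider the equation $\op{A}\, \varphi = \lambda\, \varphi$ on $\dom(\op{A})=\cX_0$, i.e. the ODE $-\partial^2 \varphi(\cdot) = \lambda\, \varphi(\cdot)$ subject to $\varphi(0) = \varphi(L) = 0$. The positivity assertion \er{eq:op-A-ass-1b} of Lemma \ref{lem:op-A-properties} immediately rules out $\lambda\le 0$ for nontrivial $\varphi$, so one may restrict to $\lambda\in\R_{>0}$. Standard ODE theory then gives the two-parameter family of solutions $\varphi(\zeta) = \alpha\, \cos(\sqrt{\lambda}\, \zeta) + \beta\, \sin(\sqrt{\lambda}\, \zeta)$. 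The boundary condition at $\zeta=0$ forces $\alpha=0$, while the boundary condition at $\zeta=L$ forces $\beta\, \sin(\sqrt{\lambda}\, L)=0$. Nontriviality $\beta\ne 0$ thus yields precisely $\sqrt{\lambda}\, L = n\, \pi$ for some $n\in\N$, which is the claimed formula \er{eq:eigenvalues}. The associated one-dimensional eigenspace is spanned by $\sin(\ts{\frac{n\, \pi}{L}}\, \cdot)$, which, after normalization with respect to $\|\cdot\|$ or $\|\cdot\|_\half$, coincides with $\varphi_n\in\op{B}$ or $\tilde\varphi_n\in\optilde{B}$ of \er{eq:basis-Riesz} respectively (note the scaling factors just reflect that $\|\tilde\varphi_n\|_\half = \|\partial \tilde\varphi_n\| = 1$ by \er{eq:op-A-ass-2c}).

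Second, simplicity of each $\lambda_n$ follows because the ODE above is a second-order linear equation with a two-dimensional solution space, of which the Dirichlet condition at $\zeta=0$ already cuts out a one-dimensional subspace; hence no second linearly independent eigenfunction exists for any given $n$. Distinctness of the $\lambda_n$ across $n\in\N$ is immediate from \er{eq:eigenvalues}. That the countable family $\{\lambda_n\}_{n=1}^\infty$ exhausts the spectrum (and hence the eigenvalues) follows from the fact that $\{\varphi_n\}_{n=1}^\infty$ is a complete orthonormal basis of $\cX$ (Lemma \ref{lem:basis-Riesz}) consisting of eigenvectors of the closed self-adjoint operator $\op{A}$: any additional eigenvector would have to be orthogonal to $\setspan\{\varphi_n\}_{n=1}^\infty = \cX$ and thus be zero.

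Finally, for $\opAsqrt$, I would invoke the uniqueness of the positive self-adjoint square root of a positive self-adjoint operator (Lemma \ref{lem:op-A-properties}(ii) and the references cited there). Since the operator defined by the prescription $\varphi_n\mapsto \sqrt{\lambda_n}\, \varphi_n$, extended by closure in the spectral sense, is a positive self-adjoint operator whose square is $\op{A}$, it must coincide with $\opAsqrt$. Consequently $\opAsqrt\, \varphi_n = \sqrt{\lambda_n}\, \varphi_n$ (and analogously for $\tilde\varphi_n$), giving the claimed eigenvalue--eigenvector pairs $\{(\sqrt{\lambda_n},\, \varphi_n)\}_{n=1}^\infty$. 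Simplicity and exhaustiveness transfer from $\op{A}$ by the same spectral argument.

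There is no genuine obstacle here; the only point to handle carefully is matching the normalizations of $\varphi_n$ and $\tilde\varphi_n$ with respect to the two inner products, and citing the correct completeness/spectral result to conclude that no eigenvalues outside $\{\lambda_n\}_{n=1}^\infty$ exist.
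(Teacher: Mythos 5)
Your proof is correct, and it takes a genuinely different route from the paper's. For the eigenvalues of $\op{A}$, the paper simply cites Example A.4.26 of \cite{CZ:95}, whereas you solve the Dirichlet problem $-\partial^2 \varphi = \lambda\varphi$, $\varphi(0)=\varphi(L)=0$ directly, which is more elementary and makes simplicity transparent from the dimension count in the ODE solution space. For $\opAsqrt$, the two arguments go in opposite directions: the paper starts from the unknown vector $x_n \doteq \opAsqrt\varphi_n$, shows via commutation that $x_n$ is an eigenvector of $\op{A}$ for $\lambda_n$, invokes simplicity to get $x_n = \alpha_n \varphi_n$, and then squares to pin down $\alpha_n^2 = \lambda_n$; you instead construct the spectral operator $\varphi_n \mapsto \sqrt{\lambda_n}\,\varphi_n$, verify it is a positive self-adjoint square root of $\op{A}$, and invoke uniqueness to conclude it equals $\opAsqrt$. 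Both hinge on uniqueness of the positive self-adjoint square root; yours makes that dependence explicit and avoids the commutation step, while the paper's extracts the eigenvalue relation from the simplicity just established for $\op{A}$. One small redundancy on your side: once you have shown via the ODE that every eigenvalue of $\op{A}$ must equal some $\lambda_n$, the subsequent completeness-of-basis argument for exhaustiveness is superfluous (though harmless). Your remark about reconciling the normalizations of $\varphi_n$ and $\tilde\varphi_n$ via \er{eq:op-A-ass-2c} is a useful detail the paper leaves implicit.
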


\if{false}

\begin{proof}
The first assertion concerning the eigenvalues of $\op{A}$ is well known, c.f. Example A.4.26 of \cite{CZ:95}. In order to prove the second assertion, define $x_n\doteq \opAsqrt\, \varphi_n\in\cX_\half$ and note that $x_n\ne 0$ as $\opAsqrt$ is boundedly invertible by Lemma \ref{lem:op-A-properties} and $0\ne \varphi_n\in\cX_0$. Also note that as $x_n\in\cX_\half$, the first assertion implies that $\opAsqrt\, x_n = \op{A}\, \varphi_n = \lambda_n\, \varphi_n\in\cX_0$. Hence, $x_n\in\cX_0 = \dom(\op{A})$. So, applying $\op{A}$ to $x_n$ and noting that $\op{A}$ and $\opAsqrt$ commute,
\begin{align}
	\op{A}\, x_n
	& = \op{A} \, ( \opAsqrt\, \varphi_n ) = \opAsqrt\, \op{A}\, \varphi_n = \opAsqrt\, (\lambda_n \, \varphi_n) = \lambda_n\, x_n\,.
	\nn
\end{align}
That is, $x_n$ is an eigenvector of $\op{A}$ corresponding to eigenvalue $\lambda_n$. Hence, by the first assertion, there exists an $\alpha_n\in\R$, $\alpha_n\ne 0$, such that $(\opAsqrt\, \varphi_n =\, ) \, x_n = \alpha_n\, \varphi_n$. That is, $\varphi_n$ is an eigenvector of $\opAsqrt$ corresponding to eigenvalue $\alpha_n\in\R_{>0}$, where positivity follows from the fact that $\opAsqrt$ is self-adjoint and positive. Consequently,
\begin{align}
	\op{A}\, \varphi_n
	& = \opAsqrt\, \opAsqrt\, \varphi_n = \opAsqrt\, (\alpha_n\, \varphi_n) = \alpha_n\, \opAsqrt\, \varphi_n = \alpha_n^2 \, \varphi_n\,.
	\nn
\end{align}
That is, $\alpha_n^2$ is an eigenvalue of $\op{A}$ corresponding to eigenvector $\varphi_n$. Hence, by the first assertion, $\alpha_n^2 = \lambda_n$.
\end{proof}

\fi

\begin{lemma} 
\label{lem:basis-Riesz}
$\op{B}$ and $\optilde{B}$ of \er{eq:basis-Riesz} are orthonormal Riesz bases for $\cX$ and $\cX_\half$ respectively.
\end{lemma}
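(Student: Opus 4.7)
The plan is to handle the two bases in turn, leaning on Lemma~\ref{lem:eigenvalues} and the isomorphism properties of $\opAsqrt$ and $\op{J}$ from Lemma~\ref{lem:op-A-properties} to push everything known for $\op{B}$ through to $\optilde{B}$.

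For $\op{B}\subset\cX$, I would simply appeal to the classical fact that the normalised Dirichlet sines $\varphi_n(\lambda)=\sqrt{2/L}\,\sin(n\pi\lambda/L)$ form an orthonormal basis for $\Ltwo(0,L;\R)$ (e.g.\ Example~A.4.26 of \cite{CZ:95}). Orthonormality is a direct trigonometric integration; completeness follows either from Fourier series theory or from the spectral theorem applied to the compact, self-adjoint operator $\op{A}^{-1}\in\bo(\cX)$ of \er{eq:op-A-ass-1c}, whose eigenfunctions are the $\varphi_n$. An orthonormal basis is trivially a Riesz basis (take the identity as the bounded invertible operator witnessing the equivalence), so the first assertion is immediate.

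For $\optilde{B}\subset\cX_\half$, the key preliminary observation is that, by Lemma~\ref{lem:eigenvalues}, $\opAsqrt\varphi_n=\sqrt{\lambda_n}\,\varphi_n=(n\pi/L)\,\varphi_n$, so
\begin{equation*}
    \tilde\varphi_n \;=\; \frac{\sqrt{2L}}{n\pi}\sin\!\left(\tfrac{n\pi}{L}\cdot\right)
    \;=\; \frac{L}{n\pi}\,\varphi_n \;=\; \frac{1}{\sqrt{\lambda_n}}\,\varphi_n \;=\; \op{J}\,\varphi_n.
\end{equation*}
Since $\op{J}\in\bo(\cX)$ with $\ran(\op{J})=\cX_\half$ by \er{eq:op-A-ass-2d}, this already gives $\tilde\varphi_n\in\cX_\half$. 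Orthonormality in $\cX_\half$ then follows by collapsing $\opAsqrt\op{J}=\op{I}$ on $\cX$:
\begin{equation*}
    \langle \tilde\varphi_n,\,\tilde\varphi_m\rangle_\half
    = \langle \opAsqrt\op{J}\varphi_n,\,\opAsqrt\op{J}\varphi_m\rangle
    = \langle \varphi_n,\,\varphi_m\rangle
    = \delta_{nm}.
\end{equation*}

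For completeness of $\optilde{B}$ in $\cX_\half$, suppose $x\in\cX_\half$ satisfies $\langle x,\tilde\varphi_n\rangle_\half=0$ for all $n\in\N$. Then $\langle \opAsqrt x,\varphi_n\rangle=\langle \opAsqrt x,\opAsqrt\op{J}\varphi_n\rangle=\langle x,\tilde\varphi_n\rangle_\half=0$ for every $n$, so by the already-established completeness of $\op{B}$ in $\cX$ we conclude $\opAsqrt x=0$, and bounded invertibility of $\opAsqrt$ (Lemma~\ref{lem:op-A-properties}) yields $x=0$. Hence $\optilde{B}$ is an orthonormal basis, and therefore a Riesz basis, for $\cX_\half$. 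There is no real obstacle here; the only subtlety is keeping the domains/ranges of $\opAsqrt$ and $\op{J}$ straight so that all manipulations are performed on elements where the operators are defined, which Lemma~\ref{lem:op-A-properties} entitles us to do.
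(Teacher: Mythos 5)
Your proof is correct, and for the completeness of $\optilde{B}$ it takes a genuinely different (and shorter) route than the paper. You use the observation that $\tilde\varphi_n = \op{J}\,\varphi_n$ together with the orthogonal-complement characterization of a basis: if $\langle x,\tilde\varphi_n\rangle_\half = \langle\opAsqrt x,\varphi_n\rangle = 0$ for all $n$, completeness of $\op{B}$ in $\cX$ forces $\opAsqrt x = 0$ and hence $x=0$. Implicitly you are exploiting the fact that $\opAsqrt:\cX_\half\to\cX$ is an isometric isomorphism carrying $\tilde\varphi_n$ to $\varphi_n$, which transfers the whole basis property at a stroke. The paper instead argues constructively: it defines the partial-sum remainders $\tilde e_N = x - \sum_{n\le N}\langle x,\tilde\varphi_n\rangle_\half\tilde\varphi_n$ and $e_N = x - \sum_{n\le N}\langle x,\varphi_n\rangle\varphi_n$, shows via a Bessel-type estimate that $\{\tilde e_N\}$ is Cauchy in $\cX_\half$, checks that $\tilde e_N = e_N$ term by term, and concludes from $e_N\to 0$ in $\cX$ that the $\cX_\half$-limit of $\tilde e_N$ is also zero. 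Your approach buys brevity and makes the structural reason (isometric transfer under $\opAsqrt$) transparent; the paper's approach is heavier but directly exhibits the convergent expansion, which is the form in which the basis property is used later (e.g. in the Riesz-spectral operator machinery of Appendices B and C). Both are valid; your version is the one most textbooks would give.
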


\if{false}

\begin{proof}
The fact that $\op{B}$ is a Riesz basis for $\cX$ is well-known, see for example \cite{CZ:95}. It remains to demonstrate that $\optilde{B}$ is a Riesz basis for $\cX_\half$. To this end, first note that $\optilde{B}\subset\cX_0\subset\cX_\half$. With $m,n\in\N$, Lemma \ref{lem:eigenvalues} implies that
\begin{align}
	& \langle \tilde\varphi_m,\, \tilde\varphi_n \rangle_\half
	= \langle \opAsqrt\, \tilde\varphi_m,\, \opAsqrt\, \tilde\varphi_n\rangle
	= \langle \tilde\varphi_m,\, \op{A}\, \tilde\varphi_n \rangle
	= \lambda_n \, \langle \tilde\varphi_m,\, \tilde\varphi_n \rangle
	= \lambda_n \, \ts{\frac{2\, L}{(n\, \pi)^2}} \, (\ts{\frac{L}{2}})\, \langle \, \varphi_m,\, \varphi_n \rangle
	= \delta_{mn}\,,
	\nn
\end{align}
where $\delta_{mn}$ denotes the Kronecker delta. Consequently, as $m,n\in\N$ are arbitrary, $\optilde{B}\subset \cX_\half$ is orthonormal. It remains to show that $\optilde{B}$ spans $\cX_\half$. To this end, fix any $x\in\cX_\half$ and define
$\tilde e_N,\, e_N\in\cX_\half$ for each $N\in\N$ by
\begin{align}
	\tilde e_N
	& \doteq x - \sum_{n=1}^N \langle x,\, \tilde\varphi_n \rangle_\half\, \tilde\varphi_n\,,
	\label{eq:basis-tilde-e}
	\\
	e_N 
	& \doteq x - \sum_{n=1}^N \langle x,\, \varphi_n \rangle\, \varphi_n\,.
	\label{eq:basis-breve-e}
\end{align}
Hence, for each $N,M\in\N$, $N>M$, \er{eq:basis-tilde-e} implies that
\begin{align}
	& \| \tilde e_N - \tilde e_M \|_\half^2
	= \left\| - \!\!\! \sum_{m=M+1}^N \langle x,\, \tilde\varphi_m\rangle_\half\, \tilde\varphi_m \right\|_\half^2
	= \left\langle \sum_{m=M+1}^N \langle x,\, \tilde\varphi_m\rangle_\half\, \tilde\varphi_m,\, 
				\sum_{n=M+1}^N \langle x,\, \tilde\varphi_n\rangle_\half\, \tilde\varphi_n \right\rangle_\half
	\nn\\
	& = \sum_{m=M+1}^N \langle x,\, \tilde\varphi_m\rangle_\half\, \left\langle \varphi_m,\, 
				\sum_{n=M+1}^N \langle x,\, \tilde\varphi_n\rangle_\half\, \tilde\varphi_n \right\rangle_\half
	= \sum_{m=M+1}^N \langle x,\, \tilde\varphi_m\rangle_\half^2\, 
						\langle \tilde\varphi_m,\, \tilde\varphi_m\rangle_\half
	= \sum_{m=M+1}^N \langle x,\, \tilde\varphi_m\rangle_\half^2
	\nn\\
	& = \sum_{m=M+1}^N \langle x,\, \op{A}\, \tilde\varphi_m \rangle^2
	= \sum_{m=M+1}^N \lambda_m^2 \, \langle x,\, \tilde\varphi_m \rangle^2
	\le \sum_{m=M+1}^\infty \lambda_m^2 \, \langle x,\, \tilde\varphi_m \rangle^2\,,
	\label{eq:basis-Cauchy}
\end{align}
in which the last equality follows by virtue of the fact that $\op{A}$ has distinct positive eigenvalues $\{\lambda_n\}_{n=1}^\infty$ corresponding to eigenvectors in $\optilde{B}$ (or equivalently $\op{B}$). In order to evaluate this last sum in \er{eq:basis-Cauchy}, note that Lemma \ref{lem:eigenvalues} implies that $\opAsqrt$ inherits distinct positive eigenvalues $\{\sqrt{\lambda_n}\}_{n=1}^\infty$  corresponding to the same eigenvectors $\optilde{B}$ (or $\op{B}$) of $\op{A}$. Hence, as $\opAsqrt\, x\in\cX$ (by definition of $x\in\cX_\half$),
\begin{align}
	\infty & > \sum_{m=1}^\infty \langle \opAsqrt\, x,\, \varphi_m \rangle^2
	= \sum_{m=1}^\infty \langle x,\, \opAsqrt\, \varphi_m \rangle^2
	= \sum_{m=1}^\infty \langle x,\, \sqrt{\lambda_m}\, \varphi_m \rangle^2
	= \sum_{m=1}^\infty \lambda_m\, \ts{\frac{(n\, \pi)^2}{2\, L}} \, \ts{\frac{2}{L}}\, \langle x,\, \tilde\varphi_m \rangle^2
	= \sum_{m=1}^\infty \lambda_m^2 \, \langle x,\, \tilde\varphi_m \rangle^2\,,
	\nn
\end{align}
where the last equality uses \er{eq:eigenvalues}. It immediately follows that
\begin{align}
	0 & = \lim_{M\rightarrow\infty} \sum_{m=M+1}^\infty \lambda_m^2 \, \langle x,\, \tilde\varphi_m \rangle^2\,.
	\label{eq:basis-sum}
\end{align}
Fix any $k\in\N$ and set $N\doteq M + k$ in \er{eq:basis-Cauchy}. Taking the limit as $M\rightarrow\infty$, and applying \er{eq:basis-sum} yields that 
$
	0 = \lim_{M\rightarrow\infty} \| \tilde e_{M+k} - \tilde e_M \|_\half^2
$.
As $k\in\N$ is arbitrary, it follows that $\{\tilde e_N \}_{N=1}^\infty$ defines a Cauchy sequence in $\cX_\half$. Completeness of (Hilbert space) $\cX_\half$ then implies that there exists a $\tilde e_\infty\in\cX_\half$ such that
\begin{align}
	\lim_{M\rightarrow\infty} \tilde e_M
	& = \tilde e_\infty\in\cX_\half\,.
	\label{eq:basis-tilde-e-infty}
\end{align}
Recall by definition \er{eq:basis-breve-e} that $e_N\in\cX_\half\subset\cX$ for each $N\in\N$. So, the fact that $\op{B}$ forms a Riesz basis for $\cX$ implies (by definition) that there exists a $e_\infty\in\cX$ such that
\begin{align}
	\lim_{N\rightarrow\infty} e_N
	& = e_\infty \doteq 0\in\cX\,.
	\label{eq:basis-e-infty}
\end{align}
By further inspection of \er{eq:basis-tilde-e} and \er{eq:basis-breve-e}, and application of \er{eq:basis-Riesz}, note that
\begin{align}
	\tilde e_N
	& = x - \sum_{n=1}^N \langle x,\ \op{A}\, \tilde\varphi_n\rangle\, \tilde\varphi_n
	=  x - \sum_{n=1}^N \lambda_n\, \langle x,\ \tilde\varphi_n\rangle\, \tilde\varphi_n
	= x - \sum_{n=1}^N \lambda_n^2\, \ts{\frac{2\, L}{(n\, \pi)^2}} \, \ts{\frac{L}{2}}\, \langle x,\ \varphi_n\rangle\, \varphi_n
	= e_N\,.
	\nn
\end{align}
So, applying the triangle inequality,
\begin{align}
	\| \tilde e_\infty - e_\infty \|
	& \le \| \tilde e_\infty - \tilde e_N \| + \| \tilde e_N - e_N\| + \| e_N - e_\infty\|
	= \| \op{J}\, \opAsqrt\, (\tilde e_\infty - \tilde e_N) \| + \| e_N - e_\infty\|
	\nn\\
	& \le  \| \op{J} \|\, \|\tilde e_\infty - \tilde e_N \|_\half +  \| e_N - e_\infty\|\,.
	\nn
\end{align}
Taking the limit as $N\rightarrow\infty$, applying \er{eq:basis-tilde-e-infty}, \er{eq:basis-e-infty}, and recalling \er{eq:basis-tilde-e}, yields that
\begin{align}
	0 & \ge \| \tilde e_\infty - e_\infty \|
	= \left\| \lim_{M\rightarrow\infty} \left\{  x - \sum_{m=1}^M \langle x,\, \tilde\varphi_m \rangle_\half\, \tilde\varphi_m \right\} - 0 \right\|
	= \left\| x - \sum_{m=1}^\infty \langle x,\, \tilde\varphi_m \rangle_\half\, \tilde\varphi_m \right\|\,.
	\nn
\end{align}
That is, $x = \sum_{m=1}^\infty \langle x,\, \tilde\varphi_m \rangle_\half\, \tilde\varphi_m$, as required. Finally, it is straightforward to show that for any $\alpha_n\in\R$, $n\in\N$,
$
	\left\| \sum_{n=1}^N \alpha_n\, \tilde\varphi_n \right\|_\half^2
	= \sum_{n=1}^N |\alpha_n|^2
$.
Hence, appealing to Definition 2.3.1 of \cite{CZ:95}, $\cB$ is an orthonormal Riesz basis for $\cX_\half$.
\end{proof}

\fi


\begin{lemma}
\label{lem:op-I-mu-properties}
The following properties hold on $\cX$ for any $\mu\in\R_{>0}$:
\begin{enumerate}[(i)]
\item Operator $\op{I}_\mu$ of \er{eq:op-I-mu} is bounded, linear, self-adjoint, positive, with
\begin{align}
	& \op{I}_\mu\, x\in\dom(\op{A}) = \cX_0
	&& \forall \ x\in\dom(\op{I}_\mu) = \cX\,,
	\label{eq:op-I-mu-ass-1a}
	\\
	& \op{I}_\mu\, x = \int_\Lambda I_\mu(\cdot,\zeta)\, x(\zeta)\, d\zeta\,, 
	&& I_\mu(\lambda,\zeta) 
	\doteq \frac{1}{\mu\, \sinh(\ts{\frac{L}{\mu}})} \left\{ \begin{aligned}
				\sinh(\ts{\frac{\lambda}{\mu}})\, \sinh( \ts{\frac{L-\zeta}{\mu}} )\,,
				& \quad 0\le \lambda \le \zeta \le L\,,
				\\
				\sinh(\ts{\frac{\zeta}{\mu}})\, \sinh( \ts{\frac{L-\lambda}{\mu}} )\,,
				& \quad 0\le \zeta \le \lambda \le L\,,
	\end{aligned} \right.
	\nn\\
	&&& \forall \ x\in\dom(\op{I}_\mu) = \cX\,.
	\label{eq:op-I-mu-ass-1b}
\end{align}

\item Operator $\op{I}_\mu$ has a unique, bounded, linear, self-adjoint, and positive square root $\op{I}_\mu^\half$, with
\begin{align}
	& \op{I}_\mu^\half\, x\in\dom(\opAsqrt) = \cX_\half
	&& \forall \ x\in\dom(\op{I}_\mu^\half) = \cX\,,
	\label{eq:op-I-mu-ass-2a}
	\\
	& \op{I}_\mu^\half\, \op{I}_\mu^\half\, x = \op{I}_\mu\, x
	&&\forall \ x\in\dom(\op{I}_\mu^\half) = \dom(\op{I}_\mu) = \cX\,,
	\label{eq:op-I-mu-ass-2b}
\end{align}

\item Operators $\op{I}_\mu$, $\op{I}_\mu^\half$, $\op{A}$, and $\op{A}^\half$ commute, with
\begin{align}
	& \op{I}_\mu^\half\, \op{A}^\half\, x = \op{A}^\half\, \op{I}_\mu^\half\, x\,, \quad
	\op{I}_\mu\, \opAsqrt\, x = \opAsqrt\, \op{I}_\mu\, x
	&& \forall \ x\in\dom(\opAsqrt) = \cX_\half\,.
	\label{eq:op-I-mu-ass-3a}
	\\
	& \op{I}_\mu\, \op{A}\, x = \op{A}\, \op{I}_\mu\, x\,, \quad
	\op{I}_\mu^\half\, \op{A}\, x = \op{A}\, \op{I}_\mu^\half\, x
	&& \forall \ x\in\dom(\op{A}) = \cX_0\,,
	\label{eq:op-I-mu-ass-3b}
\end{align}

\item Selected compositions of operators $\op{I}_\mu$, $\op{I}_\mu^\half$, $\op{A}$, and $\op{A}^\half$ define bounded linear operators, with
\begin{align}
	& \op{A}\, \op{I}_\mu, \, \opAsqrt\, \op{I}_\mu^\half \in\bo(\cX)\,,
	\label{eq:op-I-mu-ass-4a}
	\\
	& \op{A}\, \op{I}_\mu\in\bo(\cX_\half)\,,
	\label{eq:op-I-mu-ass-4b}
	\\
	& \opAsqrt\, \op{I}_\mu^\half\, \opAsqrt\in\bo(\cX_\half;\cX)\,.
	\label{eq:op-I-mu-ass-4c}
\end{align}
\end{enumerate}
\end{lemma}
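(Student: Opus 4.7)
The plan is to proceed by exploiting the Riesz-spectral structure of $\op{A}$ summarised by Lemmas \ref{lem:op-A-properties}, \ref{lem:eigenvalues}, \ref{lem:basis-Riesz}. Since $\op{A}$ is a closed, positive, self-adjoint Riesz-spectral operator on $\cX$ with simple eigenvalues $\{\lambda_n\}\subset\R_{>0}$ and orthonormal basis $\op{B}=\{\varphi_n\}$, the operator $\op{I}+\mu^2\op{A}$ is likewise Riesz-spectral with eigenvalues $\{1+\mu^2\lambda_n\}$, and its pointwise reciprocal defines a Riesz-spectral operator with eigenvalues $\{(1+\mu^2\lambda_n)^{-1}\}\subset(0,1]$. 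Identifying this with $\op{I}_\mu$ of \er{eq:op-I-mu} then immediately yields boundedness ($\|\op{I}_\mu\|\le 1$), linearity, self-adjointness, and positivity, together with $\dom(\op{I}_\mu)=\cX$. The range inclusion $\op{I}_\mu x\in\cX_0=\dom(\op{A})$ required by \er{eq:op-I-mu-ass-1a} follows algebraically: setting $y=\op{I}_\mu x$, the defining identity $(\op{I}+\mu^2\op{A})y=x$ forces $\op{A}y=\mu^{-2}(x-y)\in\cX$, hence $y\in\cX_0$. The integral kernel in \er{eq:op-I-mu-ass-1b} will be obtained independently by solving the Sturm--Liouville two-point BVP $-\mu^2\partial^2 u + u = x$, $u(0)=u(L)=0$, using the two independent solutions $\sinh(\cdot/\mu)$ and $\sinh((L-\cdot)/\mu)$ of the homogeneous equation and variation of parameters; the resulting Green's function is exactly $I_\mu(\lambda,\zeta)$, and uniqueness of the solution then identifies this integral operator with $\op{I}_\mu$.

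For assertion (ii), I will define $\op{I}_\mu^\half$ as the Riesz-spectral operator on $\cX$ with the same basis $\op{B}$ and eigenvalues $\{(1+\mu^2\lambda_n)^{-\half}\}\subset(0,1]$. Boundedness, self-adjointness, and positivity then follow from Appendix \ref{app:Riesz}, and Lemma \ref{lem:compose-Riesz} gives $\op{I}_\mu^\half\circ\op{I}_\mu^\half=\op{I}_\mu$, establishing \er{eq:op-I-mu-ass-2b}. Uniqueness of the positive self-adjoint square root follows from the standard argument of \cite[Lemma A.3.73]{CZ:95}. For \er{eq:op-I-mu-ass-2a} I will show $\op{I}_\mu^\half x\in\cX_\half=\dom(\opAsqrt)$ for every $x\in\cX$ by computing $\opAsqrt\,\op{I}_\mu^\half x$ as a Riesz-spectral series with coefficients $\sqrt{\lambda_n}/\sqrt{1+\mu^2\lambda_n}\le 1/\mu$; summability then follows from $x\in\cX$ via Lemma \ref{lem:dom-Riesz}.

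Assertions (iii) are essentially immediate consequences of Lemma \ref{lem:compose-Riesz}: all four operators $\op{A},\,\opAsqrt,\,\op{I}_\mu,\,\op{I}_\mu^\half$ are Riesz-spectral in the common orthonormal basis $\op{B}$, with scalar eigenvalues, so their compositions are determined by the (commutative) pointwise products of eigenvalue sequences, and commutation holds on any domain on which both sides are defined. The only care required is to identify, in each displayed equality, the common domain on which the composition on each side is well-defined, which is done by invoking Lemma \ref{lem:dom-Riesz} for the appropriate eigenvalue sequence.

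For assertion (iv), I will again represent each composition as a single Riesz-spectral operator on $\op{B}$ (respectively $\optilde{B}$) and read off the eigenvalues: $\op{A}\op{I}_\mu$ has eigenvalues $\lambda_n/(1+\mu^2\lambda_n)\le 1/\mu^2$, $\opAsqrt\op{I}_\mu^\half$ has eigenvalues $\sqrt{\lambda_n}/\sqrt{1+\mu^2\lambda_n}\le 1/\mu$, and $\opAsqrt\op{I}_\mu^\half\opAsqrt$ has eigenvalues $\lambda_n/\sqrt{1+\mu^2\lambda_n}\le 1/\mu$ with range taken in $\cX$ against $\cX_\half$-normalised coefficients. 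Using the relationship $\tilde\varphi_n = \sqrt{\lambda_n}^{-1}\varphi_n$ from \er{eq:basis-Riesz}, the distinction between $\bo(\cX)$ and $\bo(\cX_\half)$ (and between $\bo(\cX_\half;\cX)$) reduces to comparing two Riesz-series norms under a deterministic rescaling of coefficients, after which a uniform bound on the rescaled eigenvalue sequence delivers the claim. The main obstacle I anticipate is bookkeeping: different assertions require working with the $\cX$-basis $\op{B}$ or the $\cX_\half$-basis $\optilde{B}$, and the resulting eigenvalue sequences must be translated consistently when switching between $\|\cdot\|$ and $\|\cdot\|_\half$; in particular the $\bo(\cX_\half;\cX)$ claim \er{eq:op-I-mu-ass-4c} demands care in distinguishing the roles of the inner products used on the domain and codomain.
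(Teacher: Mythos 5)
Your proposal is correct, and it takes a genuinely different (and in some respects cleaner) route than the paper. The paper's proof is built from abstract operator-theoretic arguments: it establishes bounded invertibility of $\op{I}+\mu^2\op{A}$ via coercivity and self-adjointness, verifies the Green's kernel by applying the candidate integral operator $\ophat{I}_\mu$ to $(\op{I}+\mu^2\op{A})y$ and integrating by parts, appeals to the abstract square-root theorem for part (ii), proves the commutation identities in (iii) by hand through the factorization $(\op{I}+\mu^2\op{A})^{-1}\opAsqrt = (\op{J}[\op{I}+\mu^2\op{A}])^{-1}$, and proves boundedness of $\opAsqrt\op{I}_\mu^\half$ on $\cX$ in (iv) by first bounding it on $\cX_\half$ and then invoking a density-extension theorem. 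Your proof instead diagonalizes everything simultaneously in the orthonormal basis $\op{B}$ (or $\optilde{B}$), reading off boundedness ($\sup_n(1+\mu^2\lambda_n)^{-1}\le 1$), positivity, self-adjointness, commutation, and composition estimates directly from the scalar eigenvalue sequences, and uses the translation $\tilde\varphi_n=\lambda_n^{-1/2}\varphi_n$ to move between the $\cX$- and $\cX_\half$-norms in (iv). Your derivation of the Green's kernel is also constructive (variation of parameters for the Sturm--Liouville BVP) rather than verificatory. Both approaches are sound; the paper's is more robust if one did not have a full spectral decomposition, while yours is more unified, avoids the density-extension step in (iv), and has the incidental benefit of removing the (mild) logical dependence that the paper's later Lemma \ref{lem:specific-op-Riesz} has on this lemma, since you in effect establish the Riesz-spectral representations of $\op{I}_\mu$, $\op{I}_\mu^\half$ directly from $\op{A}$ and the composition/inverse lemmas of Appendix \ref{app:Riesz}. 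The only caveat is that Lemma \ref{lem:compose-Riesz}, Corollary \ref{cor:unique-inverse-Riesz}, and Lemma \ref{lem:dom-Riesz} are stated on $\cX_\half$ with basis $\optilde{B}$, so you should note explicitly that the same statements hold verbatim on $\cX$ with basis $\op{B}$, which is where your part-(i) and part-(ii) arguments live.
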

\begin{proof}
{\em (i)} Fix any $x\in\dom(\op{A}) = \cX_0$. Consequently, $\op{A}\, x\in\cX$, and 
\begin{align}
	\langle x,\, (\op{I} + \mu^2\, \op{A})\, x \rangle
	& = \|x\|^2 + \mu^2\, \langle x,\, \op{A}\, x \rangle
	\ge \|x\|^2\,,
	\nn
\end{align}
where the inequality follows by positivity of $\op{A}$, see assertion \er{eq:op-A-ass-1b} of Lemma \ref{lem:op-A-properties}. That is, $\op{I} + \mu^2\, \op{A}$ is both positive and coercive \cite[Definition A.3.71, p.606]{CZ:95}. It is also self-adjoint by \er{eq:op-A-ass-1a}. Hence, $\op{I} + \mu^2\, \op{A}$ is boundedly invertible, see for example \cite[Example A.4.2, p.609]{CZ:95} and \cite[Problem 10, p.535]{K:78}. In particular, $\op{I}_\mu \doteq (\op{I} + \mu^2\, \op{A})^{-1}\in\bo(\cX)$. In order to show that $\op{I}_\mu$ is also self-adjoint and positive (but not coercive), fix any $y,\, \eta\in\cX$, and define $x,\, \xi\in\cX_0$ by $x\doteq \op{I}_\mu\, y$ and $\xi = \op{I}_\mu\, \eta$. As $\op{I} + \mu^2\, \op{A}$ is self-adjoint, 
$
	\langle y,\, \op{I}_\mu\, \eta \rangle
	= \langle (\op{I} + \mu^2\, \op{A}) \, x,\, \xi \rangle
	= \langle x, \, (\op{I} + \mu^2\, \op{A}) \, \xi \rangle
	= \langle \op{I}_\mu\, y,\, \eta \rangle
$.
As $y,\, \eta\in\cX$ are arbitrary, it follows that $\op{I}_\mu$ is also self-adjoint. Furthermore, with $y = \eta$,
$
	\langle y,\, \op{I}_\mu\, y \rangle
	= \langle (\op{I} + \mu^2\, \op{A}) \, x,\, x \rangle \ge \|x\|^2 = \|\op{I}_\mu\, y\|^2
$.
As $I_\mu$ is invertible, the right-hand side is zero if and only if $y=0$. Hence, $\op{I}_\mu$ is positive. However, $\op{I}_\mu$ is not coercive, as it has eigenvalues arbitrarily close to zero. For example, select $y \doteq \psi_n$, with $\psi_n$ is as per \er{eq:basis-Riesz}. Note that $\|\psi_n\|=1$. Applying Lemma \ref{lem:eigenvalues}, it is straightforward to show that
$
	\langle \psi_n,\, \op{I}_\mu\, \psi_n \rangle
	= \frac{1}{1 + \mu^2\, \lambda_n} \, \|\psi_n\|^2 = \frac{1}{1 + \mu^2 \, (\ts{\frac{n\, \pi}{L}})^2}\,  \|\psi_n\|^2
$
for all $n\in\N$. Note in particular that the coefficient on the right-hand side may be made arbitrarily small for sufficiently large $n\in\N$. Hence, $\op{I}_\mu$ cannot be coercive.

It remains to be shown that \er{eq:op-I-mu-ass-1a} and \er{eq:op-I-mu-ass-1b} hold. Fix any $x\in\cX$. By noting that $\op{I} - \op{I}_\mu\in\bo(\cX)$, the definition \er{eq:op-I-mu} of $\op{I}_\mu$ implies that
\begin{align}
	\infty
	& > \ts{\frac{1}{\mu^2}}\, \|(\op{I} - \op{I}_\mu)\, x\|
	= \ts{\frac{1}{\mu^2}}\, \| ( [\op{I} + \mu^2\, \op{A}]\, \op{I}_\mu - \op{I}_\mu )\, x \| = \|\op{A}\, \op{I}_\mu \, x \|\,.
	\label{eq:op-A-I-mu-bound}
\end{align}
Hence, $\op{I}_\mu\, x\in\dom(\op{A}) = \cX_0$ for any $x\in\cX$, so that \er{eq:op-I-mu-ass-1a} holds.
Given the kernel $I_\mu$ as defined in \er{eq:op-I-mu-ass-1b}, define the operator $\ophat{I}_\mu$ by
\begin{align}
	\ophat{I}_\mu\, x
	& \doteq \int_\Lambda I_\mu(\cdot,\zeta)\, x(\zeta) \, d\zeta\,,
	\quad
	\dom(\ophat{I}_\mu) \doteq \cX\,,
	\label{eq:ophat-I-mu}
\end{align}
and note that $\ophat{I}_\mu\in\bo(\cX)$ by inspection. Fix any $x\in\cX$ and define $y\doteq \op{I}_\mu\, x \in \cX_0$. Hence, $\op{A}\, y\in\cX$, and
$
	\ophat{I}_\mu\, \op{A}\, y
	= \int_\Lambda I_\mu(\cdot,\zeta)\, [-\partial^2\, y(\zeta)]\, d\zeta
	= - \int_\Lambda [\partial_2^2 I_\mu(\cdot,\zeta)] \, y(\zeta)\, d\zeta
$, 
where $\partial_2^2 I_\mu(\lambda,\zeta) \doteq \ts{\frac{1}{\mu^2}} \, I_\mu(\lambda,\zeta) - \ts{\frac{1}{\mu^2}} \, \delta(\lambda-\zeta)$ is the second weak derivative of $I_\mu(\lambda,\cdot)$, $\lambda\in\Lambda$ fixed. Note in particular that the boundary conditions $y(0) = 0 = y(L)$ have been used here. Consequently, 
$
	\ophat{I}_\mu\, \op{A}\, y
	= - \ts{\frac{1}{\mu^2}} \int_\Lambda [I_\mu(\cdot,\zeta)  -  \delta(\cdot-\zeta) ]\,  y(\zeta) \, d\zeta
	= -\ts{\frac{1}{\mu^2}} ( \ophat{I}_\mu\, y - y)
$,
so that $\ophat{I}_\mu \, (\op{I} + \mu^2\, \op{A}) \, y = y$. Recalling the definition of $y$, it follows immediately that $\ophat{I}_\mu\, x = \op{I}_\mu\, x$. As $x\in\cX$ is arbitrary, assertion \er{eq:op-I-mu-ass-1b} follows. 

\if{false}

Recalling the definition of resolvent $\op{R}_{-\op{A}}(\ts{\frac{1}{\mu^2}})\in\bo(\cX)$ for $\mu\in\R_{>0}$, see for example \cite[Definition A.4.1,p.608]{CZ:95} or \cite[p.8]{P:83},
\begin{align}
	\ts{\frac{1}{\mu^2}} \, \op{R}_{-\op{A}} (\ts{\frac{1}{\mu^2}})
	& = \ts{\frac{1}{\mu^2}} \, (\ts{\frac{1}{\mu^2}} \, \op{I} - (-\op{A}))^{-1}
	= (\op{I} + \mu^2\, \op{A})^{-1} = \op{I}_\mu\,.
	\nn
\end{align}
That is, assertion \er{eq:op-I-mu-ass-1c} holds.

\fi

{\em (ii)} The existence of a unique, bounded linear, self-adjoint, and positive square root $\op{I}_\mu^\half$ follows (for example) by \cite[Theorem 4]{B:68}. (Alternatively, see \cite[Lemma A.3.73, p.606]{CZ:95}.)

\if{false}
Operator $\op{I}+\mu^2\, \op{A}$ with domain $\cX_0$ is self-adjoint, positive, and closed by Lemma \ref{lem:op-A-properties}. Hence, Lemma A.3.73 of \cite{CZ:95} implies that it has a unique, self-adjoint, positive square root $(\op{I} + \mu^2\, \op{A})^\half \equiv \op{I}_\mu^{-\half}$ with domain $\cX_\half$. Consequently, $\ran(\op{I}_\mu^\half) = \cX_\half$, as per assertion \er{eq:op-I-mu-ass-2a}. Assertion \er{eq:op-I-mu-ass-2b} follows similarly.
\fi

{\em (iii)} Fix $x\in\cX_\half$. By definition, $\opAsqrt\, x\in\cX = \dom(\op{I}_\mu^\half)$, and
\begin{align}
	\op{I}_\mu\, \opAsqrt\, x
	& = (\op{I} + \mu^2\, \op{A})^{-1}\, \opAsqrt\, x = (\op{J} \, [ \op{I} + \mu^2 \, \op{A} ])^{-1} \, x
	= (\op{J} + \mu^2\, \op{J}\, \opAsqrt\, \opAsqrt)^{-1}\, x
	\nn\\
	& = (\op{J} + \mu^2 \, \opAsqrt\, \opAsqrt\, \op{J})^{-1}\, x
	= ([\op{I} + \mu^2\, \op{A}]\, \op{J})^{-1}\, x
	= \opAsqrt\, (\op{I} + \mu^2\, \op{A})^{-1}\, x
	= \opAsqrt\, \op{I}_\mu\, x\,,
	\nn
\end{align}
so that right-hand equality in assertion \er{eq:op-I-mu-ass-3a} holds. The remaining equalities follow similarly, with $x\in\cX_0$ yielding assertion \er{eq:op-I-mu-ass-3b}.

{\em (iv)} The first assertion in \er{eq:op-I-mu-ass-4a} follows from the proof of {\em (i)} above. In particular, $\op{I}-\op{I}_\mu\in\bo(\cX)$ and \er{eq:op-A-I-mu-bound} imply that $\|\op{A}\, \op{I}_\mu\| = \ts{\frac{1}{\mu^2}} \, \|\op{I} - \op{I}_\mu\|<\infty$, as required.
In order to prove the second assertion of \er{eq:op-I-mu-ass-4a}, note that for any $x\in\cX_\half$, \er{eq:op-I-mu-ass-3b} implies that 
\begin{align}
	\|\opAsqrt\, \op{I}_\mu^\half\, x \|^2
	& =  \langle x,\, \op{I}_\mu^\half\,  \op{A} \, \op{I}_\mu^\half\, x \rangle
	= \langle x,\, \op{A}\, \op{I}_\mu\, x\rangle
	\le \|x\|\, \|\op{A}\, \op{I}_\mu\, x\|
	\le \|\op{A}\, \op{I}_\mu\| \|x\|^2\,.
	\nn
\end{align}
Hence, the restriction $\op{R}_\mu$ of $\opAsqrt\, \op{I}_\mu^\half:\cX\mapsinto\cX$ to the domain $\cX_\half\subset\cX$ is bounded and linear on that domain. However, as $\ol{\cX_\half} = \cX$, $\op{R}_\mu$ can be uniquely extended to an operator $\op{E}_\mu\in\bo(\cX)$ (see for example \cite[Theorem 2.7-11, p.100]{K:78}) that satisfies $\op{E}_\mu\, x = \op{R}_\mu\, x = \opAsqrt\, \op{I}_\mu^\half\, x$ for all $x\in\cX_\half$. Fix $y\in\cX$. Hence, for any $x\in\cX_\half$,
$
	\|\op{E}_\mu \, y - \opAsqrt\, \op{I}_\mu^\half\, y \|
	\le \|\op{E}_\mu\, x - \opAsqrt\, \op{I}_\mu^\half\, x \| + \|(\op{E}_\mu - \opAsqrt\, \op{I}_\mu^\half) \, (y-x) \|
	\le \|\op{E}_\mu - \opAsqrt\, \op{I}_\mu^\half \|\, \|y-x\|
$. 
Consequently, as $x\in\cX_\half$ is arbitrary and $\ol{\cX_\half} = \cX$, implies that
$
	\|\op{E}_\mu \, y - \opAsqrt\, \op{I}_\mu^\half\, y \|
	\le \|\op{E}_\mu - \opAsqrt\, \op{I}_\mu^\half \|\, \inf_{x\in\cX_\half}  \|y-x\| = 0
$.
As $y\in\cX$ is arbitrary, $\op{E}_\mu \equiv \opAsqrt\op{I}_\mu^\half$. Recalling that $\op{E}_\mu\in\bo(\cX)$ completes the proof of assertion \er{eq:op-I-mu-ass-4a}.

In order to prove assertion \er{eq:op-I-mu-ass-4b}, note that $\opAsqrt$ and $\op{I}_\mu$ commute on $\cX_\half$ by \er{eq:op-I-mu-ass-3a}. Hence, with $x\in\cX_\half$. 
$
	\|\op{A}\, \op{I}_\mu\, x\|_\half
	= \|\opAsqrt\, \op{I}_\mu\, \opAsqrt\, x\|_\half
	= \|\op{A}\, \op{I}_\mu\, (\opAsqrt\, x)\| 
	\le \|\op{A}\, \op{I}_\mu\| \, \|\opAsqrt\, x\|
	= \|\op{A}\, \op{I}_\mu\| \, \|x\|_\half
$.
As $\op{A}\, \op{I}_\mu\in\bo(\cX)$ by \er{eq:op-I-mu-ass-4a}, assertion \er{eq:op-I-mu-ass-4b} immediately follows.

Finally, in order to prove assertion \er{eq:op-I-mu-ass-4c}, note that \er{eq:op-I-mu-ass-4a} implies that $\| \opAsqrt\, \op{I}_\mu^\half\, \opAsqrt \, x\| \le \|\opAsqrt\, \op{I}_\mu^\half\| \, \|\opAsqrt \, x\| = \|\opAsqrt\, \op{I}_\mu^\half\| \, \|x\|_\half $ for any $x\in\cX_\half$. Consequently,
$
	\sup_{x\in\cX_\half, \|x\|_\half\ne 0} 
	\frac{\| \opAsqrt\, \op{I}_\mu^\half\, \opAsqrt \, x\|}{\|x\|_\half} \le \|\opAsqrt\, \op{I}_\mu^\half\| < \infty
$
as required.
\end{proof}


\section{Riesz-spectral operators}
\label{app:Riesz}
It is useful to consider self-adjoint operators of the form
\begin{align}
	\opbreve{F}
	\, x
	& \doteq \sum_{n=1}^\infty f_n\, \langle x,\, \tilde\varphi_n \rangle_\half\, \tilde\varphi_n\,,
	\quad
	\dom(\opbreve{F})
	\doteq \left\{ x\in\cX_\half\, \biggl|\, \opbreve{F}\, x\in\cX_\half \right\},
	\label{eq:op-Riesz}
\end{align}
where the set $\{f_n\}_{n\in\N}\subset\R$ of eigenvalues of $\opbreve{F}$ is simple and has a totally disconnected closure (i.e. no two elements of this closure can be joined by a segment lying entirely within it), and $\optilde{B} = \{\tilde\varphi_n\}_{n\in\N}$ (enumerating the corresponding eigenvectors of $\opbreve{F}$) is the orthonormal Riesz basis defined by \er{eq:basis-Riesz}. This type of operator is closed and densely defined on $\cX_\half$, see \cite[Example 2.1.13, p.29]{CZ:95}, and is referred to as a {\em Riesz-spectral operator} on $\cX_\half$, see \cite[Definition 2.3.4, p.41]{CZ:95}. Operators $\op{A}$ and $\opAsqrt$ are Riesz-spectral operators, and may be similarly represented, see \cite[Theorem 2.3.5]{CZ:95} and Lemma \ref{lem:specific-op-Riesz} below. The identity $\op{I}$ also takes the form \er{eq:op-Riesz}, with
\begin{align}
	x = \op{I}\, x
	& \doteq \sum_{n=1}^\infty \langle x,\, \tilde\varphi_n \rangle_\half \, \tilde\varphi_n\,,
	\quad \dom(\op{I}) \equiv \cX_\half\,.
	\label{eq:identity-Riesz}
\end{align}
However, $\op{I}$ is not a Riesz-spectral operator (its eigenvalues are repeated at $1$, and so are not simple). Nevertheless, $\|x\|_\half^2 = \sum_{n=1}^\infty |\langle x,\, \tilde\varphi_n \rangle_\half|^2$ for all $x\in\cX_\half$, see \cite[Corollary 2.3.3, p.40]{CZ:95}.

The remainder of this appendix documents some useful properties of Riesz-spectral operators that are applied in the main body of the paper. Unless otherwise indicated, proofs of these properties are considered standard and are omitted.

\if{false}

\begin{lemma}
\label{lem:inner-closed}
The functional $\langle \xi,\, \cdot\rangle_\half:\cX_\half\mapsinto\R$ is closed for any fixed $\xi\in\cX_\half$.
\end{lemma}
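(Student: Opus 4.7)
The plan is to observe that $\langle \xi, \cdot\rangle_\half$ is in fact a bounded linear functional on all of $\cX_\half$, and then invoke the elementary fact that bounded (equivalently, continuous) everywhere-defined operators between Banach spaces are automatically closed.

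First I would note that for fixed $\xi\in\cX_\half$, the map $\Pi_\xi:\cX_\half\mapsinto\R$ defined by $\Pi_\xi\, x \doteq \langle \xi, x\rangle_\half$ has $\dom(\Pi_\xi) = \cX_\half$ by definition of the inner product on the Hilbert space $\cX_\half$ (see \er{eq:cX-half}), and linearity in the second argument is immediate. The Cauchy--Schwarz inequality on $\cX_\half$ then yields
\begin{align}
	|\Pi_\xi\, x|
	& = |\langle \xi, x \rangle_\half|
	\le \|\xi\|_\half \, \|x\|_\half
	\nn
\end{align}
for every $x\in\cX_\half$. Hence $\Pi_\xi\in\cL(\cX_\half;\R)$, so $\Pi_\xi$ is continuous on $\cX_\half$.

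Next I would verify closedness directly from the definition: suppose $\{x_n\}_{n\in\N}\subset \dom(\Pi_\xi) = \cX_\half$ with $x_n\rightarrow x$ in $\cX_\half$ and $\Pi_\xi\, x_n\rightarrow y$ in $\R$. Since $\dom(\Pi_\xi) = \cX_\half$, automatically $x\in\dom(\Pi_\xi)$. Continuity of $\Pi_\xi$ gives $\Pi_\xi\, x_n\rightarrow \Pi_\xi\, x$, and uniqueness of limits in $\R$ forces $\Pi_\xi\, x = y$. That is exactly closedness of $\Pi_\xi$.

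There is no real obstacle here -- the lemma is a one-line consequence of Cauchy--Schwarz and the fact that a bounded linear operator defined on the whole Hilbert space is closed. The only thing to take care of is to keep the statement aligned with its usage in the proof of Theorem~\ref{thm:second-difference}, where closedness of $\Pi$ (with $q_{\tilde w}\in\cX_\half$ playing the role of $\xi$) is invoked in order to apply \cite[Theorem A.5.23]{CZ:95} to exchange the Bochner integral with the functional $\Pi$. That theorem requires $\Pi$ to be a closed linear operator between separable Hilbert spaces, and separability of $\cX_\half$ and $\R$ is already available via the orthonormal Riesz basis $\optilde{B}$ of Lemma~\ref{lem:basis-Riesz}.
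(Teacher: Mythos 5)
Your proof is correct and takes essentially the same approach as the paper's: both rely on the Cauchy--Schwarz inequality on $\cX_\half$ to establish boundedness and then verify closedness from the definition. The only difference is presentational -- the paper re-derives the ``bounded and everywhere-defined implies closed'' implication from scratch (showing $\{y_n\}$ is Cauchy, etc.), whereas you cite the standard fact and then supply the one-line verification.
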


\begin{proof}
Fix $\xi\in\cX_\half$, $\xi\ne 0$, and define $\pi_\xi:\cX_\half\mapsinto\R$ by $\pi_\xi(\cdot) = \langle \xi,\, \cdot\rangle_\half$. Let $\{x_n\}_{n=1}^\infty$ denote a convergence sequence in $\cX_\half$, such that $\lim_{n\rightarrow\infty} x_n = \bar x\in\cX_\half$, and define the sequence $\{y_n\}_{n=1}^\infty$ in $\R$ by $y_n = \pi_\xi(x_n) = \langle \xi,\, x_n\rangle_\half$. By definition, note that for $n,m\in\N$, $y_n - y_m = \langle \xi,\, x_n - x_m\rangle_\half$, so that
\begin{align}
	|y_n - y_m|
	& \le | \langle \xi,\, x_n - x_m\rangle_\half | \le \|\xi\|_\half\, \|x_n - x_m\|_\half\,.
	\label{eq:closed-y-diff}
\end{align}
As $\{x_n\}_{n=1}^\infty$ is convergent, it is also a Cauchy sequence. Consequently, given $\eps\in\R_{>0}$ and $\xi$ as specified above, there exists a $N_\xi(\eps)\in\N$ such that $\|x_n - x_m\|_\half < \eps / \|\xi\|_\half$ for all $n,m>N_\xi(\eps)$. Combining this with \er{eq:closed-y-diff}, $|y_n - y_m | \le \|\xi\|_\half\, \|x_n - x_m\|_\half < \|\xi\|_\half\, (\eps / \|\xi\|_\half) = \eps$ for all $n,m>N_\xi(\eps)$. That is, $\{y_n\}_{n=1}^\infty$ is a Cauchy sequence in $\R$, and hence there exists a $\bar y\in\R$ such that $\lim_{n\rightarrow\infty} y_n = \bar y$. Define $\hat y \doteq \pi_\xi(\bar x) = \langle \xi,\, \bar x \rangle_\half$. Applying the triangle inequality,
\begin{align}
	|\hat y - \bar y|
	& = | \hat y - y_n + y_n - \bar y| 
	\le |\hat y - y_n| + |y_n - \bar y| 
	\nn\\
	& = | \langle \xi,\, \bar x - x_n \rangle_\half | + | y_n - \bar y |
	\nn\\
	& \le \|\xi\|_\half\, \|x_n - \bar x \|_\half + |y_n - \bar y|\,.
	\label{eq:y-hat-y-bar}
\end{align}
Again fix any $\eps\in\R_{>0}$. By definition, there exists $M_\xi(\eps)\in\N$ such that $\|x_n - \bar x\|_\half < \ts{\frac{\eps}{2\, \|\xi\|_\half}}$ and $|y_n - \bar y| < \ts{\frac{\eps}{2}}$ for all $n>M_\xi(\eps)$. Applying this in \er{eq:y-hat-y-bar} yields that $|\hat y - \bar y| \le \|\xi\|_\half \, \|x_n - \bar x\|_\half +  |y_n - \bar y| < \ts{\frac{\eps}{2}} + \ts{\frac{\eps}{2}} = \eps$ for all $n>M_\xi(\eps)$. As $\eps\in\R_{>0}$ is arbitrary, it follows that $\hat y = \bar y$. So, recalling the definitions of $\bar x$, $\bar y$ and $\hat y$,
\begin{align}
	\left. \begin{aligned}
		\bar x & = \lim_{n\rightarrow\infty} x_n
		\\
		\bar y & = \lim_{n\rightarrow\infty} \pi_\xi(x_n)
	\end{aligned} \right\}
	& \quad\Longrightarrow\quad
	\bar y = \pi_\xi(\bar x)\,.
	\nn
\end{align}
That is, $\pi_\xi$ is closed for $\xi\ne 0$. For the case where $\xi = 0$, note that $\pi_\xi(\cdot) = 0$ is trivially closed.
\end{proof}

\fi

\begin{lemma}
\label{lem:dom-Riesz}
The domain $\dom(\opbreve{F})$ of a Riesz-spectral operator $\opbreve{F}$ of the form \er{eq:op-Riesz} is equivalently given by
\begin{align}
	\dom(\opbreve{F})
	& = \left\{ x\in\cX_\half \, \left| \, \|\opbreve{F}\, x\|_\half < \infty \right. \right\}\,,
	\qquad
	\|\opbreve{F}\, x\|_\half^2 = 	
	\sum_{n=1}^\infty |f_n|^2 \, |\langle x,\, \tilde\varphi_n\rangle_\half|^2\,.
	\label{eq:dom-Riesz}
\end{align}
\end{lemma}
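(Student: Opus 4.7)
The plan is to exploit the fact that $\optilde{B} = \{\tilde\varphi_n\}_{n\in\N}$ is an orthonormal Riesz basis for $\cX_\half$ (Lemma \ref{lem:basis-Riesz}), so that Parseval's identity $\|y\|_\half^2 = \sum_{n=1}^\infty |\langle y,\, \tilde\varphi_n\rangle_\half|^2$ holds for all $y\in\cX_\half$, as noted immediately after \er{eq:identity-Riesz}. With this in hand, convergence of the formal series defining $\opbreve{F}\, x$ in \er{eq:op-Riesz} reduces to $\ell^2$ summability of the coefficient sequence $\{f_n\, \langle x,\, \tilde\varphi_n\rangle_\half\}_{n\in\N}$.

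Fix $x\in\cX_\half$ and, for each $N\in\N$, define the partial sum $y_N \doteq \sum_{n=1}^N f_n\, \langle x,\, \tilde\varphi_n\rangle_\half \, \tilde\varphi_n\in\cX_\half$. I will first show that $\{y_N\}_{N\in\N}$ is a Cauchy sequence in $\cX_\half$ if and only if the scalar series $S(x) \doteq \sum_{n=1}^\infty |f_n|^2\, |\langle x,\, \tilde\varphi_n\rangle_\half|^2$ is finite. For $M<N$, orthonormality of $\optilde{B}$ in $\cX_\half$ gives $\|y_N - y_M\|_\half^2 = \sum_{n=M+1}^N |f_n|^2\, |\langle x,\, \tilde\varphi_n\rangle_\half|^2$, from which the Cauchy property is equivalent to the tail of $S(x)$ tending to zero, i.e.\ to $S(x)<\infty$.

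Next, by completeness of $\cX_\half$, the Cauchy sequence $\{y_N\}$ converges to some $y_\infty\in\cX_\half$, and by continuity of $\langle\, \cdot, \tilde\varphi_k\rangle_\half$ together with orthonormality of $\optilde{B}$, the Fourier coefficients of $y_\infty$ are exactly $f_k\, \langle x,\, \tilde\varphi_k\rangle_\half$ for each $k\in\N$. Hence Parseval's identity applied to $y_\infty$ yields $\|y_\infty\|_\half^2 = S(x)$, so that $y_\infty = \opbreve{F}\, x$ in the sense of \er{eq:op-Riesz} and the norm formula in \er{eq:dom-Riesz} holds. Conversely, if $S(x) = \infty$, the partial sums $y_N$ cannot be Cauchy and hence do not converge in $\cX_\half$, so $\opbreve{F}\, x\notin\cX_\half$.

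Combining these two implications yields the equivalence
$$\opbreve{F}\, x\in\cX_\half \quad\Longleftrightarrow\quad \|\opbreve{F}\, x\|_\half^2 = S(x) < \infty,$$
which is precisely the characterization \er{eq:dom-Riesz}. There is no genuine obstacle here beyond being careful about what ``$\opbreve{F}\, x$'' means for $x\notin\dom(\opbreve{F})$: the series in \er{eq:op-Riesz} must be interpreted as a formal object whose partial sums lie in $\cX_\half$, with membership in $\dom(\opbreve{F})$ equivalent to convergence of those partial sums in $\cX_\half$.
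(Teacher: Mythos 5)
Your proof is correct and takes essentially the same approach as the paper's (suppressed) proof: both exploit the orthonormality of $\optilde{B}$ in $\cX_\half$ and the resulting Parseval identity. Your version is slightly more scrupulous about convergence of the defining series, replacing the paper's direct inner-product expansion of $\|\opbreve{F}\,x\|_\half^2$ (which interchanges sums and inner products by closedness of $\langle\cdot,\tilde\varphi_n\rangle_\half$) with a Cauchy-criterion argument on partial sums together with continuity of the inner product; either route is valid.
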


\if{false}

\begin{proof}
Applying the definition \er{eq:op-Riesz} of the Riesz-spectral operator $\opbreve{F}$,
\begin{align}
	\| \opbreve{F}\, x \|_\half^2
	& = \left\langle \sum_{m=1}^\infty f_m \, \langle x,\, \tilde\varphi_m \rangle_\half\, \tilde\varphi_m, \opbreve{F}\, x \right\rangle_\half
	= \sum_{m=1}^\infty f_m \langle x,\, \tilde\varphi_m \rangle_\half \, 
	\left\langle \tilde\varphi_m,\, \sum_{n=1}^\infty f_n \, \langle x,\, \tilde\varphi_n \rangle_\half\, \tilde\varphi_n \right\rangle_\half
	\nn\\
	& = \sum_{m=1}^\infty \sum_{n=1}^\infty f_m \, f_n \, \langle x,\, \tilde\varphi_m \rangle_\half \, 
	\langle x,\, \tilde\varphi_n \rangle_\half \,
	\langle \tilde\varphi_m,\, \tilde\varphi_n\rangle_\half
	= \sum_{m=1}^\infty |f_m|^2 \, | \langle x,\, \tilde\varphi_m \rangle_\half |^2\,,
\end{align}
in which Lemma \ref{lem:inner-closed} has been applied twice. Recalling the domain $\dom(\opbreve{F})$ of \er{eq:dom-Riesz} completes the proof.
\end{proof}

\fi

\if{false}

\begin{lemma}
\label{lem:self-adjoint-Riesz}
Any Riesz-spectral operator $\opbreve{F}$ of the form \er{eq:op-Riesz} is self-adjoint.
\end{lemma}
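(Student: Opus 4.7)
The plan is to establish self-adjointness of $\opbreve{F}$ in two stages: first verifying symmetry, and then showing that the domain of the adjoint is contained in $\dom(\opbreve{F})$. Since $\dom(\opbreve{F})$ is already known to be dense in $\cX_\half$ (per the remark following \er{eq:op-Riesz} and \cite[Example 2.1.13]{CZ:95}), the adjoint $\opbreve{F}'$ is well-defined, and it remains only to compare it to $\opbreve{F}$.

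First I would establish symmetry on $\dom(\opbreve{F})$. Fix $x,y\in\dom(\opbreve{F})$, so that $\opbreve{F}\, x,\, \opbreve{F}\, y\in\cX_\half$. Using the orthonormality of $\optilde{B}$ in $\cX_\half$ (Lemma \ref{lem:basis-Riesz}), together with continuity of $\langle\, ,\, \cdot\rangle_\half$, I would compute
\begin{align*}
\langle \opbreve{F}\, x,\, y\rangle_\half
& = \left\langle \sum_{n=1}^\infty f_n\, \langle x,\, \tilde\varphi_n\rangle_\half\, \tilde\varphi_n,\, y \right\rangle_\half
= \sum_{n=1}^\infty f_n\, \langle x,\, \tilde\varphi_n\rangle_\half\, \langle \tilde\varphi_n,\, y \rangle_\half\,,
\end{align*}
and, by the same reasoning applied with $x$ and $y$ interchanged (and using $f_n\in\R$), that $\langle x,\, \opbreve{F}\, y\rangle_\half$ equals the same series. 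Absolute convergence of this double-sum representation follows from Cauchy--Schwartz applied to the $\ell^2$-sequences $\{f_n\langle x,\tilde\varphi_n\rangle_\half\}$ and $\{\langle y,\tilde\varphi_n\rangle_\half\}$, both of which lie in $\ell^2(\N)$ by Lemma \ref{lem:dom-Riesz} and the fact that $\optilde{B}$ is an orthonormal basis. This shows $\opbreve{F}\subset \opbreve{F}'$.

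Next, to show $\dom(\opbreve{F}')\subset\dom(\opbreve{F})$, I would fix $y\in\dom(\opbreve{F}')$ with $\opbreve{F}'\, y = z\in\cX_\half$, i.e. $\langle \opbreve{F}\, x,\, y\rangle_\half = \langle x,\, z\rangle_\half$ for every $x\in\dom(\opbreve{F})$. Testing this identity against the eigenvectors $x=\tilde\varphi_m\in\dom(\opbreve{F})$ (which lies in $\dom(\opbreve{F})$ since $\opbreve{F}\tilde\varphi_m=f_m\tilde\varphi_m\in\cX_\half$) yields
\[
\langle \tilde\varphi_m,\, z\rangle_\half
= \langle f_m\, \tilde\varphi_m,\, y\rangle_\half
= f_m\, \langle \tilde\varphi_m,\, y\rangle_\half
\]
for all $m\in\N$. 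Since $\optilde{B}$ is an orthonormal Riesz basis for $\cX_\half$ and $z\in\cX_\half$, Parseval's identity gives $\sum_m |f_m|^2\, |\langle y,\tilde\varphi_m\rangle_\half|^2 = \sum_m |\langle z,\tilde\varphi_m\rangle_\half|^2 = \|z\|_\half^2 < \infty$. By Lemma \ref{lem:dom-Riesz} this forces $y\in\dom(\opbreve{F})$, and moreover $\opbreve{F}\, y = \sum_m f_m\langle y,\tilde\varphi_m\rangle_\half\, \tilde\varphi_m = z = \opbreve{F}'\, y$.

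Combining the two inclusions yields $\opbreve{F}=\opbreve{F}'$, which is the claim. The only subtle step is the second one, where one must be careful to verify that the defining identity of the adjoint, tested only against the basis elements, is strong enough to force membership of $y$ in $\dom(\opbreve{F})$; this is precisely where the characterization of $\dom(\opbreve{F})$ in Lemma \ref{lem:dom-Riesz} as a weighted $\ell^2$ condition on Fourier coefficients is essential, and where real-valuedness of the eigenvalues $\{f_n\}$ is used.
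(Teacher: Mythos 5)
Your proof is correct, and it is in fact more complete than the paper's own. The paper's proof verifies only the symmetry identity $\langle\xi,\opbreve{F}\,x\rangle_\half=\langle\opbreve{F}\,\xi,x\rangle_\half$ for $x,\xi\in\dom(\opbreve{F})$ by interchanging the inner product with the infinite series (invoking closedness of $\langle y,\,\cdot\rangle_\half$ via an auxiliary lemma, where you invoke continuity — equivalent justifications since this is a bounded functional) and then concludes ``as required.'' That argument establishes only $\opbreve{F}\subset\opbreve{F}'$, i.e.\ symmetry, which for an unbounded densely defined operator does not by itself give self-adjointness. Your second stage supplies the missing inclusion $\dom(\opbreve{F}')\subset\dom(\opbreve{F})$: testing the adjoint identity against the eigenvectors $\tilde\varphi_m\in\dom(\opbreve{F})$ yields $\langle\tilde\varphi_m,z\rangle_\half=f_m\,\langle\tilde\varphi_m,y\rangle_\half$ for $z=\opbreve{F}'\,y$, and Parseval together with the weighted-$\ell^2$ characterization of $\dom(\opbreve{F})$ in Lemma \ref{lem:dom-Riesz} then forces $y\in\dom(\opbreve{F})$ with $\opbreve{F}\,y=z$. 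Since the Riesz-spectral operators actually used in the paper (e.g.\ $\op{A}$) are genuinely unbounded, this extra step is not cosmetic — it is needed for the lemma to hold as stated, and your proof is the one that delivers it. Your observation at the end, that real-valuedness of the $\{f_n\}$ and the weighted-$\ell^2$ domain criterion are precisely the structural ingredients, correctly identifies where the argument lives.
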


\begin{proof}
Fix $x,\xi\in\dom(\opbreve{F})\subset\cX_\half$. Recall that the functional $\langle y,\, \cdot\rangle_\half:\cX_\half\mapsinto\R$ is closed by Lemma \ref{lem:inner-closed} for $y\in\{x,\xi\}$ fixed. Hence, by inspection of \er{eq:op-Riesz},
\begin{align}
	\langle \xi,\, \opbreve{F}\, x \rangle_\half
	& = \left\langle \xi,\, \sum_{n=1}^\infty f_n \, \langle x,\, \tilde\varphi_n \rangle \tilde\varphi_n \right\rangle_\half 
	= \sum_{n=1}^\infty f_n \langle x,\, \tilde\varphi_n \rangle_\half\, \langle \xi,\, \tilde\varphi_n \rangle_\half 
	= \left\langle \sum_{n=1}^\infty f_n \, \langle \xi,\, \tilde\varphi_n \rangle_\half\, \tilde\varphi_n,\, x \right\rangle_\half
	= \langle \opbreve{F}\, \xi,\, x \rangle_\half\,,
	\nn
\end{align}
as required.
\end{proof}

\fi

\begin{lemma}
\label{lem:compose-Riesz}
Let $\opbreve{F}$ and $\opbreve{G}$ denote two Riesz-spectral operators of the form \er{eq:op-Riesz}, with respective point spectra $\sigma_p(\opbreve{F}) = \{ f_n \}_{n=1}^\infty$ and $\sigma_p(\opbreve{G}) = \{ g_n \}_{n=1}^\infty$, and domains $\dom(\opbreve{F})$, $\dom(\opbreve{G})$ as per \er{eq:dom-Riesz}. Suppose additionally that $\{ f_n\, g_n \}_{n=1}^\infty$ is simple, and  its closure is totally disconnected. Then, the composition $\opbreve{F}\, \opbreve{G}$ is also a Riesz-spectral operator of the form \er{eq:op-Riesz}, with
\begin{align}
	\opbreve{F}\, \opbreve{G}\, x
	& = \sum_{n=1}^\infty f_n\, g_n\, \langle x,\, \tilde\varphi_n \rangle_\half \, \tilde\varphi_n\,,
	\quad
	\dom(\opbreve{F}\, \opbreve{G})
	= \left\{ x\in\dom(\opbreve{G})\subset\cX_\half\, \biggl| \, 
	\opbreve{G}\, x\in\dom(\opbreve{F}) \right\}.
	\label{eq:compose-Riesz}
\end{align}
\end{lemma}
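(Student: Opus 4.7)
The plan is to verify the claimed formula by working directly from the spectral representations of $\opbreve{F}$ and $\opbreve{G}$, and then to confirm that the resulting operator meets the definition of a Riesz-spectral operator as in \er{eq:op-Riesz}. The composition $\opbreve{F}\, \opbreve{G}$ is defined on the natural composition domain $\dom(\opbreve{F}\, \opbreve{G}) = \{ x\in\dom(\opbreve{G})\, |\, \opbreve{G}\, x\in\dom(\opbreve{F})\}$, which matches the domain in \er{eq:compose-Riesz} by construction, so no additional work is needed to justify that set equality.

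The core computation is to take $x\in\dom(\opbreve{F}\, \opbreve{G})$, write $\opbreve{G}\, x = \sum_{n=1}^\infty g_n\, \langle x,\, \tilde\varphi_n\rangle_\half\, \tilde\varphi_n$ using \er{eq:op-Riesz}, and then apply $\opbreve{F}$. First I would verify termwise, using that $\{\tilde\varphi_n\}_{n\in\N}$ is the orthonormal Riesz basis of eigenvectors of $\opbreve{F}$ (i.e., $\opbreve{F}\, \tilde\varphi_n = f_n\, \tilde\varphi_n$, which follows by orthonormality from \er{eq:op-Riesz}), that $\opbreve{F}$ applied to any finite truncation $\sum_{n=1}^N g_n\, \langle x,\, \tilde\varphi_n\rangle_\half\, \tilde\varphi_n$ yields $\sum_{n=1}^N f_n\, g_n\, \langle x,\, \tilde\varphi_n\rangle_\half\, \tilde\varphi_n$. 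The step to pass to the limit $N\to\infty$ is the main point requiring care: I would invoke closedness of $\opbreve{F}$ (which holds for any Riesz-spectral operator, see \cite[Example 2.1.13]{CZ:95}) applied to the pair of sequences of partial sums of $\opbreve{G}\, x$ and its image under $\opbreve{F}$. Closedness yields that the limit $\sum_{n=1}^\infty f_n\, g_n\, \langle x,\, \tilde\varphi_n\rangle_\half\, \tilde\varphi_n$ exists in $\cX_\half$ and equals $\opbreve{F}\, \opbreve{G}\, x$, giving the spectral expansion in \er{eq:compose-Riesz}. Convergence of the partial sums of $\opbreve{F}$ applied termwise is equivalent, via Lemma \ref{lem:dom-Riesz} and the orthonormality of $\optilde{B}$, to the summability criterion $\sum_{n=1}^\infty |f_n\, g_n|^2\, |\langle x,\, \tilde\varphi_n\rangle_\half|^2<\infty$, which is exactly the condition $\opbreve{G}\, x\in\dom(\opbreve{F})$ in disguise.

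To finish, I would confirm that $\opbreve{F}\, \opbreve{G}$ satisfies the defining properties of a Riesz-spectral operator of the form \er{eq:op-Riesz}. The candidate eigenvalue set is $\{f_n\, g_n\}_{n\in\N}$, which by hypothesis is simple with totally disconnected closure, and the corresponding eigenvectors are again the Riesz basis elements $\{\tilde\varphi_n\}_{n\in\N}$ of $\cX_\half$. The expression derived above is in the exact form of \er{eq:op-Riesz}, and the domain characterization in \er{eq:compose-Riesz} coincides with the standard domain description from Lemma \ref{lem:dom-Riesz} (namely those $x\in\cX_\half$ for which the series converges in $\cX_\half$), completing the proof.

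The principal obstacle is the interchange of $\opbreve{F}$ with the infinite series defining $\opbreve{G}\, x$; since $\opbreve{F}$ is in general unbounded, this step is not free and must be supported either by closedness of $\opbreve{F}$ together with convergence of the termwise-transformed partial sums, or equivalently by showing directly that the sequence of truncated images is Cauchy in $\cX_\half$ and its limit coincides with $\opbreve{F}\, \opbreve{G}\, x$ via the orthonormality of $\optilde{B}$.
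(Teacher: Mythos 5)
Your proposal is correct and takes essentially the same approach as the paper's: both identify the interchange of the (generally unbounded) operator $\opbreve{F}$ with the limit defining $\opbreve{G}\,x$ as the key step, and both justify it by invoking closedness of $\opbreve{F}$ applied to the termwise-transformed partial sums, before checking that the resulting point spectrum $\{f_n\, g_n\}_{n\in\N}$ meets the Riesz-spectral criteria of \er{eq:op-Riesz}.
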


\if{false}

\begin{proof}
By definition, $x\in\dom(\opbreve{F}\, \opbreve{G})$ if and only if $x\in\dom(\opbreve{G})$ and $\opbreve{G}\, x\in\dom(\opbreve{F})$. That is, $\dom(\opbreve{F}\, \opbreve{G})$ is as per \er{eq:compose-Riesz}. By definition \er{eq:op-Riesz} of $\opbreve{F}$, note that $\opbreve{G}\, x\in\dom(\opbreve{F})$ if and only if $\opbreve{F}\, \opbreve{G}\, x\in\cX_\half$. Recall that the functional $\langle \cdot,\, \tilde\varphi_n \rangle_\half:\cX_\half\mapsinto\R$, $n\in\N$, is closed by Lemma \ref{lem:inner-closed}. Hence, for any $x\in\dom(\opbreve{F}\, \opbreve{G})$,
\begin{align}
	\langle \opbreve{G}\, x ,\, \tilde\varphi_n \rangle_\half
	& = \left\langle \lim_{N\rightarrow\infty} \sum_{k=1}^N g_k \, \langle x,\, 
				\tilde\varphi_k \rangle_\half \, \tilde\varphi_k,\, \tilde\varphi_n \right\rangle_\half
	= \lim_{N\rightarrow\infty} \sum_{k=1}^N g_k \, \langle x,\, \tilde\varphi_k \rangle_\half \, 
							\langle \tilde\varphi_k,\, \tilde\varphi_n \rangle_\half
	= g_n \, \langle x,\, \tilde\varphi_n \rangle_\half\,.
	\nn
\end{align}
Consequently, by definition \er{eq:op-Riesz} of $\opbreve{F}$ and $\opbreve{G}$,
\begin{align}
	\opbreve{F}\, \opbreve{G}\, x
	& = 	\opbreve{F}\, \sum_{n=1}^\infty g_n\, \langle x,\, \tilde\varphi_n \rangle_\half\, \tilde\varphi_n
	= \sum_{n=1}^\infty g_n \, \langle x,\, \tilde\varphi_n\rangle_\half \, \opbreve{F}\, \tilde\varphi_n
	= \sum_{n=1}^\infty f_n\, g_n\, \langle x,\, \tilde\varphi_n \rangle_\half \, \tilde\varphi_n\,,
	\nn
\end{align}
as $\opbreve{F}$ is closed and linear. That is, $\opbreve{F}\, \opbreve{G}\, x$ is of the form \er{eq:compose-Riesz}. 
By inspection, the point spectrum $\sigma_p(\opbreve{F}\, \opbreve{G})$ of this composition is $\{f_n\, g_n\}_{n=1}^\infty$, with each eigenvalue corresponding to an eigenvector similarly enumerated in the Riesz basis $\optilde{B}$. By hypothesis, as this point spectrum is simple, and its closure totally disconnected, $\opbreve{F}\, \opbreve{G}$ must also be a Riesz-spectral operator by definition. (Furthermore, as any Riesz-spectral operator is closed, also by definition, it follows that $\opbreve{F}\, \opbreve{G}$ must be closed.)
\end{proof}

\fi

\begin{corollary}
\label{cor:compose-dom-Riesz} Let $\opbreve{F}$ and $\opbreve{G}$ be Riesz-spectral operators as per Lemma \ref{lem:compose-Riesz}. Then, the domain of the composition $\opbreve{F}\, \opbreve{G}$ of  \er{eq:compose-Riesz} is equivalently given by
\begin{align}
	\dom(\opbreve{F}\, \opbreve{G})
	& = \left\{ x\in\cX_\half \, \left| \, 
			\sum_{n=1}^\infty (1 + |f_n|^2)\, |g_n|^2 \, |\langle x,\, \tilde\varphi_n \rangle_\half|^2 < \infty 
		\right. \right\}.
	\label{eq:compose-dom-Riesz-sum}
\end{align}
If additionally there exists $f_-\in\R_{>0}$ such that $|f_n| \ge f_-$ for all $n\in\N$, then the domain $\dom(\opbreve{F}\, \opbreve{G})$ specified via \er{eq:op-Riesz} or \er{eq:compose-dom-Riesz-sum} is equivalently given by
\begin{align}
	\dom(\opbreve{F}\, \opbreve{G})
	& = \left\{ x\in\cX_\half\, \left| \, \sum_{n=1}^\infty |f_n\, g_n|^2 \, |\langle x,\, \tilde\varphi_n\rangle_\half|^2 < \infty \right. \right\}.
	\label{eq:compose-dom-Riesz-sum-simple}
\end{align}
\end{corollary}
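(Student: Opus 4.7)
The plan is to combine the domain characterization in Lemma \ref{lem:dom-Riesz} with the composition formula in Lemma \ref{lem:compose-Riesz}, and then compare the resulting series using the assumed lower bound on $\{|f_n|\}$.

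First, I would establish \er{eq:compose-dom-Riesz-sum} as follows. Fix $x\in\cX_\half$ and let $a_n\doteq\langle x,\tilde\varphi_n\rangle_\half$. By Lemma \ref{lem:dom-Riesz}, $x\in\dom(\opbreve{G})$ if and only if $\sum_{n=1}^\infty |g_n|^2 |a_n|^2 <\infty$, and in that case the form \er{eq:op-Riesz} of $\opbreve{G}$ together with closedness of the functional $\langle\, \cdot\, ,\tilde\varphi_n\rangle_\half$ yields $\langle \opbreve{G}\, x,\tilde\varphi_n\rangle_\half = g_n\, a_n$ for every $n\in\N$. Applying Lemma \ref{lem:dom-Riesz} a second time, the further requirement $\opbreve{G}\, x\in\dom(\opbreve{F})$ is equivalent to $\sum_{n=1}^\infty |f_n|^2 |g_n|^2 |a_n|^2<\infty$. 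Using Lemma \ref{lem:compose-Riesz}, $x\in\dom(\opbreve{F}\opbreve{G})$ iff both series converge, and combining them termwise produces the single series $\sum_{n=1}^\infty (1+|f_n|^2)|g_n|^2|a_n|^2<\infty$ in \er{eq:compose-dom-Riesz-sum}.

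Second, for \er{eq:compose-dom-Riesz-sum-simple}, I would observe that under the hypothesis $|f_n|\ge f_->0$ one has the two-sided estimate
\begin{align}
|f_n\, g_n|^2 \ \le \ (1+|f_n|^2)\,|g_n|^2 \ \le \ \left(1+\tfrac{1}{f_-^2}\right) |f_n\, g_n|^2 \nn
\end{align}
for every $n\in\N$, by writing $1\le |f_n|^2 / f_-^2$. This immediately shows that convergence of $\sum_{n=1}^\infty (1+|f_n|^2)|g_n|^2 |a_n|^2$ is equivalent to convergence of $\sum_{n=1}^\infty |f_n\, g_n|^2 |a_n|^2$, which gives the representation \er{eq:compose-dom-Riesz-sum-simple}. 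Equivalence with the definition $\dom(\opbreve{F}\opbreve{G}) = \{x\in\cX_\half:\opbreve{F}\opbreve{G}\, x\in\cX_\half\}$ in \er{eq:op-Riesz} then follows from Lemma \ref{lem:dom-Riesz} applied to the Riesz-spectral operator $\opbreve{F}\opbreve{G}$ itself (whose eigenvalues are $\{f_n g_n\}$ by Lemma \ref{lem:compose-Riesz}).

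There is no real obstacle here; the only subtlety is noting that the cross-term $|f_n|^2|g_n|^2|a_n|^2$ cannot in general be dominated by $|g_n|^2|a_n|^2$ alone without the lower bound hypothesis, which is exactly why the simplification to \er{eq:compose-dom-Riesz-sum-simple} requires $|f_n|\ge f_->0$. The proof is essentially a rearrangement of series conditions inherited from Lemmas \ref{lem:dom-Riesz} and \ref{lem:compose-Riesz}.
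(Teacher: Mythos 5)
Your proposal is correct and follows essentially the same route as the paper: decompose $\dom(\opbreve{F}\opbreve{G})$ via Lemma \ref{lem:compose-Riesz} into the two series conditions from Lemma \ref{lem:dom-Riesz}, combine them termwise to obtain \er{eq:compose-dom-Riesz-sum}, and then use the lower bound $|f_n|\ge f_-$ to show the product series alone controls the $|g_n|^2$ series. Your packaging of that last step as a single two-sided inequality $|f_n g_n|^2 \le (1+|f_n|^2)|g_n|^2 \le (1+f_-^{-2})|f_n g_n|^2$ is a cosmetic tidying of the paper's two-set-inclusion argument, not a different method.
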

\begin{proof}
Recalling \er{eq:compose-Riesz}, $x\in\dom(\opbreve{F}\, \opbreve{G})$ if and only if $x\in\dom(\opbreve{G})$ and $\opbreve{G}\, x\in\dom(\opbreve{F})$. These respective properties hold if and only if
\begin{align}
	\infty
	& > \sum_{n=1}^\infty |g_n|^2\, |\langle x,\, \tilde\varphi_n \rangle_\half|^2\,,
	\quad
	\infty 
	> \sum_{n=1}^\infty |f_n|^2 \, |\langle \opbreve{G}\, x,\, \tilde\varphi_n \rangle_\half|^2
	= \sum_{n=1}^\infty |f_n \, g_n|^2 \, |\langle x,\, \tilde\varphi_n \rangle_\half|^2\,.
	\label{eq:compose-dom-sums}
\end{align}
So, the domain of $\opbreve{F}\, \opbreve{G}$ is given by
\begin{align}
	\dom(\opbreve{F}\, \opbreve{G}) 
	& = \left\{ x\in\dom(\opbreve{G})\subset\cX_\half\, \biggl| \, \opbreve{G}\, x\in\dom(\opbreve{F}) \right\}
	= \left\{ x\in\dom(\opbreve{G})\subset\cX_\half \, \left| \, 
			\sum_{n=1}^\infty |f_n\, g_n|^2 \, | \langle x,\, \tilde\varphi_n \rangle_\half|^2 < \infty
		\right. \right\}
	\nn\\
	& = \left\{ x\in\cX_\half\, \left| \,
			\sum_{n=1}^\infty (1 + |f_n|^2) \, |g_n|^2\, | \langle x,\, \tilde\varphi_n \rangle_\half|^2 < \infty
		\right. \right\},
	\nn
\end{align}
as specified by \er{eq:compose-dom-Riesz-sum}. Suppose additionally that there exists $f_-\in\R_{>0}$ such that $|f_n|\ge f_-$ for all $n\in\N$. Define the domain candidate $\funspace{D}$ as per \er{eq:compose-dom-Riesz-sum-simple}, that is
$
	\funspace{D}
	\doteq \left\{ x\in\cX_\half\, \left| \, \sum_{n=1}^\infty |f_n\, g_n|^2 \, |\langle x,\, \tilde\varphi_n\rangle_\half|^2 < \infty \right. \right\}
$.
Fix any $x\in\dom(\opbreve{F}\, \opbreve{G})$ via \er{eq:compose-dom-Riesz-sum}. By inspection, it immediately follows that $x\in\funspace{D}$. That is, $\dom(\opbreve{F}\, \opbreve{G})\subset\funspace{D}$. In order to prove the opposite direction, fix any $x\in\funspace{D}$, and note that the second inequality in \er{eq:compose-dom-sums} implies the first. In particular,
\begin{align}
	\infty 
	& > \sum_{n=1}^\infty |f_n \, g_n|^2 \, |\langle x,\, \tilde\varphi_n \rangle_\half|^2 
	> f_-^2 \, \sum_{n=1}^\infty |g_n|^2 \, |\langle x,\, \tilde\varphi_n \rangle_\half|^2
	\quad
	\stackrel{f_-\in\R_{>0}}{\Longrightarrow}
	\quad
	\infty >
	\sum_{n=1}^\infty |g_n|^2 \, |\langle x,\, \tilde\varphi_n \rangle_\half|^2\,,
\end{align}
which implies that
$
	\infty
	> \sum_{n=1}^\infty |f_n \, g_n|^2 \, |\langle x,\, \tilde\varphi_n \rangle_\half|^2
	+ \sum_{n=1}^\infty |g_n|^2 \, |\langle x,\, \tilde\varphi_n \rangle_\half|^2
	= \sum_{n=1}^\infty (1 + |f_n|^2)\, |g_n|^2 \, |\langle x,\, \tilde\varphi_n \rangle_\half|^2
$. 
Consequently, $x\in\funspace{D}$ implies that $x\in\dom(\opbreve{F}\, \opbreve{G})$, or $\funspace{D}\subset\dom(\opbreve{F}\, \opbreve{G})$. Combining this with the earlier conclusion that $\funspace{D}\supset\dom(\opbreve{F}\, \opbreve{G})$ yields that $\funspace{D} \equiv \dom(\opbreve{F}\, \opbreve{G})$. That is, \er{eq:compose-dom-Riesz-sum-simple} holds as required.
\end{proof}

\begin{lemma}
\label{lem:inverse-Riesz}
Let $\opbreve{F}$, $\opbreve{F}^\sharp$ denote a pair of Riesz-spectral operators of the form \er{eq:op-Riesz}, with point spectra $0\not\in\sigma_p(\opbreve{F}) = \{ f_n \}_{n=1}^\infty$ and $\sigma_p(\opbreve{F}^\sharp) = \{ \ts{\frac{1}{f_n}} \}_{n=1}^\infty$. Then,
\begin{align}
	\opbreve{F}\, \opbreve{F}^\sharp\, x & = x\,,
	\quad \forall \ x\in\dom(\opbreve{F}^\sharp)\,,
	&
	\opbreve{F}^\sharp\, \opbreve{F}\, x & = x\,,
	\quad \forall \ x\in\dom(\opbreve{F})\,.
	\label{eq:inverse-Riesz}
\end{align}
\end{lemma}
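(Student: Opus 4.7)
The plan is to prove both identities by direct series computation, rather than by invoking Lemma \ref{lem:compose-Riesz} — the latter does not apply verbatim because the eigenvalues of the compositions $\opbreve{F}\,\opbreve{F}^\sharp$ and $\opbreve{F}^\sharp\,\opbreve{F}$ are all equal to $1$, hence not simple, so those compositions fail to be Riesz-spectral operators in the sense of \er{eq:op-Riesz}. However, the series representation itself still applies.

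First I would fix $x\in\dom(\opbreve{F}^\sharp)$ and verify that $\opbreve{F}^\sharp x\in\dom(\opbreve{F})$. By \er{eq:op-Riesz}, $\opbreve{F}^\sharp x = \sum_{n=1}^\infty \ts{\frac{1}{f_n}}\,\langle x,\tilde\varphi_n\rangle_\half\,\tilde\varphi_n$, and orthonormality of $\optilde B$ in $\cX_\half$ yields $\langle \opbreve{F}^\sharp x,\tilde\varphi_n\rangle_\half = \ts{\frac{1}{f_n}}\langle x,\tilde\varphi_n\rangle_\half$. Applying Lemma \ref{lem:dom-Riesz} to $\opbreve{F}$ then gives
\begin{align}
	\sum_{n=1}^\infty |f_n|^2\,|\langle \opbreve{F}^\sharp x,\tilde\varphi_n\rangle_\half|^2
	= \sum_{n=1}^\infty |f_n|^2\,\ts{\frac{1}{|f_n|^2}}\,|\langle x,\tilde\varphi_n\rangle_\half|^2
	= \|x\|_\half^2 < \infty,
	\nn
\end{align}
so $\opbreve{F}^\sharp x\in\dom(\opbreve{F})$ as required.

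Next I would evaluate $\opbreve{F}\,\opbreve{F}^\sharp x$ directly. Applying definition \er{eq:op-Riesz} of $\opbreve{F}$ to the expansion of $\opbreve{F}^\sharp x$ above,
\begin{align}
	\opbreve{F}\,\opbreve{F}^\sharp x
	= \sum_{n=1}^\infty f_n\,\langle \opbreve{F}^\sharp x,\tilde\varphi_n\rangle_\half\,\tilde\varphi_n
	= \sum_{n=1}^\infty f_n\,\ts{\frac{1}{f_n}}\,\langle x,\tilde\varphi_n\rangle_\half\,\tilde\varphi_n
	= \sum_{n=1}^\infty \langle x,\tilde\varphi_n\rangle_\half\,\tilde\varphi_n
	= x,
	\nn
\end{align}
where the final equality uses the fact that $\optilde B$ is an orthonormal Riesz basis for $\cX_\half$ (Lemma \ref{lem:basis-Riesz}), so every $x\in\cX_\half$ admits the expansion $x=\sum_{n=1}^\infty \langle x,\tilde\varphi_n\rangle_\half\,\tilde\varphi_n$.

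The opposite identity follows by interchanging the roles of $\opbreve{F}$ and $\opbreve{F}^\sharp$: the hypothesis $0\notin\sigma_p(\opbreve{F})$ guarantees that $0\notin\sigma_p(\opbreve{F}^\sharp)$ as well (since its eigenvalues $\{\ts{\frac{1}{f_n}}\}$ are finite), and $(\opbreve{F}^\sharp)^\sharp = \opbreve{F}$ in the sense of the lemma statement. The only real subtlety is the domain verification above; everything else is a matter of carefully manipulating the series representations together with orthonormality. I expect no substantial obstacle beyond this domain check.
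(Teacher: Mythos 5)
Your proposal is correct and takes essentially the same approach as the paper's proof: verify the domain compatibility (that $\opbreve{F}^\sharp x\in\dom(\opbreve{F})$ whenever $x\in\dom(\opbreve{F}^\sharp)$ via the series criterion of Lemma \ref{lem:dom-Riesz}), then evaluate the composition term-by-term and invoke the Riesz basis expansion. Your remark that Lemma \ref{lem:compose-Riesz} cannot be applied directly — because the product eigenvalues $f_n\cdot\ts{\frac{1}{f_n}}=1$ are not simple, so the composition is not itself a Riesz-spectral operator — is a correct and useful observation, consistent with the paper's own comment following Corollary \ref{cor:unique-inverse-Riesz}.
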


\if{false}

\begin{proof}
Fix any $x\in\dom(\opbreve{F}\, \opbreve{F}^\sharp)$. Equivalently, $x\in\dom(\opbreve{F}^\sharp)$ and $\opbreve{F}^\sharp\, x\in\dom(\opbreve{F})$. Recalling that $\langle\cdot,\, \tilde\varphi_n\rangle_\half:\cX_\half\mapsinto\R$, $n\in\N$, is closed by Lemma \ref{lem:inner-closed}, it follows respectively that
\begin{align}
	\infty
	& > \sum_{n=1}^\infty |\ts{\frac{1}{f_n}}|^2 \, |\langle x,\, \tilde\varphi_n \rangle_\half|^2\,,
	\quad
	\infty
	> \sum_{n=1}^\infty |f_n|^2 \, |\langle \opbreve{F}^\sharp \, x,\, \tilde\varphi_n \rangle_\half|^2
	= \sum_{n=1}^\infty |f_n|^2\, |\ts{\frac{1}{f_n}}|^2 \, |\langle x,\, \tilde\varphi_n \rangle_\half|^2
	= \sum_{n=1}^\infty |\langle x,\, \tilde\varphi_n \rangle_\half|^2\,.
	\nn
\end{align}
Note that the second inequality above holds if and only if $x\in\cX_\half$, and so is automatically fulfilled. Hence,
\begin{align}
	x\in\dom(\opbreve{F}\, \opbreve{F}^\sharp)
	& \quad\Longleftrightarrow\quad 
	\infty
	> \sum_{n=1}^\infty |\ts{\frac{1}{f_n}}|^2 \, |\langle x,\, \tilde\varphi_n \rangle_\half|^2
	\quad\Longleftrightarrow\quad
	x\in\dom(\opbreve{F}^\sharp)\,.
	\nn
\end{align}
Consequently, $\dom(\opbreve{F}\, \opbreve{F}^\sharp) \equiv \dom(\opbreve{F}^\sharp)$. So, for any $x\in\dom(\opbreve{F}^\sharp)$, again applying Lemma \ref{lem:inner-closed} and the fact that $\opbreve{F}$ is closed,
$
	\opbreve{F}\, \opbreve{F}^\sharp\, x
	= \sum_{n=1}^\infty f_n \, \langle \opbreve{F}^\sharp\, x,\, \tilde\varphi_n \rangle_\half \, \tilde\varphi_n
	= \sum_{n=1}^\infty f_n \, \ts{\frac{1}{f_n}} \, \langle x,\, \tilde\varphi_n\rangle_\half\, \tilde\varphi_n
	= \sum_{n=1}^\infty \langle x,\, \tilde\varphi_n\rangle_\half\, \tilde\varphi_n
	= x
$,
as per the first identity in \er{eq:inverse-Riesz}. The second identity in \er{eq:inverse-Riesz} follows similarly.
\end{proof}

\fi

\begin{corollary}
\label{cor:unique-inverse-Riesz}
A Riesz operator $\opbreve{F}$ on $\cX_\half$ with point spectrum satisfying $0\notin\sigma_p(\opbreve{F}) = \{f_n\}_{n=1}^\infty$ is invertible. Furthermore, its inverse $\opbreve{F}^{-1}$ is also a Riesz-spectral operator on $\cX_\half$, and is given by
\begin{align}
	\opbreve{F}^{-1}\, x
	& = \sum_{n=1}^\infty \ts{\frac{1}{f_n}}\, \langle x,\, \tilde\varphi_n\rangle_\half \tilde\varphi_n\,,
	\quad
	\dom(\opbreve{F}^{-1}) 
	= \left\{ x\in\cX_\half \, \biggl|\, \sum_{n=1}^\infty |\ts{\frac{1}{f_n}}|^2 \, |\langle x,\, \tilde\varphi_n \rangle_\half|^2 < \infty \right\}.
	\label{eq:unique-inverse-Riesz}
\end{align}
\end{corollary}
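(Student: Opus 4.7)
The plan is to exhibit $\opbreve{F}^\sharp$, the candidate inverse defined by the series in \er{eq:unique-inverse-Riesz}, as a Riesz-spectral operator on $\cX_\half$ and then invoke Lemma \ref{lem:inverse-Riesz} to conclude that it coincides with $\opbreve{F}^{-1}$.

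First, I would define $\opbreve{F}^\sharp$ by the right-hand side of \er{eq:unique-inverse-Riesz}, noting that the reciprocals $\{1/f_n\}_{n\in\N}$ are well-defined by the hypothesis $0\not\in\sigma_p(\opbreve{F})$. The stated domain is exactly the general Riesz-spectral domain \er{eq:dom-Riesz} applied with eigenvalues $\{1/f_n\}$, which guarantees $\opbreve{F}^\sharp\, x\in\cX_\half$ on that domain.

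Second, I would verify that $\opbreve{F}^\sharp$ is itself a Riesz-spectral operator of the form \er{eq:op-Riesz}. The eigenvectors are again the Riesz basis $\optilde{B}$. The eigenvalues $\{1/f_n\}$ are simple, since $\{f_n\}$ is simple and the reciprocal map is injective on $\R\setminus\{0\}$. The closure of $\{1/f_n\}$ is totally disconnected, because the map $\lambda\mapsto 1/\lambda$ is a homeomorphism of $\R\setminus\{0\}$ onto itself (and thus preserves total disconnectedness on bounded pieces), while any part of the sequence $\{f_n\}$ that accumulates at $0$ corresponds to a part of $\{1/f_n\}$ escaping to infinity, contributing no finite accumulation points. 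This is the only step where anything subtle could occur; modulo a careful case split (bounded part of $\{f_n\}$ bounded away from $0$ versus the $0$-accumulating tail) it is routine.

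Third, I would apply Lemma \ref{lem:inverse-Riesz} to the Riesz-spectral pair $(\opbreve{F},\opbreve{F}^\sharp)$, whose point spectra are $\{f_n\}$ and $\{1/f_n\}$ respectively, to obtain
\begin{align*}
\opbreve{F}\,\opbreve{F}^\sharp\, x & = x \quad \forall\ x\in\dom(\opbreve{F}^\sharp)\,, &
\opbreve{F}^\sharp\,\opbreve{F}\, x & = x \quad \forall\ x\in\dom(\opbreve{F})\,.
\end{align*}
These two identities together assert that $\opbreve{F}^\sharp$ is both a left and right inverse of $\opbreve{F}$, so $\opbreve{F}$ is invertible with $\opbreve{F}^{-1}=\opbreve{F}^\sharp$ exactly as displayed in \er{eq:unique-inverse-Riesz}. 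The representation of $\dom(\opbreve{F}^{-1})$ then follows from the general domain formula \er{eq:dom-Riesz} applied to $\opbreve{F}^\sharp$. The one mild obstacle, as indicated above, is verifying the totally disconnected closure condition in complete generality; I would handle this by splitting $\{f_n\}$ into a piece bounded away from $0$ and a piece tending to $0$, and applying the homeomorphism argument to the former.
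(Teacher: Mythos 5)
Your proposal is correct and follows the same essential route as the paper's own (unpublished) proof: define the candidate inverse $\opbreve{F}^\sharp$ with eigenvalues $\{1/f_n\}$ and eigenvectors $\optilde{B}$, then invoke Lemma \ref{lem:inverse-Riesz} to obtain the two-sided inverse identity, from which the representation \er{eq:unique-inverse-Riesz} follows. In fact you are slightly more careful than the paper, which simply asserts existence via Lemma \ref{lem:inverse-Riesz} without explicitly checking that $\opbreve{F}^\sharp$ satisfies the Riesz-spectral hypotheses of that lemma (simple eigenvalues and totally disconnected closure); your observation that $\lambda\mapsto 1/\lambda$ is a homeomorphism of $\R\setminus\{0\}$, together with the remark that any tail of $\{f_n\}$ approaching $0$ sends $\{1/f_n\}$ to $\pm\infty$ and hence contributes no finite accumulation points, fills this gap cleanly. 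The paper closes uniqueness by noting $\kernel(\opbreve{F}) = \{0\} = \kernel(\opbreve{F}^\sharp)$, whereas you rely on the two-sided inverse identities of Lemma \ref{lem:inverse-Riesz}; these are equivalent ways to finish, so no substantive difference there.
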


\if{false}

\begin{proof}
The existence of an inverse $\opbreve{F}^\sharp$ of $\opbreve{F}$ follows by Lemma \ref{lem:inverse-Riesz}. Uniqueness of this inverse, and hence the representation \er{eq:unique-inverse-Riesz} follows by virtue of the fact that $\kernel(\opbreve{F}) = \{ 0 \} = \kernel(\opbreve{F}^\sharp)$.
\end{proof}

\fi

It is well known by Riesz's Lemma that the identity $\op{I}$ of \er{eq:identity-Riesz} is not a Riesz-spectral operator on $\cX_\half$.
Consequently, the composition of a Riesz-spectral operator $\opbreve{F}$ and its inverse $\opbreve{F}^{-1}$ (also a Riesz-spectral operator, by Corollary \ref{cor:unique-inverse-Riesz}) is not itself a Riesz-spectral operator. Indeed, in attempting to apply Lemma \ref{lem:compose-Riesz} to such a composition  $\opbreve{F}\, \opbreve{F}^{-1}$ reveals that its point spectrum $\sigma_p(\opbreve{F}\, \opbreve{F}^{-1}) = \{ 1 \}$ is not simple, thereby violating the definition of a Riesz-spectral operator.


\begin{lemma}
\label{lem:specific-op-Riesz}
$\op{A}$, $\opAsqrt$, $\op{J}$, $\op{I}_\mu$, $\op{I}_\mu^\half$, $\op{I}_\mu^{-\half}$, and $\opAsqrt\, \op{I}_\mu\, \opAsqrt$ are Riesz-spectral operators of the form \er{eq:op-Riesz} on $\cX$ and $\cX_\half$, with respective eigenvalues given by $\lambda_n$, $\lambda_n^\half$, $\lambda_n^{-\half}$, $(1 + \mu^2\, \lambda_n)^{-1}$, $(1 + \mu^2\, \lambda_n)^{\half}$, $(1 + \mu^2\, \lambda_n)^{-\half}$, and $\lambda_n\, (1 + \mu^2\, \lambda_n)^{-1}$ for all $n\in\N$, where $\lambda_n$ is as per \er{eq:eigenvalues}.
\end{lemma}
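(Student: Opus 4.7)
The plan is to build up the Riesz-spectral representations of the seven listed operators sequentially, exploiting the fact that they all share eigenvectors drawn from the Riesz bases $\op{B}$ (on $\cX$) and $\optilde{B}$ (on $\cX_\half$) of \er{eq:basis-Riesz}, and differ only in their eigenvalue sequences. The task therefore reduces in each case to identifying the eigenvalue sequence and checking that it is simple with totally disconnected closure; the eigenvector condition will then be inherited from Lemma \ref{lem:basis-Riesz}, and the Riesz-spectral form \er{eq:op-Riesz} will follow directly.

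First I would establish the assertion for $\op{A}$ and $\opAsqrt$. On $\cX$, Lemma \ref{lem:eigenvalues} identifies the eigenpairs $\{(\lambda_n,\varphi_n)\}_{n\in\N}$ of $\op{A}$, and \cite[Theorem 2.3.5]{CZ:95} delivers the Riesz-spectral form; simplicity and total disconnectedness of $\{\lambda_n\}_{n\in\N}$ are immediate since this set is discrete in $\R$. The analogue on $\cX_\half$ follows by direct expansion of $\op{A}\,x$ for $x\in\dom(\op{A})$ against the basis $\optilde{B}$, using $\op{A}\,\tilde\varphi_n = \lambda_n\,\tilde\varphi_n$. The same argument applied to the eigenpairs $\{(\sqrt{\lambda_n},\varphi_n)\}$ supplied by Lemma \ref{lem:eigenvalues} yields the representation for $\opAsqrt$. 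Then $\op{J}=(\opAsqrt)^{-1}$ is handled by Corollary \ref{cor:unique-inverse-Riesz}, giving eigenvalues $\{1/\sqrt{\lambda_n}\}_{n\in\N}$ whose closure accumulates only at $0$.

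Next I would treat the $\op{I}_\mu$ family. Since $\op{I}+\mu^2\,\op{A}$ acts on each basis element $\tilde\varphi_n$ as multiplication by $1+\mu^2\lambda_n$, it is Riesz-spectral with strictly increasing spectrum; Corollary \ref{cor:unique-inverse-Riesz} then presents $\op{I}_\mu$ in the form \er{eq:op-Riesz} with eigenvalues $\{(1+\mu^2\lambda_n)^{-1}\}_{n\in\N}$, strictly decreasing to $0$. The positive, self-adjoint square root $\op{I}_\mu^\half$ (unique by assertion \er{eq:op-I-mu-ass-2b} of Lemma \ref{lem:op-I-mu-properties}) must share the eigenvectors of $\op{I}_\mu$ and carry eigenvalues $\{(1+\mu^2\lambda_n)^{-\half}\}$; I would verify this by defining the candidate Riesz-spectral operator through these data, checking via Lemma \ref{lem:compose-Riesz} that it is positive, self-adjoint and squares to $\op{I}_\mu$, and then invoking uniqueness. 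A further application of Corollary \ref{cor:unique-inverse-Riesz} produces $\op{I}_\mu^{-\half}$ with eigenvalues $\{(1+\mu^2\lambda_n)^\half\}$.

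Finally, for the composition $\opAsqrt\,\op{I}_\mu\,\opAsqrt$, I would apply Lemma \ref{lem:compose-Riesz} twice, using the commutation identities \er{eq:op-I-mu-ass-3a} of Lemma \ref{lem:op-I-mu-properties} to ensure the intermediate ranges sit in the appropriate domains; the resulting eigenvalue sequence is the pointwise product $\{\sqrt{\lambda_n}\cdot(1+\mu^2\lambda_n)^{-1}\cdot\sqrt{\lambda_n}\} = \{\lambda_n/(1+\mu^2\lambda_n)\}_{n\in\N} = \{\lambda_n^\mu\}$ in the notation of \er{eq:alpha-omega-lambda}, which is strictly increasing to $1/\mu^2$. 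The main obstacle throughout is the bookkeeping around total disconnectedness in the presence of an accumulation point (at $0$ for $\op{J}$, $\op{I}_\mu$, and $\op{I}_\mu^\half$; at $1/\mu^2$ for $\opAsqrt\,\op{I}_\mu\,\opAsqrt$); strict monotonicity of each eigenvalue sequence is the key observation, since a strictly monotone real sequence together with its unique limit has order type $\omega+1$ (or its dual), yielding a countable closed set with at most one limit point, which is patently totally disconnected. With this observation the hypotheses of Lemma \ref{lem:compose-Riesz} and Corollary \ref{cor:unique-inverse-Riesz} are satisfied at every step, and the lemma follows.
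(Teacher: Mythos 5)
Your proof is correct and follows essentially the same route as the paper's: establish $\op{A}$ and $\opAsqrt$ directly from their eigenpairs and the Riesz bases of \er{eq:basis-Riesz}, then obtain $\op{J}$, $\op{I}_\mu$, $\op{I}_\mu^{\pm\half}$, and the composition $\opAsqrt\,\op{I}_\mu\,\opAsqrt$ by chaining Corollary \ref{cor:unique-inverse-Riesz}, the uniqueness of the positive square root from Lemma \ref{lem:op-I-mu-properties}, and Lemma \ref{lem:compose-Riesz}; your extra remark on why strictly monotone sequences have totally disconnected closure is a welcome bit of explicitness the paper leaves implicit. One point worth noting: the eigenvalues you derive, $(1+\mu^2\lambda_n)^{-\half}$ for $\op{I}_\mu^\half$ and $(1+\mu^2\lambda_n)^{\half}$ for $\op{I}_\mu^{-\half}$, are the correct ones (and agree with the paper's own usage in \er{eq:op-inv-I-mu-Riesz} and \er{eq:strong-half-mu-1}), but they are the reverse of the assignment written in the statement of Lemma \ref{lem:specific-op-Riesz}, which appears to be a typographical transposition in the paper; since you are proving the corrected version rather than the literal statement, you should flag that discrepancy explicitly.
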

\begin{proof}
Operator $\op{A}$ is closed and linear on $\cX$, with simple eigenvalues $\sigma_p(\op{A}) = \{\lambda_n\}_{n=1}^\infty\subset\R$ defined by \er{eq:eigenvalues} and corresponding to eigenvectors $\op{B} = \{\varphi_n\}_{n=1}^\infty\subset\cX$ as per \er{eq:basis-Riesz}. The closure of the point spectrum of $\op{A}$, denoted by $\ol{\sigma_p(\op{A})}$, is totally disconnected, and $\op{B}$ forms a Riesz basis for $\cX$, see Lemmas \ref{lem:op-A-properties}, \ref{lem:eigenvalues}, and \ref{lem:basis-Riesz}. Hence, operator $\op{A}$ is a Riesz-spectral operator on $\cX$ (see also\cite[Definition 2.3.4, p.41]{CZ:95}), and operator $\op{A}$ and its domain $\dom(\op{A}) = \cX_0$ may be represented as per \er{eq:op-Riesz} with the aforementioned eigenvalues. An analogous argument for $\op{A}$ defined in $\cX_\half$, with eigenvectors $\optilde{B}$ as per \er{eq:basis-Riesz} corresponding to the same eigenvalues $\{\lambda_n\}_{n\in\N}$ and forming a Riesz basis for $\cX_\half$, yields that $\op{A}$ is also a Riesz-spectral operator on $\cX_\half$. A similar argument yields that $\opAsqrt$ is a Riesz-spectral operator on $\cX$ and $\cX_\half$. As $\opAsqrt\, \op{J} = \op{I}$, Corollary \ref{cor:unique-inverse-Riesz} implies that $\op{J}$ is similarly a Riesz-spectral operator on $\cX$ and $\cX_\half$.

In order to show that the remaining operators are Riesz-spectral operators, first note that $\op{I}_\mu^{-1} = \op{I} + \mu^2\, \op{A}$ defined via \er{eq:op-I-mu} is a Riesz-spectral operator, with eigenvalues $\{ 1 + \mu^2 \, \lambda_n \}_{n\in\N}$. This follows by \er{eq:identity-Riesz} and the fact that $\op{A}$ is a Riesz-spectral operator. Consequently, Corollary \ref{cor:unique-inverse-Riesz} implies that $\op{I}_\mu$ is also a Riesz-spectral operator. Lemma \ref{lem:op-I-mu-properties} states that $\op{I}_\mu$ has a unique square-root $\op{I}_\mu^\half$. Subsequently applying Lemma \ref{lem:compose-Riesz} and Corollary \ref{cor:unique-inverse-Riesz} implies that both $\op{I}_\mu^{\half}$ and $\op{I}_\mu^{-\half}$ are Riesz-spectral operators. 
The fact that the composition $\opAsqrt\, \op{I}_\mu\, \opAsqrt$ is a Riesz-spectral operator follows by two further applications of Lemma \ref{lem:compose-Riesz}.
\if{false}

The domain $\dom(\op{A})$ is
\begin{align}
	\dom(\op{A})
	& = \left\{ x\in\cX \, \biggl| \, \op{A}\, x \in \cX \right\}
	= \left\{ x\in\cX \, \left| \, \sum_{n=1}^\infty |\lambda_n|^2 \, |\langle x,\, \varphi_n \rangle|^2 < \infty \right. \right\}
	= \cX_0\,,
	\nn\\
	\dom(\op{A})
	& = \left\{ x\in\cX_\half \, \biggl| \, \op{A}\, x\in\cX_\half \right\}
	= \left\{ x\in\cX_\half \, \left| \, \sum_{n=1}^\infty |\lambda_n|^2 \, |\langle x,\, \tilde\varphi_n \rangle_\half|^2 < \infty \right. \right\}
	\subset\cX_0\,.
	\nn
\end{align}

\fi
\end{proof}

\if{false}
 
\begin{lemma}
Operators $\op{A}$ and $\opAsqrt$ are Riesz-spectral operators on $\cX$, with
\begin{align}
	\op{A}\, x
	& = \sum_{n=1}^\infty \lambda_n\, \langle x,\, \varphi_n \rangle \, \varphi_n\,,
	& 
	& \dom(\op{A}) 
	= \left\{ x\in\cX \, \left| \, \sum_{n=1}^\infty |\lambda_n|^2 \, |\langle x,\, \varphi_n \rangle|^2 < \infty \right. \right\}
	= \cX_0\,,
	\\
	\opAsqrt\, x
	& = \sum_{n=1}^\infty \sqrt{\lambda_n}\, \langle x,\, \varphi_n \rangle \, \varphi_n\,,
	& 
	& \dom(\opAsqrt)
	= \left\{ x\in\cX \, \left| \, \sum_{n=1}^\infty |\lambda_n| \, |\langle x,\, \varphi_n \rangle|^2 < \infty \right. \right\} = \cX_\half\,,
	\label{eq:opAsqrt-Riesz-on-X}
\end{align}
or, equivalently, in terms of the $\langle \, , \rangle_\half$ inner product,
\begin{align}
	\op{A}\, x
	& = \sum_{n=1}^\infty \lambda_n\, \langle x,\, \tilde\varphi_n \rangle_\half \, \tilde\varphi_n\,,
	& 
	& \dom(\op{A}) 
	= \left\{ x\in\cX \, \left| \, \sum_{n=1}^\infty |\lambda_n| \, |\langle x,\, \tilde\varphi_n \rangle_\half|^2 < \infty \right. \right\} = \cX_0\,,
	\label{eq:op-A-Riesz-on-X-half}
	\\
	\opAsqrt\, x
	& = \sum_{n=1}^\infty \sqrt{\lambda_n}\, \langle x,\, \tilde\varphi_n \rangle_\half \, \tilde\varphi_n\,,
	& 
	& \dom(\opAsqrt)
	= \left\{ x\in\cX \, \left| \, \sum_{n=1}^\infty |\langle x,\, \tilde\varphi_n \rangle_\half|^2 < \infty \right. \right\} = \cX_\half\,.
	\label{eq:opAsqrt-Riesz-on-X-half}
\end{align}
\end{lemma}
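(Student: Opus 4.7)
The plan is to work through the seven operators sequentially, each time identifying the eigenvalues, verifying the Riesz-spectral hypotheses (simple spectrum with totally disconnected closure, closed and densely defined, eigenvectors forming a Riesz basis), and then citing \cite[Definition 2.3.4, p.41]{CZ:95} or the composition/inversion machinery of Appendix \ref{app:Riesz}. Since all operators of interest share either $\op{B}$ or $\optilde{B}$ of \er{eq:basis-Riesz} as their eigenvector sets (via Lemma \ref{lem:eigenvalues} and \ref{lem:basis-Riesz}), and since the eigenvalues $\lambda_n = (n\pi/L)^2$ are strictly increasing to $+\infty$, the simplicity and totally-disconnected-closure conditions will reduce in each case to monotonicity of a scalar function $\lambda\mapsto f(\lambda)$ applied to $\{\lambda_n\}$.

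First I would handle $\op{A}$ on $\cX$: it is closed, self-adjoint, and densely defined by Lemma \ref{lem:op-A-properties}, has simple eigenvalues $\{\lambda_n\}$ by Lemma \ref{lem:eigenvalues} whose closure $\{\lambda_n\}\cup\{+\infty\}$ is totally disconnected (monotone divergent sequences always are), and its eigenvectors $\op{B}$ form a Riesz basis for $\cX$ by Lemma \ref{lem:basis-Riesz}. Hence $\op{A}$ is Riesz-spectral on $\cX$ with representation \er{eq:op-Riesz}. Repeating the argument with $\optilde{B}$ in place of $\op{B}$ (again a Riesz basis for $\cX_\half$ by Lemma \ref{lem:basis-Riesz}) gives $\op{A}$ as a Riesz-spectral operator on $\cX_\half$. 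The same argument using eigenvalues $\{\sqrt{\lambda_n}\}$ (still simple and monotone divergent) settles $\opAsqrt$ on both $\cX$ and $\cX_\half$. Then $\op{J} = (\opAsqrt)^{-1}$ follows from Corollary \ref{cor:unique-inverse-Riesz} with eigenvalues $\{\lambda_n^{-\half}\}$ (monotone decreasing to $0$, so the closure $\{0\}\cup\{\lambda_n^{-\half}\}$ is again totally disconnected).

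Next, I would address $\op{I}_\mu$ by first representing $\op{I}_\mu^{-1} = \op{I} + \mu^2\op{A}$ as a formal spectral series. Since $\op{I}$ admits the formal expansion \er{eq:identity-Riesz} on $\cX_\half$ (and analogously with $\op{B}$ on $\cX$), linearity of the series representation on the domain $\dom(\op{A})$ yields the eigenvalue sequence $\{1+\mu^2\lambda_n\}$ for $\op{I}_\mu^{-1}$; this is strictly increasing to $+\infty$ and hence simple with totally disconnected closure, and applying this operator to any eigenvector returns the same eigenvector up to scaling, so $\op{I}_\mu^{-1}$ is Riesz-spectral. Corollary \ref{cor:unique-inverse-Riesz} then promotes $\op{I}_\mu$ to a Riesz-spectral operator with eigenvalues $\{(1+\mu^2\lambda_n)^{-1}\}$. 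For the square roots, uniqueness of the positive self-adjoint square root (Lemma \ref{lem:op-I-mu-properties}) is exploited: the candidate operator $\opbreve{R}$ defined via \er{eq:op-Riesz} with eigenvalues $\{(1+\mu^2\lambda_n)^{-\half}\}$ is Riesz-spectral, and by Lemma \ref{lem:compose-Riesz} its composition with itself yields $\op{I}_\mu$; by uniqueness $\opbreve{R} = \op{I}_\mu^\half$, and Corollary \ref{cor:unique-inverse-Riesz} delivers $\op{I}_\mu^{-\half}$. Finally, $\opAsqrt\,\op{I}_\mu\,\opAsqrt$ is handled by two applications of Lemma \ref{lem:compose-Riesz}, yielding the product eigenvalues $\sqrt{\lambda_n}\cdot(1+\mu^2\lambda_n)^{-1}\cdot\sqrt{\lambda_n} = \lambda_n/(1+\mu^2\lambda_n)$.

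The main obstacle I anticipate is the careful verification at each composition step that the new eigenvalue sequence is simple and has totally disconnected closure, so that Lemma \ref{lem:compose-Riesz} actually applies; in particular, for $\opAsqrt\,\op{I}_\mu\,\opAsqrt$ one must check that the map $\lambda\mapsto \lambda/(1+\mu^2\lambda)$ is strictly monotone on $\{\lambda_n\}$, which follows from the fact that $1/(1+\mu^2\lambda)$ is strictly decreasing while $\lambda$ is strictly increasing but the net product is strictly increasing (its derivative $(1+\mu^2\lambda)^{-2}>0$), with the closure gaining only the limit point $1/\mu^2$. A subtler subtlety is the step involving $\op{I}_\mu^{-1} = \op{I} + \mu^2\op{A}$, since $\op{I}$ itself is not Riesz-spectral (its eigenvalue $1$ has infinite multiplicity) and so Lemma \ref{lem:compose-Riesz} does not directly apply to the sum; the fix is to argue directly from the series expansion \er{eq:identity-Riesz} rather than from composition machinery, justifying term-by-term manipulation via closedness of the inner product functionals $\langle\cdot,\tilde\varphi_n\rangle_\half$ and the fact that the partial sums converge in $\cX_\half$ on $\dom(\op{A})$.
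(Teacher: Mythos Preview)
Your proposal appears to target Lemma~\ref{lem:specific-op-Riesz} (the seven-operator result) rather than the stated lemma, which concerns only $\op{A}$ and $\opAsqrt$ together with four explicit series representations. The portions of your argument that do overlap --- verifying the Riesz-spectral hypotheses for $\op{A}$ and $\opAsqrt$ on $\cX$ via Lemmas~\ref{lem:op-A-properties}, \ref{lem:eigenvalues}, \ref{lem:basis-Riesz} and \cite[Definition~2.3.4]{CZ:95} --- match the paper's approach for the first pair of displayed equations. Your discussion of $\op{J}$, $\op{I}_\mu$, $\op{I}_\mu^{\pm\half}$, and $\opAsqrt\op{I}_\mu\opAsqrt$ is extraneous here.

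The genuine gap is the ``or, equivalently'' clause: the statement asserts that the $\langle\,,\rangle$-based and $\langle\,,\rangle_\half$-based representations coincide, including the rewritten domain characterizations (note the exponent drops from $|\lambda_n|^2$ to $|\lambda_n|$ when passing from $\varphi_n$ to $\tilde\varphi_n$). Your one-line ``repeating the argument with $\optilde{B}$ in place of $\op{B}$'' establishes that $\op{A}$ is Riesz-spectral \emph{on $\cX_\half$}, but it does not show that the two series agree term-by-term or that the two domain sets describe the same subset. The paper does this explicitly: it computes
\[
\lambda_n\langle x,\varphi_n\rangle\varphi_n = \langle x,\op{A}\varphi_n\rangle\varphi_n = \langle\opAsqrt x,\opAsqrt\varphi_n\rangle\varphi_n = \langle x,\varphi_n\rangle_\half\varphi_n = \lambda_n\langle x,\tilde\varphi_n\rangle_\half\tilde\varphi_n
\]
using \er{eq:basis-Riesz} and \er{eq:eigenvalues}, and then expands $\|\op{A}x\|^2$ in the mixed basis (exploiting $\langle\tilde\varphi_m,\tilde\varphi_n\rangle = \lambda_n^{-1}\delta_{mn}$) to obtain $\sum_n |\lambda_n|\,|\langle x,\tilde\varphi_n\rangle_\half|^2$, thereby recovering the domain description \er{eq:op-A-Riesz-on-X-half}; the $\opAsqrt$ case follows the same pattern. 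Without this change-of-basis step, the equivalence asserted in the lemma is not proved.
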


\fi

\if{false}

\begin{proof}
Operator $\op{A}$ is closed and linear on $\cX$, with simple eigenvalues $\sigma_p(\op{A}) = \{\lambda_n\}_{n=1}^\infty\subset\R$ as per \er{eq:eigenvalues} corresponding to eigenvectors $\op{B} = \{\varphi_n\}_{n=1}^\infty\subset\cX$ as per \er{eq:basis-Riesz}, $\ol{\sigma_p(\op{A})}$ is totally disconnected, and $\op{B}$ forms a Riesz basis for $\cX$, see Lemmas \ref{lem:op-A-properties}, \ref{lem:eigenvalues}, and \ref{lem:basis-Riesz}. Hence, operator $\op{A}$ is a Riesz-spectral operator on $\cX$ (see \cite[Definition 2.3.4, p.41]{CZ:95}), and operator $\op{A}$ and its domain $\cX_0 = \dom(\op{A})$ may be represented via analogues of \er{eq:op-Riesz} and \er{eq:dom-Riesz} defined with respect to $\cX$, see \cite[Theorem 2.3.5, p.41]{CZ:95}. This immediately yields \er{eq:op-A-Riesz-on-X}. Representation \er{eq:opAsqrt-Riesz-on-X} for $\opAsqrt$ on $\cX$ follows similarly.

As $\cX_0\subset\cX_\half\subset\cX$, operator $\op{A}$ and $\opAsqrt$ may be expressed in terms of the inner product $\langle \, , \rangle_\half$ on $\cX_\half$. In particular, with $x\in\cX_0$, note that $\lambda_n\, \langle x,\, \varphi_n \rangle\, \varphi_n = \langle x,\, \lambda_n\, \varphi_n\rangle\, \varphi_n = \langle x,\, \op{A}\, \varphi_n \rangle\, \varphi_n = \langle \opAsqrt\, x,\, \opAsqrt\, \varphi_n \rangle\, \varphi_n = \langle x,\, \varphi_n \rangle_\half\, \varphi_n = \lambda_n \, \langle x,\, \tilde\varphi_n \rangle_\half\, \tilde\varphi_n$, where the last equality follows by \er{eq:basis-Riesz} and \er{eq:eigenvalues}. Hence, applying \er{eq:op-A-Riesz-on-X},
$\op{A}\, x = \sum_{n=1}^\infty \lambda_n \, \langle x,\, \tilde\varphi_n \rangle_\half\, \tilde\varphi_n$. As $\dom(\op{A}) = \{ x\in\cX \, | \, \op{A}\, x \in\cX \}$, following steps analogous to \er{eq:norm-Riesz} implies that $x\in\dom(\op{A})$ if and only if
\begin{align}
	\infty & >
	\| \op{A}\, x\|^2 
	= \left\langle \sum_{m=1}^\infty \lambda_m \, \langle x,\, \tilde\varphi_m \rangle_\half\, \tilde\varphi_m,\,
			\sum_{n=1}^\infty \lambda_n \, \langle x,\, \tilde\varphi_n \rangle_\half\, \tilde\varphi_n \right\rangle
	= \sum_{m=1}^\infty \sum_{n=1}^\infty \lambda_m\, \lambda_n \, 
			\langle x,\, \tilde\varphi_m \rangle_\half \, \langle x,\, \tilde\varphi_n \rangle_\half \, 
			\langle \tilde\varphi_m,\, \tilde\varphi_n\rangle
	\nn\\
	& = \sum_{m=1}^\infty \sum_{n=1}^\infty \sqrt{ \lambda_m\, \lambda_n}\,
			\langle x,\, \tilde\varphi_m \rangle_\half \, \langle x,\, \tilde\varphi_n \rangle_\half \, 
			\langle \varphi_m,\, \varphi_n\rangle
	= \sum_{n=1}^\infty |\lambda_n|\, |\langle x,\, \tilde\varphi_n \rangle_\half|^2\,.
	\nn
\end{align}
That is, $\dom(\op{A})$ may be equivalently represented as per \er{eq:op-A-Riesz-on-X-half}. Similarly, $x\in\dom(\opAsqrt)$ if and only if
$
	\infty > \|\opAsqrt\, x\|^2 = \sum_{n=1}^\infty |\langle x,\, \tilde\varphi_n\rangle_\half|^2 = \|x\|_\half^2
$,
so that $\dom(\opAsqrt)$ may be equivalently represented as per \er{eq:opAsqrt-Riesz-on-X-half}.
\end{proof}

\fi

\if{false}
By inspection of representation \er{eq:op-A-Riesz-on-X-half} and Lemma \ref{lem:eigenvalues}, $x\in\dom(\op{A})$ if and only if
\begin{align}
	\infty & > \sum_{n=1}^\infty |\langle x,\, \sqrt{\lambda_n}\, \tilde\varphi_n \rangle_\half|^2
	=  \sum_{n=1}^\infty |\langle x,\, \opAsqrt\, \tilde\varphi_n \rangle_\half|^2 
	= \sum_{n=1}^\infty |\langle \opAsqrt\, x,\, \tilde\varphi_n \rangle_\half|^2\,,
	\nn
\end{align}
so that $x\in\cX_0 = \dom(\op{A})$ if and only if $\opAsqrt\, x\in\cX_\half = \dom(\opAsqrt)$.
\fi


\section{Riesz-spectral operator-valued functions}
\label{app:Riesz-functions}
A Riesz-spectral operator-valued function takes the form
\begin{align}
	\op{F}(t)
	\, x
	& \doteq \sum_{n=1}^\infty f_n(t)\, \langle x,\, \tilde\varphi_n \rangle_\half\, \tilde\varphi_n\,,
	\quad
	\dom(\op{F}(t))
	\doteq \left\{ x\in\cX_\half\, \biggl|\, \op{F}(t)\, x\in\cX_\half \right\},
	\label{eq:op-F}
\end{align}
where $t\in\Omega$, $\tilde\varphi_n\in\optilde{B}$ is as per \er{eq:basis-Riesz}, and $f_n:\Omega\mapsinto\R$, for all $n\in\N$, for some interval $\Omega\subset\R_{\ge 0}$.

\begin{lemma}
\label{lem:op-F}
Suppose that the Riesz-spectral operator-valued function $\op{F}$ defined by \er{eq:op-F} satisfies the following properties with respect to a bounded open interval $\Omega\subset\R_{>0}$:
\begin{enumerate}[(i)]
\item $\{f_n(t)\}_{n\in\N}\subset\R$ is a strictly monotone sequence for every $t\in \Omega$; 
\item $f_n\in C^2(\Omega;\R)$ for all $n\in\N$; and 
\item there exists an $M_f\in\R_{\ge 0}$ such that $\max(|f_n(t)|, \, |\dot f_n(t)|,\, |\ddot f_n(t)|) \le M_f$ for all $n\in\N$ and $t\in\Omega$.
\end{enumerate}
Then, $\op{F}(t)\in\bo(\cX_\half)$ for all $t\in\Omega$, and $\op{F}:\Omega\mapsinto\bo(\cX_\half)$ is {\Frechet} differentiable with derivative $\opdot{F}:\Omega\mapsinto\bo(\cX_\half)$ of the form \er{eq:op-F} given for all $t\in\Omega$ and $x\in\cX_\half$ by
\begin{align}
	\opdot{F}(t)\, x
	& = \sum_{n=1}^\infty \dot f_n(t)\, \langle x,\, \tilde\varphi_n \rangle_\half\, \tilde\varphi_n\,.
	\label{eq:op-F-dot}
\end{align}
\end{lemma}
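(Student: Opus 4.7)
The plan is to first establish boundedness of $\op{F}(t)$ for each $t\in\Omega$, then to verify that the candidate derivative defined by \er{eq:op-F-dot} is bounded as well, and finally to obtain {\Frechet} differentiability by a uniform second-order Taylor estimate on the eigenvalue-level functions $f_n$. Throughout, the orthonormality of the Riesz basis $\optilde{B}$ for $\cX_\half$ is exploited so that the $\cX_\half$-norm of any Riesz-spectral operator of the form \er{eq:op-F} reduces to the supremum of its eigenvalue magnitudes.

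First, I would fix $t\in\Omega$ and $x\in\cX_\half$, and invoke the orthonormality of $\optilde{B}$ (together with Lemma \ref{lem:dom-Riesz}) to write
\begin{align*}
	\|\op{F}(t)\, x\|_\half^2
	& = \sum_{n=1}^\infty |f_n(t)|^2 \, |\langle x,\, \tilde\varphi_n\rangle_\half|^2
	\le M_f^2 \sum_{n=1}^\infty |\langle x,\, \tilde\varphi_n\rangle_\half|^2
	= M_f^2 \, \|x\|_\half^2\,,
\end{align*}
where condition {\em(iii)} has been used in the inequality. Hence $\dom(\op{F}(t))=\cX_\half$ and $\|\op{F}(t)\|_\half\le M_f<\infty$, so that $\op{F}(t)\in\bo(\cX_\half)$. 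An identical estimate, with $f_n(t)$ replaced by $\dot f_n(t)$, shows that the candidate derivative $\opdot{F}(t)$ of \er{eq:op-F-dot} is itself a well-defined bounded operator in $\bo(\cX_\half)$ with $\|\opdot{F}(t)\|_\half\le M_f$.

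Next, to establish {\Frechet} differentiability, I would fix $t\in\Omega$ and consider $h\in\R$ with $|h|$ small enough that $t+h\in\Omega$. For each $n\in\N$, condition {\em(ii)} and Taylor's theorem yield a $\tau_n\in(t,t+h)$ (or $(t+h,t)$) such that $f_n(t+h) - f_n(t) - h\, \dot f_n(t) = \demi\, h^2\, \ddot f_n(\tau_n)$, so that condition {\em(iii)} implies the uniform bound $|f_n(t+h)-f_n(t)-h\, \dot f_n(t)|\le \demi\, M_f\, h^2$ for all $n\in\N$. Consequently, the same orthonormality argument as above gives
\begin{align*}
	\left\| \op{F}(t+h) - \op{F}(t) - h\, \opdot{F}(t) \right\|_\half^2
	& = \sup_{\|x\|_\half = 1} \sum_{n=1}^\infty |f_n(t+h)-f_n(t)-h\, \dot f_n(t)|^2 \, |\langle x,\, \tilde\varphi_n \rangle_\half|^2
	\le \ts{\frac{M_f^2}{4}} \, h^4\,,
\end{align*}
so that $\|\op{F}(t+h) - \op{F}(t) - h\, \opdot{F}(t)\|_\half / |h| \le \demi\, M_f\, |h| \to 0$ as $h\rightarrow 0$. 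This is precisely {\Frechet} differentiability of $\op{F}:\Omega\mapsinto\bo(\cX_\half)$ at $t$ with derivative $\opdot{F}(t)$ of the form \er{eq:op-F-dot}.

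The main obstacle here is ensuring that the naive term-by-term differentiation actually yields the {\Frechet} derivative in the operator norm, rather than merely a pointwise or strong derivative; this is where condition {\em(iii)} is essential, as the uniform bound on $\{\ddot f_n\}_{n\in\N}$ allows the Taylor remainders to be controlled simultaneously across all modes and hence lifted to a norm estimate via the orthonormal expansion. Condition {\em(i)} plays no role in boundedness or differentiability per se, but ensures that $\op{F}(t)$ genuinely inherits a Riesz-spectral structure (simple spectrum with totally disconnected closure, the latter being automatic from strict monotonicity together with the bound in {\em(iii)} on a bounded interval), so that the representation \er{eq:op-F} is consistent with the framework of Appendix \ref{app:Riesz}.
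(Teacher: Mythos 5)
Your proof is correct and takes essentially the same approach as the paper: orthonormality of $\optilde{B}$ reduces operator norms to $\ell^2$ estimates on the eigenvalue sequences, condition (iii) gives the uniform bound for boundedness of $\op{F}(t)$ and $\opdot{F}(t)$, and Taylor's theorem with Lagrange remainder, again controlled uniformly in $n$ by (iii), yields the $O(h^2)$ norm bound needed for {\Frechet} differentiability. Your closing remark on the role of condition (i) matches the paper, which invokes strict monotonicity only to argue that $\ol{\{f_n(t)\}_{n\in\N}}$ is totally disconnected so that $\op{F}(t)$ is indeed a Riesz-spectral operator.
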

\begin{proof}
Fix $t\in\Omega$ and $x\in\cX_\half$, $x\ne 0$. By property {\em (i)}, as $\{f_n(t)\}_{n\in\N}\subset\R$ is a strictly monotone sequence, its closure is the union of itself and its supremum or infemum, where the latter is strictly less than or strictly greater than every element of $\{f_n(t)\}_{n\in\N}$. Hence, there always exists at least one open interval between any two distinct elements of $\ol{\{f_n(t)\}_{n\in\N}}$, so that any two such elements cannot be joined by a segment lying entirely within $\ol{\{f_n(t)\}_{n\in\N}}$. That is, $\ol{\{f_n(t)\}_{n\in\N}}$ is totally disconnected. As $\optilde{B} = \{ \tilde\varphi_n \}_{n\in\N}$ is an orthonormal Riesz basis for $\cX_\half$, it follows by \cite[Corollary 2.3.6, p.45]{CZ:95} that $\op{F}(t)$ is a Riesz-spectral operator.
\if{false}

By inspection of \er{eq:op-F}, 
\begin{align}
	\| \op{F}(t)\, x \|_\half^2
	= \langle \op{F}(t)\, x,\, \op{F}(t)\, x\rangle_\half
	& = \left\langle \sum_{m=1}^\infty f_m(t) \, \langle x,\, \tilde\varphi_m \rangle_\half\, \tilde\varphi_m, \op{F}(t)\, x \right\rangle_\half
	= \sum_{m=1}^\infty f_m(t) \langle x,\, \tilde\varphi_m \rangle_\half \, \langle \tilde\varphi_m,\, \op{F}(t)\, x \rangle_\half
	\nn\\
	& = \sum_{m=1}^\infty f_m(t) \langle x,\, \tilde\varphi_m \rangle_\half \, 
	\left\langle \tilde\varphi_m,\, \sum_{n=1}^\infty f_n(t) \, \langle x,\, \tilde\varphi_n \rangle_\half\, \tilde\varphi_n \right\rangle_\half
	\nn\\
	& = \sum_{m=1}^\infty \sum_{n=1}^\infty f_m(t) \, f_n(t) \, \langle x,\, \tilde\varphi_m \rangle_\half \, 
	\langle x,\, \tilde\varphi_n \rangle_\half \,
	\langle \tilde\varphi_m,\, \tilde\varphi_n\rangle_\half
	\nn\\
	& = \sum_{m=1}^\infty |f_m(t)|^2 \, | \langle x,\, \tilde\varphi_m \rangle_\half |^2\,,
	\label{eq:op-F-norm}
\end{align}
in which Lemma \ref{lem:inner-closed} has been applied twice. 

\fi
Applying \er{eq:dom-Riesz} to $\op{F}(t)$, and applying property {\em (iii)} and \er{eq:identity-Riesz}, yields that
\begin{align}
	\| \op{F}(t)\, x \|_\half^2
	& \le \sum_{m=1}^\infty |f_m(t)|^2 \, | \langle x,\, \tilde\varphi_m \rangle_\half |^2
	\le M_f^2\, \sum_{m=1}^\infty |\langle x,\, \tilde\varphi_m\rangle_\half|^2 = M_f^2\, \|x\|_\half^2\,,
	\label{eq:op-F-norm}
\end{align}
or $\op{F}(t)\in\bo(\cX_\half)$ with $\|\op{F}(t)\| \le M_f$. Define $\Omega(t) \doteq \left\{ s\in\R_{>0} \, | \, s+t\in\Omega \right\}$, and fix $\eps\in\Omega(t)$, $\eps\ne 0$. Define $\opdot{F}$ as per \er{eq:op-F-dot}, and note by property {\em (iii)} and \er{eq:op-F-norm} that $\opdot{F}(t)\in\bo(\cX_\half)$ with $\|\opdot{F}(t)\|_\half\le M_f$, where $\|\cdot\|_\half$ here denotes the induced operator norm on $\cX_\half$. Applying property {\em (ii)}, $f_n(t+\eps) = f_n(t) + \eps\, \dot f_n(t) + (\ts{\frac{\eps^2}{2}})\, \ddot f_n(\tau)$ for some $\tau\in(t,t+\eps)$, so that
\begin{align}
	& \left\| [\op{F}(t+\eps) - \op{F}(t) -  \eps\, \opdot{F}(t)]\, x \right\|_\half^2
	= \sum_{n=1}^\infty | f_n(t+\eps) - f_n(t) - \eps\, \dot f_n(t) |^2 \, |\langle x,\, \tilde\varphi_n\rangle_\half|^2
	\nn\\
	& \le (\ts{\frac{\eps^2}{2}})^2\, \sum_{n=1}^\infty \sup_{\tau\in\Omega(t)} | \ddot f_n(\tau) |^2 \, |\langle x,\, \tilde\varphi_n\rangle_\half|^2
	\le (\ts{\frac{\eps^2}{2}})^2\, M_f^2 \, \sum_{n=1}^\infty |\langle x,\, \tilde\varphi_n\rangle_\half|^2
	= (\ts{\frac{\eps^2}{2}})^2\, M_f^2 \, \|x\|_\half^2\,.
	\nn
\end{align}
Consequently, dividing through by $\eps\ne 0$ and $\|x\|_\half\ne 0$,
\begin{align}
	\lim_{\eps\rightarrow 0} \frac{\left\| \op{F}(t+\eps) - \op{F}(t) -  \eps\, \opdot{F}(t) \right\|_\half}{|\eps|}
	& = 
	\lim_{\eps\rightarrow 0} \sup_{\|x\|_\half\ne 0} \frac{\left\| [\op{F}(t+\eps) - \op{F}(t) -  \eps\, \opdot{F}(t)]\, x \right\|_\half}{|\eps|\, \|x\|_\half}
	\le \lim_{\eps\rightarrow 0} \ts{\frac{|\eps|}{2}}\, M_f = 0\,,
	\nn
\end{align}
in which the left-hand norm is again the induced operator norm on $\cX_\half$, thereby demonstrating that $\opdot{F}$ is indeed the {\Frechet} derivative of $\op{F}$.
\end{proof}


\end{document}